\documentclass[11pt]{article}
\usepackage[all,2cell]{xy} \UseAllTwocells \SilentMatrices
\usepackage{latexsym,amsfonts,amssymb}
\usepackage{amsmath,amsthm,amscd}
\usepackage{hyperref,psfrag}
\usepackage{graphicx}
\usepackage{epsfig}
\usepackage{cite, tocvsec2}
\usepackage{amsfonts, amstext, mathrsfs}
\usepackage{amsopn,amscd,xspace,enumerate,multicol}
\usepackage{xcolor} 

\usepackage{a4wide}

\normalfont\upshape

\usepackage{fancyhdr}
\pagestyle{fancyplain}

\lhead[\fancyplain{}{\bfseries\thepage}]{\fancyplain{}{\sl\bfseries\rightmark}}
\rhead[\fancyplain{}{\sl\bfseries\leftmark}]{\fancyplain{}{\bfseries\thepage}}
\cfoot{}

\hfuzz=6pc



\newcommand\lie[1]{\mathfrak{#1}}

\newcommand{\n}{\lie{n}}

\newcommand{\g}{\lie{g}}
\newcommand{\s}{\lie{sl}}
\newcommand{\h}{\lie{h}}

\newcommand{\C}{\mathbb{C}}

\newcommand{\PS}{\mathbb{P}}

\newcommand{\Q}{\mathbb{Q}}
\newcommand{\Z}{\mathbb{Z}}

\newcommand{\N}{\mathbb{N}}

\newcommand{\Nt}{{ \widetilde{\mathcal{N}} }}
\newcommand{\Nc}{{\mathcal{N} }}
\newcommand{\Ox}{\mathcal{O}}

\newcommand\op[1]{{\rm{#1}}}
\newcommand\mH{\mathrm{H}}
\newcommand\mHH{\mathrm{HH}}
\newcommand\lra{\longrightarrow}

\newcommand\ad{\mathrm{ad}}

\newcommand{\Ul}{{\mathsf U}}
\newcommand{\Uk}{{\mathcal U}}
\newcommand{\ul}{{\mathsf u}}
\newcommand{\pl}{{\mathsf p}}
\newcommand{\rl}{{\mathsf r}}
\newcommand{\St}{\mathrm{St}}
\newcommand{\HC}{\mathrm{HC}}
\newcommand{\Hig}{\mathrm{Hig}}

\newcommand{\zl}{{\mathsf z }}
\newcommand{\cl}{\mathsf{c}}

\newcommand{\Hom}{\mathrm{Hom}}
\newcommand{\End}{\mathrm{End}}
\newcommand{\Ext}{\mathrm{Ext}}
\newcommand{\Tor}{\mathrm{Tor}}
\newcommand{\dmod}{\mbox{-}\mathrm{mod}}
\newcommand{\qq}{/\!/}

\theoremstyle{theorem}
\newtheorem{theorem}{Theorem}[section]
\newtheorem{corollary}[theorem]{Corollary}
\newtheorem{conjecture}[theorem]{Conjecture}
\newtheorem{lemma}[theorem]{Lemma}
\newtheorem{proposition}[theorem]{Proposition}

\theoremstyle{definition}
\newtheorem{definition}[theorem]{Definition}
\newtheorem{example}[theorem]{Example}
\newtheorem{remark}[theorem]{Remark}

\numberwithin{equation}{section}

\title{Remarks on the derived center of small quantum groups}
\author{Anna Lachowska, You Qi}
\date{April 24, 2021}

\begin{document}

\maketitle

\begin{abstract}
Let $\ul_q(\g)$ be the small quantum group associated with a complex semisimple Lie algebra $\g$ and a primitive root of unity $q$, satisfying certain restrictions. We establish the equivalence between three different actions of $\g$ on the center of $\ul_q(\g)$ and on the higher derived center of $\ul_q(\g)$. Based on the triviality of this action for  $\g = \s_2, \s_3, \s_4$, we conjecture that, in finite type $A$, central elements of the small quantum group $\ul_q(\s_n)$ arise as the restriction of central elements in the big quantum group $\Ul_q(\s_n)$.

We also study the role of an ideal $\zl_{\rm Hig}$ known as the Higman ideal in the center of $\ul_q(\g)$. We show that it coincides with the intersection of the Harish-Chandra center and its Fourier transform, and compute the dimension of $\zl_{\rm Hig}$ in type $A$.  As an illustration we provide a detailed explicit description of the derived center of $\ul_q(\s_2)$ and its various symmetries. 
\end{abstract}

\section{Introduction}
\paragraph{Background.}Let $\g$ be a complex semisimple Lie algebra with the root system $R$ of rank $r$, and $U_v(\g)$ be the Drinfeld-Jimbo quantum enveloping algebra over $\Q(v)$. We will adopt the conventions and definitions of various quantum algebras used in \cite{ABG}. Namely, let $l$ be an odd integer greater than the Coxeter number of $\g$ and prime to the determinant of the Cartan matrix of $\g$, and let $\Ul=\Ul_q(\g)$ be the Lusztig's integral form of $U_v(\g)$ with $v$ specialized at $q$,  a primitive $l$th root of unity. It is usually known as the \emph{ big quantum group } at a root of unity, and contains the divided powers of the generators $E_i^{(n)} = E_i^n/ [n]_{d_i}!$, $F_i^{(n)} = F_i^n/[n]_{d_i}!$, where $d_i \in \{ 1,2,3\}$ symmetrizes the Cartan matrix of $R$. Then $\Ul_q(\g)$ is generated over $\Q[q]$ by $\{ E_{i}, F_{i}, E_{i}^{(l)}, F_{i}^{(l)} \}$, where $i$ indexes simple roots $\alpha_i$ of $\g$,  along with some additional elements of the Cartan subalgebra containing $K_j^{\pm 1}, j = 1, \ldots r$ (see e.g. \cite{LusfdHopf}).   Let $\Uk=\Uk_q(\g)$ be the De Concini-Kac form of $U_v(\g)$ (without the divided powers), also specialized at $q$ equal to a primitive $l$-th root of unity and factored over the ideal $\langle K_i^l -1\rangle_{i=1 , \ldots r}$. Both $\Ul$ and $\Uk$ are Hopf algebras and there exists a Hopf algebra homomorphism 
\[ \Uk \lra \Ul \] 
whose image $\ul=\ul_q(\g)$ is referred to as the \emph{ small quantum group}. Alternatively, $\ul$ can be defined as a subalgebra of $\Ul$ generated by $E_{i}$, $F_{i}$, $K_i^{\pm 1}$, $i = 1 \ldots r$ and factored over the ideal $\langle K_i^l -1\rangle_{i=1 , \ldots r}$.

Lusztig introduced the quantum Frobenius map $\phi$ that extends the assignment $E_{i}^{(l)} \to e_i$, $E_{i} \to 0$, $F_{i}^{(l)} \to f_i$, $F_{i} \to 0$ to an algebra homomorphism from $\Ul$  to $\hat{U}(\g)$, a completion of the universal enveloping algebra $U(\g)$ with respect to the Chevalley generators $\{e_i, f_i \}_{i = 1}^r$. The two-sided ideal $(\ul) \subset \Ul$ is the kernel of $\phi$ that fits into an exact sequence of bi-algebras 
\begin{equation}\label{eqn_quantumFrob}
 0 \lra (\ul) \lra \Ul \stackrel{\phi}{\lra} \hat{U}(\g)  .
\end{equation}
Here $\hat{U}(\g)$ is a completion of $U(\g)$ such that its category of representations can be identified with the category of finite-dimensional representations over the group $G$ of adjoint type, with ${\rm Lie}(G) = \g$. Using the sequence, one can identify the classical universal enveloping algebra $U(\g)$ as the Hopf quotient $\Ul_q(\g)\qq \ul_q(\g)$ (see Definition \ref{def-hopf-quotient}).

A geometric description of the Frobenius sequence given in \cite{ABG, BeLa} has motivated the authors to investigate the center of small quantum groups via algebro-geometric methods \cite{LQ1, LQ2}. The problem of finding an algebraic description of the center of the small quantum group has a long history. The interest in this question is motivated by the similarity with the case of algebraic groups over fields of positive characteristic \cite{Haboush}, by its connection to the representation theory of affine Lie algebras at a negative level \cite{KaLuIandII}, and more recently by potential applications in logarithmic conformal field theories \cite{FGST}. Based on the geometric interpretation given in \cite{ABG, BeLa},  we have developed in \cite{LQ1,LQ2} an algorithmic method that allows us to compute the structure of various blocks of the center as naturally bigraded vector space.  The results obtained in several low-rank cases then led to a conjecture relating the structure of the principal block of the center of $\ul$ in type $A$ to the bigraded vector space of the diagonal coinvariants  \cite{Hai} corresponding to the same root system. 

\paragraph{Methods and results.} In the present work we shift the focus from the center of a given block of $\ul$ (as in \cite{LQ1,LQ2}) to the entire center $\zl(\ul)$. We also naturally extend our study to  the total Hochschild cohomology of $\ul$ (also known as the {\em derived center} of $\ul$). Our main method relies on the classical theorem due to Ginzburg-Kumar \cite{GK}, which establishes a graded algebra isomorphism
\begin{equation}\label{eqn-GK-isomorphism}
\mHH^\bullet (H)\cong \Ext^\bullet_H(\Bbbk, H_\ad):=\mH^\bullet(H,H_\ad).
\end{equation}
for a Hopf algebra $H$. 
Here the left hand side stands for the Hochschild cohomology ring of $H$, while the right hand side denotes the usual Hopf-cohomology ring with coefficients in the left adjoint representation $H_\ad$.  In particular, taking degree zero parts on both sides, the center of $H$ can be identified with the space of $H$-invariants inside $H_\ad$.  

On the other hand, using \eqref{eqn-GK-isomorphism}, there is a split injection $\mH^\bullet(H, \Bbbk) \lra \mH^\bullet(H, H_{\rm ad})$, since $\Bbbk$ is a direct summand in $H_\ad$. It follows that $\mHH^\bullet(H)$ is a module-algebra over the ring $\mH^\bullet(H,\Bbbk)$. When $H=\ul$ defined over $\Bbbk=\C$, Ginzburg and Kumar has exhibited a $\g$-equivariant isomorphism 
\begin{equation}
\mH^\bullet(\ul,\C)\cong \C[\Nc]
\end{equation} 
identifying the cohomology ring with the space of algebraic functions on the nilpotent cone $\Nc$ of $\g$ \cite{GK}. Thus $\mHH^\bullet(\ul)$ is a module over $\C[\Nc]$.

The isomorphism \eqref{eqn-GK-isomorphism} prompts us to investigate further the module structure of $\ul_\ad$. This module carries some distinguished actions:
\begin{itemize}
\item[(i)] the Hopf adjoint action by the big quantum group $\Ul$ on $\Ul_\ad$ that preserves the submodule $\ul_\ad$. This action induces a $U(\g)$-module structure on $\zl(\ul)$ via the quantum Frobenius map \eqref{eqn_quantumFrob};
\item[(ii)] an extended modular group action on $\ul_\ad$ in the sense of \cite{LyuMa}, as $\ul$ is a \emph{ribbon factorizable} Hopf algebra (see Definition \ref{def-Dr-map} and \ref{def-ribbon-Hopf}). This action is first observed in \cite{LMSS,ScWo}. 
\end{itemize} 
These symmetries descend, via the isomorphism \eqref{eqn-GK-isomorphism}, to the level of the (derived) center of the small quantum group.
Using the sequence \eqref{eqn_quantumFrob} and the flatness of $\Ul$ over $\ul$, one may identify
\begin{equation}
\mHH^\bullet(\ul)\cong \Ext_{\ul}^\bullet(\Bbbk,\ul_\ad)\cong \Ext_{\Ul}^\bullet(\Ul\otimes_{\ul}\Bbbk, \ul_\ad)\cong \Ext_{\Ul}^\bullet(U(\g),\ul_{\ad}).
\end{equation}
It follows that the (derived) center of $\ul$ carries a natural $U(\g)$-module structure as well as an extended modular group structure\footnote{The extended modular group action restricts to a projective action of the modular group on the Hochschild cohomology of $\ul$, 
as was noticed in \cite{LMSS, ScWo}.}. One of the main goals of our work is to match this $\g$-action with the geometrically defined $\g$-action on the nilpotent cone $\Nc$ and its Springer resolution $\Nt$ via Schouten brackets (Theorem \ref{thm-Schouten-action}).

 Along the way, we also investigate a remarkable ideal $\zl_{\rm Hig}$, called the \emph{Higman ideal}, in the (derived) center of $\ul$. The ideal  is preserved by both the action of $U(\g)$ and the projective modular group action on $\mHH^\bullet(\ul)$.

The Higman ideal $\zl_{\rm Hig}$ in the (derived) center of $\ul$ has been studied in the framework of Fronenius algebras (e.g.~\cite{ZimRepBook, CoWe}). For a finite dimensional Hopf algebra $H$, it is the space obtained by applying the left integral $\Lambda \in H$ on $H_\ad$: $\zl_{\rm Hig} = {\rm ad} \Lambda (H)$. It is also called the {\it projective center}, since for a finite-dimensional Hopf algebra this ideal is the image of the span of projective characters in the Hopf dual $H^*$ under the \emph{Radford map}. Under the assumption that $H$ is factorizable, it is also the image of the ideal of projective characters under the \emph{ Drinfeld map } (see Lemma \ref{lem-hig-ideal-proj-char} and Propopsition \ref{prop-Higman-in-HC}). 

We prove the following properties of $\zl_{\rm Hig} \subset \mHH^\bullet(\ul)$: 
 \begin{itemize} 
  \item[(i)] The ideal  $\zl_{\rm Hig}$ coincides with the intersection of the Harish-Chandra center $\zl_{\HC}\subset \zl(\ul)$ and its Fourier transform: $\zl_{\rm Hig} = \zl_{\HC} \cap {\mathcal F}(\zl_{\HC})$. Its dimension is equal to the number of blocks in $\ul$ (Theorem \ref{thm-hig=intersection}) . In type $A_{n-1}$ this dimension is given by the rational Catalan number $\frac{1}{l+n} {l+n \choose n}$ (Theorem \ref{n_blocks}). 
\item[(ii)] $\zl_{\rm Hig}$ is a homogeneous ideal in the derived center $\mHH^\bullet(\ul)$. Moreover, it is a submodule in $\mHH^\bullet(\ul)$ with respect to the projective $SL(2, \Z)$ action, generated by the operators $\{{\mathcal F} , \mathcal{L}\}$ introduced in \cite{LyuMa}. Here $\mathcal F$ is the quantum Fourier transform, and $\mathcal{L}$ is the multiplication by the ribbon element of $\ul$ (Theorem \ref{hig_ideal_HH}). 
\end{itemize} 
The ideal $\zl_{\rm Hig}$ acquires special significance in the case of the small quantum group, since it carries a  projective action of the modular group,  and corresponds to the space of projective characters $\pl_l$ under an algebra isomorphism  between the left-shifted traces and the center of $\ul$. The ideal  $\pl_l$ has been considered previously in \cite{LaVer}, where it was presented as a non-semisimple analog (with the flavor of ``simple-to-projective duality") of the Verlinde algebra arising from the representation theory of $\Ul_q(\g)$.  We expect  that  $\pl_l$ and $\zl_{\rm Hig}$ will play an important role in the development of the logarithmic field theories, and we hope to study this question in our next paper. 

\paragraph{Summary of sections.}
In Section \ref{sec-Hopf-adjoint}, we present the necessary background material on finite-dimensional Hopf algberas satisfying increasingly stronger requirements, and study their adjoint representations. 
 We introduce  the \emph{quantum Fourier transform} defined by Lyubashenko-Majid \cite{LyuMa}, and studied in \cite{La, CoWe}, and an extended $SL_2(\Z)$-action on the Hopf adjoint module (Theorem \ref{thm-modular-group-action}).  In the remaining part of Section \ref{sec-Hopf-adjoint} we list other useful facts on the Higman ideal for $H$ \cite{ZimRepBook, CoWe}.

In Section \ref{sec-q-group}, we study the actions of the Lie algebra $\g$ and its universal enveloping on the Hochschild cohomology groups of the small quantum group $\ul$, naturally arising in the study of the center: 
\begin{itemize} 
\item The adjoint action of $U(\g)$ on $\zl(\ul)$ via the Frobenius pullback of the $l$-th divided powers of the generators (Theorem \ref{Action-div-powers};) 
\item The action of $U(\g)$ on the total Hochschild cohomology via the Ginzburg-Kumar isomorphism $\mHH^\bullet(\ul) \simeq {\rm Ext}^\bullet(\Ul\qq \ul , \ul_{\rm ad})$  (Corollary \ref{GK-action}). 
\item The natural $\g$-action coming from the geometric interpretation obtained in \cite{BeLa} of blocks of $\mHH^\bullet(\ul)$ as cohomology of certain coherent sheaves on the Springer resolution $\Nt$ associated with $\g$  (Theorem \ref{thm-Schouten-action}). 
\end{itemize} 
We prove that all these actions restrict to the same action of $\g$ on $\zl(\ul)$ (Corollary \ref{GK-action}, Theorem \ref{thm-Schouten-action}). Since the center $\zl(\ul)$ consists entirely of the trivial $\g$-modules in cases $\g =\s_2, \s_3, \s_4$, as exhibited in \cite{LQ1,LQ2}, 
this implies that $\zl(\ul)$ arises as the restriction of the center of the big quantum group in these cases. We conjecture that this property should hold for all $\zl(\ul)$ in type $A$ (Conjecture \ref{triv-g-action}). 

In Section \ref{sec-Hig-ideal}, we study the Higman ideal for the small quantum group $\ul$. The ideal is preserved under the modular group and under the action of $U(\g)$ studied in the previous section (Theorem \ref{thm-hig=intersection} and Proposition \ref{prop-g-preserves-Hig}). In type $A$, the dimension of the Higman ideal is expressible in terms of generalized Catalan number (Theorem \ref{n_blocks}). The Higman ideal admits an interesting ``fusion product'' introduced in \cite{LaVer}, which we intend to study in a follow-up work.

In Section \ref{section_sl2} we give an explicit description of the derived center of $\ul_q(\s_2)$, illustrating the results described above. This is made possible by the previous work of Kerler \cite{Ker}, Feigin-Gainutdinov-Semikhatov-Tipunin \cite{FGST} and Ostrik \cite{Ostrikadjoint}. The case $\g = \s_2$ remains the only case where an explicit description of the adjoint representation $\ul_{\rm ad}$ is available due to \cite{Ostrikadjoint}, and we use it to obtain an explicit description of $\zl(\ul_q(\s_2))$ as a submodule in the adjoint representation (Theorem \ref{thm-sl2-central-elements-in-adrep}).  We further use the Ginzburg-Kumar isomorphism \eqref{eqn-GK-isomorphism} to obtain a detailed description of the entire derived center  as a module algebra over the function space over the nilpotent cone $\C[\Nc]$ (Theorem \ref{thm-HH-sl2}). The algebra structure on the derived center is determined via  the geometric realization of the blocks of the center (Corollary \ref{HH_sl2_alg}). Finally, using the result in \cite{Ker}, we provide a decomposition of the derived center as a projective $SL(2, \Z)$-module in Corollary \ref{HH-sl2-modular}. 

\paragraph{Acknowledgments.} 
 A.~L. would like to thank Azat Gainutdinov for useful discussions about the $\g$-action on the center of a small quantum group and on the derived center of the small quantum $\s_2$. 
Part of this work was carried out during the first author's visit to California Institute of Technology (Caltech). Both authors thank Caltech for its hospitality and support. 
A.~L. is grateful for the support from Facutl\'e des Sciences de Base at \'Ecole Polytechnique F\'ed\'erale de Lausanne.  
Y.~Q.~is partially supported by the National Science Foundation DMS-1947532 while working on the paper.

\section{Hopf-adjoint action and an ideal in  the center of  $\ul$.}\label{sec-Hopf-adjoint}
In this section we collect some basic facts about finite-dimensional Hopf algebras that we will need later. Much of the material is well-known from the works of Drinfeld \cite{DrACHA}, Radford \cite{Radtrace}, Lyubashenko-Majid \cite{LyuMa} and Cohen-Westreich \cite{CoWe}. We prove some of the results for completeness, while streamlining some arguments.

\paragraph{Generalities.}Let $H$ be a Hopf algebra with the structure maps
\[
\Delta: H\lra H\otimes H, \quad \quad S: H\lra H^{\mathrm{op}},\quad \quad \epsilon: H\lra \Bbbk
\] 
known as the \emph{coproduct}, the \emph{antipode} and the \emph{counit}. We will use Sweedler's notation:
\begin{equation}
\Delta(h) =  \sum h_{1}  \otimes h_{2},
\end{equation}
for any $h\in H$. We will also use very often the axiom
\begin{equation}\label{eqn_antipode_axiom}
\sum h_1S(h_2)= \sum S(h_1)h_2 =\epsilon(h)
\end{equation}
which holds for any $h\in H$.

\begin{definition} 
A unital algebra $A$ equipped with a (left) $H$-action
\[
H\times A\lra A,\quad (h,x)\mapsto h\cdot x
\] 
is called an \emph{(left) $H$-module algebra}, if the algebra structure of $A$ is compatible with the $H$-action, in the sense that, for any $h\in H$
\begin{enumerate}
\item[(1)] $h\cdot 1_A=\epsilon(h) 1_A$ where $1_A$ stands for the multiplicative unit element of $A$;
\item[(2)] $h\cdot (xy)=\sum (h_1\cdot x) (h_2\cdot y)$ for all $x,y\in A$.
\end{enumerate}
\end{definition}

Similarly, one can define the right $H$-action version of $H$-module algebras. We leave its definition and analogous properties below as exercise for the reader.

\begin{lemma} \label{H-mod-alg} 
Let $A$ be an $H$-module algebra. Then the $H$-action on $A$ preserves the center $\zl(A)$ of $A$: $H\cdot \zl(A)\subset \zl(A)$.
\end{lemma}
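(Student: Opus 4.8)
The plan is to show directly that for any central $z\in\zl(A)$, any $h\in H$, and any $a\in A$, the element $h\cdot z$ commutes with $a$, i.e. $(h\cdot z)\,a = a\,(h\cdot z)$; since $a$ is arbitrary this is exactly $h\cdot z\in\zl(A)$. The only tools available are the two module-algebra axioms (unitality and the Leibniz-type rule $h\cdot(xy)=\sum(h_1\cdot x)(h_2\cdot y)$), the antipode axiom \eqref{eqn_antipode_axiom}, coassociativity of $\Delta$, and the centrality of $z$. As orientation, the two basic cases already reveal the mechanism: for $H=\Bbbk G$ a group acts by algebra automorphisms, so $g\cdot z$ commutes with $g\cdot a$ and hence, as $g$ is invertible and $g\cdot a$ ranges over $A$, with everything; for $H=U(\g)$ the generators act by derivations, and the Leibniz rule together with $za=az$ and $z(x\cdot a)=(x\cdot a)z$ forces $(x\cdot z)a=a(x\cdot z)$. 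The general proof interpolates between these via the antipode.

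First I would insert a ``resolution of the identity'' on the right-hand factor. Starting from the counit,
\[
(h\cdot z)\,a=\sum (h_1\cdot z)\,\epsilon(h_2)\,a ,
\]
I rewrite $\epsilon(h_2)a=\sum h_2\cdot\big(S(h_3)\cdot a\big)$ using \eqref{eqn_antipode_axiom}, obtaining
\[
(h\cdot z)\,a=\sum (h_1\cdot z)\,\big(h_2\cdot(S(h_3)\cdot a)\big),
\]
now summed over the threefold coproduct. The next step applies the module-algebra rule in reverse to fuse the two factors: with $b:=S(h_3)\cdot a$ held fixed relative to the $h_1,h_2$ splitting, coassociativity gives $\sum(h_1\cdot z)(h_2\cdot b)=\sum h_1\cdot\big(z\,(S(h_2)\cdot a)\big)$. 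Here centrality of $z$ enters: I replace $z\,(S(h_2)\cdot a)$ by $(S(h_2)\cdot a)\,z$, split again by the Leibniz rule, and use that the action is associative ($h_1\cdot(S(h_3)\cdot a)=(h_1S(h_3))\cdot a$) to reach
\[
(h\cdot z)\,a=\sum\big((h_1S(h_3))\cdot a\big)\,(h_2\cdot z),
\]
again over the threefold coproduct.

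The final step, and the one I expect to be the real crux, is the reassembly: one must recognize the Hopf-algebra identity
\[
\sum h_1\,S(h_3)\otimes h_2 = 1\otimes h,
\]
which collapses the right-hand side to $(1\cdot a)(h\cdot z)=a\,(h\cdot z)$, completing the argument. This is where the Hopf structure is genuinely used, and it is easy to mishandle the Sweedler bookkeeping because the two factors to be cancelled by the antipode ($h_1$ and $h_3$) are separated by the surviving factor $h_2$; in particular it is \emph{not} the more familiar $\sum h_1S(h_2)\otimes h_3=1\otimes h$, in which the antipode acts on adjacent factors. I would establish it by regrouping via coassociativity so that \eqref{eqn_antipode_axiom} applies to consecutive factors, and sanity-check it on $H=\Bbbk G$ (where it reduces to $g\,g^{-1}\otimes g$) and on $H=U(\g)$ (where the two extremal linear terms cancel). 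Everything else is routine Sweedler manipulation; all the care is concentrated in tracking the coproduct indices through the two applications of the Leibniz rule and in proving this last identity.
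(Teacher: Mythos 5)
Your reduction is carried out correctly up to the formula $(h\cdot z)\,a=\sum\bigl((h_1S(h_3))\cdot a\bigr)(h_2\cdot z)$, and in fact this is the paper's own chain of equalities with the Sweedler indices tracked honestly: the paper records this intermediate expression as $\sum (h_1S(h_2)\cdot x)(h_3\cdot z)$, silently transposing the last two legs. The genuine gap is your final step. The identity $\sum h_1S(h_3)\otimes h_2=1\otimes h$, which you correctly identify as the crux, is \emph{false} in a general Hopf algebra, and your plan to prove it ``by regrouping via coassociativity'' cannot succeed: coassociativity only re-brackets the iterated coproduct, it never permutes tensor factors, and bringing $h_1$ adjacent to $h_3$ is precisely a cocommutativity statement. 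Concretely, in Sweedler's four-dimensional Hopf algebra ($g$ grouplike, $g^2=1$, $x^2=0$, $xg=-gx$, $\Delta(x)=x\otimes 1+g\otimes x$, $S(x)=-gx$) the threefold coproduct of $x$ is $x\otimes 1\otimes 1+g\otimes x\otimes 1+g\otimes g\otimes x$, so that $\sum x_1S(x_3)\otimes x_2=x\otimes 1+g\otimes x-x\otimes g\neq 1\otimes x$. Your two sanity checks, $\Bbbk G$ and $U(\g)$, are both cocommutative --- exactly the class of Hopf algebras for which the identity (and hence your argument) does hold --- so they are structurally unable to detect this failure.

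Moreover, the gap is not patchable at this level of generality, because the lemma itself is false as stated; your attempt has in effect reproduced, and exposed, an error in the paper's own proof. For a counterexample, let $H=\Bbbk^{S_3}$ be the Hopf algebra of functions on $S_3$ and $A=\Bbbk S_3$ the group algebra, viewed as an $H$-module algebra through its $S_3$-grading, $p_g\cdot a=a_g$ (the axioms hold since $\Delta(p_g)=\sum_{xy=g}p_x\otimes p_y$ and $1\in A_e$). The class sum $z=(12)+(13)+(23)$ is central in $A$, yet $p_{(12)}\cdot z=(12)$ is not. What is true, and is all the paper actually uses (Lemma \ref{ad_on_center}), is the statement for cocommutative $H$ --- where your proof works verbatim --- and for the Hopf-adjoint action of $\Ul$ on its \emph{normal} Hopf subalgebra $\ul$ (Lemma \ref{ad_on_ul}). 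In the latter case one should argue inside the ambient algebra rather than from the abstract module-algebra axioms: for $a\in\ul$ and $z\in\zl(\ul)$,
\[
\ad h(z)\,a \;=\; \sum h_1\,z\,S(h_2)\,a\,h_3\,S(h_4) \;=\; \sum h_1\,S(h_2)\,a\,h_3\,z\,S(h_4) \;=\; a\,\ad h(z),
\]
where the first equality inserts $\epsilon(h_3)1=\sum h_3S(h_4)$, the second holds because $\sum S(h_2)\,a\,h_3$ is the right Hopf-adjoint action of the middle leg on $a$, hence lies in $\ul$ by normality and commutes with $z$, and the third collapses $\sum h_1S(h_2)=\epsilon(h_1)1$; every move here involves only adjacent Sweedler legs and is legitimate.
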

\begin{proof}
Let $h\in H$, $z\in \zl(A)$ and $x\in A$ be arbitrary elements. Then
\begin{align*}
(h\cdot z)x & =\sum (h_1\cdot z )(h_2S(h_3)\cdot x)=\sum h_1\cdot (z(S(h_2)\cdot x))=\sum h_1 \cdot ((S(h_2)\cdot x) z)\\
& =\sum (h_1S(h_2)\cdot x) (h_3\cdot z) =\sum \epsilon(h_1) x (h_3\cdot z)=x (h\cdot z).
\end{align*}
The result follows.
\end{proof}

Given a Hopf algebra  $H$, it is readily seen to be a (left) $H$-module with respect to the {\it (left) Hopf-adjoint action} defined as follows: 
\begin{equation}
H\times H\lra H, \quad (h,a)\mapsto \ad h (a): = \sum h_{1}  a S(h_{2}),
\end{equation}
for all $a, h \in H$.  We will denote this $H$-module by $H_{\mathrm{ad}}$.

Similarly, the \emph{right Hopf-adjoint action} is defined by
\begin{equation}
H\times H\lra H, \quad (a,h)\mapsto  \ad_r h(a): = \sum S(h_{1})  a h_{2},
\end{equation}
for all $a,h\in H$. We will denote this representation by $ H_\ad^\prime $ when needed.

The next result summarizes some well-known properties of the adjoint representation of Hopf algebras that we will use later. We give its proof for the sake of completeness.

\begin{lemma} \label{ad-H-on-itself} 
Let $H$ be a Hopf algebra.
\begin{enumerate}
\item[(i)] The multiplication map of $H$ is an $\ad$-equivariant homomorphism. In particular, $H$ is a module algebra over itself under the adjoint representation.
\item[(ii)] The space of $\ad$-invariants of $H$ is equal to the center $\zl(H)$ of $H$.
\end{enumerate}
\end{lemma}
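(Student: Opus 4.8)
The plan is to verify both claims by direct manipulation in Sweedler notation, relying only on the antipode axiom \eqref{eqn_antipode_axiom} together with the counit and coassociativity axioms; no structure beyond that of an abstract Hopf algebra is needed.

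For part (i), I would first dispense with the unit: since $\ad h(1_H)=\sum h_1 S(h_2)=\epsilon(h)1_H$ by \eqref{eqn_antipode_axiom}, condition (1) of a module algebra holds automatically. The substance is the equivariance of the multiplication map, which is exactly condition (2), namely $\ad h(ab)=\sum (\ad h_1)(a)(\ad h_2)(b)$. Expanding the right-hand side using the fourfold coproduct $\sum h_1\otimes h_2\otimes h_3\otimes h_4$ produces $\sum h_1\, a\, S(h_2)\, h_3\, b\, S(h_4)$, and the whole point is to collapse the interior factor $\sum S(h_2)h_3$. Here I would invoke coassociativity to regroup the iterated coproduct so that $h_2$ and $h_3$ appear as the two halves of a \emph{single} coproduct; then \eqref{eqn_antipode_axiom} replaces $\sum S(h_2)h_3$ by a scalar $\epsilon(\cdot)1_H$, and the counit axiom reassembles the surviving factors into $\sum h_1\, a\, b\, S(h_2)=\ad h(ab)$. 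This establishes that $m$ is $\ad$-equivariant and hence that $H$ is a module algebra over itself.

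For part (ii) I would prove the two inclusions separately. The inclusion $\zl(H)\subseteq (\text{space of }\ad\text{-invariants})$ is the easy direction: if $z$ is central then $\ad h(z)=\sum h_1 z S(h_2)=z\sum h_1 S(h_2)=\epsilon(h)z$, so $z$ is invariant. For the reverse inclusion, assume $\ad h(z)=\epsilon(h)z$ for every $h$, fix an arbitrary $x\in H$, and start from $xz=\sum x_1\,\epsilon(x_2)\,z=\sum x_1\, z\,\epsilon(x_2)$ (the counit scalar is central). I would then insert $\epsilon(x_2)1_H=\sum S((x_2)_1)(x_2)_2$ and relabel via coassociativity to obtain $xz=\sum x_1\, z\, S(x_2)\, x_3$. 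Regrouping so that $x_1,x_2$ are the halves of $x_{(1)}$ while $x_3=x_{(2)}$, the inner expression $\sum x_1\, z\, S(x_2)$ is $\ad(x_{(1)})(z)$, which by invariance equals $\epsilon(x_{(1)})z$; summing then yields $\sum \epsilon(x_{(1)})\,z\,x_{(2)}=zx$, so $z$ is central.

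The main obstacle — in fact essentially the only nontrivial aspect — is the combinatorial bookkeeping of the Sweedler indices: in part (i) one must choose the grouping of the fourfold coproduct so that the antipode axiom applies to the intended adjacent pair $S(h_2)h_3$, and in the reverse direction of part (ii) the insertion $\epsilon(x_2)1_H=\sum S(x_2)x_3$ is the one genuinely clever move, with the rest of each computation being forced by coassociativity and the counit axiom.
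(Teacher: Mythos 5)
Your proposal is correct and follows essentially the same route as the paper: part (i) is the paper's computation $\ad h(ab)=\sum h_1abS(h_2)=\sum h_1aS(h_2)h_3bS(h_4)$ read in the reverse (collapsing rather than inserting $\sum S(h_2)h_3$ via the antipode and counit axioms), and part (ii) is verbatim the paper's argument $hz=\sum h_1 z S(h_2)h_3=\sum \ad h_1(z)\,h_2=\epsilon(h_1)zh_2=zh$, including the key insertion $\epsilon(x_2)1_H=\sum S((x_2)_1)(x_2)_2$. No gaps; the Sweedler bookkeeping you describe is exactly what the paper's displayed identities implicitly perform.
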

\begin{proof}
The second statement of (i) follows from its first part, and using that $\ad h (1_H)=\epsilon(h)$ (\ref{eqn_antipode_axiom}). To see the first part, we note that
\[
\ad h (ab)=\sum h_1abS(h_2) = \sum h_1aS(h_2)h_3bS(h_4) = \sum \ad h_1(a) \cdot \ad h_2 (b).
\]
For (ii), it is clear that, if $z\in Z(H)$ is central, then $\ad h (z)=\epsilon(h)z$. Conversely, if $z\in H_\ad$ is  $\ad$-invariant, then
\[
hz=\sum h_1zS(h_2)h_3= \sum \ad h_1(z) h_2= \sum \epsilon(h_1)zh_2=zh
\]
holds for all $h\in H$. The lemma follows.
\end{proof}

\begin{definition}\label{def-hopf-quotient}
Suppose $H$ is a Hopf algebra and $A\subset H$ is a Hopf subalgebra. Then $A$ is called \emph{normal in $H$} if either of the equivalent conditions
\[
\ad h(A)\subset A ,\quad \quad \ad_r h(A)\subset A
\] 
holds for all $h\in H$. When $A$ is normal in $H$, the augmentation ideal $A_+$ of $\epsilon:A\lra \Bbbk$ generates an ideal in $H$ satisfying $A_+H=HA_+$. The quotient algebra 
\[
H\qq A:=\dfrac{H}{HA_+}
\]
inherits a Hopf algebra structure from that of $H$, and is called the \emph{Hopf quotient algebra}.
\end{definition}

\paragraph{Integral in $H$ and the Higman ideal.} We will be interested in finite dimensional Hopf algebras. A particular feature of such Hopf algebras is the existence of a unique left/right integral up to scaling.

\begin{definition}
Let $H$ be a Hopf algebra. An element $\Lambda\in H$ is called a \emph{left integral} if, for any $h\in H$, we have
\[
h\Lambda = \epsilon(h) \Lambda. 
\]
Similarly, a \emph{right integral} is characterized by the condition
\[
\Lambda h= \epsilon(h) \Lambda. 
\]
\end{definition}

A classical result of Sweedler \cite{Sw} says that the space of left integrals in a finite dimensional Hopf algebra is always one-dimensional, and likewise for the right integrals. When a nonzero left integral is also a right integral, it is then a central element in the Hopf algebra, and the Hopf algebra is called \emph{unimodular}.

\begin{corollary}\label{cor-higman-ideal}
Let $H$ be a finite dimensional Hopf algebra. The space $\ad \Lambda (H)$ consists of central elements. Furthermore, $\ad \Lambda (H)$ is an ideal in $\zl(H)$.
\end{corollary}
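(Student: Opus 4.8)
The plan is to deduce both assertions directly from Lemma \ref{ad-H-on-itself} together with the defining property of a left integral. The crucial point is that the Hopf-adjoint action is a genuine left $H$-module structure, so that $\ad h \circ \ad g = \ad(hg)$ as operators on $H$; this is implicit in Lemma \ref{ad-H-on-itself}(i), and a one-line check using that $S$ is an anti-homomorphism confirms it, since $\ad h(\ad g(a)) = \sum h_1 g_1 \, a\, S(g_2)S(h_2) = \sum (hg)_1 \, a \, S((hg)_2) = \ad(hg)(a)$.

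First I would establish that $\ad\Lambda(H) \subset \zl(H)$. By Lemma \ref{ad-H-on-itself}(ii) an element of $H$ is central if and only if it is $\ad$-invariant, so it suffices to show that each $\ad\Lambda(a)$ is $\ad$-invariant. Applying $\ad h$, using the module property, and invoking the left-integral identity $h\Lambda = \epsilon(h)\Lambda$, I obtain
\[
\ad h\big(\ad\Lambda(a)\big) = \ad(h\Lambda)(a) = \epsilon(h)\,\ad\Lambda(a),
\]
which is exactly $\ad$-invariance. Hence $\ad\Lambda(a)\in\zl(H)$ for every $a\in H$, so $\ad\Lambda(H)\subseteq \zl(H)$.

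Next I would verify the ideal property. Since $\zl(H)$ is commutative, it is enough to check that $\zl(H)\cdot \ad\Lambda(H)\subseteq \ad\Lambda(H)$. For a central element $z$ and arbitrary $a$, centrality allows me to move $z$ past each first Sweedler leg $\Lambda_1$:
\[
z\,\ad\Lambda(a) = \sum z\Lambda_1\, a\, S(\Lambda_2) = \sum \Lambda_1\,(za)\, S(\Lambda_2) = \ad\Lambda(za),
\]
which again lies in $\ad\Lambda(H)$. This simultaneously yields closure under left and right multiplication by $\zl(H)$, so $\ad\Lambda(H)$ is a two-sided ideal of $\zl(H)$.

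The argument is essentially formal. The only points that require care are the correct bookkeeping of Sweedler's notation together with the anti-homomorphism property of $S$ when unwinding $\ad(hg)$, and the observation that the left-integral condition collapses $\ad(h\Lambda)$ to a scalar multiple of $\ad\Lambda$. I do not anticipate any serious obstacle: finite-dimensionality of $H$ enters only through the existence and one-dimensionality of the integral $\Lambda$ guaranteed by Sweedler's theorem, which has already been recorded above.
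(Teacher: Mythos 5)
Your proof is correct and follows essentially the same route as the paper's: both deduce centrality from the identity $\ad h(\ad\Lambda(a))=\ad(h\Lambda)(a)=\epsilon(h)\ad\Lambda(a)$ combined with Lemma \ref{ad-H-on-itself}(ii), and both prove the ideal property by sliding a central element $z$ past $\Lambda_1$ to get $z\,\ad\Lambda(a)=\ad\Lambda(za)$. The only difference is cosmetic: you make explicit the module identity $\ad h\circ\ad g=\ad(hg)$, which the paper uses implicitly.
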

\begin{proof}
By Lemma \ref{ad-H-on-itself}, it suffices to show that $H$ acts trivially, via the adjoint representation, on this space. Now, for any $x,h\in H$, we have
\[
\ad x  (\ad \Lambda (h))=\ad (x\Lambda)(h)=\ad(\epsilon(x)\Lambda)(h)=\epsilon(x)\ad \Lambda (h).
\]

For the second statement, it suffices to note that, if $z\in \zl(H)$ and $h\in H$, then
\[
z \ad \Lambda (h)=\sum z\Lambda_1 h S(\Lambda_2)=\sum \Lambda_1 zh S(\Lambda_2)=\ad \Lambda(zh)\in \ad\Lambda (H).
\]
 The result follows.
\end{proof}

\begin{definition}\label{def-Higman}
The central ideal $\zl_\Hig(H):=\ad \Lambda(H)$ in $\zl(H)$ is the \emph{Higman ideal} of $H$. 
\end{definition}

\begin{proposition}\label{prop-adLambdaH}
For a finite-dimensional Hopf algebra $H$, a central element $z\in \zl(H)$ belongs to the ideal $\zl_{\Hig}(H)$ if and only if $z$ spans a trivial submodule contained in a projective-injective summand of $H_\ad$ . 
\end{proposition}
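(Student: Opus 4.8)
The plan is to reduce the statement to a short computation with the left regular representation followed by a socle argument. Write $H_{\mathrm{reg}}$ for $H$ regarded as a left module over itself. Since a finite-dimensional Hopf algebra is a Frobenius algebra, $H_{\mathrm{reg}}$ is both projective and injective; in particular projective and injective modules coincide, and every indecomposable summand of $H_{\mathrm{reg}}$ is projective-injective with simple socle. If $z\in\zl(H)$ then, by Lemma \ref{ad-H-on-itself}, $\Bbbk z$ is automatically a trivial submodule of $H_\ad$, so the only content of the proposition is that $z$ lies in the maximal projective-injective summand $P$ of $H_\ad$ exactly when $z\in\ad\Lambda(H)=\zl_{\Hig}(H)$. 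Writing $H_\ad=P\oplus N$ with $N$ a sum of non-(projective-injective) indecomposables, the goal is the equality $\zl_{\Hig}(H)=\zl(H)\cap P$.

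The key reformulation is the following. For $h\in H$ the map $\Phi_h\colon H_{\mathrm{reg}}\to H_\ad$, $x\mapsto \ad x(h)$, is a homomorphism of $H$-modules, since $\ad$ is an algebra action and hence $\Phi_h(gx)=\ad(gx)(h)=\ad g(\Phi_h(x))$; conversely every element of $\Hom_H(H_{\mathrm{reg}},H_\ad)$ arises this way, with $h=\Phi(1)$. Because $\Lambda$ is a left integral, $\Bbbk\Lambda$ is a trivial submodule of $H_{\mathrm{reg}}$ (that is, $g\Lambda=\epsilon(g)\Lambda$), and $\Phi_h(\Lambda)=\ad\Lambda(h)$. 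Therefore
\[
\zl_{\Hig}(H)=\ad\Lambda(H)=\{\,\Phi(\Lambda)\;:\;\Phi\in\Hom_H(H_{\mathrm{reg}},H_\ad)\,\},
\]
which identifies the Higman ideal with the set of images of the socle vector $\Lambda$ under all module maps out of $H_{\mathrm{reg}}$.

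For the forward inclusion I would decompose $H_{\mathrm{reg}}=\bigoplus_i Q_i$ into indecomposable (projective-injective, simple-socle) summands and write $\Lambda=\sum_i\Lambda_i$; each nonzero $\Lambda_i$ lies in $\mathrm{soc}(Q_i)$ and spans it, so $Q_i\cong I(\Bbbk)$, the injective hull of the trivial module. Given $\Phi$, postcompose with the projection $H_\ad\twoheadrightarrow N$ to obtain $f=\pr_N\circ\Phi|_{Q_i}\colon Q_i\to N$. If $f(\Lambda_i)\neq 0$ then, because $Q_i$ has simple socle, the nonzero submodule $\ker f$ (if nonzero) would contain that socle and kill $\Lambda_i$; hence $\ker f=0$ and $f$ embeds $Q_i$ into $N$. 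Injectivity of $Q_i$ then splits it off as a projective-injective summand of $N$, contradicting the definition of $N$. Thus $\pr_N(\Phi(\Lambda_i))=0$ for all $i$, so $\Phi(\Lambda)\in P$ and $\zl_{\Hig}(H)\subseteq\zl(H)\cap P$.

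For the converse, let $z\neq 0$ be central with $\Bbbk z$ contained in a projective-injective indecomposable summand $P_0$ of $H_\ad$. Then $\Bbbk z\subseteq\mathrm{soc}(P_0)$, which is simple, so $\mathrm{soc}(P_0)=\Bbbk z$ is trivial and $P_0\cong I(\Bbbk)$; meanwhile the decomposition above yields a summand $Q_0\cong I(\Bbbk)$ of $H_{\mathrm{reg}}$ in which $\Lambda$ has a nonzero component $\Lambda_0$ spanning $\mathrm{soc}(Q_0)$. Choosing an isomorphism $Q_0\xrightarrow{\sim}P_0$, rescaling so that $\Lambda_0\mapsto z$, and sandwiching it between the projection $H_{\mathrm{reg}}\twoheadrightarrow Q_0$ and the inclusion $P_0\hookrightarrow H_\ad$ gives a $\Phi\in\Hom_H(H_{\mathrm{reg}},H_\ad)$ with $\Phi(\Lambda)=z$, so $z\in\zl_{\Hig}(H)$ by the reformulation. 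The only genuinely nontrivial step is the forward inclusion, whose engine is the simple-socle and injectivity of the indecomposable summands of $H_{\mathrm{reg}}$ (the Frobenius/self-injective property); I expect the main points to verify carefully are exactly these structural facts and that the trivial module really occurs in $\mathrm{soc}(H_{\mathrm{reg}})$ through $\Lambda$. It is worth noting that no unimodularity or tracking of the distinguished grouplike element is required, since the left-integral property alone makes $\Bbbk\Lambda$ a trivial submodule of the left regular representation.
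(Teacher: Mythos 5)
Your proof is correct, and it takes a genuinely different route from the paper's. The paper's argument rests on an external result, \cite[Corollary 5.5]{QYHopf}: a morphism $f\colon M\to N$ of $H$-modules factors through a projective-injective module if and only if $f=\Lambda\cdot g=\sum\Lambda_2\, g(S^{-1}(\Lambda_1)(\mbox{-}))$ for some linear map $g$. Specializing to $M=\Bbbk z$ and $N=H_\ad$, that formula becomes exactly $z=\ad\Lambda(x)$, and a short socle argument upgrades ``factors through a projective-injective module'' to ``the injective envelope of $\Bbbk$ occurs as a summand of $H_\ad$ containing $z$.'' You instead identify $\Hom_H(H_{\mathrm{reg}},H_\ad)\cong H_\ad$ via $\Phi\mapsto\Phi(1)$, note that $\Bbbk\Lambda$ is a trivial submodule of $\mathrm{soc}(H_{\mathrm{reg}})$, and recast $\zl_\Hig(H)=\ad\Lambda(H)$ as the set of images of $\Lambda$ under all module maps $H_{\mathrm{reg}}\to H_\ad$; Krull--Schmidt together with the self-injectivity of a finite-dimensional Hopf algebra (Larson--Sweedler: $H$ is Frobenius, so indecomposable summands of $H_{\mathrm{reg}}$ are projective-injective with simple, essential socle) then carries both inclusions. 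Your route is self-contained, replacing the hopfological-algebra citation by standard Frobenius-algebra facts, and it yields the slightly sharper formulation $\zl_\Hig(H)=\zl(H)\cap P$, where $P$ is the maximal projective-injective summand in a fixed decomposition of $H_\ad$. What the paper's approach buys is brevity and an explicit ``homotopy'' formula valid for arbitrary morphisms $M\to N$, which is a reusable tool rather than a statement tailored to this proposition.

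One small point to patch: in the converse you assume that the projective-injective summand containing $\Bbbk z$ is indecomposable, whereas the statement allows an arbitrary projective-injective summand $M$. This costs one sentence: write $M=\bigoplus_j M_j$ with each $M_j$ indecomposable (hence projective-injective), and let $z=\sum_j z_j$ be the corresponding decomposition. Projecting the relation $\ad h(z)=\epsilon(h)z$ shows each $z_j$ spans a trivial submodule of $M_j$, so your indecomposable argument produces $\Phi_j$ with $\Phi_j(\Lambda)=z_j$, and then $z=\sum_j z_j\in\ad\Lambda(H)$ because $\ad\Lambda(H)$ is a linear subspace, as $\ad\Lambda(h_1)+\ad\Lambda(h_2)=\ad\Lambda(h_1+h_2)$.
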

\begin{proof}
We will use \cite[Corollary 5.5]{QYHopf} (see also \cite[Section 4]{QYMor} for another proof), which says that a morphism $f:M\lra N$ in the category of $H$-modules factors through a projective-injective module if and only if 
$$f=\Lambda \cdot g =\sum \Lambda_2 g(S^{-1}(\Lambda_1)(\mbox{-}))$$
for some $g\in \Hom_{\Bbbk}(M,N)$.

Now let $M=\Bbbk z$ where $z\in \zl(H)$, and $N=H_\ad$. Then $M\cong \Bbbk$, the trivial $H$-module. The above formula says that the inclusion map $f : M\lra N$ factors through a projective-injective object, necessarily the injective envelope of the trivial module, if and only if there is a $g\in \Hom_{\Bbbk}(\Bbbk,H_\ad)\cong H_\ad$, such that $f=\Lambda\cdot g$. Identifying $f$ and $g$ with their images $z=f(1)\in H_\ad$, $x=g(1)\in H_\ad$, this is equivalent to requiring that 
$$z=\sum \ad \Lambda_2 (g(S^{-1}(\Lambda_1)1))=\ad\Lambda_2 \epsilon(S^{-1}(\Lambda_1)) g(1)=\ad\Lambda(x).$$ 
When the inclusion of $M$ into $H_\ad$ does factor through the injective envelope $I$ of $M$:
\[
\xymatrix{
\Bbbk z \ar[rr]^f \ar[rd] && H_\ad\\
& I\ar[ur]_{f^\prime} &
}
\] 
then $f^\prime$ must be injective as it is nonzero on the socle, and thus $I$ must occur as a summand of $H_\ad$. The result now follows.
%
%
\end{proof}

 For more details on this ideal for Frobenius algebras, we refer the reader to \cite[Section 5.9]{ZimRepBook}. It has been also intensively studied  for Hopf algebras (see, for instance, \cite{CoWe}). The next proposition gives a useful dimension count of the Higman ideal. Although we will not need the result, it will help put some later discussions  in a more general context.

\begin{proposition}\label{prop-dim-proj-center}
If $H$ is a finite-dimensional unimodular Hopf algebra, then
\[
\mathrm{dim}_\Bbbk(\zl_\Hig(H))=\mathrm{rank}(C_H\otimes_\Z\Bbbk),
\]
where $C_H$ stands for the Cartan decomposition matrix of projective $H$-modules in terms of simples.
\end{proposition}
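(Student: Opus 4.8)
The plan is to identify the Higman ideal with the image of the span of projective characters under the Radford (Frobenius) isomorphism $H^\ast \xrightarrow{\sim} H$, thereby converting the dimension count into a rank computation for the Cartan matrix. Throughout I assume, as is implicit in the statement, that $\Bbbk$ is a splitting field for $H$ (e.g.\ $\Bbbk=\C$), so that the Cartan decomposition matrix $C_H=(c_{ij})$ with $c_{ij}=[P_j:S_i]$ is well defined, where $S_1,\dots,S_n$ are the simple $H$-modules and $P_1,\dots,P_n$ their projective covers.

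First I would recall the Radford map. Since $H$ is unimodular, a nonzero integral $\Lambda\in H$ is two-sided, and the nondegenerate Frobenius pairing attached to $\Lambda$ produces a linear isomorphism $R\colon H^\ast\lra H$, given by capping $\Lambda$ with a functional, $R(p)=\sum \langle p,\Lambda_1\rangle \Lambda_2$. The first key point is that $R$ carries the trace forms (characters) of $H$-modules into the center $\zl(H)$ of Lemma~\ref{ad-H-on-itself}(ii), and more precisely carries the span $\pl\subset H^\ast$ of the characters $\{\chi_{P_1},\dots,\chi_{P_n}\}$ of the indecomposable projectives onto the Higman ideal:
\[
R(\pl)=\zl_\Hig(H).
\]
Granting this, since $R$ is a linear isomorphism we immediately obtain $\dim_\Bbbk \zl_\Hig(H)=\dim_\Bbbk \pl$.

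Next I would compute $\dim_\Bbbk\pl$. Reading off a composition series of each $P_j$ expresses its character through the simple (Brauer) characters as
\[
\chi_{P_j}=\sum_{i=1}^n c_{ij}\,\chi_{S_i},
\]
so that the matrix of the family $\{\chi_{P_j}\}_j$ in the spanning set $\{\chi_{S_i}\}_i$ is exactly the image of $C_H$ in $\Bbbk$. Because $\Bbbk$ is a splitting field, the characters $\chi_{S_1},\dots,\chi_{S_n}$ of pairwise non-isomorphic simple modules are linearly independent in $H^\ast$ (Artin--Wedderburn / density). Hence the dimension of their span $\pl$ equals the rank of this coefficient matrix, i.e.\ $\dim_\Bbbk\pl=\mathrm{rank}(C_H\otimes_\Z\Bbbk)$, and combining with the previous paragraph completes the proof.

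The main obstacle is the identity $R(\pl)=\zl_\Hig(H)$, which bridges the integral-adjoint description $\zl_\Hig(H)=\ad\Lambda(H)$ of Definition~\ref{def-Higman} with the character side; the surrounding linear algebra is routine. I would establish it as two inclusions. For $R(\pl)\subseteq \zl_\Hig(H)$, a direct Sweedler-calculus computation with the two-sided integral shows that $R(\chi_M)$ lies in $\ad\Lambda(H)$ whenever $M$ is projective, the key input being that projectivity lets one realize $\chi_M$ as a trace over a projective module and then invoke the defining property $h\Lambda=\epsilon(h)\Lambda$. For the reverse inclusion I would appeal to Proposition~\ref{prop-adLambdaH}: every element of $\zl_\Hig(H)$ spans a trivial submodule sitting inside a projective--injective summand of $H_\ad$, and matching these summands against the trace forms they support identifies $\ad\Lambda(H)$ with the subspace of central elements detected by projective characters, i.e.\ with $R(\pl)$. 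Reconciling these two descriptions of $\zl_\Hig(H)$ is the technical heart of the argument.
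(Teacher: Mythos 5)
Your overall strategy---identifying $\zl_\Hig(H)$ with the image of the span of projective characters under a Frobenius-type isomorphism $H^*\cong H$, and then computing that span's dimension as $\mathrm{rank}(C_H\otimes_\Z\Bbbk)$ via linear independence of the simple characters---is the standard route to this fact, and is essentially how the result that the paper simply cites (\cite[Corollary 5.9.15]{ZimRepBook}, applicable because $\ad\Lambda$ is exactly the Higman trace map of the Frobenius algebra $H$) is proved. Your second step is correct as written. The problem is entirely in your first step.

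The identity $R(\pl)=\zl_\Hig(H)$ with \emph{unshifted} characters is false for general unimodular $H$, and in particular fails for the paper's main example. For unimodular $H$ the Nakayama automorphism of the Frobenius structure coming from the integral is $S^2$, so the Radford isomorphism matches the center $\zl(H)$ with the \emph{shifted} trace forms $\cl_l(H)=\{f\colon f(ab)=f(bS^2(a))\}$ (Theorem \ref{thmRadfordiso}(ii)), not with the ordinary trace forms $\cl(H)$ containing the $\chi_{P_j}$; unless $S^2$ is appropriately trivialized, $R(\chi_P)$ need not even be central, so it cannot lie in $\zl_\Hig(H)\subset\zl(H)$. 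Concretely, for $\ul_q(\s_2)$ the Steinberg module $L(l-1)$ is projective, and membership of $\chi_{L(l-1)}$ in $\cl_l$ would force $\chi_{L(l-1)}(Eb)=0$ for all $b$ (take $a=E$, so that $S^2(a)-a=(q^{\pm 2}-1)E$), whereas $\mathrm{Tr}\big|_{L(l-1)}(EF)=2l/(q-q^{-1})^2\neq 0$; hence the preimage of $\chi_{L(l-1)}$ under the Radford map is not central. This is precisely why the paper's Lemma \ref{lem-hig-ideal-proj-char} and Proposition \ref{prop-Higman-in-HC} work with the shifted characters $\pl_l(H)=\mu_l(\pl(H))$, $\mu_l(f)=f(u(\mbox{-}))$ (which presupposes a quasi-triangular or pivotal structure), and why Section \ref{section_sl2} uses $\chi_i(h)=\mathrm{Tr}\big|_{L(i)}(Kh)$ rather than plain traces; for a merely unimodular $H$ one must instead twist by the Nakayama automorphism. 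Your final dimension count can be salvaged, since the twist $f\mapsto f(u(\mbox{-}))$ (or the Nakayama twist) is a linear isomorphism and so $\dim\pl_l=\dim\pl$, but as written you transport the dimension through a false identity. Moreover, even the corrected identity $\psi_l(\zl_\Hig(H))=\pl_l(H)$ is never proved in your write-up: the forward inclusion is asserted as ``a direct Sweedler-calculus computation,'' and the reverse inclusion via Proposition \ref{prop-adLambdaH} is left at the level of ``matching summands.'' You yourself call this reconciliation the technical heart of the argument, and it is exactly the part that is missing.
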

\begin{proof}
This follows from \cite[Corollary 5.9.15]{ZimRepBook} since when $H$ is finite-dimensional, it is a Frobenius algebra. 
\end{proof}

\paragraph{The Radford isomorphism.} 
When $H$ is a unimodular Hopf algebra, Radford \cite{Radtrace} introduced an isomorphism between the space of certain shifted trace-like linear functionals on $H$ with the center of $H$. 

\begin{definition}
Define the commutative subalgebras $\cl(H),\cl_l(H), \cl_r(H)\subset H^*$ by
\[
\cl(H)=\{f\in H^*|f(ab)=f(ba),~\forall a, b \in H\},
\]
\[
\cl_l(H):=\{f\in H^*|f(ab)=f(bS^{2}(a)),~\forall a,b\in H\},
\]
\[
\cl_r(H):=\{f\in H^*|f(ab)=f(bS^{-2}(a)),~\forall a, b\in H\}.
\]
They will be referred to as the algebras of \emph{trace-like functionals}, \emph{left shifted trace-like functionals} and \emph{right-shifted trace-like functionals} respectively.
\end{definition}

\begin{remark}\label{rmk-S-on-clcr}
It is easily verified that, if $f\in \cl_l(H)$ then we have $f\circ S \in \cl_r(H)$. Likewise $f\in \cl_r(H)$ implies $f\circ S \in \cl_l(H)$. Thus precomposing with $S$ defines an isomorphism between $\cl_l(H)$ and $\cl_r(H)$, and precomposing with $S^2$ induces automorphisms of $\cl_l(H)$ and $\cl_r(H)$ respectively. When $S^2$ is inner, the last automorphisms are trivial. 
\end{remark}

\begin{example}\label{eg-shifted-traces}
\begin{enumerate}
\item[(1)] The counit map $\epsilon:H\lra \Bbbk$ is clearly an element in all $\cl$, $\cl_r$ and $\cl_l$.
\item[(2)] Given any $H$-module $V$, its \emph{character} $\chi_V$, defined by
\[
\chi_V(h):=\mathrm{Tr}\big|_V(h),
\]
is clearly an element of $\cl(H)$. Therefore we have a natural subalgebra of $\cl(H)$
\begin{subequations}\label{eqn-characters}
\begin{equation}
 \rl(H):=G_0(H\dmod)\otimes_\Z\Bbbk ,
\end{equation}
where $G_0(H\dmod)$ is the Grothendieck ring spanned by the characters of finite-dimensional $H$-modules. 
The algebra structure on $\cl(H)$ corresponds to tensor product of $H$-modules, and $\epsilon=\chi_\Bbbk$ is the unit of the multiplication. 

We also define
\begin{equation}
\pl(H):=\{\chi_P\in \rl(H)|P:~\textrm{projective $H$-module}\}\otimes_\Z\Bbbk
\end{equation}
 \end{subequations}
Then $\pl(H)$ is an ideal in $\cl(H)$, since the tensor product of a projective $H$-module with any $H$-module remains projective. Furthermore, the ideal $\pl(H)$ in $\rl(H)$ has dimension equal to the rank of $C_H\otimes_\Z \Bbbk$, where $C_H$ is the Cartan matrix of $H$.
\item[(3)] It is known that \cite{Radtrace}, when $H$ is unimodular, the left and right integrals of the dual Hopf algebra $H^*$, denoted $\lambda_l$ and $\lambda_r$ respectively, belong to $\cl_l(H)$ and $\cl_r(H)$ respectively. These elements are characterized by the properties that
\[
f\lambda_l =f(1)\lambda_l,\quad \quad \lambda_rf=\lambda_rf(1)
\]
for all $f\in H^*$.
\end{enumerate}
\end{example}

The following property is the analogue of Lemma \ref{ad-H-on-itself} 

\begin{proposition}\label{prop-left-trace-functionals}
A linear functional $f\in H^*$ belongs to $\cl_l$ if and only if it defines a morphism of $H$-modules $f:H_\ad\lra \Bbbk$. In other words, there is an equality
\begin{equation}
\cl_l  = \Hom_H(H_\ad,\Bbbk).
\end{equation}
\end{proposition}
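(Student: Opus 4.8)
The plan is to unwind both sides of the claimed equality directly from the definitions, exploiting the antipode axiom \eqref{eqn_antipode_axiom} to convert the ``shifted trace'' condition defining $\cl_l$ into the $\ad$-invariance condition that characterizes an $H$-module map into the trivial module $\Bbbk$. Recall that $\Bbbk$ carries the $H$-action given by the counit, so a linear functional $f\in H^*$ is a morphism $f: H_\ad \lra \Bbbk$ precisely when $f(\ad h(a)) = \epsilon(h) f(a)$ for all $h,a \in H$, i.e.\ when $\sum f(h_1 a S(h_2)) = \epsilon(h) f(a)$.

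First I would prove the implication $(\Leftarrow)$: assume $f$ is an $H$-module map, so $\sum f(h_1 a S(h_2)) = \epsilon(h) f(a)$ for all $h,a$. To extract the defining relation of $\cl_l$, the idea is to specialize cleverly. Given arbitrary $x,y\in H$, I want to produce $f(xy)=f(yS^2(x))$. The natural move is to set $a=y$ and choose $h$ so that the expression $\sum h_1 y S(h_2)$ recombines into $xy$ on one side; more precisely, I would replace $a$ by a product and use the coproduct axioms. A clean way is to start from the module-map identity written as $\sum f(h_1 \cdot a \cdot S(h_2)) = \epsilon(h)f(a)$, substitute $a \mapsto aS(h_3)^{-1}\cdots$ — but cleaner still is to run the computation in the form that appears in the proof of Lemma~\ref{ad-H-on-itself}(ii): there one shows $hz = \sum \ad h_1(z) h_2$ for $\ad$-invariant $z$, and the dual bookkeeping here gives the $S^2$-twist. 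Concretely, I expect the key manipulation to be
\[
f(xy) = \sum f(x_1 \, y \, S(x_2) S^{-1}(S(x_3)))
\]
followed by using the module-map property on the first two tensor legs to collapse $\sum x_1(\cdots)S(x_2)$ and leave behind $S^2(x)$ via $\sum S(x_2)\cdots$; tracking the Sweedler indices so that the surviving factor is exactly $S^2$ of the original element is the bookkeeping core of this direction.

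For the converse $(\Rightarrow)$, I would start from $f\in \cl_l$, i.e.\ $f(ab)=f(bS^2(a))$ for all $a,b$, and verify the module-map condition $\sum f(h_1 a S(h_2)) = \epsilon(h)f(a)$. Applying the shifted-trace relation to the product $h_1 \cdot (aS(h_2))$ moves $h_1$ to the right with an $S^2$, yielding $\sum f(aS(h_2) S^2(h_1))$; then I would use the coproduct/antipode axiom \eqref{eqn_antipode_axiom} in the form $\sum S(h_2)S^2(h_1) = S(\sum S(h_1)h_2) = S(\epsilon(h)1) = \epsilon(h)1$ to collapse the antipode terms, leaving $\epsilon(h)f(a)$ as required. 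I expect this direction to be the cleaner of the two.

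The main obstacle is purely the Sweedler-index bookkeeping in the $(\Leftarrow)$ direction: getting precisely an $S^2$ (and not $S$, $S^{-1}$, or $S^{-2}$) to appear requires inserting the right combination $\sum \cdots S(h_2) \cdots$ and then recognizing a telescoping antipode identity. No deep input is needed beyond the Hopf axioms and the definition of the $H$-action on $\Bbbk$ via $\epsilon$; once the indices are arranged so that an application of \eqref{eqn_antipode_axiom} (or its $S$-conjugate) contracts the spurious factors, both implications close. I would present the two directions as a short symmetric computation, mirroring the style of the proof of Lemma~\ref{ad-H-on-itself}.
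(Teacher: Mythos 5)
Your second direction (shifted trace $\Rightarrow$ module map) is correct, and is essentially the paper's argument up to a cosmetic variant: the paper rewrites $\sum f(h_1 x S(h_2)) = \sum f(S^{-1}(h_2) h_1 x)$ and collapses via $\sum S^{-1}(h_2) h_1 = \epsilon(h) 1$, while you move $h_1$ to the right and collapse via $\sum S(h_2) S^2(h_1) = S\bigl(\sum S(h_1) h_2\bigr) = \epsilon(h)1$; both are valid.

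The other direction, however, has a genuine gap, located exactly at the step you flagged as ``bookkeeping.'' Your proposed key identity reads $f(xy) = \sum f(x_1\, y\, S(x_2)\, S^{-1}(S(x_3)))$, but $S^{-1}(S(x_3)) = x_3$, so this is nothing but the insertion of the plain antipode axiom $\epsilon(x_2)1 = \sum S(x_2)x_3$: no $S^2$ occurs anywhere in the expression, and none will appear by re-indexing. Worse, the module-map hypothesis cannot be ``applied to the first two tensor legs'': after re-indexing, your expression equals $\sum f\bigl(\ad x_1(y)\cdot x_2\bigr)$, and the hypothesis $f(\ad h(a)) = \epsilon(h) f(a)$ says nothing about $f$ evaluated on the product of an adjoint orbit with a leftover factor --- the three Sweedler legs are linked by coassociativity, so you cannot freeze $x_3$ while contracting legs $1,2$. (If that move were legal, it would yield $f(xy)=f(yx)$, i.e.\ that every module map $H_\ad\lra\Bbbk$ is an honest trace in $\cl(H)$, which is false in general: the shifted traces $\chi_i(\mbox{-})=\mathrm{Tr}|_{L(i)}(K\,\mbox{-})$ of the small quantum group are module maps but not traces when $S^2\neq \mathrm{id}$.) The repair, which is what the paper does, is to insert the $S$-image of the antipode axiom instead, $\epsilon(x_2) 1 = S\bigl(\sum x_2 S(x_3)\bigr) = \sum S^2(x_3) S(x_2)$, so that the leftover factor lands \emph{inside} the adjoint orbit:
\begin{equation*}
f(xy) \;=\; \sum f\bigl(x_1\, y\, S^2(x_3)\, S(x_2)\bigr) \;=\; \sum f\bigl(\ad x_1 (y\, S^2(x_2))\bigr) \;=\; \sum \epsilon(x_1)\, f\bigl(y\, S^2(x_2)\bigr) \;=\; f\bigl(y\, S^2(x)\bigr).
\end{equation*}
With this one substitution your outline closes and coincides with the paper's proof.
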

\begin{proof}
Suppose $f\in \cl_l$. For any $h,x\in H$, we have
\begin{align*}
f(\ad h (x)) & = \sum f( h_1 x S(h_2)) = \sum f(S^{-1}(h_2)h_1x) = \epsilon(h) f(x).
\end{align*}
Conversely, given $f:H_\ad\lra \Bbbk$, we have the equality
\begin{align*}
f(hx) & = \sum f(h_1xS(h_2S(h_3))) = \sum f(h_1 x S^2(h_3)S(h_2))=\sum f(\ad h_1 (xS^{2}(h_2)))\\
& = \epsilon(h_1)f(xS^2(h_2))=f(xS^2(h)),
\end{align*}
which holds for any $h,x\in H$. The result follows.
\end{proof}

Using the right adjoint action, we may also identify
\begin{equation}
\cl_r(H) \cong \Hom_H( H_\ad^\prime, \Bbbk).
\end{equation}

\begin{corollary}\label{cor-trace-kills-nilpotents}
If $z\in \zl(H)$ is contained in the radical of the Hopf-adjoint module $H_\ad$, then $f(z)=0$ for all $f\in \cl_l(H)$. In particular, if $z$ is in the Higman ideal $\zl_\Hig(H)$ for a non-semisimple $H$, then $z$ is annihilated by all shifted trace-like functionals in $\cl_l(H)$.
\end{corollary}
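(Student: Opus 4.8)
The plan is to reduce both assertions to the identification $\cl_l(H) = \Hom_H(H_\ad,\Bbbk)$ established in Proposition \ref{prop-left-trace-functionals}, which turns each left-shifted trace-like functional into a genuine morphism of $H$-modules $f\colon H_\ad \to \Bbbk$. First I would treat the general statement. Since $H$ is finite-dimensional, the radical of the module $H_\ad$ is $\mathrm{rad}(H_\ad) = \mathrm{rad}(H)\cdot H_\ad$, and the quotient $H_\ad/\mathrm{rad}(H_\ad)$ is the largest semisimple quotient. The trivial module $\Bbbk$ is simple, hence semisimple, so any $H$-module map $f\colon H_\ad \to \Bbbk$ factors through $H_\ad/\mathrm{rad}(H_\ad)$; equivalently, $f$ annihilates $\mathrm{rad}(H_\ad)$. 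Viewing $f \in \cl_l(H)$ as such a morphism, we conclude $f(z) = 0$ for every $z \in \mathrm{rad}(H_\ad)$, which is precisely the first claim. (Centrality of $z$ plays no role here beyond fixing the context in which $f$ is evaluated.)

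For the \emph{in particular} statement it remains to show that, when $H$ is non-semisimple, the Higman ideal $\zl_\Hig(H)$ is contained in $\mathrm{rad}(H_\ad)$. By Proposition \ref{prop-adLambdaH}, a nonzero $z \in \zl_\Hig(H)$ spans a trivial submodule sitting inside a projective-injective summand $P$ of $H_\ad$; since $H$ is Frobenius, $P$ may be taken to be the injective envelope of $\Bbbk$, whose socle is simple with $\mathrm{soc}(P) = \Bbbk z$. The key point is that $\Bbbk$ is projective if and only if $H$ is semisimple: if $\Bbbk$ were projective, the counit $\epsilon\colon H \to \Bbbk$ would split, so $\Bbbk$ is a summand of the regular module, and then every $M \cong \Bbbk \otimes M$ would be a summand of the free module $H \otimes M$, forcing semisimplicity. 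Thus for non-semisimple $H$ the summand $P$ is not simple, and for an indecomposable projective-injective module over a Frobenius algebra (simple top and simple socle) non-simplicity forces $\mathrm{soc}(P) \subseteq \mathrm{rad}(P)$ — otherwise the composite $\mathrm{soc}(P) \hookrightarrow P \twoheadrightarrow \mathrm{top}(P)$ would be an isomorphism and split off $\mathrm{soc}(P)$. Hence $z \in \mathrm{rad}(P) \subseteq \mathrm{rad}(H_\ad)$, and the first part applies.

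Alternatively, one can bypass the module-theoretic analysis entirely: writing $z = \ad\Lambda(x)$ and using that $f \in \cl_l(H)$ is an $\ad$-morphism, Proposition \ref{prop-left-trace-functionals} gives $f(z) = f(\ad\Lambda(x)) = \epsilon(\Lambda)\,f(x)$, and $\epsilon(\Lambda) = 0$ precisely because $H$ is non-semisimple, by the Larson--Sweedler form of Maschke's theorem. I expect the only genuine subtlety to be the essential use of non-semisimplicity in the second part: the statement fails for semisimple $H$, where $\Bbbk$ is projective, $P = \Bbbk$ is simple, $\epsilon(\Lambda) \neq 0$, and the integral functional does detect central idempotents. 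Establishing that $\mathrm{soc}(P) \subseteq \mathrm{rad}(P)$ for the relevant summand (equivalently, that $\epsilon(\Lambda) = 0$) is therefore the crux, while everything else is a formal consequence of Proposition \ref{prop-left-trace-functionals}.
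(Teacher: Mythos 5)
Your proof is correct, and your first half is exactly the paper's argument: identify $\cl_l(H)=\Hom_H(H_\ad,\Bbbk)$ via Proposition \ref{prop-left-trace-functionals} and observe that any $H$-module map to the simple module $\Bbbk$ must kill $\mathrm{Rad}(H_\ad)$. Where you genuinely diverge is the \emph{in particular} clause. The paper stays at the level of the algebra $H$: by uniqueness of integrals, $\Bbbk\Lambda$ is a two-sided ideal, which is nilpotent when $H$ is non-semisimple (as $\Lambda^2=\epsilon(\Lambda)\Lambda$ with $\epsilon(\Lambda)=0$), hence $\Lambda\in\mathrm{Rad}(H)$ and so $\ad \Lambda(H)\subseteq \mathrm{Rad}(H)\cdot_{\ad}H_\ad=\mathrm{Rad}(H_\ad)$, after which the first half applies. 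Your main route instead invokes Proposition \ref{prop-adLambdaH} to place $z$ in the socle of the injective envelope $P$ of $\Bbbk$ occurring as a summand of $H_\ad$, and then shows $\mathrm{soc}(P)\subseteq\mathrm{rad}(P)$ because a non-simple indecomposable projective with simple top cannot have its socle split off; this is heavier machinery (it needs Proposition \ref{prop-adLambdaH} and self-injectivity), but it buys a sharper structural picture, locating the Higman ideal exactly at the socles of the projective-injective summands of $H_\ad$ --- precisely the description the paper exploits later for $\ul_q(\s_2)$ in Theorem \ref{thm-sl2-central-elements-in-adrep}. Your alternative computation, $f(z)=f(\ad\Lambda(x))=\epsilon(\Lambda)f(x)=0$, is in fact more economical than the paper's own proof: it bypasses radicals entirely and makes transparent that the only inputs are Proposition \ref{prop-left-trace-functionals} and the Larson--Sweedler fact that $\epsilon(\Lambda)=0$ for non-semisimple $H$, which is also the fact hidden inside the paper's nilpotency claim. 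Note only that this alternative establishes just the \emph{in particular} clause, not the general radical statement, so it complements rather than replaces your first paragraph.
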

\begin{proof}
The above proposition shows that, as $\Bbbk$ is a simple $H$-module,
\[
\cl_l(H)=\Hom_H(H_\ad, \Bbbk)\cong \Hom_{H}(H_\ad/\mathrm{Rad}(H_\ad),\Bbbk),
\]
where $\mathrm{Rad}(M)$ denotes the  radical of an $H$-module $M$. For the second statement, it suffices to note that, when $H$ is nonsemisimple, $\Lambda$ spans a two-sided ideal of $H$ which is nilpotent by the uniqueness of integrals up to rescaling. It follows that $\Lambda\in \mathrm{Rad}(H)$, and $\ad \Lambda (H) \subset \mathrm{Rad}(H_\ad)$.
\end{proof}

\begin{remark}
The converse of the second statement in the corollary does not hold, as can be seen from the example small quantum $\s_2$ in Theorem \ref{thm-sl2-central-elements-in-adrep}.
\end{remark}

\begin{theorem}\label{thmRadfordiso} 
\begin{enumerate}
\item[(i)] There are isomorphism of $H$-modules
\[
\psi_l: H \lra H^*, \quad h\mapsto \lambda_l((\mbox{-})h), 
\quad \quad \quad
\psi_r: H\lra H^*, \quad h\mapsto \lambda_r(S(h)(\mbox{-}))
\]
\item[(ii)] Upon restriction of the above isomorphisms to the center, there are isomorphisms of modules over $\zl(H)$
\[
\psi_l: \zl(H) \stackrel{\cong}{\lra} \cl_l(H),\quad z\mapsto \lambda_l(z(\mbox{-})),
\quad \quad \quad 
\psi_r: \zl(H) \stackrel{\cong}{\lra} \cl_r(H),\quad z\mapsto \lambda_r(S(z)(\mbox{-})).
\]
\end{enumerate}
\end{theorem}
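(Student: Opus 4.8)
The plan is to reduce both parts to a single input---the \emph{nondegeneracy} of the Frobenius pairing. Since $H$ is finite dimensional and $\lambda_l$ is a nonzero integral on $H$, the classical Larson--Sweedler theorem guarantees that the bilinear form $\langle x,a\rangle:=\lambda_l(xa)$ is nondegenerate (so that $H$ is a Frobenius algebra with Frobenius form $\lambda_l$), and likewise for $\langle x,h\rangle:=\lambda_r(xh)$. Everything below is extracted from this fact together with the already-established identities $\lambda_l\in\cl_l(H)$, $\lambda_r\in\cl_r(H)$ (Example \ref{eg-shifted-traces}(3)) and the characterizations in Lemma \ref{ad-H-on-itself} and Proposition \ref{prop-left-trace-functionals}.

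For part (i), nondegeneracy shows at once that $\psi_l$ is injective: $\psi_l(a)=\lambda_l((\mbox{-})a)=0$ forces $a=0$, and since $\dim_\Bbbk H=\dim_\Bbbk H^*$, the map $\psi_l$ is a linear isomorphism; the same argument applies to $\psi_r$. To see that $\psi_l$ is $H$-equivariant I would equip $H^*$ with the coadjoint action transported from $H_\ad$ and verify the intertwining by a Sweedler computation: moving the outer factor $S(h_2)$ of $\ad h(a)=\sum h_1aS(h_2)$ to the front via $\lambda_l\in\cl_l(H)$ yields $\psi_l(\ad h(a))(x)=\sum\psi_l(a)(S^{-1}(h_2)\,x\,h_1)$, which exhibits $\psi_l$ as an isomorphism of $H$-modules onto $H^*$ carrying this coadjoint structure.

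For part (ii), I would restrict to $\ad$-invariants, which on the source are exactly $\zl(H)=(H_\ad)^H$ by Lemma \ref{ad-H-on-itself}(ii). First, $\psi_l(\zl(H))\subseteq\cl_l(H)$ is a direct check: for central $z$, $\psi_l(z)(ab)=\lambda_l(abz)=\lambda_l(bzS^2(a))=\lambda_l(bS^2(a)z)=\psi_l(z)(bS^2(a))$, using $\lambda_l\in\cl_l(H)$ and then centrality. Injectivity is inherited from (i). The heart of the matter is surjectivity: starting from $a$ with $\psi_l(a)\in\cl_l(H)$, Proposition \ref{prop-left-trace-functionals} says $\psi_l(a)$ is a morphism of $H$-modules $H_\ad\lra\Bbbk$, i.e. $\sum\lambda_l(h_1xS(h_2)a)=\epsilon(h)\lambda_l(xa)$ for all $h,x$. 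Pushing $h_1$ to the right by $\lambda_l\in\cl_l(H)$ rewrites the left-hand side as $\sum\lambda_l\big(x\cdot S(h_2)aS^2(h_1)\big)$, and nondegeneracy in the second slot upgrades this scalar identity to the element identity $\sum S(h_2)aS^2(h_1)=\epsilon(h)a$ in $H$.

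The step I expect to be the main obstacle---and the one on which the whole surjectivity turns---is recognizing this twisted invariance as ordinary $\ad$-invariance. Substituting $h\mapsto S^{-1}(h)$ and using $\Delta\circ S^{-1}=(S^{-1}\otimes S^{-1})\circ\tau\circ\Delta$ collapses $\sum S(h_2)aS^2(h_1)$ into $\sum h_1aS(h_2)=\ad h(a)$, so the identity becomes $\ad h(a)=\epsilon(h)a$ for all $h$; by Lemma \ref{ad-H-on-itself}(ii) this forces $a\in\zl(H)$. Hence $\psi_l$ restricts to a bijection $\zl(H)\lra\cl_l(H)$, and it is $\zl(H)$-linear because $H_\ad$ is a module algebra over itself (Lemma \ref{ad-H-on-itself}(i)). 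Finally I would obtain the statements for $\psi_r$ either by rerunning the argument verbatim with $\lambda_r\in\cl_r(H)$ and the right adjoint action $\ad_r$, or more economically by conjugating $\psi_l$ with the antipode, invoking Remark \ref{rmk-S-on-clcr} that precomposition with $S$ interchanges $\cl_l(H)$ and $\cl_r(H)$ while $S$ preserves $\zl(H)$.
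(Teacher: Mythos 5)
Your proposal is correct, but it takes a genuinely different route from the paper: the paper disposes of Theorem \ref{thmRadfordiso} by pure citation (``This is the main theorem of \cite{Radtrace}''), while you reconstruct an actual proof. I checked your Sweedler computations and they are sound. In part (i), moving $S(h_2)$ to the front via $\lambda_l\in\cl_l(H)$ indeed gives $\psi_l(\ad h(a))(x)=\sum\psi_l(a)(S^{-1}(h_2)\,x\,h_1)$; note that the module structure you ``transport'' is exactly the dual adjoint action $\ad^* h(f)=f(\ad S^{-1}(h)(\mbox{-}))$ that the paper introduces later, so this portion of your argument duplicates the paper's own Lemma \ref{lem-Rad-ad-inv}(ii), proved there by the same manipulation. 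In part (ii), the chain is correct: Proposition \ref{prop-left-trace-functionals} turns $\psi_l(a)\in\cl_l(H)$ into the scalar identity $\sum\lambda_l(x\,S(h_2)aS^2(h_1))=\epsilon(h)\lambda_l(xa)$, Larson--Sweedler nondegeneracy upgrades it to $\sum S(h_2)aS^2(h_1)=\epsilon(h)a$, and the substitution $h\mapsto S^{-1}(h)$ together with $\Delta\circ S^{-1}=(S^{-1}\otimes S^{-1})\circ\tau\circ\Delta$ collapses this to $\ad h(a)=\epsilon(h)a$, whence $a\in\zl(H)$ by Lemma \ref{ad-H-on-itself}(ii); the deduction of the $\psi_r$ statements by conjugating with $S$ (using $\lambda_l\circ S$ as a right integral and Remark \ref{rmk-S-on-clcr}) also goes through since $S$ is bijective in finite dimension. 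What each approach buys: the paper's citation is economical and defers the Frobenius-theoretic work to Radford; your proof makes the section self-contained, isolates precisely where unimodularity enters (only through $\lambda_l\in\cl_l(H)$, Example \ref{eg-shifted-traces}(3)), and in effect reproduces Radford's original strategy from the single input of nondegeneracy.
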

\begin{proof}
This is the main theorem of \cite{Radtrace}.
\end{proof}  

The isomorphisms in Theorem \ref{thmRadfordiso} are usually known as the \emph{Radford isomorphims}. In this work, we will mostly reserve this term for $\psi_l$. We also obtain, as a corollary, a simpler proof of \cite[Theorem 2.9]{CoWe} under a slightly weaker hypothesis.

\begin{corollary}
 For a unimodular $H$, $\Lambda$ lies in $ \zl_\Hig(H)$ if and only if $H$ is semisimple.
\end{corollary}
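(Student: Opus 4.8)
The plan is to prove the two implications separately, using the Maschke-type criterion that a finite-dimensional Hopf algebra $H$ is semisimple if and only if $\epsilon(\Lambda)\neq 0$ for a nonzero integral $\Lambda$, together with the fact that $\Lambda$ is central here because $H$ is assumed unimodular. The two nontrivial inputs I would draw on are the ideal property of the Higman ideal (Corollary \ref{cor-higman-ideal}) and the vanishing result of Corollary \ref{cor-trace-kills-nilpotents}, both already available in the excerpt.

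For the ``if'' direction, suppose $H$ is semisimple, so that $\epsilon(\Lambda)\neq 0$. I would compute $\ad\Lambda(1_H)=\sum\Lambda_1 1_H S(\Lambda_2)=\sum\Lambda_1 S(\Lambda_2)=\epsilon(\Lambda)1_H$ directly from the antipode axiom \eqref{eqn_antipode_axiom}, which exhibits the unit $1_H$ as an element of $\zl_\Hig(H)=\ad\Lambda(H)$. Since $\zl_\Hig(H)$ is an ideal of $\zl(H)$ by Corollary \ref{cor-higman-ideal} and now contains the multiplicative unit, it must coincide with all of $\zl(H)$. As $H$ is unimodular, $\Lambda$ is central, so $\Lambda\in\zl(H)=\zl_\Hig(H)$, as desired.

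For the ``only if'' direction I would argue by contraposition: assuming $H$ is \emph{not} semisimple, I aim to show $\Lambda\notin\zl_\Hig(H)$. Corollary \ref{cor-trace-kills-nilpotents} states that every element of the Higman ideal of a non-semisimple $H$ is annihilated by all left-shifted trace-like functionals $f\in\cl_l(H)$. The decisive test functional is the left integral $\lambda_l\in H^*$, which lies in $\cl_l(H)$ by Example \ref{eg-shifted-traces}(3). Granting that $\lambda_l(\Lambda)\neq 0$, if $\Lambda$ were in $\zl_\Hig(H)$ we would be forced to conclude $\lambda_l(\Lambda)=0$, a contradiction; hence $\Lambda\notin\zl_\Hig(H)$, which is exactly the contrapositive statement.

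The main obstacle is therefore the nonvanishing $\lambda_l(\Lambda)\neq 0$, i.e.\ the nondegeneracy of the integral pairing, and I would extract it from the Radford isomorphism of Theorem \ref{thmRadfordiso}(i). Since $\Lambda$ is a left integral, for every $a\in H$ the functional $\psi_l(\Lambda)=\lambda_l((\mbox{-})\Lambda)$ satisfies $\psi_l(\Lambda)(a)=\lambda_l(a\Lambda)=\epsilon(a)\,\lambda_l(\Lambda)$, so that $\psi_l(\Lambda)=\lambda_l(\Lambda)\,\epsilon$. Because $\psi_l$ is an isomorphism and $\Lambda\neq 0$, the image $\psi_l(\Lambda)$ is nonzero, which forces $\lambda_l(\Lambda)\neq 0$. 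This closes the argument and, since it invokes only the unimodularity of $H$ (through the centrality of $\Lambda$) and the Radford isomorphism, it also makes transparent why the hypothesis can be weakened relative to \cite{CoWe}.
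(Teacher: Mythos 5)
Your proof is correct, but it routes through different lemmas than the paper, whose own proof is a two-line computation. For the semisimple direction, the paper uses that $\Lambda$ is simultaneously a left and right integral to compute $\ad \Lambda(\Lambda)=\epsilon(\Lambda)\Lambda$ directly, so that $\Lambda=\ad\Lambda\bigl(\Lambda/\epsilon(\Lambda)\bigr)$; you instead compute $\ad\Lambda(1_H)=\epsilon(\Lambda)1_H$ from the antipode axiom alone, then invoke the ideal property of $\zl_\Hig(H)$ (Corollary \ref{cor-higman-ideal}) and the centrality of $\Lambda$. Your chain is slightly longer but yields the stronger observation, in passing, that $\zl_\Hig(H)=\zl(H)$ whenever $H$ is semisimple. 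For the converse, the paper argues directly rather than by contraposition: normalizing $\lambda_l(\Lambda)=1$ and using that $\lambda_l\in\cl_l(H)$ is killed by the adjoint action (Proposition \ref{prop-left-trace-functionals}), it evaluates $\lambda_l$ on $\Lambda=\ad\Lambda(h)$ to get $1=\epsilon(\Lambda)\lambda_l(h)$, forcing $\epsilon(\Lambda)\neq 0$. You instead pass through Corollary \ref{cor-trace-kills-nilpotents}, whose proof runs through the radical of $H_\ad$; the substance is the same in both cases --- the functional $\lambda_l$ detects $\Lambda$ but annihilates the Higman ideal of a non-semisimple algebra --- but your version has the genuine merit of actually proving the nondegeneracy $\lambda_l(\Lambda)\neq 0$ from the injectivity of the Radford map $\psi_l$ (Theorem \ref{thmRadfordiso}), a standard fact the paper assumes silently when it normalizes $\lambda_l(\Lambda)=1$. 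Both arguments use only unimodularity (and finite-dimensionality), so your proof is valid under exactly the stated hypotheses.
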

\begin{proof}
For a general unimodular $H$, we may always normalize $\lambda_l(\Lambda)=1$. Thus if $\Lambda=\ad \Lambda(h)$ for some $h\in H$, then
\[
\lambda_l(\Lambda)=\lambda_l(\ad \Lambda (h))=\epsilon(\Lambda)\lambda_l(h),
\]
showing that $\epsilon(\Lambda)\neq 0$. Thus $H$ is semisimple. Conversely, if $H$ is semisimple, then $\epsilon(\Lambda)\neq 0$. Using that $\Lambda$ is both a left and right integral, we have 
$$ 
\ad \Lambda (\Lambda ) = \sum \Lambda_1 \Lambda S(\Lambda_2)= \sum \epsilon(\Lambda_1) \Lambda S(\Lambda_2)
= \epsilon(S(\Lambda)) \Lambda=\epsilon(\Lambda) \Lambda,
$$
 so that $\Lambda=\ad \Lambda\left(\frac{\Lambda}{\epsilon(\Lambda)}\right)$. The result follows.
\end{proof}

Another immediate consequence is the following.

\begin{corollary} 
Let $H$ be a non-semisimple finite dimensional Hopf algebra.  Then $\psi_l(z)\big|_{\zl(H)}\equiv 0$ for any  $z \in \zl_{\Hig}(H)$.
\end{corollary}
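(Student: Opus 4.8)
The plan is to unwind the definitions and reduce the claim to the already-established Corollary \ref{cor-trace-kills-nilpotents}. By Theorem \ref{thmRadfordiso}(ii) the functional $\psi_l(z)$ is $\lambda_l(z(\mbox{-}))$, so the assertion $\psi_l(z)\big|_{\zl(H)}\equiv 0$ is equivalent to proving that $\lambda_l(zw)=0$ for every central element $w\in\zl(H)$. First I would fix such a $w$ and use that $z,w\in\zl(H)$ commute, so that $\lambda_l(zw)=\lambda_l(wz)$.

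The crucial step is to reinterpret this right-hand side as the evaluation of a shifted trace-like functional on $z$. Applying the Radford isomorphism of Theorem \ref{thmRadfordiso}(ii) to $w$ itself yields $\psi_l(w)=\lambda_l(w(\mbox{-}))\in\cl_l(H)$, and by construction $\psi_l(w)(z)=\lambda_l(wz)$. Thus the quantity to be controlled equals $\psi_l(w)(z)$, where now $\psi_l(w)$ is a genuine element of $\cl_l(H)$ rather than an arbitrary functional.

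Finally I would invoke Corollary \ref{cor-trace-kills-nilpotents}: since $H$ is non-semisimple and $z\in\zl_{\Hig}(H)$, every left shifted trace-like functional $f\in\cl_l(H)$ annihilates $z$, so in particular $\psi_l(w)(z)=0$. Chaining the three identities gives $\lambda_l(zw)=0$ for all $w\in\zl(H)$, which is exactly the claim. There is essentially no obstacle beyond correctly recognizing $\lambda_l(zw)$ as $\psi_l(w)(z)$ via centrality; all the real content is packaged in Corollary \ref{cor-trace-kills-nilpotents}, whose force is that the nilpotency of the integral in the non-semisimple case places $\zl_{\Hig}(H)=\ad\Lambda(H)$ inside $\mathrm{Rad}(H_\ad)$, on which the whole space $\cl_l(H)=\Hom_H(H_\ad,\Bbbk)$ vanishes.
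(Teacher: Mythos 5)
Your proof is correct, but it takes a genuinely different route from the paper's. The paper proves the statement by direct computation: writing $z=\ad\Lambda(x)$, it expands $\psi_l(z)(y)=\sum\lambda_l(\Lambda_1xS(\Lambda_2)y)$, slides the central element $y$ past $S(\Lambda_2)$, and then uses the shifted-trace property of $\lambda_l$ together with the antipode axiom \eqref{eqn_antipode_axiom} to collapse the sum to $\epsilon(\Lambda)\lambda_l(xy)$, which vanishes because $\epsilon(\Lambda)=0$ for non-semisimple $H$. You avoid all Sweedler-notation manipulation by exploiting a symmetry of the pairing on the center: for $z,w\in\zl(H)$ one has $\psi_l(z)(w)=\lambda_l(zw)=\lambda_l(wz)=\psi_l(w)(z)$, and since $\psi_l(w)\in\cl_l(H)$ by Theorem \ref{thmRadfordiso}(ii), Corollary \ref{cor-trace-kills-nilpotents} kills the right-hand side. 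This is legitimate and non-circular, as Corollary \ref{cor-trace-kills-nilpotents} is established earlier and independently. The two arguments ultimately rest on the same underlying fact --- non-semisimplicity forces $\epsilon(\Lambda)=0$, equivalently $\Lambda$ is nilpotent --- but they package it differently: the paper's computation is self-contained and makes the role of $\epsilon(\Lambda)$ explicit, while your argument is structural (via $\zl_\Hig(H)\subset\mathrm{Rad}(H_\ad)$ and $\cl_l(H)=\Hom_H(H_\ad,\Bbbk)$) and has the added merit of exposing that the statement is precisely the transpose, under the symmetric bilinear form $(z,w)\mapsto\lambda_l(zw)$ on $\zl(H)$, of the second assertion of Corollary \ref{cor-trace-kills-nilpotents}.
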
 

\begin{proof} 
If $z=\ad \Lambda (x)$  and $y\in \zl(H)$, we have
\[ 
\psi_l ( \ad \Lambda (x))(y)  = \sum \lambda_l ( \Lambda_1 x S(\Lambda_2) y) = \sum \lambda_l ( \Lambda_1 x yS(\Lambda_2)) = \varepsilon(\Lambda) \lambda_l (xy) =0 . \qedhere
\] 
\end{proof} 

Let us point out the connection of Proposition \ref{prop-left-trace-functionals} with the Radford isomorphism. To do this, consider the \emph{left dual adjoint action} of $H$ on $H^*$ defined by
\[
H\otimes H^*\lra H^*, \quad (h,f)\mapsto \ad^* h(f),
\]
where $\ad^* h(f) (x)=f(\ad S^{-1}(h)(x))$ for any $x\in H$. We denote $H^*$ with this left $H$-module structure by $H^*_\ad$.

\begin{lemma}\label{lem-Rad-ad-inv}
\begin{enumerate}
\item[(i)] The space of $\ad^*$-invariants in $H$ is equal to $\cl_l(H)$.
\item[(ii)] The Radford isomorphism is an $H$-intertwining map $\psi_l:H_\ad\lra H^*_\ad$.
\end{enumerate}
\end{lemma}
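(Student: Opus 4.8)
The plan is to treat the two parts separately, each reducing to an earlier result together with a short Sweedler-notation computation. (Note that in (i) the $\ad^*$-invariants live in $H^*_\ad$, so I read the statement as asserting that the space of $\ad^*$-invariants in $H^*_\ad$ equals $\cl_l(H)$.)

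For part (i), I would simply unwind the definition of $\ad^*$-invariance and match it against the characterization of $\cl_l$ supplied by Proposition \ref{prop-left-trace-functionals}. A functional $f\in H^*$ is $\ad^*$-invariant precisely when $\ad^* h(f)=\epsilon(h)f$ for all $h$, i.e. $f(\ad S^{-1}(h)(x))=\epsilon(h)f(x)$ for all $h,x\in H$. Since $H$ is finite-dimensional the antipode $S$ is bijective, so as $h$ ranges over $H$ the element $g=S^{-1}(h)$ ranges over all of $H$; moreover $\epsilon(h)=\epsilon(g)$ because $\epsilon\circ S=\epsilon$. Hence the invariance condition is equivalent to $f(\ad g(x))=\epsilon(g)f(x)$ for all $g,x$, which by Proposition \ref{prop-left-trace-functionals} is exactly the assertion that $f\in\Hom_H(H_\ad,\Bbbk)=\cl_l(H)$.

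For part (ii), I would verify the intertwining identity $\psi_l(\ad h(a))=\ad^* h(\psi_l(a))$ directly, by evaluating both sides on an arbitrary $x\in H$. The left-hand side is $\sum\lambda_l(x\,h_1\,a\,S(h_2))$. For the right-hand side I would first record the coproduct of $S^{-1}(h)$: since $S$ is an anti-coalgebra map, $\Delta(S^{-1}(h))=\sum S^{-1}(h_2)\otimes S^{-1}(h_1)$, whence $\ad S^{-1}(h)(x)=\sum S^{-1}(h_2)\,x\,h_1$. Evaluating then gives $\ad^* h(\psi_l(a))(x)=\sum\lambda_l(S^{-1}(h_2)\,x\,h_1\,a)$. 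The two expressions are reconciled using the defining property of $\lambda_l\in\cl_l(H)$ from Example \ref{eg-shifted-traces}(3), namely $\lambda_l(bc)=\lambda_l(c\,S^2(b))$: taking $b=S^{-1}(h_2)$ and $c=x\,h_1\,a$ and using $S^2(S^{-1}(h_2))=S(h_2)$ converts the right-hand side into $\sum\lambda_l(x\,h_1\,a\,S(h_2))$, which is precisely the left-hand side.

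The conceptual content is light, and part (i) already shows the equivariance of part (ii) is consistent with Theorem \ref{thmRadfordiso}(ii), since $\psi_l$ must then carry the $\ad$-invariants $\zl(H)$ of $H_\ad$ isomorphically onto the $\ad^*$-invariants $\cl_l(H)$ of $H^*_\ad$. The only genuine care needed is in the bookkeeping: getting the Sweedler legs of $\Delta(S^{-1}(h))$ in the correct order and selecting the correct left-shifted trace identity (the one involving $S^2$) for $\lambda_l$ to close the computation. I expect this antipode/coproduct reconciliation to be the sole place where a sign or ordering slip could occur; everything else is formal.
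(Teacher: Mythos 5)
Your proof is correct and takes essentially the same route as the paper's: part (i) is, just as in the paper, a direct corollary of Proposition \ref{prop-left-trace-functionals} via the bijectivity of $S^{-1}$ and $\epsilon\circ S^{-1}=\epsilon$, and part (ii) uses exactly the same two ingredients — the anti-coalgebra property $\Delta(S^{-1}(h))=\sum S^{-1}(h_2)\otimes S^{-1}(h_1)$ and the shifted-trace identity $\lambda_l(bc)=\lambda_l(cS^2(b))$ — with the only cosmetic difference that you evaluate both sides and meet in the middle, while the paper runs a one-directional chain of equalities. Your parenthetical correction that the $\ad^*$-invariants in (i) live in $H^*_\ad$ (not $H$) also matches the paper's evident intent.
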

\begin{proof}
Since $S^{-1}$ is a bijection of $H$ and $\epsilon\circ S^{-1}=\epsilon$, the first statement is a direct corollary of Proposition \ref{prop-left-trace-functionals}.

The second statement follows from the easy computation:
\begin{align*}
\psi_l(\ad h(x)) & = \sum \lambda_l((\mbox{-})h_1 x S(h_2))=\sum \lambda_l(S^{-1}(h_2)(\mbox{-})h_1x ) \\
 & = \sum \lambda_l(S^{-1}(h)_1 (\mbox{-}) S(S^{-1}(h)_2)x)=\sum\lambda_l (\ad S^{-1}( h)(\mbox{-})x)\\
  & = \ad^* h (\lambda_l((\mbox{-})x)) = \ad^*h (\psi_l(x)). \qedhere
\end{align*}
\end{proof}

\paragraph{The Drinfeld isomorphism.} 
We start this part by recalling some basic notions of \cite{DrACHA}. A (finite-dimensional) Hopf algebra $H$ is called \emph{quasi-triangular} if there exists an invertible element
$R = \sum R_1\otimes R_2 \in H \otimes H$, such that
\begin{enumerate}
\item[(i)] For any $x\in H$, we have $ \Delta^{op}(x) = R \Delta(x) R^{-1} $;
\item[(ii)]$(\Delta\otimes \mathrm{Id})(R)=R_{13}R_{23}$ and $(\mathrm{Id}\otimes \Delta)(R)=R_{13}R_{12}$.
\end{enumerate}
Here we adopt the usual convention that $R_{13}=\sum R_1\otimes 1\otimes R_2\in H^{\otimes 3}$ etc.

Denote by $u=\sum S(R_2)R_1$, where $R_{21}=\sum R_2\otimes R_1\in H\otimes H$.  Then $S^2(h)=uhu^{-1}$ for any $h\in H$. In this case, $S^2$ acts trivially on $\zl(H)$, $\cl_l(H)$ and $\cl_r(H)$ (see Remark \ref{rmk-S-on-clcr}). Furthermore, there are isomorphisms of commutative algebras 
\begin{subequations}
\begin{align}
\mu_l:  \cl(H) \cong \cl_l(H), & \quad f(\mbox{-})\mapsto f(u(\mbox{-})),\\
\mu_r:  \cl(H)\cong \cl_r(H),  & \quad f(\mbox{-})\mapsto f(u^{-1}(\mbox{-})).
\end{align}
\end{subequations}
Under these isomorphisms, the natural subspaces of (projective) characters \eqref{eqn-characters} give rise to shifted characters:
\begin{subequations}\label{eqn-shifted-characters}
\begin{equation}
\rl_l(H):=\mu_l(\rl(H)) ,\quad \quad \pl_l(H):= \mu_l(\pl(H)),
\end{equation}
\begin{equation}
\rl_r(H):=\mu_r(\rl(H)) ,\quad \quad \pl_r(H):= \mu_r(\pl(H)).
\end{equation}
\end{subequations}

The following  result (without the left shifting) is more generally true for symmetric Frobenius algebras (see, for instance, \cite[Proposition 2.1]{CoWe}).

\begin{lemma}\label{lem-hig-ideal-proj-char}
Let $H$ be a quasi-triangular Hopf algebra, then
\[
\psi_l(\zl_{\Hig}(H))=\pl_l(H).
\]
\end{lemma}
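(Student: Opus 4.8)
The plan is to show the two subspaces of $\cl_l(H)$ coincide by first matching their dimensions and then establishing a single inclusion. Since both $\psi_l$ (Theorem \ref{thmRadfordiso}) and $\mu_l$ are isomorphisms, $\psi_l(\zl_{\Hig}(H))$ and $\pl_l(H)=\mu_l(\pl(H))$ are subspaces of $\cl_l(H)$ with $\dim_\Bbbk \psi_l(\zl_{\Hig}(H))=\dim_\Bbbk \zl_{\Hig}(H)$ and $\dim_\Bbbk \pl_l(H)=\dim_\Bbbk \pl(H)$. Working under the unimodularity hypothesis that underlies the Radford isomorphism (Example \ref{eg-shifted-traces}(3)), Proposition \ref{prop-dim-proj-center} gives $\dim_\Bbbk \zl_{\Hig}(H)=\mathrm{rank}(C_H\otimes_\Z\Bbbk)$, while Example \ref{eg-shifted-traces}(2) gives $\dim_\Bbbk \pl(H)=\mathrm{rank}(C_H\otimes_\Z\Bbbk)$ as well. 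Hence the two spaces share the same finite dimension, and it suffices to prove one inclusion, say $\psi_l(\zl_{\Hig}(H))\subseteq \pl_l(H)$.

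For the inclusion I would fix $z=\ad\Lambda(x)\in\zl_{\Hig}(H)$ and show that $\mu_l^{-1}(\psi_l(z))$, namely the functional $a\mapsto \lambda_l(z\,u^{-1}a)=\sum\lambda_l(\Lambda_1 x S(\Lambda_2)\,u^{-1}a)$, lies in $\pl(H)$. The structural input is Proposition \ref{prop-adLambdaH}: $z$ spans a trivial submodule contained in a projective-injective summand $I$ of $H_\ad$, and via the $H$-module isomorphism $\psi_l:H_\ad\lra H^*_\ad$ of Lemma \ref{lem-Rad-ad-inv}(ii) the functional $\psi_l(z)$ spans a trivial submodule inside the corresponding projective-injective summand of $H^*_\ad$. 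The computational input is the Frobenius duality between the integral $\Lambda\in H$ and the cointegral $\lambda_l\in H^*$, which I would use to rewrite the pairing $\lambda_l(\ad\Lambda(x)\,u^{-1}a)$ as the trace of the action of $a$ on a projective module built from $I$. Since $u$ implements $S^2$ and $\lambda_l\in\cl_l(H)$, the $u^{-1}$-twist converts the integral pairing into an honest trace functional $\chi_P(\mbox{-})$ with $P$ projective; this is precisely the Hopf-algebraic incarnation of the classical fact for symmetric Frobenius algebras that the symmetrizing form carries the projective center onto the span of projective characters, cf.\ \cite[Proposition 2.1]{CoWe}. Having exhibited $\mu_l^{-1}(\psi_l(z))\in\pl(H)$, the inclusion, and hence by the dimension count the equality, follows.

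The main obstacle is exactly this computational step: producing the precise trace identity that turns $\lambda_l\big(\ad\Lambda(x)\,u^{-1}(\mbox{-})\big)$ into the character $\chi_P$ of a \emph{genuine} projective module, with the correct $u$-twist bookkeeping. Corollary \ref{cor-trace-kills-nilpotents} shows that such functionals annihilate the radical of $H_\ad$, but this only places them in $\cl_l(H)$, which is strictly larger than the span $\pl_l(H)$ of projective characters; the extra content is that the image of $\ad\Lambda$ lands in the projective-injective part of $H_\ad$, and this part must be identified, under $\psi_l$ and the $\mu_l$-shift, with the span of traces of projectives. This is where quasi-triangularity is essential: the Drinfeld element $u$ makes $S^2$ inner, so that after the $\mu_l$-twist the Frobenius form $\lambda_l$ becomes a trace-like (symmetric) form, allowing the symmetric-algebra argument to be imported. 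I would carry out the identification blockwise, matching each one-dimensional piece of $\zl_{\Hig}(H)$ with the character of the projective indecomposable supported on the corresponding projective-injective summand of $H_\ad$.
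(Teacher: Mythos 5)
Your proposal is correct in substance, but you should know that the paper contains no proof of this lemma at all: the statement is justified entirely by the sentence preceding it, which cites \cite[Proposition 2.1]{CoWe} for the unshifted version valid for symmetric Frobenius algebras --- the point being that quasi-triangularity makes $S^2$ inner via the Drinfeld element $u$, so the $\mu_l$-shift turns the Frobenius form $\lambda_l$ into a symmetrizing form and the classical statement transports to the left-shifted setting. Your route differs only in its scaffolding: the dimension count via Proposition \ref{prop-dim-proj-center} and Example \ref{eg-shifted-traces}(2) (both sides have dimension $\mathrm{rank}(C_H\otimes_\Z\Bbbk)$, under the unimodularity that the Radford isomorphism already presupposes) is sound and validly reduces the equality to a single inclusion, which buys you something the paper's citation does not: you only need the ``easy half'' of the classical correspondence rather than the full bijection. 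The step you flag as the main obstacle --- the trace identity converting $\lambda_l\big(\ad\Lambda(x)\,u^{-1}(\mbox{-})\big)$ into a combination of genuine projective characters --- is not resolved by the paper either; it is exactly the content of the cited \cite[Proposition 2.1]{CoWe}, so if that citation is admissible your argument is complete to precisely the same degree as the paper's treatment. Two small cautions: your closing plan to match ``each one-dimensional piece'' of $\zl_\Hig(H)$ blockwise with a single projective character tacitly assumes rank-one Cartan matrices per block, which holds for $\ul_q(\g)$ (Theorem \ref{thm-hig=intersection}) but not for a general quasi-triangular $H$, and is in any case unnecessary for your argument; and Proposition \ref{prop-adLambdaH} enters only through the identification of $\ad\Lambda(H)$ with the Frobenius-theoretic projective center, which is again part of the \cite{CoWe} package rather than something established independently in the paper.
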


\begin{definition} \label{def-Dr-map}
Let $H$ be a (finite-dimensional) quasi-triangular Hopf algebra. Define the \emph{Drinfeld map} $\jmath_r : H^* \lra H$ by 
\[ 
\jmath_r(f) = m(f \otimes {\rm id}) (R_{21}R),
\] 
where $m:H\otimes H\lra H$ is the multiplication map.

A quasi-triangular Hopf algebra $H$ is called \emph{factorizable} if $\jmath_r$ is surjective.
\end{definition}

When $H$ is factorizable,  the Drinfeld map restricts to an isomorphism of commutative algebras $\jmath_r : \cl_r(H) \lra \zl(H)$ \cite{DrACHA}.  
To translate the result to left shifted trace-like functionals $\cl_l(H)$, let us define the \emph{left shifted Drinfeld isomorphism} to be
\begin{equation}\label{eqn-left-shifted-Drinfeld-map}
\jmath_l:=\jmath_r\circ S^{-1}: H^*\lra H, \quad \quad f\mapsto m(f\circ S^{-1} \otimes \mathrm{Id})(R_{21}R).
\end{equation} 

The next result is implicitly known from \cite{LyuMa}, and we give a direct proof for the sake of completeness.

\begin{lemma}\label{lem-Dr-map-ad-inv}
The Drinfeld ismorphism is an $H$-intertwining map $\jmath_l: H_\ad^* \lra H_\ad$.
\end{lemma}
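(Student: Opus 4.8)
The plan is to reduce the asserted equivariance to a single identity in $H\otimes H$ and then to deduce that identity from the covariance of the monodromy element $Q:=R_{21}R=\sum Q_1\otimes Q_2$. Writing $\jmath_l(f)=\sum f(S^{-1}(Q_1))Q_2$ and unwinding the two module structures (the dual adjoint action $\ad^* h(f)(x)=f(\ad S^{-1}(h)(x))$ on the source and the adjoint action on the target), the equality $\jmath_l(\ad^* h(f))=\ad h(\jmath_l(f))$ reads, after pairing an arbitrary $f\in H^*$ against the first tensor factor,
\[
\sum \ad S^{-1}(h)(S^{-1}(Q_1))\otimes Q_2 \;=\; \sum S^{-1}(Q_1)\otimes h_1 Q_2 S(h_2).
\]
Since $H^*$ separates points of $H$, proving the lemma is equivalent to establishing this identity in $H\otimes H$. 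Applying the (bijective) antipode to the first leg and using $\Delta\circ S^{-1}=(S^{-1}\otimes S^{-1})\circ\Delta^{\mathrm{op}}$ to expand $\ad S^{-1}(h)(S^{-1}(Q_1))=\sum S^{-1}(h_2)S^{-1}(Q_1)h_1$, I would rewrite the goal in the cleaner equivalent form
\[
\sum S(h_1) Q_1 h_2\otimes Q_2 \;=\; \sum Q_1\otimes h_1 Q_2 S(h_2) \qquad (\ast).
\]

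The key structural input is the covariance of $Q$. Axiom (i) gives $R\Delta(x)=\Delta^{\mathrm{op}}(x)R$, and applying the flip (an algebra automorphism of $H\otimes H$) yields $\Delta(x)R_{21}=R_{21}\Delta^{\mathrm{op}}(x)$; combining the two produces
\[
\Delta(x)Q \;=\; R_{21}\Delta^{\mathrm{op}}(x)R \;=\; R_{21}R\Delta(x) \;=\; Q\Delta(x) \qquad (\forall x\in H),
\]
that is, $\sum x_1 Q_1\otimes x_2 Q_2=\sum Q_1 x_1\otimes Q_2 x_2$. In words, diagonal left- and right-multiplication by (the coproduct of) any $x$ act identically on $Q$.

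Finally I would prove $(\ast)$ from this covariance by a standard insert/transport/contract maneuver. Starting from the left side $\sum S(h_1)Q_1 h_2\otimes Q_2$, I insert $\sum h_3 S(h_4)=\epsilon(\mbox{-})1$ on the second leg to obtain $\sum S(h_1)Q_1 h_2\otimes Q_2 h_3 S(h_4)$; by coassociativity the middle factors $h_2\otimes h_3$ constitute the coproduct of a single element, so covariance transports the diagonal right-multiplication to a left-multiplication, giving $\sum S(h_1) h_2 Q_1\otimes h_3 Q_2 S(h_4)$; contracting $\sum S(h_1)h_2=\epsilon(\mbox{-})1$ on the first leg then yields exactly the right side of $(\ast)$. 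The only real obstacle is the bookkeeping: one must track the antipodes and the several coproduct legs precisely, and in particular verify that the crossing of indices forced by covariance is matched exactly by the $S^{-1}$ appearing in the dual adjoint action $\ad^*$. Granting this careful Sweedler computation, the lemma follows.
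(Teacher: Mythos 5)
Your proof is correct and follows essentially the same route as the paper: the paper's argument is exactly the insert--transport--contract Sweedler computation you describe, carried out with the functional $f$ in place, using the relation $\sum R_1h_1\otimes R_2h_2=\sum h_2R_1\otimes h_1R_2$ twice (once for each copy of $R$ in $R_{21}R$) where you package those two uses into the single covariance identity $\Delta(x)Q=Q\Delta(x)$ for $Q=R_{21}R$. Your preliminary reduction to the identity $(\ast)$ in $H\otimes H$ is a harmless reorganization (valid since $H$ is finite dimensional and $S$ is bijective), and the remaining computation matches the paper's step for step.
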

\begin{proof}
Below we write
\[
R_{21}R=\sum R^\prime_2 R_1 \otimes R^\prime_1 R_2
\]
to differentiate the two different copies of $R$. 

For an $h\in H$,  we compute
\begin{align*}
\jmath_l(\ad^*h(f)) & = \jmath_l (f(\ad S^{-1}(h)(\mbox{-})))=\sum f( S^{-1}(h)_1 S^{-1}(R_2^\prime R_1) S(S^{-1}(h)_2) ) R_1^\prime R_2 \\
& = \sum f(S^{-1}(h_2)  S^{-1}(R_2^\prime R_1)h_1) R_1^\prime R_2 = \sum f(S^{-1}(R_2^\prime R_1 h_2)h_1)R_1^\prime R_2 h_3 S(h_4)\\
& = \sum f(S^{-1}( R_2^\prime h_3 R_1 )h_1) R_1^\prime h_2 R_2 S(h_4)=\sum f(S^{-1}(h_2R_2^\prime R_1)h_1) h_3 R_1^\prime R_2 S(h_4) \\
& = \sum f(S^{-1}(R_2^\prime R_1) S^{-1}(h_2)h_1) h_3R_1^\prime R_2 S(h_4) =\sum f(S^{-1}(R_2^\prime R_1)) h_1 R_1^\prime R_2 S(h_2)\\
&=f(S^{-1}(R_2^\prime R_1)) \ad h (R_1^\prime R_2) = \ad h (\jmath_l(f)).
\end{align*}
Here we have repeated used the first condition of the quasi-triangular structure that 
\[
\sum R_1 h_1 \otimes R_2 h_2 = \sum h_2 R_1 \otimes h_1 R_2.
\]
The Lemma follows.
\end{proof}

\begin{definition}\label{def-Fourier}
Given a finite-dimensional factorizable Hopf algebra $H$, the  \emph{Fourier transform on $H$} is the composition
  $$\mathcal{F}=\jmath_l\circ \psi_l:H\lra H$$
  of the Drinfeld and Radford maps. Likewise, the \emph{Fourier transform on $H^*$} is the composition
    $$\mathcal{F}^*=\psi_l\circ \jmath_l:H^*\lra H^*.$$
\end{definition} 

\begin{corollary}\label{cor-F-perserves-Higman}
The Fourier transforms are isomorphisms of $H$-modules under the adjoint actions:
\[
\mathcal{F}: H_\ad \stackrel{\cong}{\lra} H_\ad,\quad \quad \mathcal{F}^*:H_\ad^* \stackrel{\cong}{\lra} H_\ad^*.
\]
Consequently, the Fourier transform restricts to automorphisms 
$$\mathcal{F}:\zl(H)\lra \zl(H), \quad \quad \mathcal{F}^*:\cl_l(H)\lra \cl_l(H),$$ 
and the Higman ideal $\zl_{\Hig}(H)$ is invariant under the Fourier transform: 
\[  \mathcal{F}( \zl_{\Hig}(H)) = \zl_{\Hig}(H) . \] 
\end{corollary}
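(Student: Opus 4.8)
The plan is to exhibit $\mathcal{F}$ and $\mathcal{F}^*$ as compositions of maps already shown, within the excerpt, to intertwine the (dual) adjoint actions, and then to transport the intrinsic characterization of the Higman ideal through such an intertwiner.

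First I would record that both constituent maps are $H$-module isomorphisms for the adjoint structures. Lemma \ref{lem-Rad-ad-inv}(ii) gives that the Radford map $\psi_l : H_\ad \lra H^*_\ad$ is $H$-equivariant, and it is bijective by Theorem \ref{thmRadfordiso}. Lemma \ref{lem-Dr-map-ad-inv} gives that the left-shifted Drinfeld map $\jmath_l : H^*_\ad \lra H_\ad$ is $H$-equivariant, and it is bijective because $H$ is assumed factorizable. Hence $\mathcal{F} = \jmath_l \circ \psi_l : H_\ad \lra H_\ad$ and $\mathcal{F}^* = \psi_l \circ \jmath_l : H^*_\ad \lra H^*_\ad$ are automorphisms of $H$-modules, which is the first displayed assertion.

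Next I would pass to invariants. An $H$-module automorphism carries the subspace of $\ad$-invariants bijectively onto itself; by Lemma \ref{ad-H-on-itself}(ii) this subspace is precisely $\zl(H)$, so $\mathcal{F}$ restricts to an automorphism of $\zl(H)$. Dually, by Lemma \ref{lem-Rad-ad-inv}(i) the $\ad^*$-invariants in $H^*$ are exactly $\cl_l(H)$, so $\mathcal{F}^*$ restricts to an automorphism of $\cl_l(H)$. This yields the two restricted automorphisms.

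Finally, for the invariance of the Higman ideal I would invoke the intrinsic description in Proposition \ref{prop-adLambdaH}: a central $z$ lies in $\zl_{\Hig}(H)$ if and only if the inclusion $\Bbbk z \lra H_\ad$ factors through a projective-injective $H$-module (namely the injective envelope of the trivial module). Since $\mathcal{F}$ is an automorphism of $H_\ad$, composing the inclusion $\Bbbk z \lra H_\ad$ with $\mathcal{F}$ produces, up to the isomorphism $\Bbbk z \cong \Bbbk\, \mathcal{F}(z)$, the inclusion $\Bbbk\, \mathcal{F}(z) \lra H_\ad$; and an isomorphism does not change whether a map factors through a projective-injective object. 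Hence $z \in \zl_{\Hig}(H)$ if and only if $\mathcal{F}(z) \in \zl_{\Hig}(H)$, giving $\mathcal{F}(\zl_{\Hig}(H)) = \zl_{\Hig}(H)$. Equivalently, one may argue that $\mathcal{F}$ permutes the projective-injective summands of $H_\ad$ and sends trivial submodules to trivial submodules, so it preserves exactly the property characterizing $\zl_{\Hig}(H)$; applying the argument to $\mathcal{F}^{-1}$ secures the reverse inclusion. I expect the only point requiring care to be this last step: one must be sure that membership in $\zl_{\Hig}(H)$ is formulated as a property invariant under an arbitrary $H$-module automorphism of $H_\ad$, rather than one tied to a fixed direct-sum decomposition. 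Phrasing it via the factorization criterion of Proposition \ref{prop-adLambdaH} makes this manifest, since ``factors through a projective-injective'' is preserved by pre- and post-composition with isomorphisms; everything else is formal once the equivariance statements of Lemmas \ref{lem-Rad-ad-inv} and \ref{lem-Dr-map-ad-inv} are in hand.
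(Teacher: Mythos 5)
Your proof is correct, and its first two parts coincide with the paper's: both establish that $\mathcal{F}$ and $\mathcal{F}^*$ are $H$-module isomorphisms by composing the equivariant bijections of Lemma \ref{lem-Rad-ad-inv} and Lemma \ref{lem-Dr-map-ad-inv}, and then pass to $\ad$-invariants via Lemma \ref{ad-H-on-itself} and Lemma \ref{lem-Rad-ad-inv}(i). Where you genuinely diverge is the invariance of the Higman ideal. The paper never leaves the defining formula $\zl_{\Hig}(H)=\ad\Lambda(H)$: equivariance of $\mathcal{F}$, applied to the single element $h=\Lambda$, says $\mathcal{F}\circ \ad\Lambda = \ad\Lambda\circ\mathcal{F}$ as operators on $H$, so
\[
\mathcal{F}(\ad\Lambda(H))=\ad\Lambda(\mathcal{F}(H))=\ad\Lambda(H),
\]
where the last equality uses surjectivity of $\mathcal{F}$; this one line gives both inclusions at once. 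You instead invoke the module-theoretic characterization of Proposition \ref{prop-adLambdaH} (the inclusion $\Bbbk z\lra H_\ad$ factors through a projective-injective) and observe that this criterion is stable under pre- and post-composition with $H$-module automorphisms. That argument is sound --- your caution that membership must be phrased in an automorphism-invariant way, rather than relative to a fixed direct-sum decomposition, is exactly the right point to worry about --- but it is heavier machinery: Proposition \ref{prop-adLambdaH} itself rests on an external factorization criterion, and your route forces you to run the argument twice (once for $\mathcal{F}$, once for $\mathcal{F}^{-1}$) to upgrade an inclusion to an equality. What your approach buys in exchange is generality: it shows that \emph{any} $H$-module automorphism of $H_\ad$ preserving the center preserves $\zl_{\Hig}(H)$, with no reference to how the automorphism interacts with $\Lambda$, whereas the paper's computation is tailored to the equivariance identity itself.
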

\begin{proof}
The first statement follows from Lemma \ref{lem-Rad-ad-inv} and Lemma \ref{lem-Dr-map-ad-inv}. Using the first statement, the Fourier transforms $\mathcal{F}$ and $\mathcal{F}^*$ preserve $\ad H$-invariant subspaces of $H_\ad$ and $H_\ad^*$, which are identified with $\zl(H)$ and $\cl_l(H)$ respectively via Lemma \ref{ad-H-on-itself} and Lemma \ref{lem-Rad-ad-inv}. Finally, the $\ad$-invariance of $\mathcal{F}$ gives us
\[
\mathcal{F}(\ad \Lambda(H))= \ad \Lambda (\mathcal{F}(H))=\ad \Lambda (H).
\]
The result follows.
\end{proof}

\begin{definition}\label{def-HC-center}
Let $H$ be a quasi-triangular Hopf algebra. The \emph{Harish-Chandra center} of $H$ is the subspace of $\zl(H)$
\[
\zl_\HC( H ):= \jmath_l(\rl_l(H)),
\] 
\end{definition}

We document the following properties of $\zl_{\Hig}(H)$ and $\zl_{\HC}(H)$, which are known in \cite{CoWe}. We give the proofs for completeness while weakening some assumptions.

\begin{proposition}\label{prop-Higman-in-HC}
Let $H$ be a factorizable Hopf algebra. 
\begin{enumerate}
\item[(i)] The
Higman ideal of $H$  is spanned by the images under the Drinfeld map of the characters of the projective modules: 
\[ 
\zl_\Hig(H) = \jmath_l (\pl_l(H)). 
\] 
In particular, this implies that $\zl_{\Hig}(H)$ is an ideal in $\zl(H)$ of dimension equal to the rank of the Cartan matrix of $H$.  
\item[(ii)] If  $\mathrm{dim}(H)$ is invertible in $\Bbbk$, then the Higman ideal always contains a  nonzero idempotent.
\item[(iii)] The Higman ideal $\zl_\Hig(H)$ is contained in the intersection of the Harish-Chandra center and its Fourier transform: 
$$\zl_\Hig(H)\subset \zl_{\HC}(H)\cap \mathcal{F}(\zl_{\HC}(H)).$$
\item[(iv)] The Fourier transform of $\zl_{\HC}(H)$ is also equal to $\mathcal{F}(\zl_{\HC}(H))=\psi_l^{-1}(\rl_l(H))$.
\end{enumerate}
\end{proposition}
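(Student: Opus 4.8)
The plan is to deduce all four parts from the two isomorphisms $\psi_l\colon\zl(H)\stackrel{\cong}{\lra}\cl_l(H)$ of Theorem~\ref{thmRadfordiso} and $\jmath_l\colon\cl_l(H)\stackrel{\cong}{\lra}\zl(H)$, together with the factorization $\mathcal{F}=\jmath_l\circ\psi_l$ (Definition~\ref{def-Fourier}), the equality $\psi_l(\zl_\Hig(H))=\pl_l(H)$ (Lemma~\ref{lem-hig-ideal-proj-char}), and the invariance $\mathcal{F}(\zl_\Hig(H))=\zl_\Hig(H)$ (Corollary~\ref{cor-F-perserves-Higman}). For (i) I would apply $\jmath_l$ to the equality of Lemma~\ref{lem-hig-ideal-proj-char}: since $\jmath_l\circ\psi_l=\mathcal{F}$, this gives $\jmath_l(\pl_l(H))=\mathcal{F}(\zl_\Hig(H))$, whose right-hand side is $\zl_\Hig(H)$ by Corollary~\ref{cor-F-perserves-Higman}. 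For the dimension, $\mu_l$ and $\jmath_l$ are injective, so $\dim_\Bbbk\zl_\Hig(H)=\dim_\Bbbk\pl_l(H)=\dim_\Bbbk\pl(H)=\mathrm{rank}(C_H\otimes_\Z\Bbbk)$ by Example~\ref{eg-shifted-traces}; the ideal property is Corollary~\ref{cor-higman-ideal}.

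For (iii) I would first note $\pl(H)\subset\rl(H)$, hence $\pl_l(H)=\mu_l(\pl(H))\subset\mu_l(\rl(H))=\rl_l(H)$; applying $\jmath_l$ and using (i) together with Definition~\ref{def-HC-center} gives $\zl_\Hig(H)=\jmath_l(\pl_l(H))\subset\jmath_l(\rl_l(H))=\zl_\HC(H)$. Applying $\mathcal{F}$ to this inclusion and invoking $\mathcal{F}(\zl_\Hig(H))=\zl_\Hig(H)$ from Corollary~\ref{cor-F-perserves-Higman} then yields $\zl_\Hig(H)=\mathcal{F}(\zl_\Hig(H))\subset\mathcal{F}(\zl_\HC(H))$. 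The two inclusions combine to $\zl_\Hig(H)\subset\zl_\HC(H)\cap\mathcal{F}(\zl_\HC(H))$.

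For (ii), part (i) realizes $\zl_\Hig(H)$ as the image of $\pl(H)$ under the algebra homomorphism $\jmath_l\circ\mu_l\colon\cl(H)\lra\zl(H)$ (here $\jmath_l$ is an algebra isomorphism because precomposition with $S^{-1}$ is one on the commutative algebras of trace-like functionals), so it suffices to exhibit a nonzero idempotent in $\pl(H)$. I would take $e:=\tfrac{1}{\dim_\Bbbk H}\,\chi_{\mathrm{reg}}$, the normalized character of the left regular representation; it lies in $\pl(H)$ since $H$ is projective over itself. The tensor identity $H_{\mathrm{reg}}\otimes V\cong H_{\mathrm{reg}}^{\oplus\dim_\Bbbk V}$ for every $H$-module $V$ gives $\chi_{\mathrm{reg}}\cdot\chi_{\mathrm{reg}}=(\dim_\Bbbk H)\,\chi_{\mathrm{reg}}$ in $\rl(H)$, so $e$ is idempotent, and it is nonzero since $\dim_\Bbbk H$ is invertible. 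Its image $\jmath_l(\mu_l(e))$ is then the required nonzero idempotent in $\zl_\Hig(H)$.

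For (iv), the formal input is the rearrangement of $\mathcal{F}=\jmath_l\circ\psi_l$ into $\jmath_l=\mathcal{F}\circ\psi_l^{-1}$, which gives $\zl_\HC(H)=\jmath_l(\rl_l(H))=\mathcal{F}\big(\psi_l^{-1}(\rl_l(H))\big)$ and hence $\psi_l^{-1}(\rl_l(H))=\mathcal{F}^{-1}(\zl_\HC(H))$. Matching this with the stated identity, in which $\mathcal{F}$ rather than $\mathcal{F}^{-1}$ appears, is equivalent to the stability $\mathcal{F}^2(\zl_\HC(H))=\zl_\HC(H)$. This is the step I expect to be the main obstacle, since it does not follow formally from $\psi_l$ and $\jmath_l$ alone: I would establish it by identifying $\mathcal{F}^2$ on $\zl(H)$ with the charge-conjugation involution induced by the antipode, which sends the class of a module to that of its dual and therefore stabilizes the space $\rl_l(H)$ of shifted characters, and with it $\zl_\HC(H)=\jmath_l(\rl_l(H))$. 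This identification relies on the ribbon/factorizable structure and the projective $SL_2(\Z)$-action (cf.\ Theorem~\ref{thm-modular-group-action}), not merely on the Radford and Drinfeld maps. Once $\mathcal{F}^2(\zl_\HC(H))=\zl_\HC(H)$ is known, it gives $\mathcal{F}(\zl_\HC(H))=\mathcal{F}^{-1}(\zl_\HC(H))=\psi_l^{-1}(\rl_l(H))$, as claimed.
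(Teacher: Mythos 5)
Your proposal is correct and follows essentially the same route as the paper's proof: parts (i)--(iii) use the identical chain of identities built from Lemma~\ref{lem-hig-ideal-proj-char}, Corollary~\ref{cor-F-perserves-Higman} and the factorization $\mathcal{F}=\jmath_l\circ\psi_l$, and your idempotent in (ii) is exactly the paper's $\frac{1}{\mathrm{dim}(H)}\chi_H(u\mbox{-})$. For (iv), the stability $\mathcal{F}^2(\zl_{\HC}(H))=\zl_{\HC}(H)$ that you correctly single out as the non-formal step is precisely what the paper handles by citing the identity $\mathcal{F}^2\big|_{\zl(H)}=S\big|_{\zl(H)}$ from \cite[Theorem 5.1]{La} together with the fact that $S$ permutes characters of modules and their duals, which is the same ``charge-conjugation'' identification you describe.
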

\begin{proof}
$(i)$. Since the Higman ideal is invariant under the Fourier transform (Corollary \ref{cor-F-perserves-Higman}), we have
\[
\zl_{\Hig}(H)=\mathcal{F}(\zl_{\Hig}(H)) = \mathcal{F}(\psi_{l}^{-1}(\pl_l(H)))=\jmath_l(\pl_l(H)) .
\]
As the Drinfeld map $\jmath_l$, when restricted to the center, is an isomorphism of commutative algebras, and $\zl_{\Hig}(H)$ is an ideal in $\zl(H)$, we deduce that $\pl_l(H)$ is an ideal in $\cl_l(H)$ of dimension $\mathrm{rank}(C_H)$ (see Example \ref{eg-shifted-traces} (2)). The dimension count then follows.

$(ii)$. Since $\jmath_l$ is an isomorphism of commutative algebras, it suffices to show that $\pl_l(H)$ always has an idempotent. This follows from the fact that tensor product module decomposition $H\otimes H\cong H^{\mathrm{dim}(H)}$. Thus, if $\mathrm{char}(\Bbbk)\nmid \mathrm{dim}(H)$, then $\frac{1}{\mathrm{dim}(H)}\chi_H(u\mbox{-})$ is an idempotent in $\pl_l(H)$. 

$(iii)$. Next, as $\pl_l(H)\subset \rl_l(H)$ by definition of (shifted) characters (see equation \eqref{eqn-characters}), we have the inclusion
\[ \zl_{\Hig}(H) =\jmath_l(\pl_l(H))  \subset \jmath_l(\rl_l(H)) = \zl_{\HC}(H). \] 
Again, using the invariance of $\zl_{\Hig}$ under $\mathcal{F}$ gives us
\[
\zl_{\Hig}(H)=\mathcal{F}(\zl_{\Hig}(H))\subset \mathcal{F}(\zl_{\HC}(H)).
\]
The result now follows.

$(iv)$.  For the last result, we will use that \cite[Theorem 5.1]{La}
\[
 \mathcal{F}^2\big|_{\zl(H)}=S|_{\zl(H)}.
\]
Then we have
\[
\mathcal{F}^2(\zl_{\HC}(H))=S(\zl_{\HC}(H)).
\]
As $S$ sends the character of a simple $H$-module to the character of its dual module, it follows that $S$ preserves $\zl_\HC$, and
\[
\jmath_l\psi_l(\mathcal{F}(\zl_{\HC}(H)))=\zl_{\HC}(H).
\]
Taking $\jmath_l^{-1}$ on both sides gives the desired equality.
\end{proof}

\begin{corollary} \label{cor-annhilator-of-radical}
For a factorizable Hopf algebra $H$,
 the subspace $\mathcal{F}(\zl_{\HC}(H))$ is an ideal in $\zl(H)$ that is annihilated by the radical of $\zl(H)$. 
\end{corollary}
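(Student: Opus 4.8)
The plan is to transport the problem to the space of left shifted trace-like functionals via the Radford isomorphism, where it becomes a statement about characters. By Proposition \ref{prop-Higman-in-HC}(iv) we have the identification $\mathcal{F}(\zl_\HC(H))=\psi_l^{-1}(\rl_l(H))$, and by Theorem \ref{thmRadfordiso}(ii) the Radford map $\psi_l:\zl(H)\lra \cl_l(H)$ is an isomorphism of $\zl(H)$-modules, where $\zl(H)$ acts on itself by multiplication and the transported action of $z\in\zl(H)$ on $f\in \cl_l(H)$ is $(z\cdot f)(x)=f(zx)$. Under this dictionary, and using that $\psi_l$ is $\zl(H)$-equivariant, the assertion that $\mathcal{F}(\zl_\HC(H))$ is an ideal annihilated by $\mathrm{Rad}(\zl(H))$ is equivalent to the single statement that $\rl_l(H)$ is a $\zl(H)$-submodule of $\cl_l(H)$ on which $\mathrm{Rad}(\zl(H))$ acts by zero. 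So first I would reduce to computing the $\zl(H)$-action on the shifted characters $\rl_l(H)=\mu_l(\rl(H))$.

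The heart of the argument is a trace identity. Recall $\mu_l(f)(x)=f(ux)$, where $u$ is the Drinfeld element; since $H$ is quasi-triangular, $S^2(z)=uzu^{-1}=z$ for $z\in \zl(H)$, so $u$ commutes with every central element. Hence for a character $\chi_V\in\rl(H)$ and $z\in\zl(H)$, a short computation gives $(z\cdot\mu_l(\chi_V))(x)=\chi_V(uzx)=\chi_V(zux)=\mu_l(\chi_V(z\,(\mbox{-})))(x)$, reducing everything to understanding $\chi_V(z\,(\mbox{-}))$. The claim I would establish is that, for any finite-dimensional $V$ and central $z$,
\[
\chi_V(z\,(\mbox{-}))=\sum_i \omega_{L_i}(z)\,\chi_{L_i},
\]
the sum running over the composition factors $L_i$ of $V$, where $\omega_{L}:\zl(H)\lra \Bbbk$ is the central character of the simple module $L$. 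To prove this I would fix a composition series of $V$ and write $\rho_V$ for the action of $H$: both $\rho_V(z)$ and $\rho_V(x)$ preserve the series and are block upper-triangular for the associated grading, with the diagonal blocks of $\rho_V(z)$ equal to the scalars $\omega_{L_i}(z)\mathrm{Id}$ by Schur's lemma. Since the diagonal block of a product of block upper-triangular maps is the product of the diagonal blocks, $\mathrm{Tr}_V(\rho_V(z)\rho_V(x))=\sum_i\omega_{L_i}(z)\chi_{L_i}(x)$, which is the identity. In particular $\chi_V(z\,(\mbox{-}))\in\rl(H)$, so $z\cdot\mu_l(\chi_V)=\mu_l(\chi_V(z\,(\mbox{-})))\in\rl_l(H)$; this shows $\rl_l(H)$ is a $\zl(H)$-submodule, hence $\mathcal{F}(\zl_\HC(H))$ is an ideal.

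For the annihilation statement I would use that $\zl(H)$ is a finite-dimensional commutative algebra, so $\mathrm{Rad}(\zl(H))$ is its nilradical and equals the intersection $\bigcap_L\ker\omega_L$ of the kernels of the central characters, each $\omega_L$ being an algebra homomorphism into the field $\Bbbk$ and so killing nilpotents. Thus for $n\in\mathrm{Rad}(\zl(H))$ one has $\omega_{L_i}(n)=0$ for every simple $L_i$, and the trace identity gives $\chi_V(n\,(\mbox{-}))=0$, whence $n\cdot\mu_l(\chi_V)=0$. Therefore $\mathrm{Rad}(\zl(H))$ annihilates $\rl_l(H)$, and via the equivariant isomorphism $\psi_l$ it annihilates $\mathcal{F}(\zl_\HC(H))$. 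The main obstacle is the trace identity in the non-semisimple setting: one must argue carefully along a composition series that the diagonal part of $\rho_V(z)$ is scalar on each factor and that traces of products respect the block-triangular structure. The semisimple case, where $z$ is a combination of block idempotents, is transparent, but the general case is where the care is needed.
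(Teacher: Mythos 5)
Your proof is correct and follows essentially the same route as the paper: identify $\mathcal{F}(\zl_{\HC}(H))$ with $\psi_l^{-1}(\rl_l(H))$ via Proposition \ref{prop-Higman-in-HC}(iv), use that $\psi_l$ is a $\zl(H)$-module isomorphism, and observe that central elements act through central characters, which vanish on nilpotent elements of $\zl(H)$. The only difference is cosmetic: the paper works directly with shifted characters of simple modules (which span $\rl_l(H)$, since characters factor through the Grothendieck group), whereas you re-derive that spanning fact for arbitrary $V$ through your composition-series trace identity.
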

\begin{proof}
By part $(iv)$ of Proposition \ref{prop-Higman-in-HC}, we may identify $\mathcal{F}(\zl_{\HC}(H))=\psi_l^{-1}(\rl_l(H))$. We will also need the fact that (Theorem \ref{thmRadfordiso}) $\psi_l : \zl(H) \to \cl_l(H)$ is an isomorphism of $\zl(H)$-modules. 

Let  $z \in \zl(H)$, and consider 
the functional $\chi_L(K_{2\rho}(\mbox{-})z)$, where $L$ is a simple $H$-module. Since central elements act by scalars in simple modules, the resulting functional is a scalar $\alpha_z$ multiple of $\chi_L(K_{2\rho}(\mbox{-}))$, and $\alpha_z$ must be zero if $z$ is nilpotent. Therefore 
\[ 
 z\psi_l^{-1}(\chi_L) = \alpha_z \psi^{-1}(\chi_L)  
\]
with $\alpha_z =0$ if $ z $ is nilpotent. This shows that $\psi_l^{-1}({\rl}_l ) = \mathcal{F}(\zl_\HC(H))$ is an ideal in $\zl(H)$ that is annihilated by the nilradical of $\zl(H)$.  
\end{proof}

\begin{remark}
We would like to correct the mistake in \cite{La} and point out that the subspace annihilating the radical of $\zl(H)$ does not in general coincide with $\mathcal{F}(\zl_{\HC}(H))$, but contains it as a subspace. 
\end{remark}

\paragraph{Modular group action on the adjoint representation.}
We recall another important notion from \cite{DrACHA}.
\begin{definition}\label{def-ribbon-Hopf}
A factorizable Hopf algebra $H$ is called \emph{ribbon} if there is a central element $v\in H$ such that
\[
S(v)=v \quad \quad v^2=uS(u), \quad \quad \Delta(v)(R_{21}R)=v\otimes v.
\]
\end{definition}

 When $H$ is a factorizable ribbon Hopf algebra, Lyubashenko and Majid \cite{LyuMa} have introduced an extended  modular group action on $H_\ad$. 

Define 
\begin{equation}
\mathcal{L}:H  \lra H, \quad h\mapsto v h,
\end{equation}
where $v$ is the ribbon element of $H$.

\begin{theorem}\label{thm-modular-group-action}
Let $H$ be a factorizable ribbon Hopf algebra. Then the maps
$
\mathcal{F},~\mathcal{L}
$
define an extended modular group action on $H_\ad$. Furthermore, this action descends to a projective $SL_2(\Z)$ action  on 
$\zl(H)$ and on $\zl_{\Hig}(H)$\footnote{This result is first stated in \cite{LMSS}, and the authors would like to thank C.~Schweigert for pointing out the references to us. }.  
\end{theorem}
\begin{proof}
 Corollary \ref{cor-F-perserves-Higman} shows that $\mathcal{F}$ preserves the $H$-module structure on $H_\ad$. Next, note that, since $v\in H$ is central, left multiplication by $v$ (or any central element of $H$) defines an endomorphism of $H_\ad$. The fact that $\mathcal{F}$ and $\mathcal{L}$ satisfy the extended $SL_2(\Z)$ relations follows from \cite[Theorem 1.1]{LyuMa}, and in a slightly modified, but equivalent, form (see \cite[Theorem 5.1]{La}). Furthermore, this action preserves the center $\zl(H)$, since $\mathcal{F}$ preserves the subspace of $\ad$-invariants in $H_\ad$ and $v \in \zl(H)$ is central. Finally, $\mathcal{F}$ preserves  $\zl_{\rm Hig}(H)$ again by Corollary \ref{cor-F-perserves-Higman} and $v$ preserves it since $\zl_{\rm Hig}(H) \subset \zl(H)$ is an ideal. Therefore the same action preserves $\zl_{\rm Hig}(H)$. 
 
The extended modular group action descends to a projective $SL_2(\Z)$ action on the center as shown in \cite{LyuMa} (see also \cite[Theorem 5.1]{La}). This is, essentially, because $S^2$ acts on $H$ as conjugation by the invertible element $u\in H$. 
\end{proof}

\section{Derived center of the small quantum group as a $\g$-module}
\label{sec-q-group}
\paragraph{Quantum groups at roots of unity.}
For the rest of the paper, we will take $\Bbbk=\C$ to be the field of complex numbers, and fix $q\in \C$ an primitive $l$th root of unity.

Given a complex semisimple Lie algebra $\g$, denote by $I=\{\alpha_i\}$ its simple root system. To define the quantum groups at a root of unity, we will always make the assumption that 
\begin{itemize}
\item $q$ is of odd order $l$ which is greater than or equal to the Coxeter number of $\g$, and coprime to the determinant of the Cartan matrix of $\g$.
\end{itemize}

The algebras $\Ul_q(\g)$, $U(\g)$ and $\ul_q(\g)$ carry a Hopf algebra structure satisfying 
\[ 
\Delta(E_{i}) = E_{i} \otimes 1 + K_i \otimes E_{i},  \quad 
\Delta(F_{i}) = 1 \otimes F_{i} + F_{i} \otimes K_i^{-1}, \quad  \Delta(K_i) = K_i \otimes K_i 
\] 
 and 
\[ S(E_{i}) = - K_i^{-1} E_{i},  \quad  S(F_{i}) = - F_{i} K_i , \quad   S(K_i) = K_i^{-1} \] 
 for all $i = 1 , \dots, r$.
 
When no confusion can be caused, we will usually abbreviate $\Ul=\Ul_q(\g)$, $U=U(\g)$ and $\ul=\ul_q(\g)$ in what follows.

\paragraph{A $U$-action on $\zl(\ul)$.} 
Now we will define an action of the universal enveloping algebra $U(\g)$ on the center of the corresponding small quantum group $\zl(\ul)$ arising from the Hopf-adjoint action of $\Ul$ on itself. 

\begin{lemma}  \label{ad_on_ul} 
The Hopf-adjoint action of $\Ul$ on itself preserves the small quantum group $\ul$. In other words, we have $\ul_\ad \subset \Ul_{\mathrm{ad}}$ as a $\Ul$-submodule. 
\end{lemma}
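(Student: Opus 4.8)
The plan is to reduce the claim to a finite check on algebra generators, in two stages matching the two arguments of $\ad$. Since the adjoint action makes $\Ul$ a module over itself, the assignment $h\mapsto \ad h$ is an algebra homomorphism $\Ul\to\End_\C(\Ul)$, so the set $T=\{h\in\Ul\mid \ad h(\ul)\subseteq\ul\}$ is a unital subalgebra of $\Ul$: it contains $1$, and $\ad(hh')=\ad h\circ \ad h'$ makes it closed under products. As $\Ul$ is generated by $E_i,F_i,E_i^{(l)},F_i^{(l)}$ together with the $K_j^{\pm1}$, it suffices to place each generator in $T$. The generators $E_i,F_i,K_j^{\pm1}$ lie in $\ul$, which is itself a Hopf subalgebra of $\Ul$ (its coproducts and antipodes stay inside $\ul$); hence they preserve $\ul$ under $\ad$ by Lemma \ref{ad-H-on-itself}. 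Everything therefore comes down to showing $\ad E_i^{(l)}(\ul)\subseteq\ul$ and $\ad F_i^{(l)}(\ul)\subseteq\ul$.

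For the second stage I would reduce the acted-on variable to the generators of $\ul$. The input is Lusztig's coproduct formula
\[
\Delta(E_i^{(l)})=\sum_{k=0}^{l} q_i^{k(l-k)}\, E_i^{(l-k)}K_i^{k}\otimes E_i^{(k)},
\]
whose two boundary terms are $E_i^{(l)}\otimes 1$ and $K_i^{l}\otimes E_i^{(l)}$, while every interior term ($0<k<l$) lies in $\ul\otimes\ul$. Writing $\partial_i=\ad E_i^{(l)}$ and using that $K_i^l$ is central in $\Ul$ (because $q^l=1$, so $\ad K_i^l=\mathrm{id}$), the module-algebra identity of Lemma \ref{ad-H-on-itself} gives, for $a,b\in\ul$,
\[
\partial_i(ab)=\partial_i(a)\,b+a\,\partial_i(b)+\sum_{0<k<l} q_i^{k(l-k)}\,\ad(E_i^{(l-k)}K_i^{k})(a)\,\ad(E_i^{(k)})(b),
\]
with the interior sum landing in $\ul$. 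Hence $\{a\in\ul\mid \partial_i(a)\in\ul\}$ is a unital subalgebra of $\ul$, and it is enough to evaluate $\partial_i$ on $E_j,F_j,K_j^{\pm1}$ (and likewise for $F_i^{(l)}$).

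The crux, and the main obstacle, is the third stage: these generator computations must produce elements of $\ul$, i.e. the potentially dangerous contributions proportional to $E_i^{(l)}$ or $E_i^{(l+1)}$ must cancel. Using $S(E_i^{(k)})=(-1)^k q_i^{k(k-1)}K_i^{-k}E_i^{(k)}$, a direct calculation shows these contributions come only from the boundary terms $k=0,l$. For $K_j^{\pm1}$ the two boundary terms are negatives of each other since $l$ is odd, hence cancel; for $E_i$ the whole expression collapses to a multiple of $E_i^{(l+1)}$ with coefficient $1+(-1)^l q_i^{l(l+1)}$, which vanishes because $l$ is odd and $q^l=1$. The off-diagonal cases and $\ad E_i^{(l)}(F_j)$ are handled the same way, now also invoking the quantum Serre relations and the quantum Lucas congruence $\binom{l+1}{k+1}_q=0$ for $0<k<l$, and the statements for $F_i^{(l)}$ follow by the Chevalley symmetry $E_i\leftrightarrow F_i$, $K_i\leftrightarrow K_i^{-1}$. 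The cancellations rest squarely on the standing hypotheses that $l$ is odd and $q$ is a primitive $l$-th root of unity; tellingly, the vanishing is the exact shadow of the fact that $E_i^{(l)}$ maps to the primitive element $e_i$ under the Frobenius map \eqref{eqn_quantumFrob}.
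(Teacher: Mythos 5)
Your proposal is correct and takes essentially the same route as the paper: reduce to the $l$-th divided powers, expand $\ad E_i^{(l)}$ via Lusztig's coproduct formula so that all interior terms $0<k<l$ visibly lie in $\ul$, and handle the surviving boundary contribution $[E_i^{(l)},x]$ on generators $x\in\{E_j,F_j,K_j^{\pm1}\}$ of $\ul$ by Lusztig's commutation relations (your ``quantum Serre relations''), which is precisely what the paper cites. Two local repairs: your Lucas-type claim fails at $k=l-1$, since $\binom{l+1}{l}_q=[l+1]_q=1\neq 0$ --- the corresponding term nevertheless vanishes because $[k+1]_q\binom{l+1}{k+1}_q=[l+1]_q\binom{l}{k}_q$ and $\binom{l}{k}_q=0$ for $0<k<l$ --- and in the genuinely non-commuting cases ($E_j$ with $j\neq i$, and $F_i$) the two boundary terms do not cancel, as $[E_i^{(l)},E_j]\neq 0$ in general; rather, Lusztig's higher-order relations rewrite this commutator as an element of $\ul$ (e.g.\ $E_iE_jE_i^{(l-1)}$ in the simply-laced case), exactly as in the paper's proof.
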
 

\begin{proof} We only need to check that the action of the $l$-th divided powers $\{E_i^{(l)} , F_i^{(l)} \}_{i=1}^r$ preserves $u$. 
 We use the formulas derived in the series of papers \cite{LusModularRepQGp}, \cite{LusfdHopf}, \cite{Lusrootsof1}. 
\[ \Delta (E_i^{(l)})  = \sum_{0 \leq k \leq l} q^{d_i k (l-k)} E_i^{(l-k)} K_i^k \otimes E_i^{(k)} .\] 
Let $a \in \ul$. Then we have  
\begin{align*} 
{\mathrm{ad}} E_i^{(l)} ( a)& =  \sum E_{i, 1}^{l} \; a\; S(E_{i, 2}^{(l)})  =   \sum_{0 \leq k \leq l} q^{d_i k (l-k)} E_i^{(l-k)} K_i^k \; a \;  S(E_i^{(k)}) \\
& = \sum_{0 \leq k \leq l} q^{d_i k (l-k)} E_i^{(l-k)} K_i^k \; a \; (-1)^k  q^{d_i k(k-1)} K_i^{-k} E_{i}^{(k)}  \\
& = E_i^{(l)} a - a E_i^{(l)} + \sum_{0 < k < l} q^{d_i k ( l-1)} (-1)^k E_i^{(l-k)} K_i^k \; a \;  K_i^{-k} E_{i}^{(k)} .
\end{align*}
The last term belongs to $\ul$. The commutation relations in $\Ul$ given in \cite{LusfdHopf, Lusrootsof1} ensure that 
for any $x \in \ul$, such that $x$ is one of the generators $\{ E_j, F_j, K_j^{\pm1} \}$ we have 
\[ E_i^{(l)} x = x  E_i^{(l)} +  {\mathrm{terms}} \;\;{\mathrm{in }}  \;\; \ul .\] 
For example, we have in the case $\langle \alpha_i, \alpha_j \rangle =-1$ in the simply laced case: 
\[ E_i^{(l)} E_j = E_j E_i^{(l)} + E_i E_j E_i^{(l-1)} , \quad \quad  E_i^{(l)} F_i = F_i E_i^{(l)} + \frac{K_i q^{d_i} - K_i^{-1} q^{-d_i}}{q^{d_i} - q^{-d_i}} .\]  
Therefore for any $a \in \ul$ we have 
\[ E_i^{(l)} a = a E_i^{(l)} + {\mathrm{terms}} \;\;{\mathrm{in }}  \;\; \ul .\] 
The computation for $ {\mathrm{ad}} F_i^{(l)} (a) $ is similar. 
\end{proof} 

Lemmas  \ref{ad-H-on-itself} and \ref{ad_on_ul} show that $\ul$ is a $\Ul$-module algebra with respect to the adjoint action. Furthermore, it is a normal Hopf subalgebra in the sense of Definition \ref{def-hopf-quotient}. The Hopf quotient algebra can be identified, via the quantum Frobenius map \eqref{eqn_quantumFrob}, with the classical universal enveloping algebra of $\g$: 
\begin{equation}
\Ul_q(\g)\qq\ul_q(\g)=U(\g).
\end{equation}

\begin{lemma}  \label{ad_on_center}  
\begin{enumerate} 
\item[(i)] The Hopf-adjoint action of the $l$-th divided powers preserves the center of the small quantum group $\zl(\ul)$. 
\item[(ii)] For any $z \in \zl(\ul)$  and for all $i= 1, \ldots r$ we have  
\[ {\mathrm{ ad}} E_i^{(l)} ( z) =  E_i^{(l)} z - z E_i^{(l)} , \quad \quad  {\mathrm{ ad}}  F_i^{(l)} ( z) = F_i^{(l)} z - z F_i^{(l)}  .  \] 
\end{enumerate}  
\end{lemma}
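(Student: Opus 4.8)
The plan splits according to the two parts. For part (i), I would argue abstractly rather than compute. In the paragraph following Lemma~\ref{ad_on_ul} it is already recorded that $\ul$ is a $\Ul$-module algebra under the Hopf-adjoint action. Hence Lemma~\ref{H-mod-alg}, applied with $H=\Ul$ and $A=\ul$, shows that the entire $\Ul$-action preserves the center $\zl(\ul)$. As the $l$-th divided powers $E_i^{(l)}, F_i^{(l)}$ are particular elements of $\Ul$, their adjoint action in particular sends $\zl(\ul)$ into itself. No calculation is needed here.

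For part (ii), I would return to the explicit expansion used in the proof of Lemma~\ref{ad_on_ul}, namely
\[
\ad E_i^{(l)}(a) = E_i^{(l)} a - a E_i^{(l)} + \sum_{0<k<l} (-1)^k q^{d_i k(l-1)}\, E_i^{(l-k)} K_i^{k}\, a\, K_i^{-k} E_i^{(k)},
\]
which holds for every $a\in\ul$, and then specialize to $a=z\in\zl(\ul)$. The claim is exactly that the correction sum disappears, leaving $\ad E_i^{(l)}(z) = E_i^{(l)} z - z E_i^{(l)}$, so the whole task reduces to showing each summand of the sum vanishes.

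The heart of the argument is this term-by-term vanishing. For every index $0<k<l$, the factors $E_i^{(l-k)}$, $K_i^{\pm k}$ and $E_i^{(k)}$ all belong to $\ul$, so the summand lives in $\ul$ and I may use that $z$ is central \emph{in} $\ul$. Commuting $z$ past $K_i^{k}$ lets $K_i^k$ and $K_i^{-k}$ cancel, and commuting it past $E_i^{(k)}$ then collapses the summand to a scalar multiple of $E_i^{(l-k)} E_i^{(k)}\, z$. But $E_i^{(l-k)} E_i^{(k)}$ is a scalar multiple of $E_i^{l}$, which is zero in $\ul$; from the point of view of $\Ul$ this is the statement that $E_i^{(l-k)} E_i^{(k)} = \binom{l}{k}_{d_i} E_i^{(l)}$ with the Gaussian binomial coefficient $\binom{l}{k}_{d_i}$ vanishing for $0<k<l$. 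Either way every summand is zero, giving the desired formula; the case of $F_i^{(l)}$ is handled identically starting from the analogous coproduct.

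The step that deserves the most care --- more a point of bookkeeping than a real obstacle --- is twofold. First, one must know that $E_i^{(l-k)} E_i^{(k)}$ really vanishes, which uses the defining relation $E_i^{l}=0$ in $\ul$, together with the fact that $q^{d_i}$ is again a primitive $l$-th root of unity (ensured by the standing assumptions that $l$ is odd, at least the Coxeter number, and coprime to the determinant of the Cartan matrix, so that $\gcd(d_i,l)=1$ and the relevant quantum factorials $[k]_{d_i}!$, $[l-k]_{d_i}!$ are invertible). Second, since $E_i^{(l)}$ itself does \emph{not} lie in $\ul$, one cannot move $z$ through it; this is exactly why the outer commutator $E_i^{(l)} z - z E_i^{(l)}$ survives while the inner sum collapses, and keeping track of which products remain inside $\ul$ is what makes the centrality of $z$ applicable in the first place.
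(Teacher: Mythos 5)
Your proposal is correct and follows essentially the same route as the paper: part (i) is deduced abstractly from the module-algebra structure of $\ul_\ad$ over $\Ul$ (Lemmas \ref{ad_on_ul}, \ref{ad-H-on-itself}, \ref{H-mod-alg}), and part (ii) specializes the expansion of $\ad E_i^{(l)}$ from the proof of Lemma \ref{ad_on_ul} to a central $z$, using centrality to collapse each correction term to a multiple of $E_i^{(l-k)}E_i^{(k)}z=\binom{l}{k}_{d_i}E_i^{(l)}z$, which vanishes since the Gaussian binomial coefficient is zero for $0<k<l$ at the root of unity --- exactly the cancellation the paper performs.
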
 

\begin{proof}  
The first statement follows from Lemmas \ref{ad_on_ul}, \ref{ad-H-on-itself}  and  \ref{H-mod-alg}: since $\ul$ is a $U$-module algebra with respect to the adjoint action, this action preserves the center of $\ul$.  In particular the action of the $l$-th divided powers preserves the center of $\ul$. 

Now let $z \in \zl(\ul)$. Then from the proof of Lemma \ref{ad_on_ul} we have  
\begin{align*}  
{\mathrm{ad}} E_i^{(l)} (z) & =  E_i^{(l)} z - z E_i^{(l)} + \sum_{0 < k < l} q^{d_i k ( l-1)} (-1)^k E_i^{(l-k)} K_i^k  z  K_i^{-k} E_{i}^{(k)}  \\
& = E_i^{(l)} z - z E_i^{(l)} +  \sum_{0 < k < l} q^{d_i k ( l-1)} (-1)^k  \frac{[l]_{d_i}!}{[k]_{d_i}![l-k]_{d_i}!}  E_i^{(l)}  \; z  =  E_i^{(l)} z - z E_i^{(l)} .
\end{align*}
 The computation for $ {\mathrm{ad}}  F_i^{(l)} ( a)$ is similar. 

 \end{proof} 

\begin{theorem} \label{Action-div-powers}
The Hopf-adjoint action of $\Ul_q(\g)$ on $\ul_q(\g)$ induces a $U(\g)$-module structure on the center $\zl$ of the small quantum group $\ul$ via the quantum Frobenius homomorphism. Furthermore, the action is given by taking commutators with the generators $E_i^{(l)}$, $F_i^{(l)}$, $i = 1, \ldots r$.
\end{theorem}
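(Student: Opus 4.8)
The plan is to assemble the preceding lemmas so that the ambient $\Ul$-action on $\zl(\ul)$ descends along the quantum Frobenius map. First I would record that $\zl(\ul)$ is a $\Ul$-module. By Lemma \ref{ad_on_ul} the Hopf-adjoint action of $\Ul$ preserves $\ul$, so $\ul_\ad$ is a left $\Ul$-module; combined with Lemma \ref{ad-H-on-itself} this makes $\ul$ a $\Ul$-module algebra, and Lemma \ref{H-mod-alg} then guarantees that the adjoint action of $\Ul$ preserves the center $\zl(\ul)$. Writing $\ad\colon \Ul\lra \End(\zl(\ul))$ for the resulting representation, the key formal fact I would use is that this is an algebra homomorphism, i.e.\ a genuine left action; this is simply the restriction to the invariant subspace of the left $\Ul$-module structure on $\ul_\ad$.

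Next I would show that this action factors through the Frobenius quotient $\Ul \qq \ul = U(\g)$ from \eqref{eqn_quantumFrob} and Definition \ref{def-hopf-quotient}. The mechanism is that $\ul$ itself acts on $\zl(\ul)$ only through the counit: by Lemma \ref{ad-H-on-itself}(ii) the center is exactly the space of $\ad$-invariants, so $\ad h(z) = \epsilon(h)z$ for all $h\in\ul$ and $z\in\zl(\ul)$. In particular the augmentation ideal $\ul_+$ acts as zero, and since $\ad$ is an algebra homomorphism the two-sided ideal $\Ul\ul_+ = \ul_+\Ul$ annihilates $\zl(\ul)$. Hence the $\Ul$-action descends to a well-defined action of $\Ul/\Ul\ul_+ = U(\g)$, which is precisely the asserted $U(\g)$-module structure on $\zl(\ul)$.

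For the explicit description, I would trace the Chevalley generators through the Frobenius homomorphism, under which $E_i^{(l)}\mapsto e_i$ and $F_i^{(l)}\mapsto f_i$. Consequently the action of $e_i$ (resp.\ $f_i$) on a central element is computed by lifting to $E_i^{(l)}$ (resp.\ $F_i^{(l)}$) and applying $\ad$, and Lemma \ref{ad_on_center}(ii) identifies $\ad E_i^{(l)}$ and $\ad F_i^{(l)}$ on $\zl(\ul)$ with the commutators $[E_i^{(l)},\mbox{-}]$ and $[F_i^{(l)},\mbox{-}]$. The genuine analytic content, namely that the divided powers preserve $\ul$ and $\zl(\ul)$ together with the cancellation of the middle terms in $\Delta(E_i^{(l)})$, has already been established in Lemmas \ref{ad_on_ul} and \ref{ad_on_center}. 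Thus the only step requiring care in the theorem itself is the descent to the quotient; I expect verifying that $\ul_+$ (and hence $\Ul\ul_+$) acts trivially to be the conceptual crux, although it is short once the module-algebra formalism is in hand.
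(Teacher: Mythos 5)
Your proposal is correct and follows essentially the same route as the paper: establish via Lemmas \ref{ad_on_ul}, \ref{ad-H-on-itself} and \ref{H-mod-alg} that the Hopf-adjoint $\Ul$-action preserves $\zl(\ul)$, observe that $\ul$ acts on the center only through the counit so the action kills $\Ul\ul_+$ and descends to $\Ul\qq\ul = U(\g)$, and then read off the commutator formula from Lemma \ref{ad_on_center}(ii). The paper's proof is terser (it asserts the factorization through the Frobenius quotient without spelling out that the ideal $\Ul\ul_+$ annihilates the center), but the logical content is identical to yours.
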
 

\begin{proof} 
We use the quantum Frobenius homomorphism of equation \eqref{eqn_quantumFrob}, identifying $U(\g)$ as the quotient $\Ul\qq\ul$ under the quantum Frobenius map $\phi$ (for the detailed treatment see \cite{Lusrootsof1}, section 8). By Lemma \ref{ad_on_center} the generators $E_i^{(l)}, F_i^{(l)}$, $i = 1, \ldots r$ preserve the center $\zl(\ul)$ under the adjoint action. On the other hand, the adjoint action of the small  quantum group $\ul$ is trivial on its center. Therefore, the action factors through the Frobenius homomorphism and gives rise to the action of a (completion of) the universal enveloping algebra $U(\g)$. 
\end{proof} 

According to the linkage principle \cite{APW, APW2}, the category of representations of $\ul$ decomposes into a direct sum of blocks 
\begin{equation} \label{eqn-block-decomp}
{\rm Rep}(\ul_q(\g)) \cong \bigoplus_{\lambda \in P/(W \ltimes lP) } {\rm Rep}(\ul_\lambda) 
\end{equation}
The blocks are parametrized by the orbits  of the extended affine Weyl $W \ltimes lP$ group on the weight lattice $P$.  The block acting nontrivially on the trivial representation of $\ul$ is called the \emph{principal block} of the category, and denoted $\ul_0$ in what follows. The Jantzen translation principle tells us that, if the stablizer subgroup of a weight $\lambda\in P$ in $W$ is trivial, then $ {\rm Rep}(\ul_\lambda) $ is equivalent to $ {\rm Rep}(\ul_0) $.

\begin{corollary}\label{cor-Ug-preserve-block}
The Hopf-adjoint action of $\Ul_q(\g)$ on $\ul_q(\g)$ preserves the block decomposition of $\ul_q(\g)$.
\end{corollary}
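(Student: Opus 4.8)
The plan is to recast the block decomposition \eqref{eqn-block-decomp} in terms of central idempotents and then show that each of them is annihilated by the induced $\g$-action. The decomposition of $\mathrm{Rep}(\ul)$ into blocks corresponds to a complete set $\{e_\lambda\}_\lambda$ of orthogonal primitive idempotents in $\zl(\ul)$, giving $\ul=\bigoplus_\lambda \ul e_\lambda$ with each $B_\lambda:=\ul e_\lambda=e_\lambda\ul$ a two-sided ideal (the $e_\lambda$ being central). To prove that the Hopf-adjoint action preserves this decomposition, I would first reduce to the claim that every $e_\lambda$ is adjoint-invariant in the strong sense $\ad h(e_\lambda)=\epsilon(h)e_\lambda$ for all $h\in\Ul$. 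Equivalently, since by Theorem \ref{Action-div-powers} the adjoint action of $\Ul$ on $\zl(\ul)$ factors through the quantum Frobenius map onto the $U(\g)$-action, and since $\epsilon=\epsilon_{U(\g)}\circ\phi$ as $\phi$ is a Hopf map, this amounts to showing that each $e_\lambda$ spans a \emph{trivial} $U(\g)$-submodule of $\zl(\ul)$.

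The key step will be to observe that the $U(\g)$-action from Theorem \ref{Action-div-powers} restricts to an action by \emph{derivations} of the commutative algebra $\zl(\ul)$. Indeed, Lemma \ref{ad_on_center} tells us that the Lie algebra generators $e_i,f_i\in\g$ act on a central element $z$ by the commutators $z\mapsto [E_i^{(l)},z]$ and $z\mapsto [F_i^{(l)},z]$, whose images again lie in $\zl(\ul)$. As bracketing by a fixed element is an associative derivation, $[E_i^{(l)},zw]=[E_i^{(l)},z]\,w+z\,[E_i^{(l)},w]$, and all terms appearing are central, these operators satisfy the Leibniz rule on $\zl(\ul)$. Conceptually this is just the statement that the primitive elements $e_i,f_i$ of $U(\g)$ act as derivations on the module algebra $\zl(\ul)$; the only point needing care is that the commutators preserve the center and obey Leibniz there, which is exactly what Lemma \ref{ad_on_center} supplies.

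I would then invoke the elementary fact that any derivation $D$ of a commutative algebra kills idempotents: from $D(e)=D(e^2)=2eD(e)$, multiplying by $e$ gives $eD(e)=2eD(e)$, hence $eD(e)=0$ and $D(e)=0$. Applied to each $e_\lambda$, this shows that $\g$ annihilates every primitive central idempotent, so $e_\lambda$ spans a trivial $U(\g)$-submodule and $\ad h(e_\lambda)=\epsilon(h)e_\lambda$ for all $h\in\Ul$. Combining this with the $\ad$-equivariance of the multiplication (Lemma \ref{ad-H-on-itself}) and the invariance of $\ul$ under the adjoint action (Lemma \ref{ad_on_ul}), for any $a\in\ul$ one computes
\[
\ad h(e_\lambda a)=\sum \ad h_1(e_\lambda)\,\ad h_2(a)=\sum \epsilon(h_1)\,e_\lambda\,\ad h_2(a)=e_\lambda\,\ad h(a)\in B_\lambda,
\]
so each block $B_\lambda$ is an adjoint-submodule and the decomposition is preserved. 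The substantive input is already contained in Theorem \ref{Action-div-powers} and Lemma \ref{ad_on_center}; the remaining obstacle is merely the derivation-kills-idempotents observation, and the final verification is formal. (Note that this argument yields the stronger conclusion that each block is preserved individually, rather than merely permuted.)
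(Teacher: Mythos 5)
Your proof is correct, but it takes a genuinely different route from the paper's. The paper disposes of this corollary in one line by citing an external structural result, \cite[Proposition 4.2]{La}: the primitive central idempotents of $\ul$ arise by restricting (Lagrange interpolations of) Harish-Chandra central elements of the big quantum group $\Ul$ to $\ul$, and any element central in $\Ul$ is automatically $\ad\,\Ul$-invariant by Lemma \ref{ad-H-on-itself}, so each block is preserved. Your argument instead stays entirely inside the paper's own toolkit: by Theorem \ref{Action-div-powers} the adjoint action on $\zl(\ul)$ factors through the quantum Frobenius map, by Lemma \ref{ad_on_center} the Chevalley generators act there as commutators with $E_i^{(l)}$, $F_i^{(l)}$, hence as derivations of the commutative algebra $\zl(\ul)$ (and then so does all of $\g$, since brackets of derivations are derivations), and derivations annihilate idempotents; the final passage from invariance of the $e_\lambda$ to preservation of each block $\ul e_\lambda$ via the $\ad$-equivariance of multiplication (Lemma \ref{ad-H-on-itself}) and Lemma \ref{ad_on_ul} is exactly right. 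The trade-off is this: the paper's citation yields the stronger fact that the block idempotents lift to central elements of the big quantum group (very much in the spirit of Conjecture \ref{triv-g-action}), whereas your argument is self-contained, elementary, and more general --- it applies verbatim to any normal Hopf subalgebra whose Hopf quotient is generated by primitive elements, with no Harish-Chandra theory needed. Both arguments show each block is preserved individually rather than merely permuted.
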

\begin{proof}
By \cite[Proposition 4.2]{La}, the central idempotents of $\ul$ arise from (Lagrangian interpolations of) Harish-Chandra elements of $\Ul$ restricted to the small quantum group. Such central elements are then $\Ul$-adjoint invariant.
\end{proof}

\paragraph{Hopf cohomology and $\g$-action on the derived center of $\ul$.}
Now we will define the action of $U=U(\g)$ on the total Hochschild cohomology of $\ul$. We will show that, restricted to the 
center $\mHH^0(\ul)$, this action coincides with the $U$-action defined in the previous section.

We recall a result of Ginzburg-Kumar \cite{GinKu} which interprets the Hochschild cohomology groups, or {\em the derived center} of a Hopf algebra $H$, as its Hopf cohomology with coefficients in the adjoint representation $H_\ad$. Let us denote by
$H^{\mathrm{e}}:=H\otimes H^{\mathrm{op}}$ the \emph{enveloping algebra} of $H$, and the algebra embedding
\begin{equation}
\delta: H\lra H^{\mathrm{e}},\quad h\mapsto \sum h_1\otimes S(h_2).
\end{equation}
Clearly, restriction of the natural bimodule structure of $H$ along $\delta$ gives rise to $H_\ad$. Furthermore, it is an easy exercise to show that $H^{\rm e}$ is a free module over $H$ along $\delta$, and
\begin{equation}\label{eqn-Had-from-bimod}
H\cong H^{\rm e}\otimes_{\delta, H} \Bbbk \cong \Bbbk\otimes_{H, \delta} H^{\rm e}.
\end{equation}

\begin{theorem}[Ginzburg-Kumar]   \label{HH-Hopf}
Given a Hopf algebra $H$, there is an algebra isomorphism
\[
\mHH^\bullet(H)\cong {\rm Ext}^\bullet_{H^{\rm e}} (H, H)\cong {\rm Ext}^\bullet_{H} (\Bbbk, H_\ad) \cong \mH^\bullet(H,H_\ad). 
\]
\end{theorem}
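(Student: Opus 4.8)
The plan is to prove the two displayed isomorphisms separately, treating the outer equalities as definitional and concentrating on the middle identification. By definition, $\mHH^\bullet(H)=\Ext^\bullet_{H^{\mathrm e}}(H,H)$, where $H$ is regarded as a module over its enveloping algebra $H^{\mathrm e}=H\otimes H^{\mathrm{op}}$ via its regular bimodule structure; likewise the last isomorphism $\Ext^\bullet_H(\Bbbk,H_\ad)\cong \mH^\bullet(H,H_\ad)$ is simply the definition of Hopf cohomology with coefficients in $H_\ad$. Thus everything rests on the middle isomorphism $\Ext^\bullet_{H^{\mathrm e}}(H,H)\cong \Ext^\bullet_H(\Bbbk,H_\ad)$, which I would deduce from a change-of-rings argument along the algebra embedding $\delta$.

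First I would record the two structural facts about $\delta$. One checks directly from the Hopf axioms, using that $S$ is an anti-homomorphism, that $\delta\colon H\lra H^{\mathrm e}$ is an algebra map, and that restriction of the regular $H^{\mathrm e}$-module $H$ along $\delta$ recovers precisely $H_\ad$, since $\delta(h)$ acts on $m\in H$ by $\sum h_1 m S(h_2)=\ad h(m)$. Next, equation \eqref{eqn-Had-from-bimod} exhibits $H$ as the module induced from the trivial $H$-module, $H\cong H^{\mathrm e}\otimes_{\delta,H}\Bbbk$. Because $H^{\mathrm e}$ is free, hence projective, as a (right) $H$-module along $\delta$, the induction functor $\mathrm{Ind}=H^{\mathrm e}\otimes_{\delta,H}(\mbox{-})$ is exact and sends projective $H$-modules to projective $H^{\mathrm e}$-modules; it is moreover left adjoint to the exact restriction functor $\mathrm{Res}_\delta$ via the tensor--hom adjunction.

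With these in hand, the Eckmann--Shapiro lemma gives, for any $H^{\mathrm e}$-module $N$, an isomorphism $\Ext^\bullet_{H^{\mathrm e}}(\mathrm{Ind}(\Bbbk),N)\cong \Ext^\bullet_H(\Bbbk,\mathrm{Res}_\delta N)$. Taking $N=H$ and substituting $\mathrm{Ind}(\Bbbk)\cong H$ together with $\mathrm{Res}_\delta H=H_\ad$ yields the desired isomorphism of graded vector spaces. Concretely, I would realize this by applying $\mathrm{Ind}$ to the bar resolution of $\Bbbk$ over $H$ to obtain a projective resolution of $H$ over $H^{\mathrm e}$, and then invoking the adjunction isomorphism termwise.

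The main obstacle, and the part requiring real care, is promoting this vector-space isomorphism to an isomorphism of graded algebras. The left-hand side carries the Yoneda (cup) product of Hochschild cohomology, while on the right-hand side the ring structure on $\mH^\bullet(H,H_\ad)$ comes from the fact that $H_\ad$ is an algebra object in the category of $H$-modules, i.e. the multiplication $m\colon H_\ad\otimes H_\ad\lra H_\ad$ is $\ad$-equivariant by Lemma \ref{ad-H-on-itself}(i); this induces a cup product on $\Ext^\bullet_H(\Bbbk,H_\ad)$ through $\Ext^\bullet_H(\Bbbk,H_\ad)^{\otimes 2}\lra \Ext^\bullet_H(\Bbbk,H_\ad\otimes H_\ad)\xrightarrow{m_*}\Ext^\bullet_H(\Bbbk,H_\ad)$. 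To match the two products I would work at the cochain level on the induced bar resolution, verifying that the Alexander--Whitney/shuffle formula computing the Hochschild cup product transports, under the adjunction isomorphism, to the cup product built from $m$ and the diagonal $H$-action. This cochain-level comparison is the technical heart of the argument.
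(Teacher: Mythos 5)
Your proposal is correct and follows essentially the same route as the paper: both reduce to the middle isomorphism via the identification $H\cong H^{\mathrm e}\otimes_{\delta,H}\Bbbk$, the freeness of $H^{\mathrm e}$ over $H$ along $\delta$, and the (derived) induction--restriction adjunction applied to $M=H$, with restriction along $\delta$ recovering $H_\ad$. The only difference is emphasis: the paper dismisses the compatibility of the isomorphism with the ring structures as an easy exercise, whereas you rightly flag it as the step needing a cochain-level (bar resolution) verification.
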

\begin{proof}[Sketch of proof ](For details, see \cite[Section 5.6]{GinKu}) The two outer isomorphisms are just definitions of the cohomology groups. It suffices to show the middle one. To do this, it suffices to show that $\Ext^\bullet_{H^{\rm e}}(H,\mbox{-})$ and $\Ext^\bullet_H(\Bbbk,\mathrm{Res}^{H^{\rm e}}_H(\mbox{-}))$ are isomorphic as derived functors on the category of $H^{\rm e}$-modules. Now using derived induction-restriction adjunction and equation \eqref{eqn-Had-from-bimod}, we have:
\[
\Ext^\bullet_{H^{\rm e}}(H,M)\cong \Ext^\bullet_{H^{\rm e}}(\mathrm{Ind}^{H^{\rm e}}_{H} \Bbbk, M)\cong \Ext^\bullet_{H^{\rm e}}( \Bbbk, \mathrm{Res}^{H^{\rm e}}_{H} M).
\]
Applying the isomorphism to $M=H$ gives us the desired isomorphism. The algebra structure $\mH^\bullet(H,H_\ad)$ arises from the multiplication of $m:H_\ad \otimes H_\ad \lra H_\ad$, and it is an easy exercise to show that it is compatible with the isomorphism.
\end{proof}

Although it is not explicitly stated in \cite{GinKu}, the following result is obtained similarly. 

\begin{corollary}\label{cor-Hochschild-Homology}
Given a Hopf algebra $H$, there is an isomorphism
\[
\mHH_\bullet(H)=\mathrm{Tor}_\bullet^{H^{\rm e}}(H,H)\cong \mathrm{Tor}^{H}_\bullet(\Bbbk, H_\ad).
\]
\end{corollary}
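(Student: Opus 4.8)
The plan is to mirror the proof of Theorem~\ref{HH-Hopf}, replacing the induction--restriction adjunction for $\Ext$ by the corresponding flat base-change isomorphism for $\Tor$. The first equality $\mHH_\bullet(H)=\Tor_\bullet^{H^{\rm e}}(H,H)$ is the definition of Hochschild homology, so the whole content lies in the second isomorphism.

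First I would fix conventions, viewing the left copy of $H$ in $\Tor_\bullet^{H^{\rm e}}(H,H)$ as a right $H^{\rm e}$-module and the right copy as a left $H^{\rm e}$-module, both through the bimodule structure of $H$. By \eqref{eqn-Had-from-bimod} the right $H^{\rm e}$-module $H$ is induced from the trivial right $H$-module along $\delta$, that is $H\cong \Bbbk\otimes_{H,\delta}H^{\rm e}$; and restriction of the \emph{left} $H^{\rm e}$-module $H$ along $\delta$ is precisely $H_\ad$ as a left $H$-module, since $\delta(h)\cdot x=\sum h_1 x S(h_2)=\ad h(x)$.

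Next I would set up the base-change computation. Because $H^{\rm e}$ is free, hence flat, as a left $H$-module along $\delta$, the functor $(\mbox{-})\otimes_{H,\delta}H^{\rm e}$ is exact and sends projective right $H$-modules to projective right $H^{\rm e}$-modules. Hence, starting from any projective resolution $Q_\bullet\lra \Bbbk$ of right $H$-modules, the complex $Q_\bullet\otimes_H H^{\rm e}\lra \Bbbk\otimes_H H^{\rm e}\cong H$ is a projective resolution of the right $H^{\rm e}$-module $H$. Computing $\Tor^{H^{\rm e}}$ with this resolution and using associativity of tensor products together with the cancellation $(Q_\bullet\otimes_H H^{\rm e})\otimes_{H^{\rm e}}H\cong Q_\bullet\otimes_H(H^{\rm e}\otimes_{H^{\rm e}}H)\cong Q_\bullet\otimes_H H_\ad$, I obtain
\[
\Tor_\bullet^{H^{\rm e}}(H,H)\cong H_\bullet(Q_\bullet\otimes_H H_\ad)=\Tor_\bullet^H(\Bbbk,H_\ad),
\]
where the last identification uses the description of $\mathrm{Res}^{H^{\rm e}}_H H$ as $H_\ad$ from the previous paragraph.

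The verification that $Q_\bullet\otimes_H H^{\rm e}$ resolves $H$ and the tensor-cancellation bookkeeping are exact homological analogues of the $\Ext$ computation in Theorem~\ref{HH-Hopf} and present no real difficulty once flatness is in hand. The one point I expect to require genuine care is the left/right module bookkeeping: one must be sure that the freeness of $H^{\rm e}$ over $H$ is used for the module structure actually appearing in the base change, and that the restricted module is $H_\ad$ with its left action rather than the right adjoint module $H_\ad^\prime$. This is governed entirely by the antipode in $\delta$, and keeping the two sides straight is the essential subtlety of the argument.
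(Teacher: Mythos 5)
Your proposal is correct and takes essentially the same approach as the paper: both arguments replace the right $H^{\rm e}$-module $H$ by the induced module $\Bbbk\otimes_{H,\delta}H^{\rm e}$ from \eqref{eqn-Had-from-bimod}, cancel the $H^{\rm e}$ factor using that $H^{\rm e}$ is free (hence flat) over $H$ along $\delta$, and identify the restriction of $H$ along $\delta$ with $H_\ad$. The only difference is presentational, in that the paper writes the cancellation with derived tensor products $\otimes^{\mathbf{L}}$ while you unwind it with an explicit projective resolution.
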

\begin{proof}
Similar as in the proof of the previous result, we have, using equation \eqref{eqn-Had-from-bimod}, 
\[
\mathrm{Tor}_\bullet^{H^{\rm e}}(H,H)\cong (\Bbbk \otimes_{H,\delta} H^{\rm e})\otimes^{\mathbf{L}}_{H^{\rm e}} H\cong \Bbbk \otimes^{\mathbf{L}}_{H,\delta} H\cong\mathrm{Tor}^H_\bullet(\Bbbk, H_\ad).
\]
The result follows.
\end{proof}

For finite dimensional Hopf algebras, there is a well-known duality between Hochschild homology and cohomology groups.

\begin{lemma}
Let $H$ be a finite dimensional Hopf algebra. Then there is an isomorphism of graded vector spaces
\[
\mHH^{\bullet}(H)\cong (\mHH_{-\bullet} (H))^*.
\]
\end{lemma}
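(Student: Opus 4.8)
The plan is to reduce the asserted duality to a duality between Hopf cohomology and Hopf homology with coefficients in the adjoint module $H_\ad$, and then to exploit the self-duality of $H_\ad$. Unwinding the grading, the claim is a degreewise isomorphism $\mHH^n(H)\cong (\mHH_n(H))^*$ for each $n$. First I would rewrite both sides using the earlier results: by Theorem \ref{HH-Hopf} we have $\mHH^n(H)\cong \Ext^n_H(\Bbbk, H_\ad)$, and by Corollary \ref{cor-Hochschild-Homology} we have $\mHH_n(H)\cong \Tor_n^H(\Bbbk, H_\ad)$. Thus it suffices to produce, for each $n$, an isomorphism
\[
\Ext^n_H(\Bbbk, H_\ad)\cong \left(\Tor_n^H(\Bbbk, H_\ad)\right)^*.
\]

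Next I would invoke the fact that $\Bbbk$ is a field, so that $\Bbbk$-linear dualization $(\mbox{-})^*=\Hom_\Bbbk(\mbox{-},\Bbbk)$ is exact. Choosing a projective resolution $P_\bullet\lra H_\ad$ of left $H$-modules, the chain-level adjunction $\Hom_\Bbbk(\Bbbk\otimes_H P_\bullet,\Bbbk)\cong \Hom_H(P_\bullet,\Bbbk)$ (with $\Bbbk$ the trivial right, resp.\ left, module) identifies, upon passing to (co)homology,
\[
\left(\Tor_n^H(\Bbbk, H_\ad)\right)^*\cong \Ext^n_H(H_\ad,\Bbbk).
\]
This step is purely formal and holds for any finite-dimensional $H$. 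The remaining task is therefore to identify $\Ext^n_H(H_\ad,\Bbbk)$ with $\Ext^n_H(\Bbbk, H_\ad)$. For this I would apply the contravariant exact auto-equivalence $D=\Hom_\Bbbk(\mbox{-},\Bbbk)$ on finite-dimensional left $H$-modules (with module structure twisted by the antipode), which fixes the trivial module, $D\Bbbk\cong\Bbbk$. It yields $\Ext^n_H(H_\ad,\Bbbk)\cong \Ext^n_H(D\Bbbk, D H_\ad)=\Ext^n_H(\Bbbk, D H_\ad)$, reducing everything to a comparison of $D H_\ad$ with $H_\ad$.

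To finish, I would invoke the Radford-type intertwiner of Lemma \ref{lem-Rad-ad-inv}(ii), which identifies $H_\ad$ with the dual adjoint module $H^*_\ad$; a direct comparison of the $H$-actions shows that $D H_\ad$ agrees with $H^*_\ad$ after twisting the module structure by $S^2$. Since $\epsilon\circ S^2=\epsilon$, the trivial module is $S^2$-stable, and twisting by the algebra automorphism $S^2$ is an auto-equivalence of $H$-modules carrying $\Bbbk$ to $\Bbbk$; hence $\Ext^n_H(\Bbbk, D H_\ad)\cong \Ext^n_H(\Bbbk, H_\ad)$, closing the chain of isomorphisms and proving the lemma.

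The hard part will be this last identification, namely the self-duality of $H_\ad$ together with control of the residual twists. This is exactly the place where the finite-dimensionality of $H$ is indispensable: it makes $H$ a Frobenius algebra and supplies the Radford isomorphism. The $S^2$-twist is harmless because $S^2$ fixes the trivial module, so it contributes no correction to $\Ext^n_H(\Bbbk,\mbox{-})$. The only genuinely subtle point is the modular twist hidden in the Nakayama automorphism of $H$ for non-unimodular $H$; it is trivial precisely when $H$ is unimodular, which is the case for the small quantum groups under consideration, so no further correction is needed there.
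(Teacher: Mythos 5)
Your argument is correct and is essentially the paper's own proof: the paper likewise dualizes $\mathrm{Tor}^H_\bullet(\Bbbk,H_\ad)$ from Corollary \ref{cor-Hochschild-Homology}, applies (derived) tensor-hom adjunction, and invokes the Radford intertwiner of Lemma \ref{lem-Rad-ad-inv} to identify $\Hom_\Bbbk(H_\ad,\Bbbk)$ with $H_\ad$. The only difference is packaging: the paper does this in a single derived-category computation $(\Bbbk\otimes^{\mathbf{L}}_H H_\ad)^*\cong \mathbf{R}\Hom_H(\Bbbk,\Hom_\Bbbk(H_\ad,\Bbbk))\cong \mathbf{R}\Hom_H(\Bbbk,H_\ad)$, whereas you route through $\Ext^\bullet_H(H_\ad,\Bbbk)$ via the contravariant duality $D$ and then explicitly track the resulting $S^2$-twist (correctly observing it is harmless because $S^2$ fixes the trivial module), a twist that the paper's formulation absorbs into the same Radford lemma.
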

\begin{proof}
We have, via the derived tensor-hom adjunction,
\[
(\Tor_\bullet^H(\Bbbk, H_\ad))^*\cong \Hom_\Bbbk(\Bbbk \otimes_H^{\mathbf{L}} H_\ad,\Bbbk)\cong\mathbf{R}\Hom_H^\bullet(\Bbbk, \Hom_\Bbbk(H_\ad,\Bbbk))\cong \mathbf{R}\Hom_H(\Bbbk, H_\ad).
\]
The last isomorphism follows from the $\ad$-invariance of the Radford isomorphism \ref{lem-Rad-ad-inv}. The lemma follows.
\end{proof}

For this reason, we will mostly focus on the Hochschild cohomology groups of small quantum groups from now on. 

When $H=\ul_q(\g)$, equipped with the left adjoint action by the big quantum group $\Ul_q(\g)$, Theorem \ref{HH-Hopf} immediately implies the following result due to Ginzburg-Kumar, whose proof we recall for completeness.

\begin{corollary}   \label{GK-action} 
\begin{enumerate} 
\item[(i)] There is a natural action of the universal enveloping algebra $U(\g)$ on the Hochschild cohomology $\mHH^\bullet(\ul)$ and homology $\mHH_\bullet(\ul)$ of the small quantum group. 
\item[(ii)] Restricted to the center $\zl(\ul)=\mHH^0(\ul)$, this action coincides with the action given in Theorem 
\ref{Action-div-powers}.
\end{enumerate}  
\end{corollary}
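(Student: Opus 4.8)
The plan is to prove the two statements of Corollary \ref{GK-action} by leveraging the Ginzburg--Kumar isomorphism of Theorem \ref{HH-Hopf} together with the flatness of $\Ul$ over $\ul$. For statement (i), the key observation is that the adjoint action of $\Ul$ on itself preserves $\ul$ (Lemma \ref{ad_on_ul}), so $\ul_\ad$ is a $\Ul$-module. Since $\ul$ is a normal Hopf subalgebra of $\Ul$ with $\Ul \qq \ul = U(\g)$, and since $\ul$ acts trivially on its own center under the adjoint action (Lemma \ref{ad-H-on-itself}), the adjoint $\Ul$-action on $\ul_\ad$ descends to a $U(\g)$-action at the level of cohomology. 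Concretely, using that $\Ul$ is flat (indeed free) over $\ul$, I would rewrite the Hopf cohomology $\mH^\bullet(\ul, \ul_\ad) \cong \Ext^\bullet_\ul(\Bbbk, \ul_\ad)$ as $\Ext^\bullet_{\Ul}(\Ul \otimes_\ul \Bbbk, \ul_\ad) \cong \Ext^\bullet_{\Ul}(U(\g), \ul_\ad)$, exactly as indicated in the introduction. The residual $U(\g) = \Ul \qq \ul$ then acts on this $\Ext$-group via its adjoint action on the coefficient module $\ul_\ad$, functorially in cohomological degree, yielding the desired $U(\g)$-module structure on $\mHH^\bullet(\ul)$; the corresponding statement for homology follows the same pattern via Corollary \ref{cor-Hochschild-Homology}.

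For statement (ii), I would restrict to cohomological degree zero, where $\mHH^0(\ul) = \zl(\ul)$ is identified with the space of $\ad$-invariants in $\ul_\ad$ by Lemma \ref{ad-H-on-itself}(ii). The point is to check that the $U(\g)$-action just constructed, when evaluated on degree-zero classes, is literally the adjoint action of the $l$-th divided powers $E_i^{(l)}, F_i^{(l)}$ from Theorem \ref{Action-div-powers}. This is essentially a matter of unwinding the identifications: the induction-restriction adjunction used in the proof of Theorem \ref{HH-Hopf} is the identity on $H$-invariants, so the abstract $U(\g)$-action on $\Ext^0_{\Ul}(U(\g), \ul_\ad) = \Hom_{\Ul}(U(\g), \ul_\ad)$ reduces to the residual adjoint action of $\Ul$ on the $\ad$-invariant subspace $\zl(\ul) \subset \ul_\ad$. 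By Lemma \ref{ad_on_center} this residual action is precisely the one given by commutators with $E_i^{(l)}$ and $F_i^{(l)}$, which is the content of Theorem \ref{Action-div-powers}.

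The main obstacle I anticipate is verifying that the two $U(\g)$-actions agree \emph{functorially} and not merely as abstract module structures --- that is, ensuring the identification of $\mHH^0$ with $\zl(\ul)$ is compatible with the full chain of canonical isomorphisms $\mHH^\bullet(\ul) \cong \Ext^\bullet_{\ul}(\Bbbk, \ul_\ad) \cong \Ext^\bullet_{\Ul}(U(\g), \ul_\ad)$ as $U(\g)$-modules, rather than just as vector spaces. Concretely, one must check that the $\Ul$-adjoint action commutes with the adjunction isomorphism and with passage to degree-zero invariants, so that the action is transported correctly; this requires tracking the $\Ul$-equivariance through the bar-type resolution computing $\Ext_\ul(\Bbbk, -)$ and through the identification $\ul_\ad = \mathrm{Res}^{\Ul^{\rm e}}_\ul(\ul)$ of equation \eqref{eqn-Had-from-bimod}. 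Once this equivariance is in place, the degree-zero comparison is immediate from Lemma \ref{ad_on_center}, so I expect the heart of the argument to lie in the bookkeeping of the $\Ul$-module structure across these canonical isomorphisms rather than in any new computation.
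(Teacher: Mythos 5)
Your skeleton is the same as the paper's: both arguments run through the Ginzburg--Kumar identification $\mHH^\bullet(\ul)\cong \Ext^\bullet_{\ul}(\Bbbk,\ul_\ad)$, use normality of $\ul$ in $\Ul$ (Lemma \ref{ad_on_ul}) and flatness of $\Ul$ over $\ul$ to rewrite this as $\Ext^\bullet_{\Ul}(\Ul\qq\ul,\ul_\ad)=\Ext^\bullet_{\Ul}(U(\g),\ul_\ad)$, and then check in degree zero that one recovers the commutator action of the divided powers via Lemma \ref{ad_on_center}; your part (ii) is correct. However, there is a genuine gap in part (i), namely in the sentence producing the action: you say that ``the residual $U(\g)=\Ul\qq\ul$ acts on this Ext-group via its adjoint action on the coefficient module $\ul_\ad$.'' This does not type-check, for two reasons. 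First, $U(\g)$ does not act on $\ul_\ad$ at all: the $\Ul$-adjoint action on $\ul_\ad$ does not factor through the quantum Frobenius (the generators $E_i, F_i\in\ul$ act nontrivially on $\ul_\ad$; only the restriction of the action to the invariants $\zl(\ul)$ factors). Second, even the genuine $\Ul$-adjoint action cannot induce anything on $\Ext^\bullet_{\Ul}(U(\g),\ul_\ad)$ by functoriality in the coefficient, because for $a\in\Ul$ the map $\ad a:\ul_\ad\to\ul_\ad$ is not $\Ul$-linear ($\ad a\circ \ad x=\ad(ax)\neq \ad(xa)=\ad x\circ \ad a$ in general). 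The same objection undermines your earlier claim that Lemma \ref{ad-H-on-itself} makes the action ``descend at the level of cohomology'': triviality of the $\ul$-action on $\zl(\ul)$ is a degree-zero statement and does not by itself define or descend an action on the higher Ext-groups.

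The correct mechanism, implicit in the paper's one-line proof, is functoriality in the \emph{first} argument. Since $\Ul\ul_+=\ul_+\Ul$ is a two-sided ideal (normality), right multiplication by any element of $U(\g)=\Ul/\Ul\ul_+$ is a left $\Ul$-linear endomorphism of $U(\g)$, so precomposition endows $\Ext^\bullet_{\Ul}(U(\g),\ul_\ad)$ with a left $U(\g)$-module structure in every cohomological degree. Under the degree-zero identification $\Hom_{\Ul}(U(\g),\ul_\ad)\cong \zl(\ul)$, $f\mapsto f(\bar{1})$, this action becomes the residual adjoint action $z\mapsto \ad\tilde{q}(z)$ for lifts $\tilde{q}\in\Ul$, which by Lemma \ref{ad_on_center} is given by commutators with $E_i^{(l)}, F_i^{(l)}$ --- exactly the comparison you carry out in (ii). (Alternatively one could define a conjugation-type $\Ul$-action on $\Ext^\bullet_{\ul}(\Bbbk,\ul_\ad)$, acting simultaneously on $\ul$ and on the coefficients, and prove that the restriction of this action to $\ul$ is trivial on cohomology; that would legitimize your ``descent'' phrasing, but it requires an argument you have not supplied.) So the gap is confined to the identification of where the action comes from; with the first-argument mechanism substituted, your proof goes through and coincides with the paper's.
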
 

\begin{proof} 
$(i)$. Suppose $H\subset A$ is a normal Hopf subalgebra, such that $A$ is flat as an $H$-module. Then $\mH^\bullet(H,H_\ad)$ carries a natural $A\qq H$ action, defined via the isomorphism:
\[
\mH^\bullet (H, H_\ad)\cong \mathrm{Ext}_{H}^\bullet(\Bbbk, H_\ad)\cong \mathrm{Ext}^\bullet_A (A\otimes_H \Bbbk, H_\ad)\cong \mathrm{Ext}^\bullet_A(A \qq H,H_\ad).
\]
Here the middle isomorphism is the usual (derived) induction-restriction adjunction, where the tensor in $\mathrm{Ext}^\bullet_A (A\otimes_H \Bbbk, H_\ad)$ does not need to be derived because of the flatness assumption on $A$.

The isomorphism, applied to $H=\ul$ and $A=\Ul$, implies that $U(\g)\cong \Ul \qq \ul$ acts on the Hochschild cohomology $\mHH^\bullet(\ul)$. 

  $(ii)$. The action restricted to the center $\mHH^0(\ul)$ is given by the adjoint action of the $l$-th divided powers 
(via the quantum Frobenius map $\phi$ (equation \eqref{eqn_quantumFrob})).  
\end{proof} 

\begin{corollary}   \label{H_in_HH}  
Let $\Nc \subset \g$ denote the nilpotent cone of $\g$. Then
we have the inclusion $  \C[\Nc] \subset  \mHH^\bullet(\ul) $ and $\C[\Nc] \subset  \mHH_\bullet(\ul)$.  In particular, $\C[\Nc]$ is a $U(\g)$-summand of $\mHH^\bullet(\ul)$ and $ \mHH_\bullet(\ul)$ with respect to the standard (co)adjoint action. 
\end{corollary}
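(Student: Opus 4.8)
The plan is to realize the trivial module $\C$ as a direct summand of the coefficient module $\ul_\ad$ in a way compatible with the \emph{entire} $\Ul$-adjoint action, and then transport this splitting through the (co)homology functors, whose $U(\g)$-module structure is natural in the coefficient variable. First I would record that the unit $1\in\ul$ spans a trivial $\Ul$-submodule of $\ul_\ad$: by Lemma \ref{ad_on_ul} the adjoint action of $\Ul$ preserves $\ul$, and $\ad h(1)=\sum h_1 S(h_2)=\epsilon(h)1$ for every $h\in\Ul$. Dually, the counit is $\Ul$-equivariant, since $\epsilon(\ad h(a))=\sum\epsilon(h_1)\epsilon(a)\epsilon(S(h_2))=\epsilon(h)\epsilon(a)$. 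Because $\epsilon(1)=1$, the inclusion $\iota\colon\C\to\ul_\ad$, $1\mapsto 1$, and the projection $\epsilon$ compose to the identity, so that $\ul_\ad\cong\C\oplus\ker\epsilon$ as $\Ul$-modules. This is precisely the $\Ul$-equivariant refinement of the split injection $\mH^\bullet(\ul,\C)\to\mH^\bullet(\ul,\ul_\ad)$ noted in the introduction.

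Next I would invoke the identification $\mHH^\bullet(\ul)\cong\mH^\bullet(\ul,\ul_\ad)\cong\Ext^\bullet_\Ul(U(\g),\ul_\ad)$ of Corollary \ref{GK-action}, in which the $U(\g)\cong\Ul\qq\ul$-action arises from the right module structure on $U(\g)$ in the \emph{first} argument. Since that action is implemented by pre-composition with endomorphisms of $U(\g)$, it commutes with post-composition by any $\Ul$-map of coefficients; hence $\Ext^\bullet_\Ul(U(\g),\iota)$ and $\Ext^\bullet_\Ul(U(\g),\epsilon)$ are automatically $U(\g)$-equivariant. Thus $\iota$ induces a $U(\g)$-equivariant split injection $\mH^\bullet(\ul,\C)\hookrightarrow\mHH^\bullet(\ul)$ with complement $\Ext^\bullet_\Ul(U(\g),\ker\epsilon)$, and the Ginzburg-Kumar $\g$-equivariant isomorphism $\mH^\bullet(\ul,\C)\cong\C[\Nc]$ \cite{GK} identifies the summand with $\C[\Nc]$ carrying the standard adjoint action. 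For homology I would run the same split $\iota$ through the additive, coefficient-natural functor $\Tor^\ul_\bullet(\C,-)\cong\mHH_\bullet(\ul)$ of Corollary \ref{cor-Hochschild-Homology}, obtaining $\mH_\bullet(\ul,\C)$ as a $U(\g)$-summand, which by graded duality over the finite-dimensional algebra $\ul$ is the dual of $\mH^\bullet(\ul,\C)\cong\C[\Nc]$, i.e.\ $\C[\Nc]$ with the coadjoint action.

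The only genuinely non-formal input is the $\g$-equivariance of the isomorphism $\mH^\bullet(\ul,\C)\cong\C[\Nc]$, which matches the action assembled from the $\Ul$-adjoint structure with the natural $G$-action on functions on $\Nc$; I would cite this from \cite{GK} rather than reprove it. The step I expect to require the most care is the bookkeeping of where the $U(\g)$-action lives: because it sits in the first argument of $\Ext^\bullet_\Ul(U(\g),-)$, its naturality in the coefficients---and therefore the $U(\g)$-equivariance of the splitting---is formal, but this must be stated precisely to guarantee that $\C[\Nc]$ splits off as a $U(\g)$-module rather than merely as a graded vector space.
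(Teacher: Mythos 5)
Your proposal is correct and takes essentially the same route as the paper: the paper's proof likewise splits the trivial module off $\ul_\ad$ (so that $\Ext^\bullet_\ul(L(0),L(0))\cong \mH^\bullet(\ul,\C)$ sits inside $\Ext^\bullet_\ul(L(0),\ul_\ad)\cong \mHH^\bullet(\ul)$) and then cites Ginzburg--Kumar for the vanishing of odd cohomology and the $\g$-equivariant isomorphism $\mH^{2\bullet}(\ul)\cong\C^\bullet[\Nc]$, with the $\g$-action induced by Frobenius pullback as in Corollary \ref{GK-action}. Your explicit verification of the $\Ul$-equivariant splitting $\ul_\ad\cong\C\oplus\ker\epsilon$ via the unit and counit, the naturality argument placing the $U(\g)$-action in the first argument of $\Ext^\bullet_\Ul(U(\g),-)$, and the Tor-duality treatment of homology only spell out steps the paper leaves implicit.
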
 

\begin{proof} 
Let $L(0)$ denote the trivial $\ul$-module. We have, by Theorem \ref{HH-Hopf}, 
\[ \mHH^\bullet(\ul) \cong  \mH^\bullet(\ul, \ul_{\rm ad}) \cong {\rm Ext}_\ul^\bullet(L(0), \ul_{\rm ad}) \supset {\rm Ext}_\ul^\bullet(L(0), L(0) ) \cong \mH^\bullet(\ul), \]
where $\mH^\bullet(\ul) $ stands for the usual cohomology of the Hopf algebra $\ul$ (with coefficients in the trivial representation).
 
The main result in \cite{GinKu} states that the odd cohomologies $\mH^{\rm odd}(\ul)$ vanish, and there is a graded algebra isomorphism  $\mH^{2\bullet}(\ul)  \cong \C^\bullet[\Nc]$ for the even cohomologies of $\ul$. Moreover, this isomorphism intertwines the $\g$-actions on both sides. The $\g$-action on  $\mH^\bullet(\ul)$ is induced from the Frobenius pullback of $\g$ just as in Corollary \ref{GK-action}. The $\g$-action on $\C[\Nc]$ is induced from  the standard (co)adjoint action on 
$\Nc$. 
\end{proof}

\paragraph{The action of $\g$ in the geometric realization of the derived center.}
Recall that the finite dimensional Hopf algebra $\ul$ decomposes as a direct sum of blocks: two-sided ideals parametrized by the orbits of the extended affine Weyl group $W \ltimes lP$ in the weight lattice $P$. Denote by $\ul_\lambda \subset \ul$ the unique block corresponding to the orbit of the weight $\lambda$.

In this section we will use the geometric realization of the total Hochschild cohomology of the block $\ul_\lambda$  
to construct a natural $\g$-action on $\mHH^\bullet(\ul_\lambda)$. Restricted to the center of the block $\zl(\ul_\lambda)$ this action coincides with the action defined in the previous two sections. 

Recall the geometric construction for  $\mHH^\bullet(\ul_\lambda)$ described in \cite{ABG} \cite{BeLa}. Let $G$ be the reductive algebraic 
group over $\C$ with the Lie algebra $\g$, $P_\lambda$ its fixed parabolic subgroup whose Weyl group stablizes $\lambda$, $X=G/P_\lambda$ the (partial) flag variety classifying subgroups conjugate to $P_\lambda$. Let 
$\Nt_\lambda \cong T^* X \cong G \times_{P_\lambda} \n$ denote the Springer resolution, where $\n$ stands for the nilpotent radical of the Lie algebra of $P_\lambda$. 
Elements in $\Nt_\lambda$ are given by pairs $(g,x)$, where $g\in G$ and $x\in \n$, subject to the identification $(g,x)=(gb^{-1}, \mathrm{Ad}_b(x)))$. 
Let the group $\mathbb{C}^*$ act on $G/P_\lambda$ trivially, and define its action on $\Nt_\lambda$ by rescaling the fibers of $\op{pr}:\Nt_\lambda \lra G/P_\lambda$, which are all isomorphic to the vector space $\n$, via the character $z\mapsto z^{-2}$. This action commutes with the action of $G$ on $\Nt_\lambda$ and $G/P_\lambda$. 

\begin{theorem}     \label{BL} 
Let $\ul_\lambda \subset \ul$ be the block of $\ul$ that corresponds to the weight $\lambda$. Then we have 
\[
\mHH^\bullet(\ul_\lambda)\cong \mHH^\bullet_{\C^*}(\Nt_\lambda)\cong \bigoplus_{i+j+k=\bullet} \mH^i(\Nt_\lambda,\wedge^j T\Nt_\lambda)^k
\]
where $k$ is the grading induced by the $\C^*$-action. 
\end{theorem}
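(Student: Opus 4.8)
The plan is to deduce the statement from three ingredients: the Arkhipov--Bezrukavnikov--Ginzburg geometric equivalence (in the block form of \cite{BeLa}), the invariance of Hochschild cohomology under a derived equivalence, and the Hochschild--Kostant--Rosenberg (HKR) decomposition of the smooth variety $\Nt_\lambda$. The first isomorphism in the theorem is the transport of Hochschild cohomology across the equivalence; the second is HKR applied $\C^*$-equivariantly.

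First I would invoke the main result of \cite{ABG}, extended to an arbitrary block in \cite{BeLa}: there is a $\C^*$-equivariant equivalence of (dg-enhanced) triangulated categories
\[
\mathcal{D}^b(\ul_\lambda\dmod)\;\simeq\;\mathcal{D}^b_{\C^*}(\coh(\Nt_\lambda)),
\]
under which the internal grading on the left corresponds to the equivariant weight grading on the right, the latter induced by the $\C^*$-rescaling of the cotangent fibers of $\Nt_\lambda\cong T^*(G/P_\lambda)$. Now Hochschild cohomology is the graded endomorphism algebra of the identity functor — on the algebra side it is $\Ext^\bullet_{\ul_\lambda\otimes\ul_\lambda^{\op{op}}}(\ul_\lambda,\ul_\lambda)$ in the sense of Theorem \ref{HH-Hopf}, and on the geometric side it is $\Ext^\bullet_{\Nt_\lambda\times\Nt_\lambda}(\Ox_\Delta,\Ox_\Delta)$. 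Since this invariant depends only on the underlying dg/$A_\infty$-category, the equivalence above yields $\mHH^\bullet(\ul_\lambda)\cong\mHH^\bullet_{\C^*}(\Nt_\lambda)$, the first asserted isomorphism. I would stress here that the transfer is legitimate only because \cite{BeLa} provides a dg-enhancement, and that the $\C^*$-equivariance must be carried along at every step.

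Next I would apply HKR to the smooth (quasi-projective, symplectic) variety $\Nt_\lambda$, obtaining, as graded vector spaces, $\mHH^n(\Nt_\lambda)\cong\bigoplus_{i+j=n}\mH^i(\Nt_\lambda,\wedge^j T\Nt_\lambda)$. Non-properness of $\Nt_\lambda$ is harmless for this cohomological statement: each weight space is finite-dimensional and the weights are bounded below in every degree, so the equivariant sums converge. Decomposing each summand by its $\C^*$-weight, $\mH^i(\Nt_\lambda,\wedge^j T\Nt_\lambda)=\bigoplus_k\mH^i(\Nt_\lambda,\wedge^j T\Nt_\lambda)^k$, produces the triple-graded object on the right-hand side of the theorem.

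The hard part will be matching the single cohomological grading $\bullet$ on $\mHH^\bullet(\ul_\lambda)$ with the combination $i+j+k$ rather than with the naive geometric Hochschild degree $i+j$. This is precisely the Koszul-type mixing built into the equivalence of \cite{ABG,BeLa}: a shift of the internal ($\C^*$) grading on the geometric side corresponds on the algebra side to a combined internal-and-cohomological shift, so the equivariant weight $k$ is absorbed into the total degree. As a consistency check, the $i=j=0$ part reads $\bigoplus_k\mH^0(\Nt_\lambda,\Ox_{\Nt_\lambda})^k$; for the principal block the identity $R\pi_*\Ox_{\Nt}=\Ox_{\Nc}$ identifies it with $\C[\Nc]$, and since a degree-$m$ function pulls back to weight $k=2m$ under the fiber-rescaling, it sits in total degree $\bullet=k=2m$. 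This reproduces the Ginzburg--Kumar isomorphism $\mH^{2\bullet}(\ul)\cong\C^\bullet[\Nc]$ recalled in Corollary \ref{H_in_HH}, confirming that the weight $k$ genuinely contributes to $\bullet$ and that the grading bookkeeping is the crux of the argument.
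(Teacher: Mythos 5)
Your overall route---transport Hochschild cohomology across the dg-enhanced geometric equivalence of \cite{ABG,BeLa} and then apply the $\C^*$-equivariant HKR decomposition, with the Koszul-type grading mixing accounting for the total degree $i+j+k$---is exactly the content behind the paper's proof, which consists of a citation to these references; your consistency check against the Ginzburg--Kumar isomorphism $\mH^{2\bullet}(\ul)\cong \C^\bullet[\Nc]$ also matches the paper's conventions (fiber rescaling by $z\mapsto z^{-2}$, so degree-$m$ functions contribute in total degree $2m$).

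There is, however, one genuine gap: you assert that the equivalence of \cite{ABG} is ``extended to an arbitrary block in \cite{BeLa},'' and your entire first step rests on this. That is not what those references provide. The theorem is stated for an arbitrary block $\ul_\lambda$, including \emph{singular} $\lambda$, where the stabilizer $W_\lambda$ is nontrivial and the relevant variety is $\Nt_\lambda = T^*(G/P_\lambda)$ for a proper parabolic $P_\lambda$. The works \cite{ABG} and \cite{BeLa} treat the principal (more generally, regular) block, where $P_\lambda=B$; for singular blocks the required equivalence is not available there and must instead be deduced, as the paper explicitly notes, from the singular localization theorem of Backelin--Kremnizer \cite{BaKr,BaKrSing}, following the argument stated in \cite{LQ2}. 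Your proof as written therefore covers only the regular case; to close the gap you would need to either restrict the claim to regular $\lambda$ or add the Backelin--Kremnizer input and check that the rest of your argument (dg invariance of $\mHH^\bullet$, equivariant HKR, and the grading bookkeeping) goes through verbatim for the parabolic Springer resolution.
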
 
\begin{proof}
See \cite{ABG, BeLa}. The singular weight case, stated in \cite{LQ2}, follows similarly as in \cite{BeLa} using the singular localization theorem of Backelin-Kremnizer \cite{BaKr, BaKrSing}.
\end{proof}

\begin{theorem}  \label{thm-Schouten-action} 
There is a natural action of the Lie algebra $\g$ on the total Hochschild cohomology of the principal block of $\ul$. Restricted to the center of the principal block, this action coincides with the action given in Theorem \ref{Action-div-powers}  and Corollary \ref{GK-action}. 
\end{theorem}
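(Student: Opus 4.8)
The plan is to produce the $\g$-action on $\mHH^\bullet(\ul_\lambda)$ from the residual algebraic group action on the Springer resolution, and then to identify its restriction to the center with the Frobenius-adjoint action of Theorem~\ref{Action-div-powers}. Recall from the discussion preceding Theorem~\ref{BL} that $G$ acts on $\Nt_\lambda = G\times_{P_\lambda}\n$ by left translation on the first factor, and that this action commutes with the fiberwise $\C^*$-action. Since the tangent sheaf $T\Nt_\lambda$ and all of its exterior powers $\wedge^j T\Nt_\lambda$ are $G$-equivariant, $G$ acts on each cohomology group $\mH^i(\Nt_\lambda,\wedge^j T\Nt_\lambda)^k$ compatibly with the internal $\C^*$-grading. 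Differentiating this algebraic $G$-action yields a $\g$-action on $\bigoplus_{i+j+k=\bullet}\mH^i(\Nt_\lambda,\wedge^j T\Nt_\lambda)^k$, which I transport to $\mHH^\bullet(\ul_\lambda)$ through the isomorphism of Theorem~\ref{BL}. Infinitesimally, $\xi\in\g$ acts by the fundamental vector field $v_\xi$ it generates on $\Nt_\lambda$: on polyvector fields this is the Lie derivative $\mathscr{L}_{v_\xi}$, which agrees with the Schouten bracket $[v_\xi,-]$, whence the name.

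The heart of the matter is the comparison with Theorem~\ref{Action-div-powers} on $\zl(\ul_\lambda)=\mHH^0(\ul_\lambda)$. Both actions descend from a single symmetry, namely $G$ acting on the whole picture: on the quantum side via the Hopf-adjoint action of $\Ul$, which preserves $\ul$ by Lemma~\ref{ad_on_ul} and factors through $\Ul\qq\ul=U(\g)$ on the center, and on the geometric side via translation on $\Nt_\lambda$. The key input I would invoke is that the derived equivalence of \cite{ABG,BeLa} underlying Theorem~\ref{BL}, combined with the Ginzburg--Kumar identification of Theorem~\ref{HH-Hopf}, is $G$-equivariant with respect to exactly these two actions. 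Granting this, the isomorphism $\mHH^\bullet(\ul_\lambda)\cong \bigoplus_{i+j+k=\bullet}\mH^i(\Nt_\lambda,\wedge^j T\Nt_\lambda)^k$ is one of $\g$-modules, and restricting to degree $0$ identifies the geometric action with the Frobenius-adjoint action of Corollary~\ref{GK-action}, hence with that of Theorem~\ref{Action-div-powers}.

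To anchor the equivariance claim in something already available, I would first treat the graded subalgebra $\mH^\bullet(\ul)\cong\C[\Nc]\subset\mHH^\bullet(\ul)$. By Corollary~\ref{H_in_HH}, on this subalgebra the Frobenius-adjoint $\g$-action is precisely the standard coadjoint action on $\Nc$, and the Ginzburg--Kumar isomorphism $\mH^\bullet(\ul,\C)\cong\C[\Nc]$ is already known to intertwine the two $\g$-actions. Pulling back along $\op{pr}:\Nt_\lambda\lra G/P_\lambda$ and the affinization $\Nt_\lambda\lra\Nc$ realizes this coadjoint action as the geometric $G$-action on global functions, so the two actions agree on the $\C[\Nc]$-part. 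It then remains to propagate equivariance from this distinguished subalgebra to the full center, exploiting that both $\g$-actions act by derivations compatible with the module-algebra and Gerstenhaber structures of Theorem~\ref{HH-Hopf}, so that agreement on the generating subalgebra constrains them on the rest of $\mHH^0$.

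I expect the main obstacle to be establishing the $G$-equivariance of the equivalence of \cite{ABG,BeLa} in a form sharp enough to pin down that the $G$-action appearing on the quantum side is exactly the Frobenius pullback of the adjoint action, and not some twist of it. Tracing the $G$-action through the successive steps of that construction---the (linear) Koszul duality, induction along the parabolic, and the $\C^*$-equivariant bookkeeping---is delicate; and the singular-block case of Theorem~\ref{BL}, handled through Backelin--Kremnizer localization \cite{BaKr,BaKrSing}, would require the additional verification that those localization functors are themselves $G$-equivariant.
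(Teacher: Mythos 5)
Your proposal follows essentially the same route as the paper's own proof: you define the $\g$-action geometrically as the infinitesimal $G$-action on $\Nt_\lambda$ (equivalently, Schouten bracket/Lie derivative along fundamental vector fields, which is exactly the action via $\mH^0(\Nt_\lambda,T\Nt_\lambda)^0\subset \mHH^1(\ul_\lambda)$ used in the paper), and you compare it with the Frobenius-adjoint action of Theorem \ref{Action-div-powers} through the $\g$-equivariant $\C[\Nc_\lambda]$-module-algebra structure together with Ginzburg--Kumar's equivariance on $\C[\Nc]$ (Corollary \ref{H_in_HH}). The ``key input'' you flag as the main obstacle --- the $\g$-equivariance of the identification $\mHH^\bullet(\ul_\lambda)\cong\mHH^\bullet_{\C^*}(\Nt_\lambda)$ of Theorem \ref{BL} with respect to these two actions --- is precisely the step the paper's proof also relies on, asserting it via the chain of compatibilities rather than verifying it in detail, so your treatment is in line with (and arguably more candid than) the published argument.
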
 

\begin{proof} 
We have shown in \cite{LQ1,LQ2}, that 
\[
\mHH^1(\ul_\lambda)\supset \mH^0(\Nt_\lambda,T\Nt_\lambda)^0 \cong \g\oplus \C.
\]
Global sections of $T\Nt_\lambda$ consists of vector fields on $\Nt_\lambda$. Vector fields act, via the Schouten bracket, on $\wedge^\bullet T\Nt_\lambda$. This in turn induces an action of $\g\oplus \C$ on the Hochschild cohomology groups (The $\C$ part comes from the Euler vector field generated by the $\C^*$-action along the fibers, and counts the $k$-degree of Hochschild cohomology group elements). The Schouten bracket is defined by extending the natural commutator of germs of vector fields on $\Nt_\lambda$ to germs of poly-vector fields. By the main Theorem of \cite{CaVdB}, this Schouten bracket defines a Gerstenharber structure on the Hochschild cohomology of $\Nt_\lambda$.

On the other hand, the $\g$ action on $\Nt_\lambda$ also arises as the infinitesimal $G$-action on the variety $\Nt_\lambda$, and the Springer resolution $\pi:\Nt_\lambda\lra \Nc_\lambda$, $(g,x)\mapsto \mathrm{Ad}_b(x)$ is $G$-equivariant. Moreover, $\mHH^\bullet_{\C^*}(\Nt_\lambda)$ is a $\g$-equivariant algebra over the function algebra $\C[\Nt_\lambda]$. Hence, the Gerstenhaber action of $\g$ on $\mHH^\bullet(\ul_\lambda)\cong \mHH^\bullet_{\C^*}(\Nt_\lambda)$ is compatible with the $\g$ action on $\C[\Nt_\lambda]= \C[\Nc_\lambda]$. As $\iota:\Nc_\lambda\subset \Nc$ is a closed $G$-orbit, the $\g$ action on $\C[\Nc_\lambda]$ agrees with the $\g$-action on $\C[\Nc]$ under $\iota^*$. By Ginzburg-Kumar's Theorem \ref{GK-action}, we identify this $\g$-action with the (derived) Hopf adjoint action by the $l$-th divided powers of generators of the big quantum group $\Ul_q(\g)$. 
\end{proof}

\begin{remark} 
By the result of \cite{CaVdB} the action of $\mH^0(\Nt_\lambda, T\Nt_\lambda)^0\subset \mHH^1(\ul_\lambda)$ on $\mHH^\bullet(\ul_\lambda)$ by the Gerstenhaber bracket agrees, up to a twist by the square root of the Todd class, with the action of the vector field $\g \oplus \C$ on 
$\mHH^\bullet_{\C^*}(\Nt_\lambda)$  by the Schouten bracket. 
\end{remark}

\paragraph{The action of $\g$ on the center in type $A$.} 
In \cite{LQ1,LQ2}, we have computed that,  for all blocks of $\ul_q(\s_n)$ when $n=2,3,4$, the natural $\g$-modules occurring $\zl(\s_n)$ from Theorem \ref{thm-Schouten-action} only consist of trivial representations. Since, by Theorem \ref{Action-div-powers}, the nilpotent Chevalley generators of $\g$ acts on the center by taking commutators with $E_i^{(l)}$ and $F_i^{(l)}$, the triviality of the $\g$-action means that the central elements in $\zl(\s_n)$ commutes with $E_i^{(l)}$ and $F_i^{(l)}$. We conjecture that this is not a coincidence in type A.

\begin{conjecture}  \label{triv-g-action}
At a root of unity, central elements of small quantum groups $\ul_q(\s_n)$ arise from restriction of central elements in the big quantum group $\Ul_q(\s_n)$.
 \end{conjecture}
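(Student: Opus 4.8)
The plan is to reduce the conjecture to the triviality of the $\g$-action on $\zl(\ul_q(\s_n))$ provided by Theorem \ref{Action-div-powers}, and then to attack that triviality geometrically. For the reduction, take $z\in\zl(\ul)$. Being central in the small quantum group, $z$ commutes with $E_i$, $F_i$ and $K_i^{\pm1}$, and hence with every Laurent polynomial in the $K_i$ --- in particular with the divided-power Cartan elements that, together with $\{E_i,F_i,E_i^{(l)},F_i^{(l)},K_i^{\pm1}\}$, generate $\Ul$. Thus the only generators whose commutation with $z$ is in question are $E_i^{(l)}$ and $F_i^{(l)}$. If the $\g$-module $\zl(\ul)$ is trivial, then by Theorem \ref{Action-div-powers} we have $[E_i^{(l)},z]=\ad E_i^{(l)}(z)=0$ and $[F_i^{(l)},z]=0$, so $z$ commutes with all generators of $\Ul$ and therefore $z\in\zl(\Ul)\cap\ul$, exhibiting it as the restriction of a central element of the big quantum group. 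Conversely, any central element of $\Ul$ lying in $\ul$ is $\ad\Ul$-invariant, hence $\g$-trivial. The conjecture is therefore equivalent to the triviality of the $\g$-module $\zl(\ul)$; this equivalence holds in every type, and only the triviality is special to type $A$.

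Next I would translate triviality into a statement about weights. Because $K_j^l=1$ in $\ul$, the adjoint $K_j$-action detects the root-lattice weight only modulo $l$, so centrality forces a weight that is merely $\equiv 0 \pmod l$. Decomposing $z=\sum_\nu z_{l\nu}$ into its genuine $\Ul$-weight components (each again central, by comparing weights in $[z,x]=0$ for weight-homogeneous $x$), the quantum Frobenius identifies the $\h$-weight of $z_{l\nu}$ with $\nu$. Since a finite-dimensional $\g$-module all of whose weights vanish is a sum of trivial modules, the conjecture is equivalent to the assertion that every central element of $\ul_q(\s_n)$ has genuine $\Ul$-weight zero. This is exactly where the naive argument breaks: although $[E_i^{(l)},z]$ has $\Ul$-weight $l\alpha_i$ and lands back in $\zl(\ul)$ by Lemma \ref{ad_on_center}, the relation $l\alpha_i\equiv 0 \pmod l$ produces no contradiction inside $\ul$, so one cannot conclude vanishing by weight bookkeeping alone. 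The genuine content, and the source of the type-dependence, is the absence of central elements of nonzero $\Ul$-weight.

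To prove that $\zl(\ul)$ lies in the genuine weight-zero part, I would pass to the geometric realization of Theorem \ref{thm-Schouten-action}, working one block at a time. There $\zl(\ul_\lambda)=\mHH^0(\ul_\lambda)\cong\bigoplus_{i+j+k=0}\mH^i(\Nt_\lambda,\wedge^j T\Nt_\lambda)^k$, and the $\g$-action on the center is the restriction of the geometric $G$-action on the Springer resolution $\Nt_\lambda=T^*(G/P_\lambda)$. The summand $(i,j,k)=(0,0,0)$ contributes only the constants, so triviality amounts to showing that every surviving piece $\mH^i(\Nt_\lambda,\wedge^j T\Nt_\lambda)^k$ with $i+j>0$ (hence $k<0$) is a trivial $G$-module. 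I would attempt this by computing the $\C^*$-weight decomposition of $\mH^\bullet(\Nt_\lambda,\wedge^\bullet T\Nt_\lambda)$ through the projection $\Nt_\lambda\to G/P_\lambda$ and Bott-type vanishing, isolating the negative-weight pieces that reach Hochschild degree zero and checking that they carry no nonzero $\h$-weight.

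The \emph{main obstacle} will be controlling these cohomology groups uniformly in $n$: there is no evident vanishing that forces the contributing pieces to be $G$-trivial, and the difficulty is genuinely type-dependent, which is why I expect the statement to require type-$A$ input (or to fail outside type $A$). A promising route, specific to type $A$, is to exploit the conjectural identification of the principal-block center with Haiman's diagonal coinvariants and to transport the question to the Hilbert scheme of points, where the relevant bigraded pieces and their symmetries are explicit; matching the $\g=\s_n$-action with a manifestly trivial action on that model would settle the principal block, after which the Jantzen translation principle would reduce the regular blocks to it. Making this comparison unconditional, and separately handling the genuinely singular blocks via the singular localization of Backelin--Kremnizer, is the principal difficulty I anticipate.
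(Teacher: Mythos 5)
Your target statement is a \emph{conjecture}: the paper offers no proof of it, only the evidence that the $\g$-action on $\zl(\ul_q(\s_n))$ computed in \cite{LQ1,LQ2} is trivial for $n=2,3,4$, together with the remark that the analogous statement already fails in type $B_2$. Your first paragraph --- the equivalence between the conjecture and the triviality of the $\g$-module structure on $\zl(\ul)$ coming from Theorem \ref{Action-div-powers} --- is exactly the reduction the paper itself makes when motivating the conjecture, and it is correct in substance, but your handling of the Cartan generators is flawed. At a root of unity the additional Cartan generators of Lusztig's integral form (the divided binomials $\left[\begin{smallmatrix} K_i\,;0 \\ l \end{smallmatrix}\right]$) are \emph{not} Laurent polynomials in the $K_i$ --- that is precisely why they must be adjoined in the specialized algebra --- so commutation of $z$ with $K_i^{\pm 1}$ does not give commutation with them; moreover their Hopf-adjoint action on a central element is not simply a commutator, so Lemma \ref{ad_on_center}(ii) has no stated analogue for them. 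The clean argument avoids generators altogether: by the proof of Theorem \ref{Action-div-powers}, the $\Ul$-adjoint action on $\zl(\ul)$ factors through the quantum Frobenius quotient $U(\g)\cong\Ul\qq\ul$, so if the $U(\g)$-module $\zl(\ul)$ is trivial then $\ad\, x(z)=\epsilon(x)z$ for every $x\in\Ul$, and Lemma \ref{ad-H-on-itself}(ii) applied to $H=\Ul$ gives $z\in\zl(\Ul)$ directly.

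The genuine gap is everything after the reduction. The triviality of the $\g$-action for all $n$ is not a lemma you can defer to --- it is the entire open content of the conjecture, proved nowhere in the paper --- and your geometric attack (isolating the negative $\C^*$-weight pieces of $\mH^i(\Nt_\lambda,\wedge^j T\Nt_\lambda)$ that reach Hochschild degree zero and checking their $G$-triviality) is a program, not an argument, as you yourself acknowledge. The proposed route through Haiman's diagonal coinvariants compounds the difficulty: the identification of the principal block of the center with the diagonal coinvariant ring is itself only a conjecture of \cite{LQ1,LQ2}, so you would be resting one conjecture on another; and the translation-principle step handles only the regular blocks, leaving the singular ones to a separate and equally unproved analysis. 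Your plan is consistent with the paper's framework, and your candor about the obstacles is appropriate, but nothing in the proposal (or in the paper) constitutes a proof of the statement; at best you have correctly recast the conjecture in its equivalent form.
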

 
The conjecture fails outside of type $A$, as shown in \cite[Appendix 2]{LQ2}. There are already nontrivial $\g$-modules appearing in the center of $\ul_q(\g)$ in type $B_2$.

\section{Higman ideal in the center of the small quantum group}
\label{sec-Hig-ideal}
In the section we derive further results on modular group action and the Higman ideal in case when $H$ is a factorizable ribbon Hopf algebra, or more specifically, the small quantum group. 

\paragraph{Modular group action on a factorizable ribbon Hopf algebra.}  
If $H$ is a factorizable ribbon Hopf algebra, the extended modular group action on $H_\ad$ (Theorem \ref{thm-modular-group-action}) descends to an action on the Hochschild cohomology. This result has previously been noted in \cite{LMSS, ScWo}. In our approach it is also a direct consequence of Ginzburg-Kumar's Theorem \ref{HH-Hopf}.

\begin{corollary}
Let $H$ be a factorizable ribbon Hopf algebra. Then there is a projective $SL_2(\Z)$-action on $\mHH^\bullet(H)$. 
\end{corollary}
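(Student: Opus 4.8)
The plan is to transport the projective $SL_2(\Z)$-action from the Hopf-adjoint module $H_\ad$ to the Hochschild cohomology $\mHH^\bullet(H)$ by functoriality of the Ginzburg-Kumar isomorphism (Theorem \ref{HH-Hopf}), using that the two generators $\mathcal{F}$ and $\mathcal{L}$ are morphisms of $H$-modules on $H_\ad$ (Corollary \ref{cor-F-perserves-Higman} and the centrality of $v$). Concretely, Theorem \ref{HH-Hopf} gives a natural identification $\mHH^\bullet(H)\cong \mH^\bullet(H,H_\ad) = \Ext^\bullet_H(\Bbbk, H_\ad)$, and the functor $\Ext^\bullet_H(\Bbbk, \mbox{-})$ sends any endomorphism of the $H$-module $H_\ad$ to an endomorphism of the graded vector space $\mH^\bullet(H,H_\ad)$. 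So the first step is to apply this functor to $\mathcal{F}:H_\ad\to H_\ad$ and to $\mathcal{L}:H_\ad\to H_\ad$, obtaining induced operators
\[
\mathcal{F}_*,~\mathcal{L}_* : \mHH^\bullet(H)\lra \mHH^\bullet(H).
\]

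The second step is to check that these induced operators still satisfy the defining projective $SL_2(\Z)$ relations. This is where functoriality does the essential work: because $\Ext^\bullet_H(\Bbbk,\mbox{-})$ is a (covariant) functor, it respects composition, so any polynomial relation satisfied by $\mathcal{F}$ and $\mathcal{L}$ as $H$-module endomorphisms of $H_\ad$ is automatically satisfied by $\mathcal{F}_*$ and $\mathcal{L}_*$ on cohomology — possibly up to the same scalar ambiguity already present in the projective relations of Theorem \ref{thm-modular-group-action}. One subtlety to record is that the relations in \cite{LyuMa} are \emph{projective}, i.e. hold only up to an overall scalar; since functoriality carries scalar multiples of the identity on $H_\ad$ to scalar multiples of the identity on each $\mH^i(H,H_\ad)$, the projective relations descend verbatim, and one obtains a genuine projective $SL_2(\Z)$-action rather than an honest one.

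The main obstacle I anticipate is not the relations themselves but the compatibility of $\mathcal{L}$ with the \emph{algebra} (or at least graded) structure and the fact that $\mathcal{L}$ is defined by multiplication by the ribbon element $v$, which one should verify descends correctly. The cleanest route is to observe that $\mathcal{L}$ is simply left multiplication by the central element $v\in\zl(H)$, hence an $H$-module map on $H_\ad$ by the centrality argument already used in the proof of Theorem \ref{thm-modular-group-action}; applying $\Ext^\bullet_H(\Bbbk,\mbox{-})$ then yields an operator on $\mHH^\bullet(H)$ which, under the algebra isomorphism of Theorem \ref{HH-Hopf}, is multiplication by the image of $v$ in degree zero. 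Since $\mathcal{F}$ likewise descends by Corollary \ref{cor-F-perserves-Higman}, the two descended operators generate the desired projective action.

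In summary, I would prove the corollary in three short moves: (1) invoke Theorem \ref{HH-Hopf} to present $\mHH^\bullet(H)$ as $\Ext^\bullet_H(\Bbbk,H_\ad)$; (2) apply the functor $\Ext^\bullet_H(\Bbbk,\mbox{-})$ to the $H$-module endomorphisms $\mathcal{F}$ and $\mathcal{L}$ of $H_\ad$ provided by Theorem \ref{thm-modular-group-action}, obtaining induced graded operators; and (3) use functoriality to propagate the projective $SL_2(\Z)$-relations from $H_\ad$ to $\mHH^\bullet(H)$. The only point requiring care is bookkeeping of the projective scalars, which descend unproblematically because they act as scalar multiples of the identity at each stage.
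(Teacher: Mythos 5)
Your proposal is correct and is essentially the paper's own argument: the paper proves this corollary in one line by combining Theorem \ref{HH-Hopf} with Theorem \ref{thm-modular-group-action}, i.e.\ exactly your step of transporting the $H$-module endomorphisms $\mathcal{F}$ and $\mathcal{L}$ of $H_\ad$ through the functor $\Ext^\bullet_H(\Bbbk,\mbox{-})$. Your additional bookkeeping of the projective scalars (scalars on $H_\ad$ induce scalars on each cohomology group, so the projective relations descend) is the right justification for what the paper leaves implicit.
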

\begin{proof}
This follows by combining Theorem \ref{HH-Hopf} and Theorem \ref{thm-modular-group-action}. 
\end{proof}

The small quantum group $\ul$ is factorizable ribbon (see, for example, \cite[Corollary A.3.3, Theorem A.4.1. and Appendix B]{Lyu}). When $H=\ul$ and we restrict the projective action of the modular group to $\mHH^0(\ul)=\zl$, the action agrees with the one studied in \cite{La}. This action, unlike in Corollary \ref{cor-Ug-preserve-block}, preserves neither the algebra structure of $\zl$ nor the block decomposition of $\zl$. This is because the generating maps $\mathcal{F},~\mathcal{L}:H_\ad \lra H_\ad$ and do not preserve the multiplicative structure of $H_\ad$.

\begin{corollary}  \label{hig_ideal_HH}
The Higman ideal constitutes a homogeneous ideal in the Hochschild cohomology ring $\mHH^\bullet(H)$. 
Furthermore, it is a direct summand, as a projective $SL_2(\Z)$-module, in the Hochschild cohomology ring.
\end{corollary}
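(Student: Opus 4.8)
The plan is to establish the two assertions in turn, the homogeneous-ideal statement being clean and the direct-summand statement carrying the real difficulty. Since $\zl_\Hig(H)\subseteq\zl(H)=\mHH^0(H)$ lies entirely in cohomological degree $0$, it is automatically a homogeneous subspace, and by Corollary~\ref{cor-higman-ideal} it is already an ideal of the subring $\mHH^0(H)$. To upgrade this to an ideal of the full graded ring $\mHH^\bullet(H)$ concentrated in degree $0$, the one thing I need is that Higman elements annihilate every class of positive cohomological degree, i.e.\ that $\zl_\Hig(H)\cdot\mHH^{>0}(H)=0$.

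Here I would use the multiplicative description coming from Ginzburg--Kumar (Theorem~\ref{HH-Hopf}): the product on $\mHH^\bullet(H)\cong\mH^\bullet(H,H_\ad)$ is the external cup product followed by the map $m_\ast$ induced by the multiplication $m\colon H_\ad\otimes H_\ad\to H_\ad$. Fix $z\in\zl_\Hig(H)$ and $\xi\in\mHH^j(H)$ with $j>0$. By Proposition~\ref{prop-adLambdaH} the class $z$, regarded as a morphism $\Bbbk\to H_\ad$, factors as $\Bbbk\xrightarrow{a}I\xrightarrow{b}H_\ad$ through a projective--injective summand $I$ of $H_\ad$. Hence $z\cdot\xi$ is the image under $m_\ast\circ(b\otimes\mathrm{id})_\ast$ of the external product $a\times\xi\in\mH^j(H,I\otimes H_\ad)$. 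Since $H$ is a finite-dimensional Hopf algebra it is Frobenius, so $I$ is injective; and because the tensor product of a projective module with any module is again projective (the fact used in Proposition~\ref{prop-Higman-in-HC}(ii)), $I\otimes H_\ad$ is projective--injective. Therefore $\mH^j(H,I\otimes H_\ad)=\Ext^j_H(\Bbbk,I\otimes H_\ad)=0$ for $j>0$, so $a\times\xi=0$ and thus $z\cdot\xi=0$. This proves $\zl_\Hig(H)\cdot\mHH^{>0}(H)=0$ and completes the homogeneous-ideal claim.

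For the second assertion I would first observe that the projective $SL_2(\Z)$-action respects the cohomological grading: $\mathcal{L}$ is multiplication by the central ribbon element $v\in\mHH^0(H)$, while $\mathcal{F}$ is induced by the $H$-module automorphism $\mathcal{F}\colon H_\ad\to H_\ad$ of Corollary~\ref{cor-F-perserves-Higman} and so acts separately on each $\Ext^n_H(\Bbbk,H_\ad)$. Thus $\mHH^\bullet(H)=\bigoplus_n\mHH^n(H)$ is a decomposition of projective $SL_2(\Z)$-modules, and since $\zl_\Hig(H)$ is $SL_2(\Z)$-stable (Theorem~\ref{thm-modular-group-action}) and sits in degree $0$, the claim reduces to showing that $\zl_\Hig(H)$ is an $SL_2(\Z)$-module direct summand of $\zl(H)=\mHH^0(H)$. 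A natural vector-space splitting is provided by a maximal projective--injective summand $H_\ad^{\mathrm{pr}}$ of $H_\ad$: writing $H_\ad=H_\ad^{\mathrm{pr}}\oplus H_\ad^{\mathrm{st}}$ and using $\Ext^{>0}_H(\Bbbk,H_\ad^{\mathrm{pr}})=0$ together with Proposition~\ref{prop-adLambdaH}, one obtains $\Ext^\bullet_H(\Bbbk,H_\ad^{\mathrm{pr}})=\zl_\Hig(H)$ and hence $\mHH^\bullet(H)=\zl_\Hig(H)\oplus\Ext^\bullet_H(\Bbbk,H_\ad^{\mathrm{st}})$ as graded spaces.

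The hard part, which I expect to be the real content, is making this degree-$0$ splitting $SL_2(\Z)$-equivariant, i.e.\ producing an $SL_2(\Z)$-invariant complement to $\zl_\Hig(H)$ inside $\zl(H)$; in positive degrees there is nothing to do, since $\Ext^{>0}_H(\Bbbk,H_\ad^{\mathrm{pr}})=0$ forces the complement to be all of $\mHH^{>0}(H)$, which is canonically stable. All the obvious candidates break down: the block/idempotent decomposition of $\zl(H)$ is not preserved by $\mathcal{F}$, which mixes blocks (as observed in the discussion preceding the corollary); the idempotent of Proposition~\ref{prop-Higman-in-HC}(ii) is not fixed by $\mathcal{L}$, because the ribbon element acts by distinct twists on distinct blocks; and the Radford pairing $(z,w)\mapsto\lambda_l(zw)$ degenerates exactly along $\zl_\Hig(H)$ (the corollary following Theorem~\ref{thmRadfordiso}), which rules out orthogonal complementation. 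I therefore expect the equivariant splitting to require genuine input about the modular representation beyond its bare existence — ideally a uniformly constructed $SL_2(\Z)$-invariant projector of $H_\ad$ onto its maximal projective--injective part, and, absent such a construction, the explicit $SL_2(\Z)$-module decomposition of $\mHH^\bullet$ available in examples such as $\ul_q(\s_2)$ (Corollary~\ref{HH-sl2-modular}).
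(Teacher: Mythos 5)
Your first paragraph is correct, and it is essentially the paper's own argument for the first assertion: a Higman element, viewed via Proposition~\ref{prop-adLambdaH} as a morphism $\Bbbk\to P_0\subset H_\ad$ into a projective--injective summand, multiplies any positive-degree class into $\mH^{>0}(H,P_0\otimes H_\ad)$, which vanishes because the tensor product of a projective--injective module with anything is again projective--injective; hence $\zl_\Hig(H)\cdot\mHH^{>0}(H)=0$ and the ideal is homogeneous, concentrated in degree zero. The genuine gap is in the second assertion, and you name it yourself: you correctly reduce to producing an $SL_2(\Z)$-stable complement of $\zl_\Hig(H)$ inside $\zl(H)=\mHH^0(H)$, observe that the obvious candidates (block idempotents, the idempotent of Proposition~\ref{prop-Higman-in-HC}, orthogonal complements for the Radford pairing) all fail, and then stop, suggesting that the statement needs input beyond the general theory, e.g.\ explicit computations as in Corollary~\ref{HH-sl2-modular}. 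As written, the second claim of the corollary is therefore not proved.

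The idea you are missing is that the splitting should be produced upstairs in $H_\ad$, not downstairs in $\zl(H)$, where no module structure remains to be exploited. The modular generators act on $H_\ad$ by maps of $H$-modules: $\mathcal{F}$ is an $H$-module automorphism of $H_\ad$ by Corollary~\ref{cor-F-perserves-Higman}, and $\mathcal{L}$ is multiplication by the central (invertible) ribbon element $v$, hence also an $H$-module map. Inside $H_\ad$, the Higman ideal has an injective hull $P_0\otimes\zl_\Hig(H)$, a direct sum of copies of the injective envelope $P_0$ of $\Bbbk$, one for each basis vector of $\zl_\Hig(H)$, supplied by Proposition~\ref{prop-adLambdaH}. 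Being an injective $H$-module, this hull is automatically an $H$-module direct summand of $H_\ad$, and the paper argues that, since $\mathcal{F}$ and $\mathcal{L}$ respect the $H$-module structure and preserve $\zl_\Hig(H)$, the summand $P_0\otimes\zl_\Hig(H)$ is in fact an $SL_2(\Z)$-summand of $H_\ad$. Applying the functor $\mH^\bullet(H,\mbox{-})=\Ext^\bullet_H(\Bbbk,\mbox{-})$ of Theorem~\ref{HH-Hopf} --- which is $SL_2(\Z)$-equivariant by functoriality, sends $P_0\otimes\zl_\Hig(H)$ to exactly $\zl_\Hig(H)$ in degree zero, and kills all of its higher cohomology --- then converts the module-level splitting of $H_\ad$ into the asserted $SL_2(\Z)$-module splitting of $\mHH^\bullet(H)$. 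You are right that the passage from ``the hull is preserved'' to ``the hull is an $SL_2(\Z)$-summand of $H_\ad$'' is delicate; this is precisely the step the paper compresses into the words ``it follows.'' But the intended mechanism is this module-level one, resting on injectivity of the hull and equivariance of the cohomology functor, not on any explicit knowledge of the modular representation such as Kerler's computation for $\ul_q(\s_2)$.
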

\begin{proof}
Under the isomorphism of Theorem \ref{HH-Hopf}, the ring structure on $\mHH^\bullet(H)$ arises from the multiplication map $m:H\otimes H\lra H$ which is clearly $\ad$-equivariant. Thus, if
$z\in \zl_\Hig(H)$ and $y\in \mHH^{n}(H)$, then $z$ and $y$ are respectively represented by morphism $z:\Bbbk\lra P_0\subset H_\ad$, $y:\Bbbk\lra H_\ad[n] $, where $P_0$ is the injective envelope of $\Bbbk$. Their product cohomology class is then represented by
\[
zy: \Bbbk \cong \Bbbk\otimes \Bbbk\lra P_0\otimes H_\ad[n] \subset H_\ad\otimes H_\ad[n]\stackrel{m}{\lra} H_\ad[n].
\]
Since the tensor product $P_0\otimes H_\ad$ is an injective $H$ module, we have $\mH^n(H,P_0\otimes H_\ad)=0$ whenever $n>0$, and the result follows.

For the second statement, since the projective $SL_2(\Z)$-action by $\mathcal{F}$ and $\mathcal{L}$ preserves the module structure of $\ul_\ad$, it follows that $ P_0\otimes \zl_{\Hig}$ is an $SL_2(\Z)$-summand of $H_\ad$. The result then follows by taking Hopf cohomology $\mH^*(\ul,\mbox{-})$, using Theorem \ref{HH-Hopf} again.
\end{proof}

\paragraph{Higman ideal in case of the small quantum group.}
We want to specialize Proposition \ref{prop-Higman-in-HC} to the case of small quantum groups $\ul=\ul_q(\g)$. We will always assume $q$ is a root of unity satisfying the conditions of Section \ref{sec-q-group}.

\begin{theorem}   \label{thm-hig=intersection} 
Let $\ul=\ul_q(\g)$ be the small quantum group associated with a complex semisimple Lie algebra $\g$. Then
\[
\zl_{\Hig}(\ul)=\zl_{\HC}(\ul)\cap \mathcal{F}(\zl_{\HC}(\ul)).
\]
Furthermore, under the block decomposition \eqref{eqn-block-decomp} of $\ul=\prod_\lambda \ul_\lambda$, the Cartan matrix of each block $\ul_\lambda$ has rank one and $\mathrm{dim}(\zl_{\Hig}(\ul)\cap \ul_\lambda)=1$.
\end{theorem}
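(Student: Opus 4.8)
The plan is to establish the theorem in two stages: first prove the equality $\zl_{\Hig}(\ul)=\zl_{\HC}(\ul)\cap \mathcal{F}(\zl_{\HC}(\ul))$, and then compute the dimensions block by block. For the first stage, Proposition \ref{prop-Higman-in-HC}(iii) already gives the inclusion $\zl_{\Hig}(\ul)\subset \zl_{\HC}(\ul)\cap \mathcal{F}(\zl_{\HC}(\ul))$ for any factorizable Hopf algebra, so it suffices to verify that $\ul$ is factorizable and to prove the reverse inclusion. The small quantum group is well known to be a factorizable ribbon Hopf algebra under our hypotheses on $q$, so Proposition \ref{prop-Higman-in-HC} applies. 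For the reverse inclusion I would argue as follows. By Proposition \ref{prop-Higman-in-HC}(iv) we have $\mathcal{F}(\zl_{\HC}(\ul))=\psi_l^{-1}(\rl_l(\ul))$, and by Corollary \ref{cor-annhilator-of-radical} this subspace is precisely an ideal in $\zl(\ul)$ annihilated by the nilradical of $\zl(\ul)$. The key structural input to exploit is that $\zl_{\HC}(\ul)=\jmath_l(\rl_l(\ul))$ consists of the images of genuine characters, while the \emph{projective} characters $\pl_l(\ul)\subset \rl_l(\ul)$ cut out exactly the Higman ideal.

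The heart of the reverse inclusion is a semisimplicity-versus-nilpotency dichotomy per block. Under the block decomposition \eqref{eqn-block-decomp}, an element $z\in\zl_{\HC}(\ul)$ lying also in $\mathcal{F}(\zl_{\HC}(\ul))$ must, by Corollary \ref{cor-annhilator-of-radical}, be annihilated by the radical of $\zl(\ul)$; equivalently, in each block $\ul_\lambda$ the component $z_\lambda$ spans a trivial summand inside a projective-injective summand of $(\ul_\lambda)_\ad$. By Proposition \ref{prop-adLambdaH}, this is exactly the characterization of membership in $\zl_{\Hig}(\ul)$. I would make this precise by translating through the Radford isomorphism: $\psi_l$ carries $\zl_{\Hig}(\ul)$ to $\pl_l(\ul)$ (Lemma \ref{lem-hig-ideal-proj-char}), carries $\zl_{\HC}(\ul)$-membership to membership in $\jmath_l^{-1}$ of characters, and the intersection condition forces the corresponding functional to kill the radical, hence to be supported on the projective characters. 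This identifies the intersection with $\jmath_l(\pl_l(\ul))=\zl_{\Hig}(\ul)$, completing the equality.

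For the dimension statement, the strategy is to combine Proposition \ref{prop-Higman-in-HC}(i), which gives $\dim\zl_{\Hig}(\ul)=\mathrm{rank}(C_\ul)$, with a block-by-block analysis. Each block $\ul_\lambda$ is itself a (sum of) indecomposable algebra, and the assertion is that its Cartan matrix has rank one. The essential input here is the linkage and Jantzen translation principle quoted after \eqref{eqn-block-decomp}: every regular block is Morita equivalent to the principal block $\ul_0$, and the relevant homological structure of each block is governed by a single indecomposable projective-injective pattern so that $\mathrm{rank}(C_{\ul_\lambda})=1$. Since $\zl_{\Hig}(\ul)\cap\ul_\lambda=\zl_{\Hig}(\ul_\lambda)$ respects the block decomposition (the integral $\Lambda$ splits as a sum of block integrals and $\ad\Lambda$ preserves blocks by Corollary \ref{cor-Ug-preserve-block}), Proposition \ref{prop-dim-proj-center} applied to each block gives $\dim(\zl_{\Hig}(\ul)\cap\ul_\lambda)=\mathrm{rank}(C_{\ul_\lambda})=1$. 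Summing over blocks recovers $\dim\zl_{\Hig}(\ul)=$ number of blocks.

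I expect the main obstacle to be the block-wise rank-one claim for the Cartan matrix. The equality of the two subspaces is essentially formal once factorizability and Corollary \ref{cor-annhilator-of-radical} are in hand, but showing $\mathrm{rank}(C_{\ul_\lambda})=1$ requires genuine representation-theoretic input about the module category of a single block: one must know that the simple modules in a block, though possibly numerous, have projective covers whose composition multiplicities assemble into a Cartan matrix of rank exactly one. I would handle this by reducing to the principal block via the translation equivalences and then invoking the explicit linkage structure—effectively, that up to the simultaneous support condition the projective characters in a block span a one-dimensional space inside $\rl_l$. Verifying this last point carefully, rather than the homological algebra surrounding it, is where the real work lies.
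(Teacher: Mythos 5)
Your proposal diverges from the paper's proof, and both of its main steps contain genuine gaps.

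First, the reverse inclusion. You argue that any $z\in\zl_{\HC}(\ul)\cap\mathcal{F}(\zl_{\HC}(\ul))$ is annihilated by the radical of $\zl(\ul)$ (correct, by Corollary \ref{cor-annhilator-of-radical}), and then assert this is ``equivalently'' the condition that each block component $z_\lambda$ spans a trivial submodule inside a projective-injective summand of $(\ul_\lambda)_\ad$, i.e.\ membership in $\zl_{\Hig}(\ul)$ via Proposition \ref{prop-adLambdaH}. That equivalence is false: annihilating the radical is strictly weaker than lying in the Higman ideal. For $\ul_q(\s_2)$ the element $x_i$ annihilates the radical of the block $\C[x_i,y_i]/(x_i^2,x_iy_i,y_i^2)$, yet by Theorem \ref{thm-sl2-central-elements-in-adrep} it spans a trivial \emph{direct summand} of $\ul_\ad$, not a trivial submodule of a projective-injective summand; only $x_i+y_i$ lies in $\zl_{\Hig}$. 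The paper's remark correcting \cite{La} makes exactly this point: the annihilator of the radical contains $\mathcal{F}(\zl_{\HC})$, in general strictly. The same leap reappears in your Radford-side translation, where ``kills the radical, hence is supported on the projective characters'' is precisely the implication that fails. So your block-wise dichotomy does not close the reverse inclusion.

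Second, the rank-one claim. You defer $\mathrm{rank}(C_{\ul_\lambda})=1$ to ``linkage and translation,'' but translation equivalences only identify regular blocks with one another; they say nothing about the rank of the Cartan matrix of the principal block itself, and rank one is genuinely nontrivial (the $6\times 6$ Cartan matrix displayed for $\s_3$ having rank one is a theorem, not a formality). In the paper this statement is an \emph{output}, not an input: the whole theorem is proved by a dimension sandwich. By Brown--Gordon \cite{BrGo}, each block of $\zl_{\HC}(\ul)$ is a coinvariant algebra $S(\h)^{W_\lambda}/S(\h)^W_+$, a local Frobenius algebra with one-dimensional socle; combined with Corollary \ref{cor-annhilator-of-radical} this gives $\dim\bigl(\zl_{\HC}\cap\mathcal{F}(\zl_{\HC})\bigr)\leq b$, the number of blocks. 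Since each block has a nonzero Cartan matrix, $b\leq\mathrm{rank}(C_\ul)=\dim\zl_{\Hig}$ (Proposition \ref{prop-Higman-in-HC}), and Proposition \ref{prop-Higman-in-HC}(iii) gives $\dim\zl_{\Hig}\leq\dim\bigl(\zl_{\HC}\cap\mathcal{F}(\zl_{\HC})\bigr)$. The chain $b\leq\dim\zl_{\Hig}\leq\dim\bigl(\zl_{\HC}\cap\mathcal{F}(\zl_{\HC})\bigr)\leq b$ forces equality everywhere, which simultaneously yields the set equality, the dimension count, and $\mathrm{rank}(C_{\ul_\lambda})=1$. The Brown--Gordon local-Frobenius structure of the Harish-Chandra center is the key ingredient your proposal never invokes, and without it neither of your two stages can be completed.
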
 

\begin{proof} 
 It is known from \cite{BrGo} that the blocks of the Harish-Chandra center are isomorphic to algebras of coinvariants 
$S(\h)^{W_\lambda} /S(\h)^W_+$, where $W_\lambda \subset W$ is a subgroup stabilizing the weight $\lambda$ corresponding to the block $u_\lambda$. Clearly these are local Frobenius algebras, and thus each block has exactly a one-dimensional subspace that is annihilated by the radical of $\zl_{\HC}$. Therefore, by Corollary \ref{cor-annhilator-of-radical}, the dimension of $\zl_{\HC} \cap \mathcal{F}(\zl_{\HC})$ is less than or equal to the number of blocks of $\ul$. Denote this number by $b$. 

On the other hand, each block $\ul_\lambda$ has a non-zero Cartan matrix, and thus the rank of $C_H$ is greater than or equal to the number of blocks of $H$. Now part $(ii)$ of Proposition  \ref{prop-Higman-in-HC} (c.f.~Proposition \ref{prop-dim-proj-center})  implies that ${\rm dim}(\zl_\Hig(\ul))=\mathrm{rank}(C_H)$. We conclude, using part  $(iii)$ of Proposition \ref{prop-Higman-in-HC}, that
\[ 
b\leq \mathrm{dim}(\zl_{\rm Hig}(\ul))  \leq \mathrm{dim} (\zl_{\HC}(\ul) \cap  \mathcal{F}(\zl_{\HC}(\ul))) \leq b,
\]  
and equality must hold everywhere.

In particular, the rank of the Cartan matrix of each block of $\ul$ is precisely one. 
\end{proof} 

\begin{remark} 
Together with Proposition \ref{prop-Higman-in-HC}, Theorem \ref{thm-hig=intersection} implies that, for the small quantum group $\ul_q(\g)$, the intersection  
$\zl_{\HC}(\ul) \cap  \mathcal{F}(\zl_{\HC}(\ul))$ is spanned by $\jmath_l(\chi_P)$, where $P$ is a projective $\ul$-module:
\[ 
\zl_{\HC}(\ul) \cap  \mathcal{F}(\zl_{\HC}(\ul)) =\zl_{\rm Hig}(\ul) = \psi_l^{-1}(\pl_l(\ul)) = \jmath_l(\pl_l(\ul)). \] 
In particular, the Higman ideal $\zl_{\HC}(\ul) \cap  \mathcal{F}(\zl_{\HC}(\ul))$ is isomorphic to the ideal of the projective characters in the Grothendieck ring $G_0(\ul\dmod)\otimes_\Z \C$. 
\end{remark} 

\begin{corollary}
In the adjoint representation $\ul_q(\g)_\ad$, the projective cover $P(0)$ of the simple module $L(0)$ occurs with multiplicity equal to the number of blocks of $\ul_q(\g)$.
\end{corollary}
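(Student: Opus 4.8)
The plan is to reinterpret Theorem \ref{thm-hig=intersection} as a statement about the decomposition of $\ul_\ad$ into indecomposable summands, using the intrinsic description of $\zl_{\Hig}(\ul)$ supplied by Proposition \ref{prop-adLambdaH}.

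I would start from Proposition \ref{prop-adLambdaH}, which identifies $\zl_{\Hig}(\ul)$ with the set of central elements $z$ whose span $\C z$ is a trivial submodule contained in a projective-injective direct summand of $\ul_\ad$. As its proof records, the projective-injective in question is forced to be the injective envelope of the trivial module $L(0)$. Since $\ul$ is a finite-dimensional Hopf algebra, hence Frobenius, injective and projective modules coincide, and for the small quantum group the injective envelope of $L(0)$ is precisely its projective cover $P(0)$ (equivalently $\mathrm{soc}\, P(0)\cong L(0)$). Moreover, any trivial submodule of $\ul_\ad$ that factors through a projective-injective summand lands in the socle of such a $P(0)$-summand, because a semisimple submodule lies in the socle and the socle of the indecomposable projective $P(0)$ is the one-dimensional module $L(0)$.

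Next I would fix a decomposition of $\ul_\ad$ into indecomposables and let $m$ be the multiplicity of $P(0)$. Each $P(0)$-summand contributes its simple socle $L(0)$, a one-dimensional $\ad$-invariant, hence central (Lemma \ref{ad-H-on-itself}) subspace lying in $\zl_{\Hig}(\ul)$ by the preceding paragraph; conversely every element of $\zl_{\Hig}(\ul)$ is a combination of these socles. As distinct summands contribute linearly independent socles, this gives $\dim_\C \zl_{\Hig}(\ul)=m$. Finally, Theorem \ref{thm-hig=intersection} identifies $\dim_\C \zl_{\Hig}(\ul)$ with the number of blocks of $\ul$, whence $m$ equals the number of blocks, as claimed.

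The delicate point I expect to be the crux is the identification of the relevant projective-injective summand with the projective cover $P(0)$ rather than merely the injective hull of $L(0)$: this rests on the structural fact that the Nakayama permutation of $\ul$ fixes the trivial module, i.e.\ $\mathrm{soc}\, P(0)\cong L(0)$, so that the injective hull of $L(0)$ is isomorphic to $P(0)$. Everything else is a bookkeeping of socles once Proposition \ref{prop-adLambdaH} and Theorem \ref{thm-hig=intersection} are in hand.
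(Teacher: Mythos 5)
Your proof is correct and takes essentially the same route as the paper, whose entire proof is the citation of Proposition \ref{prop-adLambdaH} and Theorem \ref{thm-hig=intersection}; you have simply made explicit the bookkeeping the paper leaves implicit, namely that $\zl_\Hig(\ul)$ is exactly the span of the socles of the $P(0)$-summands of $\ul_\ad$, so $\dim_\C \zl_\Hig(\ul)$ equals the multiplicity of $P(0)$. The ``delicate point'' you flag --- that the injective envelope of $L(0)$ is $P(0)$, i.e.\ $\mathrm{soc}\,P(0)\cong L(0)$, which holds because $\ul$ is unimodular with $S^2$ inner and hence a symmetric algebra --- is indeed used silently in the paper (e.g.\ in the proof of Proposition \ref{prop-adLambdaH} and throughout Section \ref{section_sl2}), so your argument matches the intended one.
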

\begin{proof}
Follows from Proposition \ref{prop-adLambdaH} and Theorem \ref{thm-hig=intersection}.
\end{proof}

\begin{proposition}\label{prop-g-preserves-Hig}
The action of $U(\g)$ defined in Theorem  \ref{Action-div-powers} is trivial on the Higman ideal $\zl_\Hig(\ul)$. 
\end{proposition}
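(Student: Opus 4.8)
The plan is to show that the two operators generating the $\g$-action, namely $\ad E_i^{(l)}$ and $\ad F_i^{(l)}$ of Theorem \ref{Action-div-powers}, annihilate every element of the Higman ideal. The efficient reduction is to enlarge the target: Proposition \ref{prop-Higman-in-HC}(iii) gives the inclusion $\zl_{\Hig}(\ul)\subseteq \zl_{\HC}(\ul)$, so it suffices to prove that the $\g$-action is already trivial on the whole Harish-Chandra center $\zl_{\HC}(\ul)$. This is the natural thing to aim for, since $\zl_{\HC}$ is the part of the center closest to the classical picture, whereas the ``quantum'' part outside it is precisely where Conjecture \ref{triv-g-action} is still open.

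The key input I would use is that $\zl_{\HC}(\ul)=\jmath_l(\rl_l(\ul))$ consists of restrictions to $\ul$ of central elements of the big quantum group $\Ul$: each Drinfeld--Casimir element $\jmath_l(\chi_V)$ spanning it arises by restricting the corresponding quantum Casimir of $\Ul$, which is the mechanism already exploited for the central idempotents in Corollary \ref{cor-Ug-preserve-block} and reflected in the block description $\zl_{\HC}(\ul_\lambda)\cong S(\h)^{W_\lambda}/S(\h)^W_+$ used in Theorem \ref{thm-hig=intersection} (cf.\ \cite{La, BrGo}). For any central $z$ in $\Ul$ one has $\ad x(z)=\sum x_1 z S(x_2)=\sum x_1 S(x_2)z=\epsilon(x)z$, so the restriction of $z$ to $\ul$ is $\Ul$-adjoint invariant. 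Since $\epsilon(E_i^{(l)})=\epsilon(F_i^{(l)})=0$, this forces $\ad E_i^{(l)}(z)=\ad F_i^{(l)}(z)=0$ for every $z\in\zl_{\HC}(\ul)$. By Theorem \ref{Action-div-powers} the $\g$-action on $\zl(\ul)$ is generated exactly by these commutators, so $\g$ acts trivially on $\zl_{\HC}(\ul)$, and hence on $\zl_{\Hig}(\ul)$.

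As a conceptual cross-check, the block structure makes the triviality unavoidable once preservation is known: by Corollary \ref{cor-Ug-preserve-block} the $\Ul$-adjoint action preserves each block $\ul_\lambda$, and — granting (as above) that it preserves $\zl_{\Hig}(\ul)$ — it preserves each intersection $\zl_{\Hig}(\ul)\cap\ul_\lambda$, which by Theorem \ref{thm-hig=intersection} is one-dimensional; a one-dimensional representation of the semisimple Lie algebra $\g=[\g,\g]$ is necessarily trivial. The main obstacle is the common ingredient on which both routes hinge: upgrading the adjoint invariance from the central idempotents treated in Corollary \ref{cor-Ug-preserve-block} to the generally nilpotent Higman generator of each block, equivalently identifying the \emph{full} Harish-Chandra center $\zl_{\HC}(\ul)$ with the image of the restriction of the center of $\Ul$, rather than only with the idempotent subalgebra it generates. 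I would make this identification precise using the compatibility of the Drinfeld maps for $\ul$ and $\Ul$ under restriction, and would flag it as the one place where the argument genuinely depends on quantum Harish-Chandra theory rather than on the formal Hopf-algebraic results of Section \ref{sec-Hopf-adjoint}.
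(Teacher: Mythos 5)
Your proposal is correct and follows essentially the same route as the paper's own proof: reduce to the Harish-Chandra center via the inclusion $\zl_{\Hig}(\ul)\subset\zl_{\HC}(\ul)$, then use the fact that $\zl_{\HC}(\ul)$ consists of restrictions of central elements of $\Ul$, which are $\Ul$-adjoint invariant and hence killed by $\ad E_i^{(l)}$ and $\ad F_i^{(l)}$ since these have vanishing counit. Your explicit computation $\ad x(z)=\epsilon(x)z$ for central $z$ and your flagging of the quantum Harish-Chandra input simply make precise what the paper asserts in one line.
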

\begin{proof}
 The action of $U(\g)$ on $\zl$ is trivial on  $\zl_{\HC}$, as the Harish-Chandra center of $\ul$ also arises as the restriction of the Harich-Chandra center of the big quantum group to the small quantum group. Since $\zl_{\Hig} \subset \zl_{\HC}$  by  Theorem \ref{thm-hig=intersection}, the statement follows. 
 \end{proof}  


\begin{example} 
Consider the principal block of the small quantum group $\ul_q(\g)$ for $\g = \s_3$. 
The block contains $6$ simple and $6$ projective modules. The Cartan matrix of the block has the entries $[P_i : L_j]$ for $1 \leq i, j \leq 6$. Recall that the $l$-restricted dominant weights in case $\g = \s_3$ include two open alcoves. 
Below $\{L_1. L_2. L_3 \}$ and $\{ P_1, P_2, P_3 \}$ denote the simple modules in the lower alcove of the $l$-restricted dominant weights and their projective covers. The modules $\{L_4, L_5, L_6 \}$ and $\{P_4, P_5, P_6 \}$ denote the simple modules in the second alcove and their projective covers. 
\[  
C_{0} =  
\left( \begin{array}{cccccc} 
24 & 24 & 24 & 12 & 12 & 12 \\ 
24 & 24 & 24 & 12 & 12 & 12 \\ 
24 & 24 & 24 & 12 & 12 & 12 \\ 
12 & 12 & 12 & 6 & 6 & 6 \\ 
12 & 12 & 12 & 6 & 6 & 6 \\ 
12 & 12 & 12 & 6 & 6 & 6 \\ 
\end{array} 
\right) .
\]  
Up to equivalence, there is a unique nontrivial singular block $\ul_{\mathrm{sing}}$. The simple module $L_1$ has its highest weight in the closure of the lower alcove, and $L_2, L_3$ in the closure of the upper alcove. 
\[
C_{\mathrm{sing}}=
\left(
\begin{matrix}
12 & 6 & 6\\
6 & 3 & 3 \\
6 & 3 & 3
\end{matrix}
\right)
\]
The Steinberg block is semisimple and, has the Cartan matrix $C_{\St}=(1)$. 

\begin{figure}[!htpb] 
\begin{center}
\caption{\small Orbits of $W \ltimes l P$ in the $l$-restricted weights for $\g= \s_3$, $l=7$.}  \label{pic_orbits} 
\vspace*{1mm} 
\begin{tabular}{ccc} 
\psfig{figure=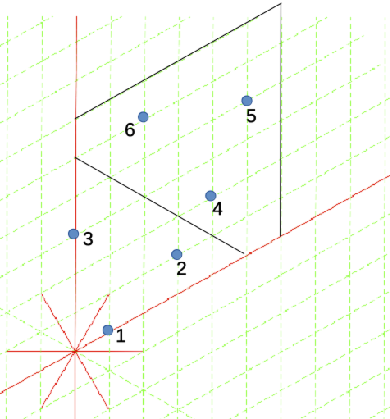, height=60mm}  
&  \;\;\;\;\;\; & 
\psfig{figure=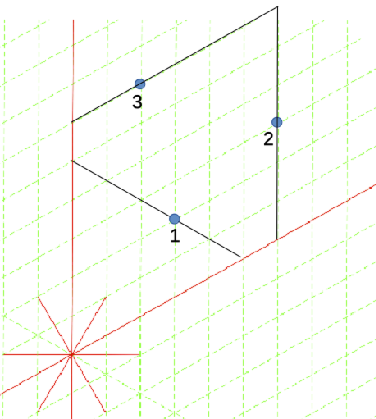, height = 60mm}  \\
{\small a regular orbit} & &  {\small a singular orbit}  
\end{tabular} 
\end{center}
\end{figure}

Figure \ref{pic_orbits} shows a regular and a nontrivial singular orbit for $\g = \s_3$. The $l$-restricted dominant weights are bounded by the red cone from below (the dominant chamber) and by the black boundary of the upper alcove from above. The weights on the black lines are the singular dominant $l$-restricted weights. The upper vertex corresponds to the Steinberg weight $(l-1)\rho$. The root system is shown in red. 
\end{example}

In case of $\g = \s_n$ we can give an explicit formula for the number of blocks of $\ul$, or equivalently, for the dimension of the Higman ideal in the center $\zl(\ul)$. 
\begin{theorem} \label{n_blocks}
The number of blocks for $\ul_q(\s_n)$ is equal to
\[
\frac{(l+n-1)\cdots (l+1)}{n!}=\frac{1}{l+n}{l+n \choose n}.
\]
Consequently, the Higman ideal for $\ul_q(\s_n)$ has the same dimension. 
\end{theorem}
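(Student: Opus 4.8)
The plan is to count the blocks directly from the parametrization in \eqref{eqn-block-decomp}: the number of blocks equals the number of orbits of the extended affine Weyl group $W \ltimes lP$ on the weight lattice $P$. The first step is to collapse this infinite count to a finite one. The translation subgroup $lP$ is normal in $W \ltimes lP$, and $W$ preserves $lP$ since it acts linearly; hence two weights lie in one $W\ltimes lP$-orbit exactly when their images in the finite group $P/lP$ lie in one $W$-orbit. So the number of blocks equals the number of $W$-orbits on $P/lP$.

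Next I would make the type $A$ combinatorics explicit. For $\g=\s_n$ we have $W=S_n$ and $P\cong \Z^n/\Z(1,\dots,1)$ with $S_n$ permuting coordinates, so that
\[
P/lP\cong (\Z/l)^n/\langle \mathbf 1\rangle, \qquad \mathbf 1=(1,\dots,1).
\]
On $(\Z/l)^n$ there are two commuting actions: the permutation action of $S_n$ and the free cyclic action of $\Z/l$ generated by the diagonal shift $v\mapsto v+\mathbf 1$. Because the shift is free, the $S_n$-orbits on $P/lP$ biject with the $(S_n\times\Z/l)$-orbits on $(\Z/l)^n$. Dividing first by $S_n$ identifies its orbits with size-$n$ multisets from $\Z/l$, of which there are $\binom{l+n-1}{n}$, and the diagonal shift descends to an action on these multisets. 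Thus the block count equals the number of $\Z/l$-orbits of this shift on size-$n$ multisets, which I would evaluate by Burnside's lemma.

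The crux --- and the only place the arithmetic hypothesis on $l$ is used --- is the fixed-point count. A multiset fixed by the shift by $c$, of order $d=l/\gcd(l,c)$, must have multiplicities constant along the $\langle c\rangle$-orbits in $\Z/l$, each of size $d$; hence $d \mid n$. But $l$ is coprime to $n$ (the Cartan determinant in type $A_{n-1}$ being $n$), and $d \mid l$, so $\gcd(d,n)=1$, forcing $d=1$, i.e.\ $c=0$. Only the identity has fixed points, and Burnside yields
\[
\frac{1}{l}\binom{l+n-1}{n}=\frac{(l+n-1)\cdots(l+1)}{n!}=\frac{1}{l+n}\binom{l+n}{n},
\]
the rewritings being the elementary identities $\tfrac1l\binom{l+n-1}{n}=\tfrac{(l+n-1)!}{n!\,l!}=\tfrac1{l+n}\binom{l+n}{n}$. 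The claim about $\dim\zl_{\Hig}$ then follows immediately from Theorem \ref{thm-hig=intersection}, by which each block contributes exactly one dimension to the Higman ideal. I expect no serious obstacle beyond organizing the two successive quotients (by $lP$ and by the diagonal) in the correct order and invoking the coprimality of $l$ and $n$ at precisely the fixed-point-free step.
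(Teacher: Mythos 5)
Your proof is correct, and it takes a genuinely different route from the paper's. The paper works with the alcove geometry: it takes the $l$-dilated closed fundamental alcove $\overline{A}$ as a fundamental domain for $W\ltimes lQ$, uses the decomposition $W\ltimes lP=(W\ltimes lQ)\rtimes \Omega$ with $\Omega\cong P/Q$ stabilizing $\overline{A}$ to identify the orbit count with the number of root-lattice points $|\overline{A}\cap Q|$, and then cites Gorsky--Mazin--Vazirani for the fact that this count is the rational Catalan number $\frac{1}{l+n}\binom{l+n}{n}$. You instead reduce modulo $l$: orbits of $W\ltimes lP$ on $P$ become $S_n$-orbits on $P/lP\cong (\Z/l)^n/\langle\mathbf 1\rangle$, equivalently $\Z/l$-orbits (by diagonal shift) on size-$n$ multisets from $\Z/l$, and Burnside plus the coprimality $\gcd(l,n)=1$ (which kills all fixed points except for the identity shift, since a fixed multiset forces the order $d$ of the shift to divide both $l$ and $n$) gives $\frac{1}{l}\binom{l+n-1}{n}=\frac{1}{l+n}\binom{l+n}{n}$ directly. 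What your approach buys is a completely self-contained, elementary argument with no external citation, essentially the classical necklace-counting proof of the rational Catalan identity; what the paper's approach buys is uniformity and context --- the identification of the block count with $|\overline{A}\cap Q|$ makes sense in any type and ties the result to the rational Catalan combinatorics literature. Two cosmetic remarks: the freeness of the diagonal shift is not actually needed for the bijection between $S_n$-orbits on the quotient and $(S_n\times\Z/l)$-orbits upstairs (this is just taking quotients in stages for commuting actions); and strictly speaking the linkage principle uses the $\rho$-shifted dot action, but translation by $\rho$ intertwines it with the linear action on $P$, so the orbit count is unchanged --- the paper glosses over this in exactly the same way, so your argument is consistent with its conventions.
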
  
\begin{proof} 
Let $\Delta \subset Q \subset P$ denote the root system, the root lattice and the weight lattice of $\s_n$. We assume that $l$ greater than or equal to $n+1$ and  coprime to $n$. The linkage principle \cite{APW, APW2} implies that the number of blocks of $\ul_q(\s_n)$ equals to the number of the orbits of the {\it extended affine Weyl group} 
$W \ltimes l P$ in the set of $l$-restricted weights, or, equivalently, in the weight lattice. We can compute this number as follows. Let $\overline{A}$ denote the $l$-extended closed first dominant alcove: 
\[ \overline{A} = \{ \mu \in P^+ : \langle \mu , \check{\alpha} \rangle \leq l , \;\; \forall \alpha \in \Delta^+ \} .\] 
Then $\overline{A}$ is the fundamental domain for the action of the affine Weyl group $W \ltimes l Q$ on the weight lattice. 
Since $W \ltimes lQ$ acts simply transitively on the set of all alcoves, there is a one-to-one correspondence between the  integer weights in $\overline{A}$ and the orbits of $W \ltimes lQ$. To find the number of orbits of the extended affine Weyl group $W \ltimes lP$ in $\overline{A}$, we notice that $W \ltimes l P = (W \ltimes lQ) \ltimes \Omega$, where $\Omega \simeq P/Q$ is the subgroup of $W \ltimes l P$ stabilizing $\overline{A}$. Therefore, the number of orbits of $W \ltimes lP$ in $\overline{A}$ equals to the number of {\it root weights} in $\overline{A}$: $|\overline{A} \cap Q|$. By the work of Gorsky, Mazin and Vazirani  \cite[Theorem 3.4]{GMV}, 
this number is known to be equal to the rational Catalan number $c_{l,n} = \frac{1}{l+n}{l+n \choose n}$.   
\end{proof} 

\begin{remark} 
A similar argument in \cite{LQ2} shows that the number of regular blocks of $\ul_q(\s_n)$ equals to the rational Catalan number $c_{l-n,n} = \frac{1}{l}{l \choose n}$. 
\end{remark}

\paragraph{Geometric interpretation of Higman ideal.}Let $X_\lambda=G/P_\lambda$ be the (partial) flag variety of a complex semisimple Lie group, and $\Nt_\lambda=T^*X_\lambda$ be the Springer variety. Denote by ${\rm pr}:\Nt_\lambda\lra X_\lambda$ the natural cotangent projection map. Set $m=\mathrm{dim}_\C(X_\lambda)$. Since ${\rm pr}$ is an affine map, the coherent-cohomological dimension of $\Nt_\lambda$ is equal to $m$. 

Following \cite{LQ1,LQ2}, it will be beneficial to arrange the Hochschild cohomology ring of $\Nt$ into the direct sum of two tables, accounting for the even and old cohomology groups. 


\begin{equation}
\begin{array}{|c||c|c|c|c|} 
\hline
{\scriptstyle \mH^i(\wedge^j T\Nt_\lambda)}   &  {\scriptstyle j-i=0}   &  {\scriptstyle j-i=2} & \dots & {\scriptstyle j-i=m} \\[1mm]  \hline \hline
{\scriptstyle i+j = 0}    &  {\scriptstyle \C[\Nc_\lambda]}      & 0  & \dots & 0 \\[1mm]  \hline 
{\scriptstyle i+j = 2}    &   \ast       &   \ast & \dots & 0 \\[1mm]  \hline
\vdots    &   \ast      &  \ast  & \dots &  0 \\[1mm]  \hline
{\scriptstyle i+j = 2m}    &   \ast      & \ast  &  \dots  & {\scriptstyle \C[\Nc_\lambda]\{-2m\} }\\[1mm]  \hline
  \end{array} 
\oplus
\begin{array}{|c||c|c|c|c|} 
\hline
{\scriptstyle \mH^i(\wedge^j T\Nt_\lambda)}   &  {\scriptstyle j-i=1}   &  {\scriptstyle j-i=3} & \dots & {\scriptstyle j-i=2m-1} \\[1mm]  \hline \hline
{\scriptstyle i+j = 1}    &  \ast    & 0  & \dots & 0 \\[1mm]  \hline 
{\scriptstyle i+j = 3}    &   \ast       &   \ast  & \dots & 0 \\[1mm]  \hline
\vdots    &   \ast       &  \ast & \dots &  0\\[1mm]  \hline
{\scriptstyle i+j = 2m-1}    &   \ast     &  \ast  &  \dots  & \ast \\[1mm]  \hline
  \end{array}  
\end{equation}

One reason to exhibit the Hochschild cohomology table this way is to exhibit an apparent $\s_2(\C)$ action along the Northwest-Southeast diagonals of the table. The action is generated by wedging with the Poisson bivector field and contraction with the holomorphic symplectic form on $\Nt_\lambda$. We refer the reader to \cite[Theorem 4.3]{LQ1}, \cite[Theorem 2.11]{LQ2} for the details. 

Because of this $\s_2(\C)$ action, one deduces that the main diagonal entries in the even table contains copies of $\C[\Nc_\lambda]\{-2k\}$, $k=0,\dots, -2m$. The degree shift arises from the fact that the Poisson bivector field on $\Nt_\lambda$ has degree $-2$.

As follows from Theorem \ref{thm-hig=intersection},  The Cartan matrix of a block $\ul_\lambda$ of the small quantum group $\ul_q(\g)$ has rank one. Consequently, the projective center of $\ul_\lambda$ has dimension one.

Geometrically, we may identify the position of the projective center:

\begin{proposition}\label{prop-geometric-Higman-ideal}
There is an isomorphism of Hochschild cohomology group
\[
\mH^m(\Nt_\lambda,\wedge^m T\Nt_\lambda)^\bullet \cong \C\{-2m\}.
\]
\end{proposition}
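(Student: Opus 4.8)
The plan is to pass to the base of the cotangent projection and then dualize. Write $m=\dim_\C X_\lambda$ and let $\pr\colon \Nt_\lambda\lra X_\lambda$ be the (affine) cotangent projection, so that $\pr_*\Ox_{\Nt_\lambda}\cong \mathrm{Sym}^\bullet(T_{X_\lambda})$ as a $\C^*$-equivariant sheaf, with each fibre (vertical) tangent direction in $\C^*$-degree $-2$ and $\mathrm{Sym}^k(T_{X_\lambda})$ in degree $2k$ (consistent with the Poisson bivector having degree $-2$). Since $\pr$ is affine and $\dim X_\lambda=m$, the Leray identity gives $\mH^m(\Nt_\lambda,\wedge^m T\Nt_\lambda)\cong \mH^m(X_\lambda,\pr_*\wedge^m T\Nt_\lambda)$, and $m$ is the top cohomological degree. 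First I would use the relative tangent sequence
\[
0\lra \pr^*\Omega^1_{X_\lambda}\lra T\Nt_\lambda\lra \pr^* T_{X_\lambda}\lra 0
\]
to equip $\wedge^m T\Nt_\lambda$ with a finite filtration whose associated graded pieces are $\pr^*(\wedge^a\Omega^1_{X_\lambda}\otimes\wedge^b T_{X_\lambda})$ with $a+b=m$. Pushing forward (exact, since $\pr$ is affine) and applying the projection formula, the graded pieces of $\pr_*\wedge^m T\Nt_\lambda$ are $\wedge^a\Omega^1_{X_\lambda}\otimes\wedge^b T_{X_\lambda}\otimes\mathrm{Sym}^k(T_{X_\lambda})$, sitting in internal degree $2(k-a)$.

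Next I would evaluate $\mH^m(X_\lambda,-)$ on each graded piece by Serre duality on the smooth projective $m$-fold $X_\lambda$. Using the contraction isomorphism $\wedge^a T_{X_\lambda}\otimes K_{X_\lambda}\cong \wedge^{m-a}\Omega^1_{X_\lambda}$, the Serre dual bundle of a graded piece is a direct summand (in characteristic zero) of $(\Omega^1_{X_\lambda})^{\otimes N}$ with $N=(m-a)+b+k$. The decisive input is that $X_\lambda=G/P_\lambda$ is rational homogeneous and therefore carries no nonzero global holomorphic tensors, i.e. $\mH^0(X_\lambda,(\Omega^1_{X_\lambda})^{\otimes N})=0$ for every $N\geq 1$ (equivalently, a Borel--Weil--Bott weight computation). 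Hence $\mH^m(X_\lambda,-)$ annihilates every graded piece except the one with $N=0$, namely $(a,b,k)=(m,0,0)$, which is $\pr^*K_{X_\lambda}$ in degree $-2m$ and satisfies $\mH^m(X_\lambda,K_{X_\lambda})\cong \mH^0(X_\lambda,\Ox_{X_\lambda})^*\cong \C$.

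Finally I would settle the extension problem inside the filtration using the internal grading. Let $A=\pr_*\wedge^m(\pr^*\Omega^1_{X_\lambda})\cong K_{X_\lambda}\otimes\mathrm{Sym}^\bullet(T_{X_\lambda})$ be the bottom subsheaf, with quotient $C$ assembled from the pieces having $a<m$. By the previous paragraph $\mH^m(A)\cong \C$ concentrated in degree $-2m$, while $\mH^m(C)=0$; moreover $\mH^{m-1}(C)$ has no degree-$(-2m)$ part, since a piece with $a<m$ lives in degree $2(k-a)$ and $2(k-a)=-2m$ would force $k=a-m<0$. The long exact sequence of $0\lra A\lra \pr_*\wedge^m T\Nt_\lambda\lra C\lra 0$ then shows that the one-dimensional class in degree $-2m$ survives (its incoming connecting map has vanishing source) and that nothing survives in any other degree (the a priori bound $\dim\le 1$ from right-exactness of the top functor, together with the concentration of $\mH^m(A)$ in a single degree). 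This yields $\mH^m(\Nt_\lambda,\wedge^m T\Nt_\lambda)^\bullet\cong \C\{-2m\}$. I expect the main obstacle to be the tensor-power vanishing $\mH^0(X_\lambda,(\Omega^1_{X_\lambda})^{\otimes N})=0$ for $N\geq 1$, together with the careful $\C^*$-weight bookkeeping needed to promote the a priori inequality $\dim\leq 1$ to an honest isomorphism in the correct internal degree.
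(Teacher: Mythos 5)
Your proof is correct and takes essentially the same route as the paper's: the same filtration of $\pr_*(\wedge^m T\Nt_\lambda)$ induced by the relative tangent sequence, and the same Serre-duality computation reducing everything to the vanishing of global holomorphic tensors on $X_\lambda=G/P_\lambda$, with only the piece $(a,b,k)=(m,0,0)$ surviving to give $\C\{-2m\}$. Your last paragraph merely spells out, via the internal-degree bookkeeping, the step the paper compresses into ``an easy induction on the filtration.''
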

\begin{proof}
The coherent sheaf ${\rm pr}_* (\wedge^m T\Nt_\lambda)$ has a natural filtration whose subquotients consist of coherent sheaves of the form $\wedge^r \Omega_X \otimes \wedge^{m-r}TX \otimes S^\bullet (TX)$. Then, by Serre duality, we have
\begin{align*}
\mH^m(X, \wedge^r \Omega_X \otimes \wedge^{m-r}TX \otimes S^k (TX)) & \cong  \mH^0 ( X, \wedge^{r}TX \otimes \wedge^{m-r} \Omega_X \otimes S^k(\Omega_X)\otimes \Ox(K))^*\\
& \cong \mH^0(X,\wedge^{m-r}\Omega_X\otimes \wedge^{m-r} \Omega_X\otimes S^k(\Omega_X) )^*\\
& \cong
\left\{
\begin{array}{cc}
\C\{-2m\} & r=m,~k=0,\\
0 & \textrm{otherwise}.
\end{array}
\right. 
\end{align*}
It follows that, by an easy induction on the filtration, we have
\[
\mH^m(\Nt_\lambda,\wedge^m T\Nt_\lambda)^k \cong
\left\{
\begin{array}{cc}
 \C\{-2m\} & k=-2m,\\
 0 & k\neq -2m.
\end{array}
 \right.
\]
The proposition follows.
\end{proof}

 This one-dimensional subspace in Proposition \ref{prop-geometric-Higman-ideal} is in the Higman ideal: it is in the Harish-Chandra center and annihilates the radical of the block (or in the Fourier transform of $\zl_{\HC}$.

We can use Theorem \ref{BL} and the techniques developed in \cite{LQ1}, \cite{LQ2} to compute higher Hochschild cohomologies for $\g = \s_3$. In particular, we have the following result.

\begin{example} Let $V(0)$  and $V(\rho)$ denote the trivial and the adjoint representation of $\s_3$, respectively. Then we have an isomorphism of the bigraded vector spaces 
\[
\mHH^1(\ul_0(\s_3))\cong \bigoplus_{i+j+k=1} \mH^i(\Nt,\wedge^j T\Nt)^k
\]
with the bigraded components isomorphic to $\s_3$-modules as shown in the following table 
\[ 
\begin{array}{|c||c|c|c|} 
\hline
i+j = 1   & V(\rho) \oplus V(0) & & \\[1mm]  \hline 
i+j =3   & V(\rho)^{\oplus 2} \oplus V(0)^{\oplus 3} & V(\rho) \oplus  V(0)  & \\[1mm]  \hline
i+j = 5  & V(0)^{\oplus 2} &  V(\rho)^{\oplus 2} \oplus V(0)^{\oplus 3} & V(\rho) \oplus V(0)  \\[1mm] \hline   \hline 
             & j-i =1 & j-i =3 & j-i = 5  \\  \hline 
\end{array}
\] 
\end{example}

\section{Derived center of the small quantum $\mathfrak{sl}_2$ }    \label{section_sl2} 
\paragraph{The center of small quantum $\mathfrak{sl}_2$.}
We first recall the following description of the center of $\ul(\mathfrak{sl}_2)$ by Kerler \cite{Ker} (see also \cite{FGST}).

\begin{theorem}\label{thmsl2center}
At a primitive odd $l$th root of unity $q$, the center $\zl(\mathfrak{sl}_2)$ of $\ul_q(\mathfrak{sl}_2)$ is isomorphic to the commutative algebra
\[
\zl(\mathfrak{sl}_2)\cong \prod_{i=1}^{\frac{l-1}{2}} \dfrac{\C[x_i,y_i]}{(x_i^2,x_iy_i,y_i^2)}\times \C
\]
\end{theorem}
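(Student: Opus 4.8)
The plan is to reduce the computation to the block decomposition of $\ul_q(\s_2)$ and to determine the center of each block separately. By the linkage principle \eqref{eqn-block-decomp} the algebra splits as a product $\ul_q(\s_2)\cong \prod_\lambda \ul_\lambda$ of blocks, so that $\zl(\ul_q(\s_2))\cong \prod_\lambda \zl(\ul_\lambda)$; it therefore suffices to identify the blocks and compute each $\zl(\ul_\lambda)$. First I would recall the standard representation theory of $\ul_q(\s_2)$ at an odd root of unity: the simple modules are $L(s)$ with $\dim L(s)=s+1$ for $0\le s\le l-1$, the Steinberg module $L(l-1)$ is simple and projective, and the remaining simples are linked in pairs $\{L(s),L(l-2-s)\}$ with $0\le s\le \tfrac{l-3}{2}$. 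This yields exactly $\tfrac{l-1}{2}$ non-semisimple blocks together with the Steinberg block, in agreement with the count $\tfrac{l+1}{2}$ of Theorem \ref{n_blocks} for $n=2$.

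For the Steinberg block, $L(l-1)$ is its own projective cover, so the block is a matrix algebra and contributes a single factor $\C$ to the center. The heart of the argument is the non-semisimple block $B$ containing $L(s)$ and $L(t)$, $t=l-2-s$. Here I would first pin down the Loewy structure of the indecomposable projectives. Using the standard (baby Verma) modules $\Delta(s)$, which are uniserial of length two with top $L(s)$ and socle $L(t)$, together with BGG reciprocity $[P(s):\Delta(r)]=[\Delta(r):L(s)]$, one finds that $P(s)$ carries a two-step standard filtration by $\Delta(s)$ and $\Delta(t)$, hence (using that $\ul$ is Frobenius, so $P(s)$ is self-dual with simple socle) the ``diamond'' radical series with top $L(s)$, middle $L(t)\oplus L(t)$, and socle $L(s)$, and symmetrically for $P(t)$. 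In particular each projective is $2l$-dimensional, $\dim \Ext^1_\ul(L(s),L(t))=2$, and the Cartan matrix of $B$ has all entries equal to $2$ (rank one, consistent with Theorem \ref{thm-hig=intersection}).

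From this structure the center is a short linear-algebra computation. Passing to the Morita-equivalent basic algebra $B_0=\End_B(P(s)\oplus P(t))^{\mathrm{op}}$, which is $8$-dimensional with quiver on two vertices $\{s,t\}$ carrying two arrows each way and relations forcing a one-dimensional socle in each projective, one checks that a central element must act as the same scalar on both vertices and can differ from that scalar only by a multiple of the socle endomorphism of each projective. Writing $x$ and $y$ for the two socle generators, i.e.\ the nilpotent endomorphisms $P(s)\twoheadrightarrow L(s)\cong \mathrm{soc}\,P(s)\hookrightarrow P(s)$ and its analogue for $P(t)$, one verifies that $x,y$ are central, that $x^2=y^2=xy=0$ since products of socle elements vanish, and that together with the identity they span $\zl(B)$. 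Hence $\zl(B)\cong \C[x,y]/(x^2,xy,y^2)$. Assembling the $\tfrac{l-1}{2}$ isomorphic non-semisimple factors and the single Steinberg factor produces the claimed product decomposition.

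The main obstacle is the structural input rather than the final arithmetic: establishing the precise diamond Loewy structure of the projective covers, and thus the quiver-with-relations presentation of the block, genuinely requires the representation theory of $\ul_q(\s_2)$ — linkage, the standard modules, self-duality, and BGG reciprocity — whereas once this is in hand the determination of each block center, and in particular the relations $x^2=xy=y^2=0$, is elementary. A secondary point requiring care is verifying that the two nilpotent generators are genuinely central and mutually annihilating; this is precisely where the Frobenius structure of the block, which makes the socle behave well under both left and right multiplication, is used.
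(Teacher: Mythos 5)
Your argument is correct, but it follows a genuinely different route from the paper: the paper gives no proof of Theorem \ref{thmsl2center} at all, quoting it from Kerler \cite{Ker} (see also \cite{FGST}), where the center is obtained by Hopf-algebraic means --- the generators are realized as images of shifted characters under the Radford isomorphism, $x_{i+1}=\psi_l^{-1}(\chi_i)$, $y_{i+1}=\psi_l^{-1}(\chi_{l-2-i})$, exactly as the paper records immediately after the theorem statement. Your route is ring-theoretic and Morita-theoretic: linkage gives $\frac{l-1}{2}$ non-semisimple blocks plus the Steinberg block (consistent with Theorem \ref{n_blocks} at $n=2$), the diamond Loewy structure of the projectives identifies the basic algebra of each regular block as the $8$-dimensional quiver algebra with two vertices, two arrows each way and radical cube zero --- the same algebra the paper writes down in the Remark preceding Corollary \ref{HH_sl2_alg} --- and the center of that algebra is a short direct computation giving $\C[x,y]/(x^2,xy,y^2)$. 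Your intermediate checks also match the paper: the Cartan matrix $\left(\begin{smallmatrix}2&2\\2&2\end{smallmatrix}\right)$ of a regular block is the one used in the proof of Theorem \ref{thm-sl2-central-elements-in-adrep}, and the dimension count $3\cdot\frac{l-1}{2}+1=\frac{3l-1}{2}$ agrees with the paper's. What each approach buys: yours is elementary and self-contained modulo standard facts about $\ul_q(\s_2)$ (baby Vermas, BGG reciprocity, injectivity of projectives); the Kerler--FGST construction, by contrast, produces the \emph{canonical} central elements $x_i,y_i$ as Radford images of characters of simple modules, which is what the rest of Section \ref{section_sl2} actually exploits --- evaluating trace functionals on them, locating them inside $\ul_\ad$, and tracking the modular group action. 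With your proof, the abstract socle generators would still need to be matched against these canonical ones before the later arguments can proceed.

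One step worth tightening: you justify the simple socle of $P(s)$ by ``Frobenius, hence self-dual,'' but Frobenius only gives that $P(s)$ is injective with simple socle $L(\nu(s))$ for the Nakayama permutation $\nu$; that the socle is $L(s)$ itself is not automatic. It does follow from your own filtration: $P(s)$ contains the standard submodule $\Delta(l-2-s)$, whose socle is $L(s)$, and an indecomposable injective has simple socle, so $\mathrm{soc}\,P(s)\cong L(s)$. Similarly, semisimplicity of the middle Loewy layer uses $\Ext^1_{\ul}(L(t),L(t))=0$, which holds here; both points are standard but should be said.
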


The last factor of $\C$ corresponds to the center of the semisimple Steinberg block. We will denote by $e_i$ the central idempotent in $\ul(\mathfrak{sl}_2)$ that corresponds to the element $(0,\dots, 1, \dots, 0)$, where the only nonzero entry appears in the $i$th position. The last central idempotent  corresponds to the Steinberg block and  will be denoted by $e_{\St}$.

The generators $x_i,y_i$ for the regular blocks can be identified, via the Radford maps, as follows.

Define the shifted trace-like functionals
\[
\chi_i(h):=\mathrm{Tr}\big|_{L(i)}(Kh), \quad i=0,\dots, l-1.
\]
Then each $\chi_i\in \cl_l(\mathfrak{sl}_2)$ since $S^2=\ad K$:
\begin{align}
\chi_i(ab)=\mathrm{Tr}\big|_{L(i)}(Kab)=\mathrm{Tr}\big|_{L(i)}(KaK^{-1}Kb)=
\mathrm{Tr}\big|_{L(i)}(KbKaK^{-1})=\chi_i(bS^2(a)).
\end{align}
According to  \cite{FGST}, we can define the variables $\{ x_i, y_i \}_{i=1}^{\frac{l-1}{2}}$ and $e_{\St}$  in Theorem \ref{thmsl2center}  via the Radford isomorphism (Theorem \ref{thmRadfordiso}) as follows: 
\begin{subequations}
\begin{equation}
x_{i+1}=\psi_l^{-1}(\chi_{i}), \quad y_{i+1}=\psi_l^{-1}(\chi_{l-2-i}) \quad (i=0, \dots, \frac{l-3}{2}).
\end{equation}
and
\begin{equation}
e_{\St}=\frac{1}{l\sqrt{l}}\psi_l^{-1}(\chi_{l-1}).
\end{equation}
\end{subequations}

\paragraph{The adjoint representation.} 
We will use the following result of Ostrik to obtain the total Hochschild cohomology of the small quantum group $\ul_q(\s_2)$. 

\begin{theorem}  \cite{Ostrikadjoint}    \label{ostrik-ad} 
Let $l \geq 3$ be an odd integer. Let $L(m)$ denote the simple $\ul_q(\mathfrak{sl})$-module of highest weight $m$ such that $0 \leq m \leq l-1$, and $P(m)$ its projective cover. Let $\phi^*(V(1))$ denote the $\Ul$-module obtained as the Frobenius pullback of the simple $\s_2$-module of highest weight $1$. Then the small quantum group $\ul(\s_2)$ has the following decomposition with respect to the Hopf-adjoint action by $\Ul_q(\s_2)$: 
\begin{align*}
\ul_q(\s_2)_{\rm ad}  \cong & L(l-1)^{\oplus l}  \bigoplus  \left( \bigoplus_{i=0}^{\frac{l-3}{2}} P(2i)^{\oplus (\frac{l+1}{2}+i)} \right) \bigoplus
\left(\bigoplus_{i=0}^{\frac{l-3}{2}} L(2i)^{\oplus (l-1-2i)} \right)\\
& \bigoplus 
\left(\bigoplus_{i=0}^{\frac{l-3}{2}} ( \phi^* V(1) \otimes L(l -2 - 2i))^{\oplus (\frac{l-1}{2}-i)} \right) .
\end{align*}
\end{theorem}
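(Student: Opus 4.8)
The statement decomposes $\ul_q(\s_2)_\ad$ into indecomposable $\Ul_q(\s_2)$-modules, so the plan is to first fix the underlying $\ul$-module structure and then refine it to the full $\Ul$-action. To begin, note that by Corollary \ref{cor-Ug-preserve-block} the Hopf-adjoint action preserves the algebra-block decomposition $\ul=\prod_\lambda \ul_\lambda$, so $\ul_\ad=\bigoplus_\lambda (\ul_\lambda)_\ad$ and one may argue block by block. The Steinberg block $\ul_{\St}=\End(L(l-1))$ is a matrix algebra whose adjoint module is $L(l-1)\otimes L(l-1)^*$, decomposable by quantum Clebsch--Gordan / tilting theory for $\s_2$. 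For the non-semisimple blocks I would use the PBW basis $\{F^aK^bE^c \mid 0\le a,b,c\le l-1\}$, on which $\ad K$ acts on $F^aK^bE^c$ by the scalar $q^{2(c-a)}$; since the central factor $K^b$ is free, the resulting formal character of $\ul_\ad$ is $l$ times the explicit product $(\sum_a z^{-2a})(\sum_c z^{2c})$, which fixes the multiplicity of each simple in the composition series.

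The formal character cannot distinguish a genuine indecomposable projective $P(2i)$ from the direct sum of its composition factors, so the second stage is to compute the actual $\ad E,\ad F$ action and extract the socle and head of $\ul_\ad$. Here the $\ad$-invariant vectors are exactly the center $\zl(\ul)$ (Lemma \ref{ad-H-on-itself}), whose structure and weight grading are already known from Theorem \ref{thmsl2center}; computing $\Hom_\ul(L(m),\ul_\ad)$ and $\Hom_\ul(\ul_\ad,L(m))$ against the known Loewy structure of the $P(m)$ then forces the indecomposable decomposition. The projective-injective summands are controlled by Proposition \ref{prop-adLambdaH}, and in particular the projective cover $P(0)$ of the trivial module occurs with multiplicity equal to the number of blocks, i.e.\ $\tfrac{l+1}{2}$; combined with the socle/head counts this yields the $\ul$-module $L(l-1)^{\oplus l}\oplus\bigoplus_i P(2i)^{\oplus(\cdots)}\oplus\bigoplus_i L(2i)^{\oplus(\cdots)}\oplus\bigoplus_i L(l-2-2i)^{\oplus 2(\cdots)}$.

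To refine this to a $\Ul$-module, recall that the divided powers $E^{(l)},F^{(l)}$ act on $\ul_\ad$ (Lemma \ref{ad_on_ul}) and, being compatible with the $\ul$-action, descend to an action of $U(\s_2)$ via the Frobenius map $\phi$ on the multiplicity space $\Hom_\ul(M,\ul_\ad)$ of each indecomposable $\ul$-summand $M$. I would compute this $\s_2$-action directly from the commutation formula in the proof of Lemma \ref{ad_on_ul}: on the multiplicity spaces of $L(l-1)$, of the $P(2i)$, and of the isolated simples $L(2i)$ it is trivial, while on each two-dimensional multiplicity block joining the two $\ul$-copies of $L(l-2-2i)$ the generators $e,f$ act as the defining representation, so those copies fuse into $\phi^*V(1)\otimes L(l-2-2i)$.

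The crux is the second stage together with the $\g$-action: beyond reproducing the correct multiplicities one must show that, as a $\ul$-module, $\ul_\ad$ is a sum of only simples and projective covers (no string or band indecomposables, although $\ul_q(\s_2)$ is of tame representation type), separate the non-split self-extensions $P(2i)$ from semisimple pieces of identical character, and verify that only the Frobenius twist by $V(1)$ (and no higher $V(n)$) occurs. These are exactly the points where dimension and character data are insufficient and the explicit action of $\ad E,\ad F,E^{(l)},F^{(l)}$ is unavoidable. Throughout, the identity $\dim\ul=l^3$ and the center count $\dim\zl(\ul)=\tfrac{3l-1}{2}$, which the decomposition must reproduce as its number of trivial submodules, provide strong consistency checks on the multiplicities.
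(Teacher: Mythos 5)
There is no proof in the paper to compare against: Theorem \ref{ostrik-ad} is imported verbatim from Ostrik \cite{Ostrikadjoint}, so your attempt has to stand on its own, and as written it has two genuine gaps. The first is that your character argument fails at exactly the point where it is needed. The eigenvalues of $\ad K$ on $F^aK^bE^c$ are the scalars $q^{2(c-a)}$, so the ``formal character'' you extract lives in $\Z[\Z/l\Z]$, and there the characters of the simple $\ul_q(\s_2)$-modules are linearly \emph{dependent}: the Steinberg module $L(l-1)$ has each $K$-eigenvalue exactly once, and $L(l-2)$ has each eigenvalue except $1$ exactly once, so $\chi_{L(l-1)}=\chi_{L(l-2)}+\chi_{L(0)}$. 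Hence the mod-$l$ character does not ``fix the multiplicity of each simple in the composition series'', and the ambiguity is precisely the one the theorem must resolve, namely how many copies of $L(l-1)$ occur versus factors of type $L(l-2-2i)$ and $L(2i)$. To repair this you must keep the integral grading $\deg(F^aK^bE^c)=2(c-a)\in\Z$, i.e.\ the weight grading of $\ul_\ad$ as a $\Ul$-module, where the characters of the $\Ul$-simples $L(a)\otimes\phi^*V(b)$ are linearly independent in $\Z[z,z^{-1}]$; with $\ad K$ alone, stage one already collapses.

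The second and more serious gap is that the actual content of the theorem is deferred rather than proved. Kerler's Theorem \ref{thmsl2center} together with Lemma \ref{ad-H-on-itself} and Proposition \ref{prop-left-trace-functionals} gives you only the $L(0)$-isotypic parts of the socle and head of $\ul_\ad$; you propose no method at all for computing $\Hom_\ul(L(m),\ul_\ad)$ or $\Hom_\ul(\ul_\ad,L(m))$ when $m\neq 0$. Proposition \ref{prop-adLambdaH} and Theorem \ref{thm-hig=intersection} do pin the multiplicity of $P(0)$ at $\frac{l+1}{2}$ (this step is correct and non-circular), but they say nothing about the multiplicities $\frac{l+1}{2}+i$ of $P(2i)$ for $i\geq 1$. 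Finally, the decisive structural claims --- that no indecomposable summand other than simples and projective covers occurs (the regular blocks are special biserial, hence tame, with infinitely many string modules having exactly the right composition factors), and that only the twist $\phi^*V(1)$, never a higher $\phi^*V(n)$, appears on multiplicity spaces --- are the claims you yourself label ``the crux'' and then leave to an ``unavoidable explicit computation'' of $\ad E$, $\ad F$, $E^{(l)}$, $F^{(l)}$, with no indication of how that computation is organized or why it terminates in the stated answer. Identifying the hard steps is not the same as carrying them out: what your proposal actually establishes is the block bookkeeping (Corollary \ref{cor-Ug-preserve-block}), the Steinberg-block decomposition $L(l-1)\otimes L(l-1)^*$ via tilting theory, and the dimension consistency checks, which together fall well short of Ostrik's theorem.
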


\begin{corollary}   \label{u-sl2-ad} 
The submodule in $\ul_q(\s_2)_{\rm ad}$ with composition factors in the principal block has the following decomposition: 
\[ 
\left( \ul_q(\s_2)_{\rm ad}\right)_0 \cong P(0)^{\oplus \frac{l+1}{2}} \oplus L(0)^{\oplus (l-1)} \oplus (\phi^*V(1) \otimes L(l-2))^{\oplus \frac{l-1}{2}} . 
\] 
\end{corollary}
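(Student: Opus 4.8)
The plan is to obtain $(\ul_q(\s_2)_{\rm ad})_0$ by filtering the decomposition of Theorem \ref{ostrik-ad} block by block: I will determine, for each $\Ul$-summand listed there, the unique block of $\ul_q(\s_2)$ in which all of its $\ul$-composition factors lie, and then retain exactly those summands that sit in the principal block.

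First I would record the block structure of $\ul_q(\s_2)$. By the linkage principle \eqref{eqn-block-decomp}, together with the standard Loewy structure of the indecomposable projectives (the cover $P(m)$ of $L(m)$ for $0 \le m \le l-2$ has composition factors $L(m), L(l-2-m), L(m)$), the regular blocks are the linkage pairs $\{L(2i), L(l-2-2i)\}$ for $0 \le i \le \frac{l-3}{2}$, while the Steinberg module $L(l-1)$ is projective and forms its own semisimple block. In particular the principal block, being the block of the trivial module $L(0)$, is $\{L(0), L(l-2)\}$ with projective cover $P(0)$.

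Next I would clear up the only non-formal point, namely the behaviour of the fourth family $\phi^* V(1) \otimes L(l-2-2i)$. Because $\phi^* V(1)$ is a Frobenius pullback, $\ul$ acts on it through the counit, so $\phi^* V(1)\vert_{\ul} \cong L(0)^{\oplus 2}$ is trivial, and hence over $\ul$ one has $\phi^* V(1) \otimes L(l-2-2i) \cong L(l-2-2i)^{\oplus 2}$; its $\ul$-composition factors are all $L(l-2-2i)$. Consequently, for each fixed $i$, all three families $P(2i)$, $L(2i)$ and $\phi^* V(1) \otimes L(l-2-2i)$ have their $\ul$-composition factors inside the single block $\{L(2i), L(l-2-2i)\}$, whereas the summand $L(l-1)^{\oplus l}$ lies in the Steinberg block.

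It then remains to run the bookkeeping. The block $\{L(2i), L(l-2-2i)\}$ coincides with the principal block $\{L(0), L(l-2)\}$ exactly when $\{2i, l-2-2i\} = \{0, l-2\}$; since $l$ is odd, $l-2$ is odd, so both $2i = l-2$ and $l-2-2i = 0$ are impossible, leaving only $i = 0$. Discarding the Steinberg summand and keeping the $i=0$ contributions of the three remaining families—with multiplicities $\frac{l+1}{2}+i$, $l-1-2i$ and $\frac{l-1}{2}-i$ evaluated at $i=0$—yields the asserted decomposition. Finally, since each retained summand is a $\Ul$-summand of $\ul_{\rm ad}$, their sum $(\ul_{\rm ad})_0$ is automatically a $\Ul$-submodule (consistent with Corollary \ref{cor-Ug-preserve-block}), so the isomorphism holds at the level of $\Ul$-modules. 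The step requiring the most care is the identification of $\phi^* V(1)$ as a non-simple yet trivial $\ul$-module, since this is precisely what places the twisted summands in the same blocks as the untwisted simples while allowing them to survive undecomposed as $\Ul$-modules on the right-hand side.
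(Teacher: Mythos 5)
Your proposal is correct and is essentially the proof the paper intends: the corollary follows immediately from Theorem \ref{ostrik-ad} by sorting each $\Ul$-summand into its $\ul$-block (using that $\phi^*V(1)\vert_{\ul}$ is trivial) and keeping the $i=0$ terms, exactly as you do. One harmless slip: the composition factors of $P(m)$ for $m\neq l-1$ are $L(m)$ twice and $L(l-2-m)$ twice (consistent with the rank-one Cartan matrix $\bigl(\begin{smallmatrix}2&2\\2&2\end{smallmatrix}\bigr)$ used later in the paper), not $L(m),L(l-2-m),L(m)$, but this does not affect the block-membership argument.
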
 

It follows from Ostrik's theorem, together with Lemma \ref{ad-H-on-itself}, Proposition \ref{prop-left-trace-functionals} that the dimension of the center and shifted trace-like functionals are equal to
\begin{subequations}
\begin{equation}
\mathrm{dim}_\C(\zl(\mathfrak{sl}_2))=\mathrm{dim}(\Hom_{\ul}(L(0),\ul_\ad))=\frac{l+1}{2}+l-1=\frac{3l-1}{2},
\end{equation}
\begin{equation}
\mathrm{dim}_\C(\cl_l(\mathfrak{sl}_2))=\mathrm{dim}(\Hom_{\ul}(\ul_\ad,L(0)))=\frac{l+1}{2}+l-1=\frac{3l-1}{2},
\end{equation}
\end{subequations}
since $L(0)$ occurs with multiplicity two inside $P(0)$, once as a subrepresentation and once as its head. We would then like to analyze how Kerler's explicit central elements are contained in projective or simple summands of $\ul(\mathfrak{sl}_2)_\ad$.

\begin{theorem}\label{thm-sl2-central-elements-in-adrep}
For $\ul_q(\mathfrak{sl}_2)$ at a primitive odd $l$th root of unity $q$, the following results holds.
\begin{enumerate}
\item[(i)] $\mathrm{dim}(\zl_\Hig(\ul_q(\s_2))=\frac{l+1}{2}$. Moreover, $\zl_{\Hig}(\ul_q(\s_2))  = {\mathrm{Soc}} \left( P(0)^{\oplus \frac{l+1}{2}} \right)$.  
\item[(ii)] The central idempotents $e_i$, $i=1,\dots, \frac{l-1}{2}$ and the central elements $x_i$, $i=1,\dots,\frac{l-1}{2}$, are contained in the direct sum of trivial summands of $\ul(\mathfrak{sl}_2)_\ad$. 
\item[(iii)] The central elements $x_i+y_i$, $i=1,\dots,\frac{l-1}{2}$, and the Steinberg idempotent $e_{\St}$ span the subspace  $ {\mathrm{Soc}} \left( P(0)^{\oplus \frac{l+1}{2}} \right)$. 
\item[(iv)] The integral $\Lambda$  is contained in a trivial summand.
\end{enumerate}
\end{theorem}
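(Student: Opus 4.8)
The plan is to realize the whole center as the $L(0)$-isotypic part of the socle of the adjoint module and then to sort Kerler's generators into the two natural pieces of this socle. Write $\ul=\ul_q(\s_2)$. By Lemma \ref{ad-H-on-itself}(ii) I identify $\zl(\ul)$ with $\Hom_{\ul}(L(0),\ul_\ad)$, and by Corollary \ref{u-sl2-ad} this equals the $L(0)$-isotypic part of $\mathrm{Soc}(\ul_\ad)$, namely $\mathrm{Soc}(P(0)^{\oplus\frac{l+1}{2}})\oplus L(0)^{\oplus(l-1)}$; the last summand $(\phi^*V(1)\otimes L(l-2))^{\oplus\frac{l-1}{2}}\cong L(l-2)^{\oplus(l-1)}$ of $\ul_\ad$ contributes nothing since it contains no trivial submodule. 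By Proposition \ref{prop-adLambdaH} the first piece is exactly $\zl_{\Hig}$, giving $\zl_{\Hig}=\mathrm{Soc}(P(0)^{\oplus\frac{l+1}{2}})$, whose dimension $\tfrac{l+1}{2}$ is the number of blocks by Theorem \ref{thm-hig=intersection} and Theorem \ref{n_blocks}; this proves (i). The organizing principle for the rest is the dichotomy: a trivial submodule $\C z\subset\ul_\ad$ lies in $\zl_{\Hig}$ iff $z\in\mathrm{Rad}(\ul_\ad)$, while $\C z$ splits off as a direct summand (so $z$ lies in the trivial summands) iff $z\notin\mathrm{Rad}(\ul_\ad)$; here I would record the elementary fact that a simple submodule is a direct summand precisely when it is not contained in the radical.

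Next I would prove (iii) via the Radford isomorphism of Theorem \ref{thmRadfordiso}. Using $x_j=\psi_l^{-1}(\chi_{j-1})$, $y_j=\psi_l^{-1}(\chi_{l-1-j})$ and Theorem \ref{thmsl2center}, I get $x_j+y_j=\psi_l^{-1}(\chi_{j-1}+\chi_{l-1-j})$. The two simples of the $j$-th regular block are $L(j-1)$ and $L(l-1-j)$, and because that block has Cartan matrix $\left(\begin{smallmatrix}2&2\\2&2\end{smallmatrix}\right)$ (rank one, consistent with Theorem \ref{thm-hig=intersection}) its indecomposable projective satisfies $\chi_{P(j-1)}=2(\chi_{j-1}+\chi_{l-1-j})$ as shifted trace functionals, additivity of the trace over composition factors. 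Hence $\chi_{j-1}+\chi_{l-1-j}=\tfrac12\chi_{P(j-1)}\in\pl_l$, and similarly $e_{\St}$ is a scalar multiple of $\psi_l^{-1}(\chi_{l-1})=\psi_l^{-1}(\chi_{P(l-1)})$ with $L(l-1)$ projective. By Lemma \ref{lem-hig-ideal-proj-char}, $\psi_l(\zl_{\Hig})=\pl_l$, so all the $x_j+y_j$ and $e_{\St}$ lie in $\zl_{\Hig}$; as $\psi_l$ is injective and the shifted projective characters of distinct blocks are linearly independent, these $\tfrac{l+1}{2}$ elements form a basis of $\zl_{\Hig}$, proving (iii).

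Part (ii) then follows by pure linear algebra together with the dichotomy. Replacing $y_j$ by $x_j+y_j$ is an invertible change of the basis $\{e_j,x_j,y_j\}_{j}\cup\{e_{\St}\}$ of $\zl(\ul)$ coming from Theorem \ref{thmsl2center}, so $\{e_j,x_j\}_{j}\cup\{x_j+y_j,\,e_{\St}\}_{j}$ is again a basis. Since by (iii) the second block is a basis of $\zl_{\Hig}$, the span $W:=\mathrm{span}\{e_j,x_j\}$ is a complement to $\zl_{\Hig}$ in $\zl(\ul)$; in particular $W\cap\zl_{\Hig}=0$, i.e.\ $W\cap\mathrm{Rad}(\ul_\ad)=0$. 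Thus $W$ is an $L(0)$-isotypic (hence semisimple) submodule meeting the radical trivially, so it maps isomorphically onto a semisimple submodule of the head and therefore splits off as a direct sum of trivial summands. This places every $e_j$ and $x_j$ in the trivial part, proving (ii). (As a check, $\psi_l(x_j)=\chi_{j-1}\notin\pl_l$ because $\chi_{j-1},\chi_{l-1-j}$ are independent, so indeed $x_j\notin\zl_{\Hig}$.)

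Finally, for (iv) I would compute $\psi_l(\Lambda)$: normalizing $\lambda_l(\Lambda)=1$ and using that $\Lambda$ is a left integral, $\psi_l(\Lambda)(x)=\lambda_l(x\Lambda)=\epsilon(x)\lambda_l(\Lambda)=\epsilon(x)$, so $\psi_l(\Lambda)=\epsilon$, the character of the non-projective trivial module, whence $\epsilon\notin\pl_l$ and $\Lambda\notin\zl_{\Hig}$ (equivalently, this is the earlier corollary that $\Lambda\in\zl_{\Hig}$ iff $H$ is semisimple, applied to the non-semisimple $\ul$). Since $\ul_q(\s_2)$ is unimodular, $\Lambda$ is central, and $\Lambda\notin\zl_{\Hig}=\zl(\ul)\cap\mathrm{Rad}(\ul_\ad)$ forces $\Lambda\notin\mathrm{Rad}(\ul_\ad)$; by the dichotomy $\C\Lambda$ splits off as a trivial summand. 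The main obstacle I anticipate is not any single step but assembling the socle/radical dictionary cleanly, in particular justifying $\zl_{\Hig}=\mathrm{Soc}(P(0)^{\oplus\frac{l+1}{2}})=\zl(\ul)\cap\mathrm{Rad}(\ul_\ad)$ from Proposition \ref{prop-adLambdaH} and Corollary \ref{u-sl2-ad}, and carrying out the shifted-character identity $\chi_{P(j-1)}=2(\chi_{j-1}+\chi_{l-1-j})$ that pins the Radford preimages of projective characters inside the $P(0)$-socles rather than the trivial summands.
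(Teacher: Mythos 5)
Your proof is correct, and it takes a genuinely different route from the paper's. The paper argues by explicit evaluation: via Proposition \ref{prop-left-trace-functionals}, any $f\in\cl_l$ with $f(z)\neq 0$ splits $\C z$ off $\ul_\ad$ as a trivial summand; the proof then computes $\chi_j(e_i)$ (quantum dimensions), $\chi_j(x_i)=\chi_j(y_i)=0$, $\lambda_l(x_i)=-\lambda_l(y_i)$, $\lambda_l(e_{\St})=0$, concludes that $\C\langle e_i,x_i\rangle$ consists of trivial summands, identifies the complementary central subspace $\C\langle x_i+y_i,e_{\St}\rangle$ with $\zl_\Hig$ by counting the trivial summands in Ostrik's decomposition, and gets (iv) from $\lambda_l(\Lambda)=1$. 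You never evaluate a functional: you locate $\zl_\Hig=\mathrm{Soc}\bigl(P(0)^{\oplus\frac{l+1}{2}}\bigr)$ from Proposition \ref{prop-adLambdaH} together with Corollary \ref{u-sl2-ad} (taking the dimension from Theorems \ref{thm-hig=intersection} and \ref{n_blocks} rather than from the paper's direct Cartan-matrix computation), prove (iii) by pushing the Cartan matrix through the Radford map---$\chi_{P(j-1)}=2(\chi_{j-1}+\chi_{l-1-j})$, hence $x_j+y_j\in\psi_l^{-1}(\pl_l(\ul))=\zl_\Hig$ by Lemma \ref{lem-hig-ideal-proj-char}---and then deduce (ii) and (iv) by linear algebra from the radical/summand dichotomy. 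Your route is more structural and explains why $x_i+y_i$ is distinguished (it is half a shifted projective character, which is precisely the content of the Remark following the theorem in the paper); the paper's route is more explicit, producing the actual pairing between shifted trace functionals and central elements.

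Two points need the careful assembly you yourself flag, and both do go through. First, Proposition \ref{prop-adLambdaH} gives the inclusion $\mathrm{Soc}\bigl(P(0)^{\oplus\frac{l+1}{2}}\bigr)\subseteq\zl_\Hig$ immediately, but the reverse inclusion needs either your dimension count or the observation (from Corollary \ref{cor-trace-kills-nilpotents}) that $\zl_\Hig\subseteq\mathrm{Rad}(\ul_\ad)$, so that Higman elements have no component along the trivial summands $L(0)^{\oplus(l-1)}$; with either supplement, the equality and hence the dichotomy $\zl_\Hig=\zl(\ul)\cap\mathrm{Rad}(\ul_\ad)$ are sound. Be aware that this dichotomy is not a general Hopf-algebra fact (cf.\ the remark after Corollary \ref{cor-trace-kills-nilpotents}); here it is a consequence of the specific decomposition in Corollary \ref{u-sl2-ad}, in which the only indecomposable summands with $L(0)$ in their socles are the copies of $P(0)$ and the trivial summands. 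Second, since the trivial isotypic part of $\ul_\ad$ is not canonical as a subspace, statements (ii) and (iv) can only mean that the listed elements span trivial direct summands in some decomposition of the form given by Ostrik's theorem; your argument (like the paper's own, which exhibits $\C\langle e_i,x_i\rangle$ as such a summand) proves exactly this, so no strength is lost.
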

\begin{proof}
The first statement follows from Proposition \ref{prop-adLambdaH} and Proposition \ref{prop-dim-proj-center} by computing the Cartan matrix for each regular block to be
\[
\left(
\begin{matrix}
2 & 2 \\
2 & 2
\end{matrix}
\right),
\]
while the Steinberg block has its Cartan matrix $(1)$.

Using Proposition \ref{prop-left-trace-functionals}, each shifted trace-like functional in $f\in\cl_l$ defines a $\ul$-module map
\[
f:\ul_\ad\lra \C.
\]
If $z$ is a central element and $f\in \cl_l$ satisfies
 $f(z)\neq 0$, then the composition map
\begin{equation}
\C \stackrel{\iota_z}{\lra} \ul_\ad \stackrel{f}{\lra} \C, \quad 1\mapsto f(z)
\end{equation}
provides the inclusion $\iota_z :\C\lra \ul_\ad$, $1\mapsto z$ with a splitting, so that $\C z\cong L(0)$ is a direct summand of $\ul_\ad$.

Next, we observe that, when decomposing 
$$\ul=\prod_{i=1}^{\frac{l-1}{2}}\ul e_i \times \ul e_{\rm St}$$
into a direct product of indecomposable two-sided ideals (blocks), the adjoint representation preserves the two sided-ideals. Here the first $\frac{l-1}{2}$ factors correspond to regular blocks, while the last one stands for the Steinberg block. Each $\ul e_i$ must contribute the same number of $P(0)$'s and $L(0)$'s into $\ul_\ad$, for, otherwise, the regular blocks would have non-isomorphic Hochschild cohomology groups, violating the translation principle. Therefore, we may restrict our attention to each block, and compute the effect of $\cl_l$ restricted to the center of each block.

For the functions $\chi_j$,  $j \in \{0,1,\dots, l-1\}$, we have that, when evaluated on the central idempotents $e_i$, $i=1,\dots, \frac{l-1}{2}$,
\[
\chi_j(e_i)=\left\{
\begin{array}{cc}
\dfrac{q^{j+1}-q^{-j-1}}{q-q^{-1}}=\mathrm{dim}_q(L(j)) & j=i-1,~ l-i-1 , \\
& \\
0 & j\neq i-1,~ l-i-1 .
\end{array}
\right.
\]
When they are evaluated on $x_i$, $y_i$, $i=1,\dots, \frac{l-1}{2}$, we have
\[
\chi_j(x_i)=\chi_j(y_i)=0.
\]

For the functional $\lambda_l$, we compute that
\[
\lambda_l(x_i)=\lambda_l(x_i1)=\chi_{i-1}(1)=\mathrm{dim}_q(L(i-1))=[i-1]_q,
\]
\[
\lambda_l(y_i)=\lambda_l(y_i1)=\chi_{l-i-1}(1)=\mathrm{dim}_q(L(l-i-1))=[l-i-1]_q= -[i-1]_q .
\]
\[
\lambda_l(e_\St)=\lambda_l(e_\St1)=\chi_\St(1)=\mathrm{dim}_q(L(l-1))=0 .
\]
It follows that, in the regular blocks, the product homomorphism 
\[
\chi_{i-1} \times \chi_{l-i-2} \times \lambda_l : \C\langle e_i,x_i,y_i \rangle\lra \C^3
\]
has two-dimensional image, and $\C \langle x_i+y_i \rangle$ lies in the kernel of this map. It follows that, a complementary subspace to the kernel, say either $\C\langle e_i, x_i \rangle$ or $\C\langle e_i, y_i \rangle$, constitutes a trivial summand in $(\ul e_i)_{\ad}$.

 Denote by $\C\langle e_i, x_i \rangle_{i=1}^{\frac{l-1}{2}}$ the central subalgebra spanned by the elements in the bracket, then
\[
\mathrm{dim}\C\left\langle e_i, x_i \right\rangle_{i=1}^{\frac{l-1}{2}}=l-1,
\]
which is already equal to the number of trivial summands in $\ul_\ad$. It follows that the complementary subspace in the center agrees with $\ad \Lambda(\ul)$:
\[
\C \left\langle e_{\mathrm{St}}, x_i+y_i \right\rangle _{i=1}^{\frac{l-1}{2}}=\bigcap_{i=1}^{\frac{l-1}{2}}\mathrm{Ker}(\chi_i\big|_{\zl})\bigcap \mathrm{Ker}(\lambda_l\big|_{\zl})=\zl_{\Hig}.
\]

The last statement follows from the fact that $\lambda_l(\Lambda)=1$. This finishes the proof of the theorem.
\end{proof}

\paragraph{The $\ul_q(\mathfrak{sl}_2)$ derived center.}
Combined with the result \cite{GinKu} on the cohomology of the small quantum group, this decomposition allows us to derive 
the total Hochschild cohomology of the small quantum group $\ul(\s_2)$. 

Denote by $\C[\Nc]_+$ the augmentation ideal generated by all positive degree elements in the graded ring $\C[\Nc]$.
We will consider separately the odd and even degree of the Hochschild cohomology. 

First we compute the odd Hochschild cohomology part of $\ul_q(\mathfrak{sl}_2)$. In order to do so, we need to compute the cohomology of the module $\phi^*(V(1))\otimes L(l-2)$ appearing as direct summands of $\ul(\mathfrak{sl}_2)$. Although the factor $\phi^*(V(1))$ restricts to a trivial module over $\ul_q(\s_2)$, it carries a nontrivial action by $\Ul_q(\s_2)\qq \ul(\s_2)=U(\s_2)$.

\begin{proposition}\label{prop-HH-sl2-odd}
There are isomorphism of $U(\s_2)$-modules
\[
\mathrm{Ext}^i_{\ul(\s_2)} (L(0), \phi^*(V(1))\otimes L(l-2) ) \cong
\left\{
\begin{array}{cc}
V(i-1)\oplus V(i+1) & i~\textrm{odd},\\
0 & i~\textrm{even}.
\end{array}
\right.
\]
Consequently, for the odd degrees, there is an isomorphism of graded $ \C[\Nc]\rtimes U(\s_2) $-modules
\[
\mathrm{Ext}^{2\bullet+1}_{\ul_q(\s_2)}(L(0),\phi^*(V(1))\otimes L(l-2)) \cong \C[\Nc][-1]\oplus \C[\Nc]_+[1].
\]
\end{proposition}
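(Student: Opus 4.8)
The plan is to reduce everything to the computation of $\mathrm{Ext}^\bullet_\ul(L(0),L(l-2))$ as a graded $U(\s_2)$-module and then apply Clebsch--Gordan. First I would observe that $\phi^*(V(1))$ restricts to the trivial two-dimensional $\ul$-module, while $L(0)$, $L(l-2)$ and $\phi^*(V(1))$ are all restrictions of genuine $\Ul_q(\s_2)$-modules (the last one a Frobenius pullback, where the residual $U(\s_2)=\Ul\qq\ul$ action is precisely $V(1)$). Since the $U(\s_2)$-action on these Ext groups is induced exactly as in Corollary \ref{GK-action} from the adjoint action of $\Ul$, the $\ul$-trivial twist factors out of the Hom-complex, giving an isomorphism of $U(\s_2)$-modules
\[
\mathrm{Ext}^i_\ul(L(0),\phi^*(V(1))\otimes L(l-2))\cong V(1)\otimes \mathrm{Ext}^i_\ul(L(0),L(l-2)).
\]

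Next I would establish the key claim that $\mathrm{Ext}^i_\ul(L(0),L(l-2))$ vanishes for even $i$ and is isomorphic to the irreducible $V(i)$ for odd $i$. The vanishing and the dimension count $\dim\mathrm{Ext}^{2n+1}=2n+2$ follow from the minimal projective resolution of $L(0)$: using the diamond Loewy structure of $P(0),P(l-2)$ recorded by the Cartan matrix $\left(\begin{smallmatrix}2&2\\2&2\end{smallmatrix}\right)$, the resolution uses $P(0)$ in even homological degrees and $P(l-2)$ in odd ones, so that $\mathrm{Ext}^\bullet_\ul(L(0),L(l-2))$ is concentrated in odd degrees, complementary to $\mathrm{Ext}^\bullet_\ul(L(0),L(0))\cong\C[\Nc]$ in even degrees. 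To pin down the $\s_2$-module structure I would pass to the geometric side: under the ABG/BeLa equivalence $D^b(\ul_0)\cong D^b_{\C^*}(\coh(\Nt_0))$ with $\Nt_0=T^*\PS^1$, the simples $L(0),L(l-2)$ correspond (up to homological and grading shifts) to the line bundles $\Ox$ and $\Ox(-1)$, and $\mathrm{Ext}^\bullet_\ul(L(0),L(l-2))$ is computed as $\bigoplus_{k\ge0}\mH^\bullet(\PS^1,\Ox(2k-1))$ with its $\C^*$-grading. Borel--Weil--Bott on $\PS^1$ then yields $\bigoplus_{n\ge0}V(2n+1)$ placed in odd cohomological degree, which is the claim.

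Finally, Clebsch--Gordan $V(1)\otimes V(i)\cong V(i-1)\oplus V(i+1)$ for odd $i$ produces the first displayed isomorphism of the Proposition, together with vanishing in even degrees. For the consequence, I would invoke Ginzburg--Kumar's identification $\C[\Nc]\cong\bigoplus_{d\ge0}V(2d)$ with $V(2d)$ in cohomological degree $2d$; under it the degree-$(2n+1)$ piece $V(2n)\oplus V(2n+2)$ matches exactly the degree-$(2n+1)$ parts of $\C[\Nc][-1]$ and of $\C[\Nc]_+[1]$ respectively, and assembling over all $n$ gives the stated isomorphism of graded $\C[\Nc]\rtimes U(\s_2)$-modules, the module structure being transported through the same equivalence.

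I expect the main obstacle to be the middle step: showing that $\mathrm{Ext}^{2n+1}$ is the single irreducible $V(2n+1)$ rather than merely a $(2n+2)$-dimensional space of the correct total dimension. This forces care about the precise line-bundle dictionary under the ABG equivalence and about matching its intrinsic $\C^*$-grading with the homological grading. If that dictionary proves awkward, I would instead argue representation-theoretically: $\mathrm{Ext}^1_\ul(L(0),L(l-2))\cong V(1)$ (it cannot be $V(0)^{\oplus 2}$, since the final answer has even highest weights), and $\mathrm{Ext}^\bullet_\ul(L(0),L(l-2))$ is generated over $\C[\Nc]=\bigoplus_d V(2d)$ by this copy of $V(1)$, so that in each degree only the top Clebsch--Gordan constituent $V(2n+1)$ survives; as an independent check, the full geometric computation of the odd Hochschild cohomology via Theorem \ref{BL} gives $\mHH^{2n+1}(\ul_0)\cong\mH^0(\Nt_0,T\Nt_0)$ in total degree $2n+1$, equal to $V(2n)\oplus V(2n+2)$, in agreement with the answer.
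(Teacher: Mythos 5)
Your overall skeleton is the same as the paper's: factor the $\ul$-trivial twist $\phi^*(V(1))$ out of the coefficients, compute $\Ext^\bullet_\ul(L(0),L(l-2))$, and finish with Clebsch--Gordan (the paper applies Clebsch--Gordan inside the resolution rather than at the end, which is the same thing). Your first reduction $\Ext^i_\ul(L(0),\phi^*(V(1))\otimes L(l-2))\cong V(1)\otimes\Ext^i_\ul(L(0),L(l-2))$ is correct, and so are the parity and dimension counts from the minimal resolution. The genuine gap is exactly where you predict it: identifying $\Ext^{2n+1}_\ul(L(0),L(l-2))$ with the single irreducible $V(2n+1)$. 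Your primary argument rests on an ABG dictionary ``$L(0)\leftrightarrow\Ox$, $L(l-2)\leftrightarrow\Ox(-1)$'' that is nowhere established and is genuinely delicate: you must decide whether $\Ox(-1)$ means $\mathrm{pr}^*\Ox_{\PS}(-1)$ or the pushforward $\iota_*\Ox_{\PS}(-1)$ along the zero section (the latter gives $\Ext^\bullet_{\Nt}\bigl(\Ox_{\Nt},\iota_*\Ox_{\PS}(-1)\bigr)\cong\mH^\bullet(\PS,\Ox(-1))=0$, i.e.\ the wrong answer), and you must pin down how the sheaf degree and the internal $\C^*$-degree assemble into the homological degree; Theorem \ref{BL} as used in this paper supplies none of this for simple objects. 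Your fallback is partly circular --- ``it cannot be $V(0)^{\oplus 2}$, since the final answer has even highest weights'' presupposes the statement being proved --- and the claim that $\Ext^\bullet_\ul(L(0),L(l-2))$ is generated in degree one over $\C[\Nc]$ is asserted, not proved (it is true, but needs the Koszulity of the block or, again, an explicit resolution).

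The paper closes this gap with the one idea you are missing: resolve $L(l-2)=T(l-2)$ by tilting modules over the big quantum group, $0\to T(l-2)\to T(l)\to T(3l-2)\to T(3l)\to\cdots$, and invoke Lusztig's tensor product theorem to factor $T((2k+1)l)\cong\phi^*V(2k)\otimes T(l)$ and $T((2k+1)l-2)\cong\phi^*V(2k-1)\otimes T(2l-2)$. Since $T(l)\big|_\ul\cong P(l-2)$ and $T(2l-2)\big|_\ul\cong P(0)$, restriction gives an injective $\ul$-resolution whose terms $\phi^*V(k)\otimes P(j)$ carry their $U(\s_2)$-structure explicitly; applying $\Hom_\ul(L(0),\mbox{-})$ kills every other term, so all differentials vanish and one reads off $\Ext^{2n+1}_\ul(L(0),L(l-2))\cong V(2n+1)$ --- dimensions and module structure simultaneously, with no geometric input. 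If you prefer a geometric proof, the honest route is not the simple-to-line-bundle dictionary but the argument you relegate to an ``independent check'': combine Ostrik's decomposition (Corollary \ref{u-sl2-ad}), the fact that the $P(0)$ and $L(0)$ summands contribute nothing in odd degrees, and Theorem \ref{BL} together with the equivariance of Theorem \ref{thm-Schouten-action} and the computation of Proposition \ref{HH_sl2_0}; but then these ingredients must be carried out in that order as the proof, not cited as corroboration of an answer obtained otherwise.
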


\begin{proof}
Regarding $L(l-2)$ as a (tilting) module over $\Ul=\Ul_q(\s_2)$, we will construct an injective resolution for $L(l-2)$ over $\Ul$. The resolution would then restricts to an injective resolution $L(l-2)$ over $\ul=\ul_q(\s_2)$ (see e.g. \cite{APW, APW2, Andsntensor}).

The module $L(l-2)=T(l-2)$ is a tilting module over $\Ul$. There is a resolution of tilting modules 
\begin{equation}
0\lra T(l-2) \lra T(l) \stackrel{d_0}{\lra} T(3l-2) \stackrel{d_1}{\lra} T(3l) \stackrel{d_2}{\lra} T(5l-2) \stackrel{d_3}{\lra} \dots
\end{equation}
The differentials ($k=0,1,\dots,$)
\begin{equation}
\cdots \lra T((2k+1)l-2) \xrightarrow{d_{2k}} T((2k+1)l) \xrightarrow{d_{2k+1}} T((2k+2)l-2) \lra \cdots
\end{equation}
are given by the unique maps, up to rescaling, of  tilting modules whose neighboring highest weights lie in the same orbit of $l-2$ by the affine Weyl group action on the weight lattice.

Lusztig's tensor product decomposition theorem implies that, in this case, there is a tensor product decomposition
\[
T((2k+1)l)\cong \phi^*V(2k)\otimes T(l), \quad \quad 
T((2k+1)l-2) \cong \phi^* V(2k-1)\otimes T(2l-2).
\]
Here $V(m)$ denotes the simple $U(\s_2)$-module of highest weight $m \geq 0$ and dimension $m+1$.

Restrict to the small quantum group, and keeping track of the $U(\s_2)$ action at the same time, we obtain an injective resolution
\[
0 \to L(l-2) \to P(l-2) \to \phi^*V(1) \otimes P(0) \to \phi^*V(2) \otimes P(l-2) \to \phi^*V(3) \otimes P(0) \to \dots ,
\]   
where we have used that
\[
T(l)\big|_{\ul}\cong P(l-2), \quad \quad T(2l-2)\big|_{\ul}\cong P(0). 
\]
The Frobenius pullbacks are flat over $\ul$. Then to compute ${\rm Ext}^\bullet_\ul (L(0), \phi^*V(1) \otimes L(l-2))$ we take the tensor product of the above resolution with $\phi^*(V(1))$ to obtain
\begin{align}\label{eqn-resolution1}
0 &\to \phi^*V(1)\otimes L(l-2)  \to \phi^*V(1)\otimes P(l-2) \to \phi^*(V(0) \oplus V(2)) \otimes P(0) \to
 \nonumber \\
 & \to \phi^*(V(1)\oplus V(3)) \otimes P(l-2)  \to \phi^*(V(2) \oplus V(4)) \otimes P(0) \to \ldots 
 \end{align} 
 
Equivalently, this resolution can be obtained from the injective resolution for $L(0)$ constructed in a similar fashion as above:
\begin{equation}\label{eqn-resolution2}
0 \to L(0) \to P(0) \to \phi^*V(1) \otimes P(l-2) \to \phi^*V(2) \otimes P(0) \to \phi^*V(3) \otimes P(l-2) \to \ldots 
\end{equation}
by tensoring it with the module $\phi^*V(1)\otimes L(l-2)$.

Note that $\Hom_{\ul}(L(i),P(j))=\delta_{i,j} \C$ for $i,j\in \{0,l-2\}$. Using this and resolution \eqref{eqn-resolution2} we obtain that 
\begin{equation}\label{eqn-ext-L0}
\C^\bullet[\Nc]\cong{\rm Ext}^{\bullet}_\ul (L(0), L(0)) \cong \bigoplus_{k\in \N} V(2k) ,
\end{equation}
where $V(2k)$ sits in homological degree $2k$. Likewise, using resolution \eqref{eqn-resolution1}, we have
\begin{equation}\label{eqn-ext-L0Ll-2}
\mathrm{Ext}^\bullet (L(0), \phi^*V(1)\otimes L(l-2))= \bigoplus_{k\in \N} \left( V(2k)\oplus V(2k+2) \right),
\end{equation}
where the summand $ V(2k)\oplus V(2k+2) $ appears in homological degree $2k+1$. Comparing the expressions \eqref{eqn-ext-L0} and \eqref{eqn-ext-L0Ll-2}, the Proposition follows. 
\end{proof}

\begin{proposition} \label{prop-HH-sl2-even}
The even part of the Hochschild cohomology ring of $\ul_q(\mathfrak{sl}_2)$ is isomorphic to the graded algebra
\[
\mHH^{2\bullet} (\ul_q(\mathfrak{sl}_2))
\cong
\dfrac{\C[\Nc]\otimes \zl(\mathfrak{sl}_2) }{\left(\C[\Nc]_+\otimes (x_i+y_i),\C[\Nc]_+ \otimes e_{\mathrm{st}}|i=1,\dots, \frac{l-1}{2}\right)}.
\]
Here the terms in the denominator stands for the ideal generated by the corresponding elements in the tensor product commutative algebra.
\end{proposition}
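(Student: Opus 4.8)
The plan is to compute $\mHH^{2\bullet}(\ul_q(\s_2))$ through the Ginzburg--Kumar isomorphism (Theorem \ref{HH-Hopf}), which identifies it with $\Ext^{2\bullet}_\ul(L(0),\ul_\ad)$ together with its cup product, and then to evaluate this $\Ext$-group summand by summand against Ostrik's decomposition of $\ul_\ad$. Since $L(0)$ lies in the principal block, only the principal-block part $(\ul_\ad)_0$ of Corollary \ref{u-sl2-ad} contributes. The projective-injective summands $P(0)^{\oplus\frac{l+1}{2}}$ give $\Hom_\ul(L(0),P(0))=\C$ in cohomological degree zero and nothing in positive degree; each trivial summand in $L(0)^{\oplus(l-1)}$ gives a full copy of $\Ext^{\bullet}_\ul(L(0),L(0))\cong\C[\Nc]$, concentrated in even degrees by \eqref{eqn-ext-L0}; and the summands $(\phi^*V(1)\otimes L(l-2))^{\oplus\frac{l-1}{2}}$ live in odd degrees by Proposition \ref{prop-HH-sl2-odd}, hence do not affect the even part. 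This already yields, as a graded vector space,
\[
\mHH^{2\bullet}(\ul_q(\s_2))\cong \C[\Nc]^{\oplus(l-1)}\oplus \C^{\oplus\frac{l+1}{2}},
\]
with the second summand placed in degree zero.

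Next I would set up a comparison map. The subalgebra $\C[\Nc]=\mH^{2\bullet}(\ul,\C)\hookrightarrow \mHH^{2\bullet}(\ul)$ (the image of the unit $\C\to\ul_\ad$) and the degree-zero part $\mHH^0=\zl(\s_2)$ (Theorem \ref{thmsl2center}) are commuting subalgebras sharing the unit, so the cup product defines a graded algebra homomorphism
\[
\mu:\C[\Nc]\otimes_\C \zl(\s_2)\lra \mHH^{2\bullet}(\ul),\qquad f\otimes z\mapsto f\cup z.
\]
The decisive input is Theorem \ref{thm-sl2-central-elements-in-adrep}, which records exactly where the central basis elements sit inside $\ul_\ad$: the idempotents $e_i$ and the elements $x_i$ lie in trivial summands, while $x_i+y_i$ and $e_\St$ span the socles of the projective summands, i.e.\ the Higman ideal. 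Matching the cup-product $\C[\Nc]$-module structure with the $\Ext$-decomposition above, the elements $e_i,x_i$ generate the $(l-1)$ free $\C[\Nc]$-summands and $x_i+y_i,e_\St$ the degree-zero summands; thus $\mHH^{2\bullet}(\ul)$ is generated over $\C[\Nc]$ by its degree-zero part $\zl(\s_2)$, and $\mu$ is surjective.

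To pin down the kernel I would invoke the fact that the Higman ideal is annihilated by all positive-degree cohomology: by (the proof of) Corollary \ref{hig_ideal_HH}, one has $\C[\Nc]_+\cup z=0$ for every $z\in\zl_\Hig(\ul)=\langle x_i+y_i,e_\St\rangle$. Hence the stated ideal $I=(\C[\Nc]_+\otimes(x_i+y_i),\,\C[\Nc]_+\otimes e_\St)$ lies in $\ker\mu$, so $\mu$ factors as $\bar\mu:(\C[\Nc]\otimes\zl(\s_2))/I\to\mHH^{2\bullet}(\ul)$. A short computation in the commutative ring $\C[\Nc]\otimes\zl(\s_2)$ (using $w(x_i+y_i)\in\C(x_i+y_i)$ and $we_\St\in\C e_\St$) shows $I=\C[\Nc]_+\otimes\langle x_i+y_i,e_\St\rangle$, so that $(\C[\Nc]\otimes\zl(\s_2))/I\cong \C[\Nc]\otimes\langle e_i,x_i\rangle \oplus \C\otimes\langle x_i+y_i,e_\St\rangle$ has the same graded dimension as $\mHH^{2\bullet}(\ul)$ in every degree. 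A surjection of graded vector spaces whose graded pieces are finite-dimensional of equal dimension is an isomorphism, so $\bar\mu$ is the desired isomorphism of graded algebras.

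The main obstacle I anticipate is the middle step: justifying that the cup-product module structure over $\C[\Nc]$ really distributes over the indecomposable summands of $\ul_\ad$ as the $\Ext$-computation suggests — concretely, that $f\cup x_i$ generates precisely the free summand attached to the trivial constituent carrying $x_i$ rather than mixing into the other summands, and that the degree-zero central elements are exactly the $\C[\Nc]$-module generators. This requires identifying the cup-product action of $\mH^\bullet(\ul,\C)$ on $\mH^\bullet(\ul,\ul_\ad)$ with the natural $\Ext$-module action (via $L(0)\otimes\ul_\ad\cong\ul_\ad$ and the multiplication $m$), combined with the block-by-block bookkeeping supplied by Theorem \ref{thm-sl2-central-elements-in-adrep}. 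By contrast, the vanishing half of the relations is immediate from Corollary \ref{hig_ideal_HH}, and the final passage from a graded vector-space isomorphism to an algebra isomorphism is a routine dimension count.
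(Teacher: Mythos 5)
Your proposal is correct and takes essentially the same route as the paper's proof: Ginzburg--Kumar (Theorem \ref{HH-Hopf}) plus the principal-block decomposition of Corollary \ref{u-sl2-ad}, with $P(0)$-summands contributing only in degree zero, trivial summands contributing copies of $\C[\Nc]$ via \eqref{eqn-ext-L0}, the odd part discarded by Proposition \ref{prop-HH-sl2-odd}, and Theorem \ref{thm-sl2-central-elements-in-adrep} locating the central generators --- your explicit comparison map $\mu$ and the dimension count merely make rigorous the identification that the paper states summand by summand. The obstacle you anticipate is not a real gap: the $\C[\Nc]$-action is the external cup product $\mH^\bullet(\ul,\C)\otimes \mH^\bullet(\ul,M)\lra \mH^\bullet(\ul,M)$, which is functorial in the coefficient module $M$ and therefore respects the direct-sum decomposition of $\ul_\ad$, so $f\cup x_i$ lands precisely in the summand $\Ext^\bullet_\ul(L(0),\C x_i)$ and cannot mix into other summands.
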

\begin{proof}
We use Theorem \ref{HH-Hopf} and Corollary \ref{u-sl2-ad}. 
The contribution from $L(0)$ is obtained by using the injective resolution \ref{eqn-resolution2} and results in 
the term \ref{eqn-ext-L0}. The contribution from the injective module $P(0)$ is contained entirely in the zeroth cohomology. 
Comparing with Theorem \ref{thm-sl2-central-elements-in-adrep}, we see that $\C(x_i+y_i)$ and $\C(e_{\mathrm{st}})$ are contained in projective summands of $\ul_\ad$.  Each of their contribution, say, the term $\C(x_i+y_i)$, to the entire Hochschild cohomology can be computed to be
\[
\mathrm{Ext}^\bullet_{\ul}(L(0),P(0))\cong \Hom_{\ul}(L(0),P(0)) \cong \C \cong \dfrac{\C[\Nc]\otimes \C(x_i+y_i)}{\C[\Nc]_+\otimes \C(x_i+y_i)}.
\]
Here the first isomorphism holds since $P(0)$ is also injective over $\ul$, so that there are no higher ext groups.
The term $(\phi^* V(1) \otimes L(l-2))$ does not contribute to even-degree cohomology by Proposition \ref{prop-HH-sl2-odd}. 
The proposition follows.
\end{proof}

Combining Proposition \ref{prop-HH-sl2-even} and \ref{prop-HH-sl2-odd}, we immediately obtain

\begin{theorem}\label{thm-HH-sl2}
The total Hochschild cohomology group of $\ul_q(\s_2)$, where $q$ is an odd primitive $l$th root of unity, is isomorphic to the graded $\C[\Nc]\rtimes U(\s_2)$-module
\[
\mHH^\bullet(\ul_q(\s_2))\cong
\prod_{i=1}^{\frac{l-1}{2}}
\left(
\C[\Nc]e_i\oplus  \C[\Nc]\mu_i \oplus \C[\Nc]_+\nu_i
\oplus \dfrac{\C[\Nc]x_i \oplus\C[\Nc]y_i}{\C[\Nc]_+(x_i+y_i)} \right)\times \C e_{\rm st},
\]
where the module generators $e_i,~\mu_i,~\nu_i,~x_i,~y_i,~e_{\rm st}$ have homological degrees
 $$\mathrm{deg}(e_i)=\mathrm{deg}(e_{\rm st})=\mathrm{deg}(x_i)=\mathrm{deg}(y_i)=0, \quad \quad \mathrm{deg}(\mu_i)=1, \quad \quad \mathrm{deg}(\nu_i)=-1. $$
Furthermore, the product structure respects the block decomposition of $\ul_q(\s_2)$. 
\end{theorem}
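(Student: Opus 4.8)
The plan is to assemble the theorem directly from the even-degree computation of Proposition \ref{prop-HH-sl2-even} and the odd-degree computation of Proposition \ref{prop-HH-sl2-odd}, using the Ginzburg--Kumar isomorphism $\mHH^\bullet(\ul_q(\s_2)) \cong \Ext^\bullet_{\ul}(L(0), \ul_\ad)$ of Theorem \ref{HH-Hopf} as the organizing principle and Ostrik's decomposition (Corollary \ref{u-sl2-ad}) to track which summand of $\ul_\ad$ contributes to which graded piece. First I would split the graded space as $\mHH^\bullet = \mHH^{2\bullet} \oplus \mHH^{2\bullet+1}$; the even part is handed to us by Proposition \ref{prop-HH-sl2-even} and the odd part by Proposition \ref{prop-HH-sl2-odd}, so the content of the present statement is a repackaging of these two outputs into a single block-indexed formula.

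For the even part I would rewrite the quotient algebra of Proposition \ref{prop-HH-sl2-even} using the block decomposition $\zl(\s_2)\cong \prod_{i=1}^{(l-1)/2}\C[x_i,y_i]/(x_i^2,x_iy_i,y_i^2)\times \C$ of Theorem \ref{thmsl2center}. Distributing the tensor product $\C[\Nc]\otimes \zl(\s_2)$ over this product and imposing the relations $\C[\Nc]_+\otimes(x_i+y_i)$ and $\C[\Nc]_+\otimes e_{\rm st}$ block by block yields, for each regular block, the summand $\C[\Nc]e_i \oplus \big(\C[\Nc]x_i\oplus\C[\Nc]y_i\big)/\C[\Nc]_+(x_i+y_i)$, and for the Steinberg block the one-dimensional $\C e_{\rm st}$. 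Here I would cite Theorem \ref{thm-sl2-central-elements-in-adrep}: part (ii) says that $e_i$ and $x_i$ sit in trivial $L(0)$-summands of $\ul_\ad$ and hence generate free rank-one $\C[\Nc]$-modules, while part (iii) says that $x_i+y_i$ and $e_{\rm st}$ lie in the socle of projective (hence injective) $P(0)$-summands, so they survive only in cohomological degree zero --- which is exactly the effect of killing them by $\C[\Nc]_+$ in the quotient.

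For the odd part I would invoke Proposition \ref{prop-HH-sl2-odd}, which computes the contribution of a single summand $\phi^*(V(1))\otimes L(l-2)$ to be $\C[\Nc][-1]\oplus\C[\Nc]_+[1]$; I name the two module generators $\mu_i$ (homological degree $1$) and $\nu_i$ (homological degree $-1$), producing $\C[\Nc]\mu_i\oplus\C[\Nc]_+\nu_i$, the $\C[\Nc]_+$ on $\nu_i$ ensuring that the actual cohomology sits in positive degree. To see that there is exactly one such pair per regular block I would combine the total multiplicity $\tfrac{l-1}{2}$ of $\phi^*(V(1))\otimes L(l-2)$ in $\ul_\ad$ (Corollary \ref{u-sl2-ad}) with the Jantzen translation principle, which forces all $\tfrac{l-1}{2}$ regular blocks to have isomorphic Hochschild cohomology and hence apportions these summands one to each block; the Steinberg block, being semisimple, contributes nothing in odd degree.

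Finally I would record the block structure. Hochschild cohomology is additive over a product of algebras, so under $\ul = \prod_i \ul e_i\times \ul e_{\rm st}$ the ring $\mHH^\bullet(\ul)$ decomposes as the product of the block contributions, with $e_i$ and $e_{\rm st}$ the units of the factors; this yields the displayed product together with the claim that multiplication respects the block decomposition. Both the $\C[\Nc]$-module structure (cup product with $\mH^\bullet(\ul,\C)\cong\C[\Nc]$) and the $U(\s_2)$-action are already tracked in Propositions \ref{prop-HH-sl2-even} and \ref{prop-HH-sl2-odd}, so they transport to the assembled formula unchanged. The only real obstacle is bookkeeping rather than mathematics: one must check that the degree-zero part of the block-$i$ summand reproduces precisely the three-dimensional center $\C e_i\oplus \C x_i\oplus \C y_i$ (the relation $\C[\Nc]_+(x_i+y_i)$ being vacuous in degree $0$), and that the multiplicities of $L(0)$, $P(0)$ and $\phi^*(V(1))\otimes L(l-2)$ from Corollary \ref{u-sl2-ad} are split correctly across the regular blocks and the Steinberg block so that each generator lands in the intended summand.
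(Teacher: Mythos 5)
Your proposal is correct and follows essentially the same route as the paper: the theorem is obtained by combining Propositions \ref{prop-HH-sl2-even} and \ref{prop-HH-sl2-odd} via the Ginzburg--Kumar isomorphism, with the block-by-block apportionment handled exactly as you do --- the Steinberg block is semisimple and contributes nothing in higher degree, while Jantzen's translation principle forces the $\frac{l-1}{2}$ copies of $\phi^*V(1)\otimes L(l-2)$ (and hence the odd cohomology) to be distributed one per regular block. Your extra bookkeeping via Theorems \ref{thmsl2center} and \ref{thm-sl2-central-elements-in-adrep} for the even part is consistent with what the paper's propositions already encode.
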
 
\begin{proof}
Only the last part needs some remark. The Steinberg block is semisimple and does not have any higher Hochschild cohomology. On the other hand, the regular blocks $\ul_\lambda$ are all equivalent to the principal block $\ul_0$ by Jantzen's translation principle, and therefore $\phi^*V(1)\otimes L(l-2)$ must appear exactly once in $(\ul_\lambda)_\ad$, and the total odd Hochschild cohomology groups are evenly distributed among the $\frac{l-1}{2}$ regular blocks. 
\end{proof}

However, Theorem \ref{thm-HH-sl2} does not provide the algebra structure of the Hochschild cohomology ring yet. This will be done in the next subsection.

\paragraph{Geometric construction and algebra structure.} 
In the final part of this section, we use Theorem \ref{BL} to compare the Hochschild cohomology of each block with with the algebraic computations above, and determine the algebra structure of the Hochschild cohomology ring.

\begin{proposition}  \label{HH_sl2_0} 
The $\C^*$-equivariant Hochschild cohomology of the Springer variety is isomorphic to 
\[ 
 \mHH^s_{\C^*} (\Nt)=\left\{ \begin{array}{ll} 
V(0)^{\oplus 3} , & s =0 ,\\ 
   V(s)^{\oplus 2},  & s \geq 2, \;\; s \; {\rm even} \\
    V(s-1) \oplus V(s+1), & s \geq 1, \;\; s \; {\rm odd} \\ 
  \end{array} \right.  \]  
\end{proposition}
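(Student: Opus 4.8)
The plan is to compute the right–hand side of the isomorphism of Theorem \ref{BL} directly on the geometric model $\Nt = T^*\PS^1$. Since $X := \PS^1 = G/B$ has dimension $m=1$, the total space $\Nt$ has dimension $2$, so the only polyvector degrees that can occur are $j\in\{0,1,2\}$; and since $\pr:\Nt\lra X$ is affine (so $R^{>0}\pr_*=0$) and $X$ has coherent-cohomological dimension $1$, we have $\mH^i(\Nt,-)\cong\mH^i(X,\pr_*(-))$ with $i\in\{0,1\}$. Thus I only need the six groups $\mH^i(\Nt,\wedge^j T\Nt)^k$ for $(i,j)\in\{0,1\}\times\{0,1,2\}$, together with their $\C^*$-weights $k$, and then collect everything according to the total degree $s=i+j+k$.

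First I would record the ingredients on $\PS^1$: $T_X\cong\Ox(2)$, $\Omega_X\cong\Ox(-2)$, and $\pr_*\Ox_\Nt\cong\bigoplus_{n\geq 0}S^n(T_X)\cong\bigoplus_{n\geq 0}\Ox(2n)$, where the degree-$n$ summand sits in $\C^*$-weight $2n$ (the fibres of $\pr$ are rescaled by $z\mapsto z^{-2}$, and I fix the normalization so that the Poisson bivector occupies weight $-2$). Borel--Weil gives $\mH^0(X,\Ox(d))\cong V(d)$ for $d\geq 0$ and Serre duality gives $\mH^1(X,\Ox(d))\cong V(-d-2)$ for $d\leq -2$, both as $\s_2$-modules. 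The two extreme degrees are then immediate: $\wedge^0 T\Nt=\Ox_\Nt$ yields $\mH^0=\bigoplus_n V(2n)\{2n\}$ (recovering $\C[\Nc]$) and $\mH^1=0$; while $\wedge^2 T\Nt=\det T\Nt\cong\pr^*(\Omega_X\otimes T_X)$ is $\Ox_\Nt$ twisted only by the weight $-2$ of the fibre direction, i.e.\ $\Ox_\Nt\{-2\}$, so $\mH^0=\bigoplus_n V(2n)\{2n-2\}$ and $\mH^1=0$.

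The heart of the computation is $j=1$, for which I would use the relative tangent sequence
\[
0\lra \pr^*\Omega_X\lra T\Nt\lra \pr^*T_X\lra 0,
\]
apply $\pr_*$, and pass to cohomology. Via the projection formula, $\mH^\bullet(\Nt,\pr^*\Omega_X)$ contributes $\bigoplus_{m\geq 0}V(2m)\{2m\}$ in degree $0$ together with a single degree-$1$ term $V(0)\{-2\}$ coming from $\mH^1(\Ox(-2))$, while $\mH^\bullet(\Nt,\pr^*T_X)$ contributes $\bigoplus_{n\geq 0}V(2n+2)\{2n\}$ in degree $0$ and vanishes in degree $1$. The key point is that the connecting map $\mH^0(\pr^*T_X)\to\mH^1(\pr^*\Omega_X)=V(0)\{-2\}$ must vanish for weight reasons, since its source has no weight-$(-2)$ component; hence the long exact sequence splits as a sequence of $\C^*$-graded $\s_2$-modules, giving $\mH^0(T\Nt)\cong\bigoplus_{m\geq 0}V(2m)\{2m\}\oplus\bigoplus_{n\geq 0}V(2n+2)\{2n\}$ and $\mH^1(T\Nt)\cong V(0)\{-2\}$. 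In particular the weight-$0$ piece of $\mH^0(T\Nt)$ is $V(0)\oplus V(2)\cong\C\oplus\g$, as expected from Theorem \ref{thm-Schouten-action}.

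Finally I would reassemble by total degree $s=i+j+k$. The slot $s=0$ receives $V(0)$ from $\wedge^0$ (the constants, $i=j=0$, $k=0$), $V(0)$ from $\wedge^2$ (the $n=0$ term, $i=0,j=2,k=-2$), and $V(0)$ from $\mH^1(T\Nt)$ ($i=j=1,k=-2$), giving $V(0)^{\oplus 3}$. For even $s=2m\geq 2$, the two copies of $\C[\Nc]$ arising from $\wedge^0$ and $\wedge^2$ each contribute one $V(2m)$ and no odd-degree group lands in an even slot, giving $V(s)^{\oplus 2}$. For odd $s=2m+1$, only $\mH^0(T\Nt)$ contributes, and its two summands land in weight $2m$ as $V(2m)=V(s-1)$ and $V(2m+2)=V(s+1)$. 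This reproduces the three cases of the statement, and may then be matched against the algebraic description of Theorem \ref{thm-HH-sl2}. The main obstacle is the careful bookkeeping of the $\C^*$-weights — fixing the weight of the fibre direction and tracking the twist through $\pr_*$ — together with the verification that the $j=1$ connecting homomorphism vanishes; once these are in place, the rest is a routine application of Borel--Weil and Serre duality on $\PS^1$.
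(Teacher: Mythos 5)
Your proof is correct and takes essentially the same route as the paper's: both push forward along the affine projection $\pr:\Nt\to\PS$, resolve $T\Nt$ by the relative tangent sequence $0\to\pr^*\Omega_\PS\to T\Nt\to\pr^*T\PS\to 0$, apply Borel--Weil/Serre duality on $\PS^1$ with the same $\C^*$-weight conventions, and reassemble by total degree $s=i+j+k$. Your explicit vanishing of the connecting map for weight reasons is the same observation the paper makes implicitly by working one $\C^*$-weight at a time, where in each weight either $\mH^1$ of the sub ($\Ox(2t-2)$, $t\geq 1$) or the quotient piece itself (at weight $-2$) vanishes.
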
 

\begin{proof} 
Denote for simplicity the $SL_2(\C)$ flag variety by $\PS=SL_2(\C)/B$, where $B$ is a Borel subgroup, and $\Nt=T^*\PS$ is the Springer variety.

By the Hochschild-Rosenberg-Kostant Theorem, there is an isomorphism of graded vector spaces
$$\mHH^\bullet_{\C^*}(\Nt) \cong \bigoplus_{i+j+k=\bullet} \mH^i(\Nt, \wedge^j T\Nt)^k$$
We compute each cohomology group $\mH^i(\Nt, \wedge^j T\Nt)^k$ by pushing forward the poly-tangent bundles along the projection   ${\rm pr} : \Nt=T^*\PS \to  \PS$ and making use of the short exact sequence  
$$0 \lra {\rm pr}^* \Omega_\PS  \lra T\Nt \lra {\rm pr}^*  T\PS \lra 0.$$  

Below, we identify $T\PS \cong \Ox(2)$ and $T^*\PS=\Omega_\PS \cong \Ox(-2)$.            
It is important to keep track of the $\C^*$-degree of the sheaves: $\Omega_\PS$ has $\C^*$-degree $-2$, $T\PS$ has $\C^*$-degree $0$. The pushforward 
$${\rm pr} (\Ox_\Nt ) \cong  SL_2(\C)\times_B\C[\n] \cong  \bigoplus_{t \geq 0} \Ox(2t)^{k=2t}$$
splits into $\C^*$-homogeneous summands indicated as superscripts. Here $\n$ is the nilpotent radical of the Lie algebra of $B$.

We will also use the Borel-Bott-Weil Theorem for $SL_2(\C)$: 
\[ 
\mH^i (\PS,  \Ox(m) ) 
\cong \left\{ 
\begin{array}{cc} 
 V(m)    & i =0, ~ m \geq 0 , \\ 
 V(-m-2) & i = 1, ~ m \leq -2 , \\
    0    & {\rm otherwise.}     
\end{array}   
\right. 
\]                                           
   
We divide the cases according the wedge power $\wedge^jT\Nt$ of $j$. Since $T\Nt$ has rank two, $j$ ranges over $0,~1,~2$.
                                                
Let $j=0$. Then we have ${\rm pr}_* (\wedge^0T\Nt)={\rm pr}_*(\Ox_\Nt)\cong 
\oplus_{t \geq 0} \Ox(2t)$.  
We get the contribution of this component to the total Hochschild cohomology is equal to
\[
\mH^0(\Nt,\wedge^0T\Nt)^{2t}\cong  V(2t) \quad (t\in \N).
\] 

Let $j=1$. 
Pushing forward the short exact sequence of sheaves $0 \to {\rm pr}^* \Ox(-2) \to T\Nt \to {\rm pr}^* \Ox(2) \to 0$  to $\PS$ gives us a short exact sequence of bundles of infinite rank 
\[
0\lra \bigoplus_{t=0}^\infty \Ox(-2+2t)^{2t-2} \lra \bigoplus_{t=0}^\infty \left({\rm pr}_* T\Nt\right)^{2t-2} \lra \bigoplus_{t=0}^\infty \Ox(2+2t)^{2t}\lra 0,
\]
since ${\rm pr}:\Nt\lra \PS$ is an affine morphism. Taking cohomology, we obtain, as part of a long exact sequence, the short exact sequences
\[ 
0 \to \mH^0(\PS, \Ox(2t-2)) \to \mH^0(\Nt, T\Nt)^{2(t-1)} \to \mH^0(\PS, \Ox(2t)) \to 0 ,
\] 
\[ 
0 \to \mH^1(\PS, \Ox(-2)) \to \mH^1(\Nt, T\Nt)^{-2} \to 0 \to 0 .
\] 
Here we have used that $\mH^1(\PS, \Ox(2t-2) ) =0$  for all $t \geq 1$.                                             
Therefore we have:
\[  
\mH^0(\Nt, T\Nt)^{2t-2}\cong  V(2t-2) \oplus V(2t) , \quad (t \geq 1) \quad \quad \mH^1(\Nt, T\Nt)^{-2}\cong  V(0)
\]

Let $j=2$. Then we have $\wedge^2T\Nt\cong \Ox_\Nt$, but with the $\C^*$-degree shifted down by $-2$. Therefore, we have, as in the $j=0$ case, the nonzero contribution to Hochschild cohomology comes from
\[
\mH^0(\Nt, \wedge^2T\Nt)^{2t-2}\cong V(2t) \quad (t\in \N),
\]
and vanishes otherwise. 
The Proposition follows. 
\end{proof}

Comparing the injective resolution for $L(0)$ over $\ul(\s_2)$, given in the proof of Proposition \ref{prop-HH-sl2-odd}, with the result ${\rm Ext}^{2\bullet}_\ul (L(0), L(0)) \cong \C^\bullet[\Nc]$, we find $\C^s[\Nc] \cong V(2s)$ for all $s \geq 0$. 

Perhaps it is more illustrative to exhibit the computation results of Proposition \ref{HH_sl2_0} into the following table:
\begin{equation}\label{table-sl2}
\begin{array}{|c||c|c|} 
\hline
{\scriptstyle \mH^i(\Nt,\wedge^j T\Nt)}   &  {\scriptstyle j-i=0}   &  {\scriptstyle j-i=2} \\[1mm]  \hline \hline
{\scriptstyle i+j = 0}    &  {\scriptstyle \C[\Nc]}      & 0 \\[1mm]  \hline 
{\scriptstyle i+j = 2}    &   {\scriptstyle \C \{-2\} }        &  {\scriptstyle \C[\Nc]\{-2\}}\\[1mm]  \hline
  \end{array} 
\oplus
\begin{array}{|c||c|c|} 
\hline
{\scriptstyle \mH^i(\Nt,\wedge^j T\Nt)}   &  {\scriptstyle j-i=-1}   &  {\scriptstyle j-i=1} \\[1mm]  \hline \hline
{\scriptstyle i+j = 1}    &  0      & {\scriptscriptstyle \C[\Nc][-1]\oplus  \C[\Nc]_+[1]} \\[1mm]  \hline 
{\scriptstyle i+j = 3}    &  0           &  0 \\[1mm]  \hline
  \end{array} 
\end{equation}
The table splits the total Hochschild cohomology of $\ul_0(\s_2)$ into a direct sum of even and odd parts, and the notation $\{-2\}$ stands for a $\C^*$-degree shift down by 2.

This shows that Theorem \ref{thm-HH-sl2} agrees with the result for the principal block given in Proposition \ref{HH_sl2_0}. Indeed, 
taking product over the $\frac{l-1}{2}$ regular blocks, which are Morita equivalent to the principal block, of the above table, as well as a copy of $\C$ for the Steinberg block, one recovers the statement of Theorem \ref{thm-HH-sl2}. 

Furthermore, the table exhibits a triply-graded module structure of $\mHH^\bullet(\ul(\s_2))$ over the graded ring $\C[\Nc]\rtimes U(\s_2)$. Namely, the $i$, $j$ and $k$ (or $\C^*$) gradings in $\mH^{i}(\Nt,\wedge^jT\Nt)^k$. The Hochschild cohomological grading is equal to $i+j+k$. 
 In general the Hochschild-Rosenberg-Kostant Theorem is only an isomorphism of algebras after twisting by the square root of Todd class (see, for instance, \cite{CaVdB}). We expect the Hochschild cohomology ring structure of $\ul(\s_2)$ to only inherit the $\C^*$-grading from this table.  But for $\Nt=T^*\PS$, the Todd class is trivial, and the algebra structure is untwisted. See the next remark.

\begin{remark}
Using the projective covers $P(0)$ and $P(l-2)$ of the simples $L(0)$ and $L(l-2)$ in ${\rm Rep}(\ul_0(\s_2))$, we may identify the principle block ${\rm Rep}(\ul_0)$ with module over the endomorphism algebra
$
E:=\End_{\ul}(P(0)\oplus P(l-2))
$
The algebra can readily be computed to be equivalent to the quiver algebra with two vertices
\[
\xymatrix{
\bullet \ar@/_1pc/[rr]|{a_2}\ar@/_0.3pc/[rr]|{a_1} && \ar@/_1pc/[ll]|{b_2} \ar@/_0.3pc/[ll]|{b_1} \bullet
}
\]
modulo relations
\[
a_ib_j=0=b_ja_i~(i\neq j), \quad  a_1b_1=a_2b_2, \quad b_1a_1=b_2a_2.
\]
One may readily check that the algebra is Koszul, and thus there is a hidden grading on ${\rm Rep}(\ul_0(\s_2))$ whose graded representation theory is equivalent to the graded representation theory of $E$. One can check that the $\C^*$-grading above in Table \ref{table-sl2} agrees with the Koszul grading.
\end{remark}

\begin{corollary}   \label{HH_sl2_alg}
The algebra structure of $\mHH^{\bullet}(\ul(\s_2))$ is determined by
\[
\mu_i^2=\nu_i^2=\nu_i\mu_i+\mu_i\nu_i=0, \quad \quad e_i\mu_i=\mu_ie_i=\mu_i, \quad \quad e_i\nu_i=\nu_ie_i=\nu_i, \quad \quad
\]
\[
x_i\mu_i=\mu_ix_i=y_i\mu_i=\mu_iy_i=0, \quad \quad x_i\nu_i=\nu_ix_i=y_i\nu_i=\nu_iy_i=0,
\]
\[
\mu_i\nu_i=-\nu_i\mu_i=x_i-y_i.
\]
\end{corollary}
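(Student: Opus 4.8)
The plan is to transport the entire computation to the geometric side. By Theorem~\ref{BL}, applied to the principal block with $\Nt = T^*\PS$ and $\PS = SL_2(\C)/B$, one has $\mHH^\bullet(\ul_0(\s_2)) \cong \bigoplus_{i,j,k}\mH^i(\Nt,\wedge^j T\Nt)^k$; crucially, since the Todd class of $T^*\PS$ is trivial, the Hochschild--Kostant--Rosenberg comparison is an isomorphism of \emph{rings}, with product given by the cup product on sheaf cohomology together with the exterior product $\wedge^j T\Nt \otimes \wedge^{j'}T\Nt \to \wedge^{j+j'}T\Nt$ on polyvector fields. Because the product respects the block decomposition (Theorem~\ref{thm-HH-sl2}) and the Steinberg block only contributes the semisimple line $\C e_{\rm st}$, it suffices to determine structure constants inside one regular block, i.e. within the table \eqref{table-sl2}. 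First I would record the bidegree $(i,j)$ and $\C^*$-weight $k$ of each generator: $e_i = 1 \in \mH^0(\Nt,\wedge^0 T\Nt)^0$; the class $\mu_i$ is a genuine global vector field in $\mH^0(\Nt, T\Nt)^0$, while $\nu_i$ is a formal generator of degree $-1$ whose genuine $\C[\Nc]_+$-multiples account for the remaining global vector fields, so that $\C[\Nc]\mu_i \oplus \C[\Nc]_+\nu_i \cong \mH^0(\Nt, T\Nt)$; the Higman generator $x_i+y_i$ spans $\mH^1(\Nt,T\Nt)^{-2}$ (bidegree $(1,1)$, by Proposition~\ref{prop-geometric-Higman-ideal} with $m=1$), and the transverse nilpotent central direction $x_i-y_i$ spans $\mH^0(\Nt,\wedge^2 T\Nt)^{-2}$ (bidegree $(0,2)$).

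With these identifications the bulk of the relations are forced. The three vanishings $\mu_i^2 = \nu_i^2 = \mu_i\nu_i + \nu_i\mu_i = 0$ are consequences of the graded-commutativity of Hochschild cohomology alone: $\mu_i$ and $\nu_i$ sit in odd homological degrees $1$ and $-1$, so over $\C$ their squares are $2$-torsion and hence zero, and $\mu_i\nu_i = (-1)^{(\deg\mu_i)(\deg\nu_i)}\nu_i\mu_i = -\nu_i\mu_i$. The idempotent relations $e_i\mu_i = \mu_i e_i = \mu_i$ and $e_i\nu_i = \nu_i e_i = \nu_i$ merely express that $e_i$ is the unit of the ring attached to the $i$th regular block. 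For the relations $x_i\mu_i = \mu_ix_i = y_i\mu_i = \mu_iy_i = 0$ and $x_i\nu_i = \nu_ix_i = y_i\nu_i = \nu_iy_i = 0$, I would argue by bidegree: multiplying a class of bidegree $(1,1)$ or $(0,2)$ by a class of bidegree $(0,1)$ lands in $\mH^1(\Nt,\wedge^2 T\Nt)$ or in $\mH^0(\Nt,\wedge^3 T\Nt)$, both of which vanish --- the first by the cohomology computation of Proposition~\ref{HH_sl2_0} (only $\mH^0$ survives on $\wedge^2 T\Nt$), the second because $\wedge^3 T\Nt = 0$ as $\dim_\C \Nt = 2$. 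The products involving the formal $\nu_i$ are then handled by $\C[\Nc]$-linearity, testing against genuine classes $f\nu_i$ with $f\in\C[\Nc]_+$.

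The single substantive relation is $\mu_i\nu_i = -\nu_i\mu_i = x_i - y_i$. Both factors come from $\mH^0(\Nt, T\Nt)$, so their product is an exterior product of vector fields landing in $\mH^0(\Nt,\wedge^2 T\Nt)^{-2}\cong \C\{-2\}$, a one-dimensional space. This already pins $\mu_i\nu_i$ to a scalar multiple of the class spanning it, which is precisely the central direction \emph{transverse} to the Higman line $\mH^1(\Nt,T\Nt)^{-2} = \C(x_i+y_i)$. To see that the scalar lands it on $x_i-y_i$ rather than on some other transverse combination, I would use antisymmetry: the involution interchanging $x_i\leftrightarrow y_i$ in Kerler's presentation $\C[x_i,y_i]/(x_i^2,x_iy_i,y_i^2)$, compatible with $\mu_i\leftrightarrow\nu_i$, fixes the symmetric Higman class $x_i+y_i$ and negates $x_i-y_i$; since $\mu_i\nu_i = -\nu_i\mu_i$ is antisymmetric under swapping the two factors, it must be proportional to $x_i-y_i$. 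A choice of normalization of $\mu_i,\nu_i$ then fixes the constant to $1$.

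The main obstacle is exactly this last step: upgrading the qualitative statement that $\mu_i\nu_i$ spans the one-dimensional bivector cohomology to the precise equality with $x_i-y_i$, including the correct sign and normalization. Concretely this means either wedging the explicit global vector field $\mu_i$ against a genuine representative $f\nu_i$ (with $f\in\C[\Nc]_1 \cong \g$) and evaluating the resulting bivector class in $\mH^0(\Nt,\wedge^2 T\Nt)^0$, or tracking the element $x_i-y_i$ back through the defining chain of isomorphisms --- Hochschild--Kostant--Rosenberg, the Ginzburg--Kumar isomorphism~\eqref{eqn-GK-isomorphism}, and the Radford isomorphism of Theorem~\ref{thmRadfordiso} that produces $x_i$ and $y_i$ from the shifted traces $\chi_{i-1},\chi_{l-i-1}$. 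The symmetry argument above is what reduces this potentially delicate bookkeeping to the determination of a single nonzero scalar.
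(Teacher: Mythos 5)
Your overall strategy coincides with the paper's: invoke Theorem \ref{BL}, use that the Todd class of $T^*\PS$ is trivial so that the Hochschild--Kostant--Rosenberg comparison respects products, and read off structure constants from the bigraded table \eqref{table-sl2}. Your handling of the routine relations is correct and in fact more explicit than the paper, which does not address them at all: graded commutativity of the Hochschild cup product together with $\C[\Nc]$-bilinearity kills $\mu_i^2$, $\nu_i^2$ and gives $\mu_i\nu_i=-\nu_i\mu_i$, and your bidegree argument (using $\mH^1(\Nt,\wedge^2 T\Nt)=0$ from Proposition \ref{HH_sl2_0} and $\wedge^3 T\Nt=0$) correctly forces $x_i\mu_i=\mu_ix_i=y_i\mu_i=\mu_iy_i=0$ and the analogous relations for $\nu_i$.

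However, there is a genuine gap at the single relation that carries real content: $\mu_i\nu_i=x_i-y_i$. Your bidegree argument places the formal product $\mu_i\nu_i$ in the line $\mH^0(\Nt,\wedge^2 T\Nt)^{-2}$ (which, incidentally, already excludes any $x_i+y_i$ component, so the $x_i\leftrightarrow y_i$ involution argument adds nothing and is itself shaky, since the compatibility of that involution with $\mu_i\leftrightarrow\nu_i$ on the geometric side is not established). What is never ruled out is that the scalar is \emph{zero}: ``a choice of normalization of $\mu_i,\nu_i$ then fixes the constant to $1$'' presupposes nonvanishing, and if $\mu_i\nu_i=0$ the ring is a genuinely different, degenerate algebra that no rescaling can repair. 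Degree bookkeeping and equivariance cannot decide this; one must know which global sections of $T\Nt$ the classes $\mu_i$ and $f\nu_i$ ($f\in\C^1[\Nc]$) actually are, and wedge them. That computation is precisely what the paper's proof consists of: covering $\PS$ by two affine lines, it identifies $\mu_i$ with the class of the fiberwise Euler field $p_z\,\partial/\partial p_z$ and $\C^1[\Nc]\nu_i$ with the classes of the $\s_2$-fields $f(z)\,\partial/\partial z$, so the product is represented by $f(z)p_z\,\partial/\partial p_z\wedge\partial/\partial z$, a visibly nonzero $\C^1[\Nc]$-multiple of the Poisson bivector $\tau$ spanning $\mH^0(\Nt,\wedge^2T\Nt)^{-2}$; the identification of that class with $x_i-y_i$ rather than another complement of the Higman line is then made ``up to isomorphism,'' exactly as you suggest. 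So your reduction is sound, but the step you defer as ``the main obstacle'' is in fact the entire proof; to complete the argument you must carry out this explicit wedge computation (equivalently, verify that the two vector fields are not pointwise proportional, so their wedge is a nonzero section).
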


\begin{proof} 
The associative algebra structure on $\mHH(\ul_0)\cong \mHH_{\C^*}(T\Nt)$, as given in the Theorem  \ref{BL}, is determined by the exterior algebra structure on $\Nt$ up to a Todd class twist by \cite{CaVdB}. For $\Nt=T^*\PS$, the Todd class is trivial, and we may compute the algebra structure directly.

Cover $\PS$ by affine lines 
\[
\PS=\mathrm{Spec}(\C[z])\cup \mathrm{Spec}(\C[w]),
\]
where $z=1/w$ over the common intersection. Then $\Nt=T^*\PS$ is covered by the affine planes
\[
T^* \PS = \mathrm{Spec}(\C[z, p_z]) \cup \mathrm{Spec}(\C[w,p_w]),
\]
with it understood that $p_z$ is dual to $dz$ and $p_w$ is dual to $dw$.

In the first affine chart, we may identify $\mH^0(\Nt, T\Nt)$ with the vector space
\[
\C\left\langle p_z\frac{\partial}{\partial p_z}, \frac{\partial}{\partial z}, z\frac{\partial}{\partial z}, z^2\frac{\partial}{\partial z} \right\rangle
\]
We note that $\frac{\partial}{\partial p_z}\wedge \frac{\partial}{\partial z}$ is the restriction of the Poisson bivector field $\tau$ in the coordinate chart.

Identify
\[
\C\left\langle p_z\frac{\partial}{\partial p_z}\right\rangle\cong V(0)\otimes \mu_i, \quad \quad \quad
\C \left\langle \frac{\partial}{\partial z}, z\frac{\partial}{\partial z}, z^2\frac{\partial}{\partial z}\right\rangle\cong V(2)\otimes \nu_i.
\]
The natural map
\[
\mH^0(\Nt,T\Nt)^0\otimes \mHH^0(\Nt, T\Nt)^0\stackrel{\wedge}{\lra} \mH^0(\Nt, \wedge^2T\Nt)^0\cong V(2) (x_i-y_i)
\]
factors through
\[
(V(0)\otimes \mu_i)\otimes (V(2)\otimes \nu_i)) \cong V(2) \tau.
\]
It follows that we have to set $\mu_i\nu_i=-\nu_i\mu_i$ to be a degree-$2$ central element, which we may choose to be $x_i-y_i$ up to isomorphism. The result follows.

\end{proof} 

\paragraph{Modular group action.}
In \cite{Ker}, it is established that there is the following decompostion of $\zl(\s_2)=\zl(\ul_q(\s_2))$ as a module over the modular group.

\begin{theorem}
At a primitive root of unity $q$ of order $l$, the projective modular group $SL_2(\Z)$ action on the center of $\ul_q(\s_2)$ decomposes into
\[
\zl(\s_2)\cong P_{\frac{l+1}{2}} \oplus \C^2\otimes V_{\frac{l-1}{2}},
\]
where $P_{\frac{l+1}{2}} $ is an $\frac{l+1}{2}$-dimensional module of $SL_2(\Z)$, $\C^2$ stands for the standard matrix representation  and $V_{\frac{l-1}{2}}$ is an $\frac{l-1}{2}$-dimensional representation of $SL_2(\Z)$.
\end{theorem}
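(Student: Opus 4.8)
The decomposition is originally due to Kerler \cite{Ker}; the plan is to indicate how it can be recovered from the explicit data assembled above for $\ul_q(\s_2)$. The first step is to transport the projective $SL_2(\Z)$-action generated by $\mathcal{F}$ and $\mathcal{L}$ from $\zl(\s_2)$ to the space of left-shifted trace functionals $\cl_l(\s_2)$ via the Radford isomorphism $\psi_l$ (Theorem \ref{thmRadfordiso}). Since $\psi_l$ is $\ad$-equivariant (Lemma \ref{lem-Rad-ad-inv}) and $\zl(\s_2)$-linear, it intertwines the two actions: $\psi_l\circ \mathcal{F}=\mathcal{F}^*\circ\psi_l$ with $\mathcal{F}^*=\psi_l\circ\jmath_l$, and it carries $\mathcal{L}$ (multiplication by the ribbon element $v$) to multiplication by $v$ on $\cl_l$. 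On $\cl_l$ the generators are classical: $\mathcal{F}^*$ is computed by the Drinfeld map on the simple and pseudo-characters, i.e. by the $S$-matrix of the factorizable ribbon structure, while $v$ acts on the character $\chi_j$ of $L(j)$ by the twist $\theta_j$ and has a unipotent Jordan part on the pseudo-characters coming from the non-semisimple regular blocks.

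Next I would isolate the sub-representation. By Theorem \ref{thm-modular-group-action} and Corollary \ref{hig_ideal_HH} the Higman ideal $\zl_{\Hig}$ is an $SL_2(\Z)$-submodule, and by Theorem \ref{thm-sl2-central-elements-in-adrep} it is exactly the $\tfrac{l+1}{2}$-dimensional span of $\{x_i+y_i\}_{i=1}^{(l-1)/2}$ together with $e_{\St}$, equivalently $\psi_l^{-1}(\pl_l)$, the image of the projective characters. This is the summand $P_{\frac{l+1}{2}}$: its $SL_2(\Z)$-structure is the one carried by the ideal $\pl_l$ of projective characters, whose $S$- and $T$-transformations close on themselves precisely because the tensor ideal of projectives is stable under the modular operators.

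For the complementary $(l-1)$-dimensional piece I would pass to the quotient $\zl/\zl_{\Hig}$, spanned by the classes of the idempotents $e_i$ and of $x_i$ (with $y_i\equiv -x_i$ modulo $x_i+y_i$) over the $\tfrac{l-1}{2}$ regular blocks. The block index $i$ furnishes the $\tfrac{l-1}{2}$-dimensional factor $V_{\frac{l-1}{2}}$, on which $\mathcal{L}$ acts through the block twists $\theta_i$ and $\mathcal{F}$ recombines blocks via the reduced $S$-matrix; within each block the pair $(e_i,x_i)$ corresponds under $\psi_l$ to a pair (pseudo-character, genuine simple character), and such pairs transform under $\mathcal{F}$ and $\mathcal{L}$ through the defining $2\times 2$ matrices, producing the factor $\C^2$ with the standard matrix representation. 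Tensoring these gives $\C^2\otimes V_{\frac{l-1}{2}}$, and one finally checks that the extension $0\to \zl_{\Hig}\to \zl\to \zl/\zl_{\Hig}\to 0$ splits as a sequence of (projective) $SL_2(\Z)$-modules.

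The hard part will be the explicit computation of the Fourier generator $\mathcal{F}=\jmath_l\circ\psi_l$: unlike $\mathcal{L}$, which is read off immediately from the twist eigenvalues and is upper-triangular, $\mathcal{F}$ requires the full $S$-matrix of the factorizable ribbon structure, including the contributions of the pseudo-characters recording the non-semisimplicity of the regular blocks. Assembling these into the two closed blocks above, and verifying that the indecomposable summand $P_{\frac{l+1}{2}}$ has exactly the $SL_2(\Z)$-module structure claimed (rather than merely the correct dimension) together with the splitting, is where the genuine work lies; this is carried out by Kerler \cite{Ker} and, in conformal-field-theory language, in \cite{FGST}.
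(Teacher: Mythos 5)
The paper offers no proof of this statement at all: it is quoted directly as Kerler's theorem \cite{Ker}, and the identification of the two summands inside the center is only made afterwards, in a corollary proved by dimension counting against Theorem \ref{thm-sl2-central-elements-in-adrep}. Your proposal takes essentially the same route — your structural reductions (the Higman ideal $\C\langle x_i+y_i, e_{\St}\rangle$ as the $SL_2(\Z)$-submodule giving $P_{\frac{l+1}{2}}$, and the complementary span of the $e_i$ and $x_i-y_i$) mirror that corollary, and like the paper you defer the genuine content of the theorem, namely the explicit computation of $\mathcal{F}$ and the verification of the splitting, to Kerler \cite{Ker}.
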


\begin{corollary}
Inside the adjoint representation, there is a decomposition 
\[
P_{\frac{l+1}{2}} \cong \C \left\langle x_i+y_i, e_{\rm st}\right\rangle_{i=1}^\frac{l-1}{2},\quad \quad
\C^2\otimes V_{\frac{l-1}{2}} \cong  \C \left\langle x_i-y_i, e_{i}\right\rangle_{i=1}^\frac{l-1}{2}.
\]
\end{corollary}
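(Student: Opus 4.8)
The plan is to match each piece of Kerler's decomposition $\zl(\s_2)\cong P_{\frac{l+1}{2}}\oplus\C^2\otimes V_{\frac{l-1}{2}}$ with an explicit subspace of central elements, combining the description of the Higman ideal in Theorem~\ref{thm-sl2-central-elements-in-adrep} with its modular invariance from Theorem~\ref{thm-modular-group-action}. The clean half is the identification of $P_{\frac{l+1}{2}}$ with the Higman ideal. By Theorem~\ref{thm-sl2-central-elements-in-adrep}(iii) we have $\zl_\Hig(\ul_q(\s_2))=\C\langle x_i+y_i,\,e_{\St}\rangle_{i=1}^{(l-1)/2}$, of dimension $\frac{l+1}{2}=\dim P_{\frac{l+1}{2}}$, and by Theorem~\ref{thm-modular-group-action} it is a submodule for the projective $SL_2(\Z)$-action generated by $\mathcal{F}$ and $\mathcal{L}$. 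Matching Kerler's explicit action \cite{Ker} in the basis $\{e_i,x_i,y_i,e_{\St}\}$ with the Radford description of these generators identifies $\zl_\Hig$ with the summand $P_{\frac{l+1}{2}}$, the dimension count and $SL_2(\Z)$-invariance being the conceptual reason. This is internally consistent with the $\mathcal{L}$-action: the block relations $x_i^2=x_iy_i=y_i^2=0$ give $\mathcal{L}(x_i+y_i)=v(x_i+y_i)=\theta_i(x_i+y_i)$ and $\mathcal{L}(e_{\St})=\theta_{\St}e_{\St}$, so $\zl_\Hig$ is manifestly stable under multiplication by the ribbon element.

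For the complementary summand I would show that $\C\langle x_i-y_i,\,e_i\rangle$ is itself $SL_2(\Z)$-stable and then conclude it equals $\C^2\otimes V_{\frac{l-1}{2}}$ by dimension together with uniqueness of the complement, the latter holding because $\Hom_{SL_2(\Z)}(\C^2\otimes V_{\frac{l-1}{2}},P_{\frac{l+1}{2}})=0$. Stability under $\mathcal{L}$ is the same local computation: $\mathcal{L}(x_i-y_i)=\theta_i(x_i-y_i)$, while $v\,e_i=\theta_i e_i+\alpha_i x_i+\beta_i y_i$ with $\alpha_i=-\beta_i$, the sign being forced by the relations $\lambda_l(y_i)=-\lambda_l(x_i)$ (equivalently $[\,l-1-i\,]_q=-[\,i-1\,]_q$) recorded in the proof of Theorem~\ref{thm-sl2-central-elements-in-adrep}; hence $\mathcal{L}(e_i)=\theta_i e_i+\alpha_i(x_i-y_i)\in\C\langle x_i-y_i,\,e_i\rangle$. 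The vectors $\{e_i\}$ and $\{x_i-y_i\}$ then furnish two copies of the $\frac{l-1}{2}$-dimensional block-indexing representation $V_{\frac{l-1}{2}}$, and each pair $(e_i,x_i-y_i)$ carries the standard two-dimensional factor $\C^2$, with $x_i-y_i$ spanning the line fixed (up to the scalar $\theta_i$) by $\mathcal{L}$.

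The main obstacle is the invariance of $\C\langle x_i-y_i,\,e_i\rangle$ under the Fourier transform $\mathcal{F}$. Unlike the Higman ideal, this complement is not handed to us abstractly, and $\mathcal{F}$ does not respect the block decomposition, so that $\mathcal{F}(e_i)$ is a genuine mixture of idempotents and nilpotents across several blocks. I expect to settle this on the character side: since $x_i,y_i$ are the Radford preimages $\psi_l^{-1}(\chi_{i-1}),\psi_l^{-1}(\chi_{l-1-i})$ and $\mathcal{F}^*=\psi_l\circ\jmath_l$ is computed from the explicit $S$-matrix of $\ul_q(\s_2)$ in \cite{Ker,FGST}, one checks that this matrix maps the span $\C\langle x_i-y_i,\,e_i\rangle$ into itself. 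This single matrix verification is the one genuinely computational point; once it is in hand, the uniqueness of the $SL_2(\Z)$-stable complement closes the identification.
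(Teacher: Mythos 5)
Your treatment of the first summand---the Higman ideal $\zl_{\Hig}=\C\langle x_i+y_i,e_{\St}\rangle$, its stability under $\mathcal{F}$ and $\mathcal{L}$, and the dimension count against Kerler's theorem---coincides with the paper's argument, and your remark that complements to $\zl_{\Hig}$ are parametrized by $\Hom_{SL_2(\Z)}\bigl(\C^2\otimes V_{\frac{l-1}{2}},P_{\frac{l+1}{2}}\bigr)$ is the correct bookkeeping. The gaps are in the second summand, and there are two. First, the sign claim: you assert $v e_i=\theta_i e_i+\alpha_i x_i+\beta_i y_i$ with $\alpha_i=-\beta_i$ ``forced'' by $\lambda_l(y_i)=-\lambda_l(x_i)$. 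This cannot follow from any such relation. What has to be ruled out is a nonzero component of $ve_i$ along $x_i+y_i$; but by Corollary \ref{cor-trace-kills-nilpotents} every shifted trace-like functional in $\cl_l$---in particular $\lambda_l$ and all the $\chi_j$---vanishes identically on $\zl_{\Hig}$, hence on $\C(x_i+y_i)$. Evaluations of such functionals are therefore blind to exactly the component you need to kill, so the statement that the nilpotent part of the ribbon element in each regular block is proportional to $x_i-y_i$ requires the explicit form of $v$ from \cite{Ker,FGST}; it is not a formal consequence of the Radford normalization. Second, as you say yourself, the $\mathcal{F}$-stability of $\C\langle x_i-y_i,e_i\rangle$ is deferred to an unexecuted $S$-matrix verification, so the proposal is incomplete at precisely what you identify as its crux.

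For comparison, the paper's own proof is purely module-theoretic and involves no $S$- or $T$-matrix computation: since $\mathcal{F}$ is an $\ul$-module automorphism of $\ul_{\ad}$ (Corollary \ref{cor-F-perserves-Higman}) and $\mathcal{L}$ is multiplication by an invertible central element, the projective $SL_2(\Z)$-action preserves both the span of the trivial submodules lying in projective summands (the Higman ideal, via Proposition \ref{prop-adLambdaH}) and the span of the trivial direct summands in the decomposition of Corollary \ref{u-sl2-ad}, after which one compares dimensions with Kerler's theorem. Note, however, that this abstract argument really pins down the isomorphism types rather than the particular spans: by Theorem \ref{thm-sl2-central-elements-in-adrep}(ii) the subspace $\C\langle x_i,e_i\rangle$ is equally a span of trivial direct summands (such complements to $\zl_{\Hig}$ are not unique), so singling out $\C\langle x_i-y_i,e_i\rangle$ as the $SL_2(\Z)$-stable complement ultimately rests on the same explicit data from \cite{Ker,FGST} that you appeal to. In short, your plan correctly isolates the computation that has to be done, but that computation is the proof, and it is the part that is missing.
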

\begin{proof}
Since the modular group action preserves the module structure of $\ul_q(\s_2)_\ad$ (Theorem \ref{thm-modular-group-action}), it preserves the sum of trivial submodules contained in projective summands as well as trivial summands of $\ul_q(\s_2)_\ad$. The result now follows from counting dimensions from the  Kerler's Theorem and Theorem \ref{thm-sl2-central-elements-in-adrep}.
\end{proof}

\begin{corollary}  \label{HH-sl2-modular}
The even part of the Hochschild cohomology decomposes, as a projective $SL_2(\Z)$-module, into
\[
\mHH^{2\bullet}(\ul_q(\s_2))\cong P_{\frac{l+1}{2}} \oplus \C^2\otimes V_{\frac{l-1}{2}} \otimes \C[\Nc].
\]
\end{corollary}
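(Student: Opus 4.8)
The plan is to compute $\mHH^{2\bullet}(\ul_q(\s_2))$ directly from the Ginzburg--Kumar isomorphism $\mHH^\bullet(\ul)\cong \mH^\bullet(\ul,\ul_\ad)$ (Theorem \ref{HH-Hopf}) together with Ostrik's decomposition of the adjoint module (Corollary \ref{u-sl2-ad}), and then to read off the projective $SL_2(\Z)$-structure summand by summand. First I would isolate the summands of $\ul_\ad$ contributing in even homological degree: by Proposition \ref{prop-HH-sl2-odd} the summands $\phi^*V(1)\otimes L(l-2)$ contribute only in odd degrees, so the even part is governed entirely by the trivial summands $L(0)^{\oplus(l-1)}$ and the projective-injective summands $P(0)^{\oplus\frac{l+1}{2}}$. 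Each copy of $L(0)$ contributes a copy of $\C[\Nc]\cong\Ext^{2\bullet}_\ul(L(0),L(0))$, while each copy of $P(0)$, being injective over $\ul$, contributes only $\Hom_\ul(L(0),P(0))\cong\C$ concentrated in degree zero. Thus, as a graded vector space, $\mHH^{2\bullet}(\ul_q(\s_2))$ is $(\C[\Nc]\otimes\C^{l-1})\oplus\C^{\frac{l+1}{2}}$, with the second factor living in degree zero.

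Next I would split off the Higman ideal. The degree-zero $P(0)$-contribution is exactly $\zl_\Hig(\ul_q(\s_2))=\mathrm{Soc}(P(0)^{\oplus\frac{l+1}{2}})$ of Theorem \ref{thm-sl2-central-elements-in-adrep}(i); by Corollary \ref{hig_ideal_HH} it is a direct summand of $\mHH^\bullet$ as a projective $SL_2(\Z)$-module, and by the preceding corollary it is isomorphic to $P_{\frac{l+1}{2}}$. Since it is annihilated by $\C[\Nc]_+$ (Theorem \ref{thm-HH-sl2}), it contributes the single summand $P_{\frac{l+1}{2}}$ and nothing in positive degree.

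It remains to analyze the complementary free $\C[\Nc]$-module $\C[\Nc]\otimes\C\langle e_i,x_i-y_i\rangle$, which is generated in degree zero. The crucial point — and the step I expect to be the main obstacle — is to show that the projective $SL_2(\Z)$-action of Theorem \ref{thm-modular-group-action} acts $\C[\Nc]$-linearly on this complement, so that it is determined by its restriction to the degree-zero generators. The maps $\mathcal{F}$ and $\mathcal{L}$ are $\ul$-module automorphisms of $\ul_\ad$; expanding them with respect to the indecomposable summands, the only block that can survive after applying $\Ext^{2\bullet}_\ul(L(0),-)$ in positive degree is the $L(0)\to L(0)$ block, since $\Ext^{\geq 1}_\ul(L(0),P(0))=0$ by injectivity of $P(0)$ and the remaining summands contribute nothing to even cohomology. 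By functoriality this block induces $\mathrm{id}_{\C[\Nc]}\otimes\rho'$ in each positive even degree, where $\rho'$ is the induced action on the $L(0)$-multiplicity space $\C^{l-1}$; the off-diagonal blocks involving $P(0)$ can only act in degree zero, and there they are absorbed into the Higman summand already removed. Reconciling this degree-zero discrepancy with the splitting is the delicate part of the argument.

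Finally I would identify $\rho'$. In degree zero the complement of the Higman ideal is the span $\C\langle e_i,x_i-y_i\rangle$, which by Kerler's decomposition of $\zl(\s_2)$ together with the preceding corollary is precisely the $SL_2(\Z)$-summand $\C^2\otimes V_{\frac{l-1}{2}}$. Hence $\rho'\cong\C^2\otimes V_{\frac{l-1}{2}}$, the positive-degree pieces are $\C^k[\Nc]\otimes\C^2\otimes V_{\frac{l-1}{2}}$, and summing over $k\geq 0$ yields $\C^2\otimes V_{\frac{l-1}{2}}\otimes\C[\Nc]$ with $SL_2(\Z)$ acting trivially on the $\C[\Nc]$ factor. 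Combined with the Higman summand this gives $\mHH^{2\bullet}(\ul_q(\s_2))\cong P_{\frac{l+1}{2}}\oplus\C^2\otimes V_{\frac{l-1}{2}}\otimes\C[\Nc]$, as claimed.
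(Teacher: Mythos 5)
Your proof is correct, and its skeleton is exactly the argument the paper leaves unwritten (the corollary is stated with no proof): even-degree cohomology comes only from the $L(0)$- and $P(0)$-summands of $\ul_\ad$ (Proposition \ref{prop-HH-sl2-odd} killing $\phi^*V(1)\otimes L(l-2)$), the $P(0)$-contribution is $\zl_\Hig \cong P_{\frac{l+1}{2}}$ sitting in degree zero and split off equivariantly by Corollary \ref{hig_ideal_HH} together with the preceding corollary, and the complement is the free $\C[\Nc]$-module on the degree-zero span $\C\langle e_i, x_i-y_i\rangle \cong \C^2\otimes V_{\frac{l-1}{2}}$, as recorded in Theorem \ref{thm-HH-sl2}. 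Where you genuinely go beyond the paper is the step you flag as delicate. It is correct, and can be made painless: $\mathcal{F}$ and $\mathcal{L}$ act on $\mHH^\bullet(\ul)=\mH^\bullet(\ul,\ul_\ad)$ as maps induced by \emph{coefficient} morphisms $\ul_\ad\to\ul_\ad$ (Theorems \ref{HH-Hopf} and \ref{thm-modular-group-action}), and coefficient-induced maps commute with the cup-product action of $\mH^\bullet(\ul,\C)\cong\C[\Nc]$; this naturality gives the $\C[\Nc]$-linearity outright, without tracking blocks. The degree-zero ``discrepancy'' you worry about then disappears for two compatible reasons: in positive degree the off-diagonal blocks land in $\Ext^{>0}_\ul\bigl(L(0),P(0)\bigr)=0$ (equivalently, $\zl_\Hig$ is killed by $\C[\Nc]_+$), and in degree zero the preceding corollary supplies an $SL_2(\Z)$-equivariant complement $\C\langle e_i,x_i-y_i\rangle$ to $\zl_\Hig$, so that $\rho'$ is identified with the quotient representation $\zl/\zl_\Hig\cong\C^2\otimes V_{\frac{l-1}{2}}$. (For $\rho'$ to be well defined as a projective representation one should also note that composites $L(0)\to P(0)\to L(0)$ vanish, since any map into $P(0)$ lands in the socle and any map out of it kills the radical; alternatively this is automatic from functoriality in positive Ext-degrees.) Finally, note that your argument proves slightly more than the corollary states: the modular group acts \emph{trivially} on the tensor factor $\C[\Nc]$, which in particular addresses, for the even part, the question the paper raises in its closing sentence about the projective $SL_2(\Z)$-action on $\C[\Nc]$.
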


It remains an interesting question to investigate the projective $SL_2(\Z)$-action on $\C[\Nc]$ so as to determine the exact modular group structure in the above results.


\addcontentsline{toc}{section}{References}

\bibliographystyle{alpha}
\bibliography{qy-bib}

\begin{thebibliography}{CVdB10}

\bibitem[ABG04]{ABG}
S.~Arkhipov, R.~Bezrukavnikov, and V.~Ginzburg.
\newblock Quantum groups, the loop {G}rassmannian, and the {S}pringer
  resolution.
\newblock {\em J. Amer. Math. Soc.}, 17(3):595--678, 2004.
\newblock \href{http://arxiv.org/abs/math/0304173}{arXiv:math/0304173}.

\bibitem[And92]{Andsntensor}
H.~H. Andersen.
\newblock Tensor products of quantized tilting modules.
\newblock {\em Comm. Math. Phys.}, 149(1):149--159, 1992.

\bibitem[APW91]{APW}
H.~H. Andersen, P.~Polo, and K.~X. Wen.
\newblock Representations of quantum algebras.
\newblock {\em Invent. Math.}, 104(1):1--59, 1991.

\bibitem[APW92]{APW2}
H.~H. Andersen, P.~Polo, and K.~X. Wen.
\newblock Injective modules for quantum algebras.
\newblock {\em Amer. J. Math.}, 114(3):571--604, 1992.

\bibitem[BG01]{BrGo}
K.~A. Brown and I.~Gordon.
\newblock The ramification of centres: {L}ie algebras in positive
  characteristic and quantised enveloping algebras.
\newblock {\em Math. Z.}, 238(4):733--779, 2001.
\newblock \href{https://arxiv.org/abs/math/9911234}{arXiv:math/9911234}.

\bibitem[BK08]{BaKr}
E.~Backelin and K.~Kremnizer.
\newblock Localization for quantum groups at a root of unity.
\newblock {\em J. Amer. Math. Soc.}, 21(4):1001--1018, 2008.
\newblock \href{http://arxiv.org/abs/math/0407048}{arXiv:math/0407048}.

\bibitem[BK15]{BaKrSing}
E.~Backelin and K.~Kremnizer.
\newblock Singular localization of {$\mathfrak{g}$}-modules and applications to
  representation theory.
\newblock {\em J. Eur. Math. Soc. (JEMS)}, 17(11):2763--2787, 2015.
\newblock \href{http://arxiv.org/abs/1011.0896}{arXiv:1011.0896}.

\bibitem[BL07]{BeLa}
R.~Bezrukavnikov and A.~Lachowska.
\newblock The small quantum group and the {S}pringer resolution.
\newblock In {\em Quantum groups}, volume 433 of {\em Contemp. Math.}, pages
  89--101. Amer. Math. Soc., Providence, RI, 2007.
\newblock \href{http://arxiv.org/abs/math/0609819}{arXiv:math/0609819}.

\bibitem[CVdB10]{CaVdB}
D.~Calaque and M.~Van~den Bergh.
\newblock Hochschild cohomology and {A}tiyah classes.
\newblock {\em Adv. Math.}, 224(5):1839--1889, 2010.
\newblock \href{https://arxiv.org/abs/0708.2725}{arXiv:0708.2725}.

\bibitem[CW08]{CoWe}
M.~Cohen and S.~Westreich.
\newblock Characters and a {V}erlinde-type formula for symmetric {H}opf
  algebras.
\newblock {\em J. Algebra}, 320(12):4300--4316, 2008.

\bibitem[Dri89]{DrACHA}
V.~G. Drinfeld.
\newblock Almost cocommutative {H}opf algebras.
\newblock {\em Algebra i Analiz}, 1(2):30--46, 1989.

\bibitem[FGST06]{FGST}
B.~L. Feigin, A.~M. Gainutdinov, A.~M. Semikhatov, and I.~Y. Tipunin.
\newblock Modular group representations and fusion in logarithmic conformal
  field theories and in the quantum group center.
\newblock {\em Comm. Math. Phys.}, 265(1):47--93, 2006.
\newblock \href{http://arxiv.org/abs/hep-th/0504093}{arXiv:hep-th/0504093}.

\bibitem[GK92]{GK}
S.~Gelfand and D.~Kazhdan.
\newblock Examples of tensor categories.
\newblock {\em Invent. Math.}, 109(3):595--617, 1992.

\bibitem[GK93]{GinKu}
V.~Ginzburg and S.~Kumar.
\newblock Cohomology of quantum groups at roots of unity.
\newblock {\em Duke Math. J.}, 69(1):179--198, 1993.
\newblock Available at
  \href{http://www.unc.edu/math/Faculty/kumar/papers/kumar23.pdf}{http://www.unc.edu/math/Faculty/kumar/papers/kumar23.pdf}.

\bibitem[GMV14]{GMV}
E.~Gorsky, M.~Mazin, and M.~Vazirani.
\newblock Affine permutations and rational slope parking functions.
\newblock In {\em 26th {I}nternational {C}onference on {F}ormal {P}ower
  {S}eries and {A}lgebraic {C}ombinatorics ({FPSAC} 2014)}, Discrete Math.
  Theor. Comput. Sci. Proc., AT, pages 887--898. Assoc. Discrete Math. Theor.
  Comput. Sci., Nancy, 2014.
\newblock \href{https://arxiv.org/abs/1403.0303}{arXiv:1403.0303}.

\bibitem[Hab80]{Haboush}
W.~J. Haboush.
\newblock Central differential operators on split semisimple groups over fields
  of positive characteristic.
\newblock In {\em S\'eminaire d'{A}lg\`ebre {P}aul {D}ubreil et {M}arie-{P}aule
  {M}alliavin, 32\`eme ann\'ee ({P}aris, 1979)}, volume 795 of {\em Lecture
  Notes in Math.}, pages 35--85. Springer, Berlin, 1980.

\bibitem[Hai94]{Hai}
M.~D. Haiman.
\newblock Conjectures on the quotient ring by diagonal invariants.
\newblock {\em J. Algebraic Combin.}, 3(1):17--76, 1994.
\newblock Available at
  \url{https://math.berkeley.edu/~mhaiman/ftp/diagonal/diagonal.pdf}.

\bibitem[Ker95]{Ker}
T.~Kerler.
\newblock Mapping class group actions on quantum doubles.
\newblock {\em Comm. Math. Phys.}, 168(2):353--388, 1995.
\newblock \href{http://arxiv.org/abs/hep-th/9402017}{arXiv:hep-th/9402017}.

\bibitem[KL93]{KaLuIandII}
D.~Kazhdan and G.~Lusztig.
\newblock Tensor structures arising from affine {L}ie algebras. {I}, {II}.
\newblock {\em J. Amer. Math. Soc.}, 6(4):905--947, 949--1011, 1993.

\bibitem[Lac03a]{LaVer}
A.~Lachowska.
\newblock A counterpart of the {V}erlinde algebra for the small quantum group.
\newblock {\em Duke Math. J.}, 118(1):37--60, 2003.
\newblock \href{https://arxiv.org/abs/math/0107136}{arXiv:math/0107136}.

\bibitem[Lac03b]{La}
A.~Lachowska.
\newblock On the center of the small quantum group.
\newblock {\em J. Algebra}, 262(2):313--331, 2003.
\newblock \href{http://arxiv.org/abs/math/0107098}{arXiv:math/0107098}.

\bibitem[LM94]{LyuMa}
V.~Lyubashenko and S.~Majid.
\newblock Braided groups and quantum {F}ourier transform.
\newblock {\em J. Algebra}, 166(3):506--528, 1994.

\bibitem[LMSS18]{LMSS}
S.~Lentner, S.~N. Mierach, C.~Schweigert, and Y.~Sommerh\"{a}user.
\newblock Hochschild cohomology and the modular group.
\newblock {\em J. Algebra}, 507:400--420, 2018.
\newblock \href{https://arxiv.org/abs/1707.04032}{arXiv:1707.04032}.

\bibitem[LQ18a]{LQ1}
A.~Lachowska and Y.~Qi.
\newblock The center of the small quantum group {I}: the principal block in
  type {A}.
\newblock {\em Int. Math. Res. Not. IMRN}, (20):6349--6405, 2018.
\newblock \href{http://arxiv.org/abs/1604.07380}{arXiv:1604.07380}.

\bibitem[LQ18b]{LQ2}
A.~Lachowska and Y.~Qi.
\newblock The center of the small quantum group {II}: singular blocks.
\newblock {\em Proc. Lond. Math. Soc. (3)}, 118:513--544, 2018.
\newblock \href{https://arxiv.org/abs/1703.02457}{arXiv:1703.02457}.

\bibitem[Lus89]{LusModularRepQGp}
G.~Lusztig.
\newblock Modular representations and quantum groups.
\newblock In {\em Classical groups and related topics ({B}eijing, 1987)},
  volume~82 of {\em Contemp. Math.}, pages 59--77. Amer. Math. Soc.,
  Providence, RI, 1989.

\bibitem[Lus90a]{LusfdHopf}
G.~Lusztig.
\newblock Finite-dimensional {H}opf algebras arising from quantized universal
  enveloping algebra.
\newblock {\em J. Amer. Math. Soc.}, 3(1):257--296, 1990.

\bibitem[Lus90b]{Lusrootsof1}
G.~Lusztig.
\newblock Quantum groups at roots of {$1$}.
\newblock {\em Geom. Dedicata}, 35(1-3):89--113, 1990.

\bibitem[Lyu95]{Lyu}
V.~Lyubashenko.
\newblock Invariants of {$3$}-manifolds and projective representations of
  mapping class groups via quantum groups at roots of unity.
\newblock {\em Comm. Math. Phys.}, 172(3):467--516, 1995.
\newblock \href{http://arxiv.org/abs/hep-th/9405167}{arXiv:hep-th/9405167}.

\bibitem[Ost97]{Ostrikadjoint}
V.~Ostrik.
\newblock Decomposition of the adjoint representation of the small quantum
  {${\rm sl}_2$}.
\newblock {\em Comm. Math. Phys.}, 186(2):253--264, 1997.
\newblock \href{https://arxiv.org/abs/q-alg/9512026}{arXiv:q-alg/9512026}.

\bibitem[Qi14]{QYHopf}
Y.~Qi.
\newblock Hopfological algebra.
\newblock {\em Compos. Math.}, 150(01):1--45, 2014.
\newblock \href{http://arxiv.org/abs/1205.1814}{arXiv:1205.1814}.

\bibitem[Qi19]{QYMor}
Y.~Qi.
\newblock Morphism spaces in stable categories of {F}robenius algebras.
\newblock {\em Comm. Algebra}, 47(8):3239--3249, 2019.
\newblock \href{https://arxiv.org/abs/1801.07838v1}{arXiv:1801.07838}.

\bibitem[Rad94]{Radtrace}
D.~E. Radford.
\newblock The trace function and {H}opf algebras.
\newblock {\em J. Algebra}, 163(3):583--622, 1994.

\bibitem[SW19]{ScWo}
C.~Schweigert and L.~Woike.
\newblock The {H}ochschild complex of a finite tensor category.
\newblock 2019.
\newblock \href{https://arxiv.org/abs/1910.00559}{arXiv:1910.00559}.

\bibitem[Swe69]{Sw}
M.~E. Sweedler.
\newblock Integrals for {H}opf algebras.
\newblock {\em Ann. of Math. (2)}, 89:323--335, 1969.

\bibitem[Zim14]{ZimRepBook}
A.~Zimmermann.
\newblock {\em Representation theory}, volume~19 of {\em Algebra and
  Applications}.
\newblock Springer, Cham, 2014.
\newblock A homological algebra point of view.

\end{thebibliography}


%

\vspace{0.1in}

\noindent A.~L.: { \sl \small Department of Mathematics, \'{E}cole Polytechnique F\'{e}d\'{e}rale de Lausanne,  SB MATH TAN, MA C3 535 (B\^atiment MA), Station 8, CH-1015 Lausanne, Switzerland} \newline \noindent {\tt \small email: anna.lachowska@epfl.ch}

\vspace{0.1in}

\noindent Y.~Q.: { \sl \small Department of Mathematics, University of Virginia, Charlottesville, VA 22904, USA} \newline \noindent {\tt \small email: yq2dw@virginia.edu}

%

\end{document}